\newtheorem{prop}{Proposition}[section]
\newtheorem{lem}[prop]{Lemma}
\newtheorem{cor}[prop]{Corollary}
\newtheorem{thm}[prop]{Theorem}
\theoremstyle{definition}
\newtheorem{defi}[prop]{Definition}
\theoremstyle{remark}
\newtheorem{examp}[prop]{Example}
\newtheorem{remar}[prop]{Remark}
\DeclareMathAlphabet{\mathpzc}{OT1}{pzc}{m}{it}
\DeclareMathOperator{\End}{End}
\DeclareMathOperator{\Hom}{Hom}
\DeclareMathOperator{\Res}{Res}
\DeclareMathOperator{\GL}{GL}
\DeclareMathOperator{\Ker}{Ker}
\DeclareMathOperator{\Gal}{Gal}
\DeclareMathOperator{\Irr}{Irr}
\DeclareMathOperator{\Spec}{Spec}
\DeclareMathOperator{\codim}{codim}
\DeclareMathOperator{\Sp}{Sp}
\DeclareMathOperator{\Rep}{Rep}
\DeclareMathOperator{\Def}{Def}
\DeclareMathOperator{\Set}{Set}
\newcommand{\Qp}{\mathbb {Q}_p}
\newcommand{\Zp}{\ZZ_p}
\newcommand{\Qpbar}{\overline{\mathbb{Q}}_p}
\newcommand{\MM}{\mathscr M}
\newcommand{\ZZ}{\mathbb Z}
\newcommand{\QQ}{\mathbb Q}
\newcommand{\Aa}{\mathfrak A}
\newcommand{\Fp}{\mathbb F_p}
\newcommand{\RR}{\mathbb R}
\newcommand{\mm}{\mathfrak m}
\newcommand{\OO}{\mathcal O}
\DeclareMathOperator{\wtimes}{\widehat{\otimes}}
\newcommand{\nn}{\mathfrak n}
\newcommand{\br}[1]{\llbracket #1\rrbracket}
\newcommand{\ana}{\mathrm{an}}
\newcommand{\adm}{\mathrm{adm}}
\newcommand{\alg}{\mathrm{alg}}
\newcommand{\Htilde}{\widetilde{H}^0}
\newcommand{\rhobar}{\overline{\rho}}
\newcommand{\Frob}{\mathrm{Frob}}
\newcommand{\Gm}{\mathbb G_m}
\newcommand{\CH}{\mathrm{CH}}
\newcommand{\gen}{\mathrm{gen}}
\DeclareMathOperator{\Spf}{Spf}
\newcommand{\rig}{\mathrm{rig}}
\newcommand{\red}{\mathrm{red}}
\newcommand{\univ}{\mathrm{univ}}
\newcommand{\irr}{\mathrm{irr}}
\newcommand{\tw}{\mathrm{tw}}
\newcommand{\cyc}{\mathrm{cyc}}
\newcommand{\kappabar}{\overline{\kappa}}
\DeclareMathOperator{\Lie}{Lie}
\newcommand{\lin}{\mathrm{lin}}
\newcommand{\kbar}{\bar{k}}
\newcommand{\PC}{\mathrm{PC}}
\newcommand{\cPC}{\mathrm{cPC}}
\newcommand{\hyphen}{\text{-}}
\newcommand{\disc}{\operatorname{disc}}
\newcommand{\Thetabar}{\overline{\Theta}}
\newcommand{\Cont}{\mathcal C}
\newcommand{\Psibar}{\overline{\Psi}}
\DeclareMathOperator{\Group}{Group}
\newcommand{\Cp}{\mathbb C_p}
\newcommand{\ps}{\mathrm{ps}}
\newcommand{\Ghat}{\widehat{G}}
\newcommand{\LG}{\leftidx{^L}{G}{}}
\newcommand{\CG}{\leftidx{^C}G{}}
\newcommand{\ghat}{\widehat{\mathfrak g}}
\newcommand{\that}{\widehat{\mathfrak t}}
\newcommand{\Sen}{\mathrm{Sen}}
\newcommand{\grho}{\leftidx{^g}{\rho}{}}
\newcommand{\Rig}{\mathrm{Rig}}
\newcommand{\op}{\mathrm{op}}
\newcommand{\Ber}{\mathrm{Ber}}
\newcommand{\Ohat}{\widehat{\OO}}
\newcommand{\sing}{\mathrm{sing}}
\newcommand{\ver}{\mathrm{ver}}
\newcommand{\Sigmahat}{\widehat{\Sigma}}
\newcommand{\Spl}{\mathrm{Spl}}
\newcommand{\gl}{\mathfrak{gl}}
\newcommand{\la}{\mathrm{la}}
\title{Infinitesimal characters and Lafforgue's pseudocharacters}
\author{Vytautas Pa\v{s}k\={u}nas and  Julian Quast }
\date{\today.}
\begin{document}

\begin{abstract} 
We associate infinitesimal characters to $p$-adic families of Lafforgue's pseudocharacters of the absolute Galois group of a $p$-adic local field by extending a construction of Dospinescu, Schraen, and the first author. We use this construction to study the action of the centre of the universal enveloping algebra on the locally analytic vectors in the Hecke eigenspaces of the completed cohomology.
\end{abstract}
\maketitle

\tableofcontents

\section{Introduction}
Let $\Gamma_F$ be the absolute Galois group of a $p$-adic local field 
$F$. Let $G$ be a connected reductive group defined over $F$ and
let $\rho: \Gamma_F\rightarrow \LG(\Qpbar)$ be a continuous admissible
representation, where $\LG$ is the $L$-group of $G$. Dospinescu, Schraen and the first 
author in \cite{DPS} have attached to $\rho$ a $\Qp$-algebra homomorphism  $\zeta_{\rho}: Z(\Res_{F/\Qp} \mathfrak g)\rightarrow \Qpbar$, 
where $\Res_{F/\Qp} \mathfrak g$ is the Lie algebra of the restriction of scalars $\Res_{F/\Qp}G$ and 
$Z(\Res_{F/\Qp} \mathfrak g)$ is the centre of
its enveloping algebra. The homomorphism $\zeta_{\rho}$ is defined using Sen theory and encodes  information about the Hodge--Tate weights of the Galois representation $\rho$. The point of view taken in \cite{DPS} is 
that the construction $\rho \mapsto \zeta_{\rho}$ 
is an infinitesimal shadow of a hypothetical $p$-adic 
Langlands correspondence. Namely to $\rho$ there should
correspond a packet of admissible unitary $p$-adic Banach space 
representations of $G(F)$. For any representation $\Pi$ in the 
packet corresponding to $\rho$ the expectation is that the 
action of $Z(\Res_{F/\Qp} \mathfrak g)$ on the locally analytic vectors in $\Pi$ is given by $\zeta_{\rho}$. Although the
$p$-adic Langlands correspondence has not yet been defined at this level of generality, in \cite{DPS}  the authors have verified that the expectation is true when $\Pi$ 
is a Hecke eigenspace in the completed cohomology in many cases. 

A key result proved in \cite{DPS} and used in the application to the Hecke eigenspaces is that the construction $\rho\mapsto \zeta_{\rho}$ behaves well in $p$-adic families. 
Namely, if $\rho: \Gamma_F\rightarrow \LG(A)$
is an admissible continuous representation, where  $A$ is an affinoid algebra
then there is a $\Qp$-algebra homomorphism 
$\zeta_{\rho}: Z(\Res_{F/\Qp} \mathfrak g)\rightarrow A$, compatible with specialisations at closed points $x:A \rightarrow \Qpbar$, so that 
$ (\zeta_{\rho})_x = \zeta_{\rho_x}$. Moreover, $\zeta_{\rho}$ is functorial in $A$, which allows instead of $\Sp A$ to consider more general rigid analytic spaces. 

In this paper we refine the results of \cite{DPS} by considering 
$p$-adic families of Lafforgue's pseudocharacters  instead 
of Galois representations. 
If $\rho: \Gamma_F\rightarrow \LG(\Qpbar)$ 
is a continuous representation then $\zeta_{\rho}$ depends 
only the $\LG$-semisimplification of $\rho$, which in turn 
depends only on the associated $\LG$-pseudocharacter
$\Theta_{\rho}$. 
Conversely, every admissible $\Qpbar$-valued  $\LG$-pseudocharacter $\Theta$ corresponds to a $\Ghat(\Qpbar)$-conjugacy class of continuous admissible $\LG$-semisimple representations $\rho: \Gamma_F\rightarrow \LG(\Qpbar)$ and hence 
we may define $\zeta_{\Theta}:= \zeta_{\rho}$. 

The main result 
of this paper is that the construction $\Theta\mapsto \zeta_{\Theta}$ varies well in $p$-adic families.
To formulate it we let  $\Rig_{\Qp}$ be the category of rigid analytic spaces over $\Qp$ and let 
 $\tilde{X}^{\adm}_{\LG}: \Rig_{\Qp}^{\op}\rightarrow \Set$ be the functor, which 
maps a rigid analytic space $Y$ to 
the set of continuous admissible $\LG$-pseudocharacters 
$\Theta: \Gamma_F\rightarrow \LG(\OO(Y))$. The admissibility for 
$\LG$-pseudocharacters of $\Gamma_F$, defined in \Cref{adm_pseudo}, is a condition
analogous to admissibility for representations of
$\Gamma_F$ into $L$-groups. In particular, if $G$ is 
split over $F$ then according to our conventions $\LG$ is equal to the dual group $\Ghat$ and every $\LG$-pseudocharacter is admissible. 

\begin{thm}[\Cref{main_L}]\label{intro_main_L} There exists a unique  family of $\Qp$-algebra 
homomorphisms $$\zeta_{Y,\Theta}: Z(\Res_{F/\Qp} \mathfrak g)\rightarrow \OO(Y)$$ indexed by pairs $Y\in \Rig_{\Qp}$ and 
$\Theta\in \tilde{X}^{\adm}_{\LG}(Y)$ with the following properties:
\begin{enumerate} 
\item (functoriality) for every morphism $\varphi: Y'\rightarrow Y$ in $\Rig_{\Qp}$ we have $$\varphi^*(\zeta_{Y, \Theta})= \zeta_{Y', \varphi^* \Theta};$$
\item (specialisation) for every $Y\in \Rig_{\Qp}$ and every 
$y\in Y(\Qpbar)$ the specialisation 
$$\zeta_{y, \Theta}:  Z(\Res_{F/\Qp} \mathfrak g)
\overset{\zeta_{Y,\Theta}}{\longrightarrow} \OO(Y)\overset{y}{\longrightarrow} \Qpbar$$
is equal to $\zeta_{\rho}$, where $\rho: \Gamma_F \rightarrow 
\LG(\Qpbar)$ is any continuous representation with 
$\Theta_{\rho}= \Theta\otimes_{\OO(Y), y} \Qpbar$. 
\end{enumerate}
\end{thm}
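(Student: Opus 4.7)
The overall strategy is to reduce the pseudocharacter statement to the representation-theoretic version already established in \cite{DPS}. Throughout, \emph{uniqueness} is forced by the specialisation property together with functoriality: by (1), two candidate families must agree on each affinoid $Y = \Sp A$, and by (2) they agree after specialising at every $y \in Y(\Qpbar)$. Applying functoriality to morphisms $\Sp R \to Y$ with $R$ a finite artinian local $\Qpbar$-algebra (capturing jets at $y$) promotes agreement on closed points to agreement modulo arbitrary powers of maximal ideals, and hence to equality in $A$.

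For \emph{existence}, I first reduce to the case of an affinoid $Y = \Sp A$ using gluing: functoriality together with uniqueness implies that a construction on the pieces of an admissible affinoid cover glues to a global $\zeta_{Y,\Theta}$. Given an admissible $\Theta: \Gamma_F \to \LG(A)$, the idea is to form the rigid analytic fibre product $Z := \Sp A \times_{X^{\ps}_{\LG}} X^{\gen}_{\LG}$, where $X^{\gen}_{\LG}$ is the moduli of continuous admissible representations into $\LG$ and $X^{\ps}_{\LG}$ is the associated moduli of pseudocharacters. The correspondence between admissible pseudocharacters and $\Ghat$-semisimple admissible representations (recalled in the paragraph preceding the theorem) ensures that $Z \to \Sp A$ is surjective on $\Qpbar$-points, and $Z$ carries a tautological admissible representation $\rho^{\tau}: \Gamma_F \to \LG(\OO(Z))$ lifting the pullback of $\Theta$.

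Applying the main result of \cite{DPS} to $\rho^{\tau}$ produces $\zeta_{Z,\rho^{\tau}}: Z(\Res_{F/\Qp}\mathfrak g) \to \OO(Z)$, compatible with specialisations. The final step is to descend this homomorphism along $Z \to \Sp A$. The key input is that $\zeta_{\rho}$ depends only on the $\LG$-semisimple conjugacy class of $\rho$, hence only on the associated pseudocharacter; combined with functoriality in \cite{DPS} applied to the two projections $\Ghat \times Z \rightrightarrows Z$ giving the conjugation action, this shows that the image of $\zeta_{Z,\rho^{\tau}}$ lies in the subring of $\Ghat$-conjugation invariants of $\OO(Z)$. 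The GIT description of the pseudocharacter moduli identifies this invariant subring with $A$, yielding the desired $\zeta_{A,\Theta}$. Properties (1) and (2) then follow from the corresponding properties of the DPS construction on $Z$ together with uniqueness of the descent.

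\emph{Main obstacle.} The delicate step is the descent: one needs the $\Ghat$-invariance of $\zeta_{Z,\rho^{\tau}}$ not only on $\Qpbar$-points but as an equality of rigid analytic maps, and one needs a clean rigid-analytic formulation of the identification $\OO(Z)^{\Ghat} \cong A$ coming from the universal property of the pseudocharacter moduli. Making both of these precise, while tracking the admissibility condition and handling a general (not necessarily split) $L$-group, is where the bulk of the technical work will lie; once this GIT-theoretic descent is set up in the rigid analytic category, the remaining verifications are formal consequences of the corresponding statements in \cite{DPS}.
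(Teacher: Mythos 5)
Your proposal has two genuine gaps, one in each half of the theorem. For uniqueness, the ``jets'' argument does not work: property (2) only constrains the composite of $\zeta_{Y,\Theta}$ with $\Qpbar$-points, i.e.\ its image modulo maximal ideals, and pulling back along $\Sp R\to Y$ with $R$ artinian gives nothing new, because the specialisation property on $\Sp R$ again only sees the residue field. Two candidate families could therefore differ by nilpotents, and no formal argument rules this out. In the paper, uniqueness is reduced (via functoriality and the representability of $\tilde X^{\adm}_{\LG}$ by a disjoint union of spaces $(\Spf R^{\ps}_{\Theta_z})^{\rig}$) to the statement that $R^{\ps}_{\Theta_z}[1/p]$ is reduced; this is a real theorem, obtained from normality, which in turn rests on the adequate-homeomorphism result $R^{\ps}_{\Thetabar}[1/p]\cong (A^{\gen}_{\Thetabar}[1/p])^{H^0}$ (\Cref{adequate}, building on Emerson--Morel) together with normality of $A^{\gen}_{\Thetabar}[1/p]$ (\Cref{normal}). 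Your plan contains no substitute for this input.

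For existence, the descent you flag as the ``main obstacle'' is exactly where the content lies, and the ingredients needed to carry it out are absent. Conjugation-invariance of $\zeta_{Z,\rho^{\tau}}$ for a fixed $g\in\Ghat(\OO(Z))$ (as in \Cref{conj_same}) does not by itself give invariance under the group action in the scheme/rigid sense over a possibly non-reduced base; and even granting the scheme-level identification of invariants with $R^{\ps}[1/p]$, it does not formally pass to global sections of analytifications, nor is the GIT quotient compatible with the base change $Z\to \Sp A$ you use. The paper does not descend from the whole representation space at all: it restricts to an open subset $U$ of the absolutely irreducible locus over which $\pi:X^{\gen}\to X^{\ps}$ is smooth --- itself nontrivial because centralisers of absolutely irreducible representations need not be connected, which is handled via Luna's fundamental lemma and Chevalley--Shephard--Todd (\Cref{main_cor_luna}) --- takes \'etale-local sections to obtain honest representations $\rho_i$, glues the $\zeta_{\rho_i^{\ana}}$ using the \'etale sheaf property of $U\mapsto\OO(U^{\ana})$ (\Cref{etale_cover}), and then extends from $\OO(U^{\ana})$ to $(R^{\ps})^{\ana}$ using normality and the codimension-$\ge 2$ estimate of \Cref{nice_locus} via Riemann's Hebbarkeitssatz (\Cref{work}, \Cref{main}), with a separate reduction of the case $F=\Qp$ to $F\neq\Qp$ by restricting to $\Gamma_E$, plus a Galois descent to the correct coefficient ring (\Cref{descent}). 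Without these steps, or a genuine proof of your rigid-analytic GIT descent, the existence argument remains a plan rather than a proof.
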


In \Cref{main_C} we prove a version of \Cref{intro_main_L} for $C$-admissible $\CG$-pseudo\-characters of $\Gamma_F$, where
$\CG$ is the $C$-group of $G$.

\subsection{Strategy of the proof}
Let us explain the strategy of the proof of \Cref{intro_main_L}. Using the results of J.Q. \cite{quast}, which generalise the results of 
Chenevier \cite{che_durham} when $G=\GL_n$, we show that the functor $\tilde{X}^{\adm}_{\LG}$ is representable by 
a quasi-Stein rigid analytic space $X^{\adm}_{\LG}$. 
Moreover, $X^{\adm}_{\LG}$ is 
disjoint union of rigid analytic spaces of 
the form $(\Spf R^{\ps}_{\Thetabar})^{\rig}$, 
where $\Thetabar$ is a $k$-valued continuous 
admissible $\LG$-pseudocharacter, where $k$ is a 
finite extension of $\Fp$, and $R^{\ps}_{\Thetabar}$
is the universal deformation ring of $\Thetabar$. 
Let $(R^{\ps}_{\Thetabar})^{\ana}$ be the ring of 
global sections of $(\Spf R^{\ps}_{\Thetabar})^{\rig}$. To prove the existence part of \Cref{intro_main_L} it is enough to construct $\zeta_{Y, \Theta}$ for $Y= (\Spf R^{\ps}_{\Thetabar})^{\rig}$ and $\Theta=\Theta^{u}\otimes _{R^{\ps}_{\Thetabar}} (R^{\ps}_{\Thetabar})^{\ana}$, where $\Theta^{u}$  is the universal deformation
of $\Thetabar$, such that $\zeta_{Y, \Theta}$ satisfies the specialisation property. To prove the uniqueness part of \Cref{intro_main_L} it is 
enough to prove that $(\Spf R^{\ps}_{\Thetabar})^{\rig}$ is reduced, which is equivalent to the ring $R^{\ps}_{\Thetabar}[1/p]$ being reduced. 

Let us explain how we prove that $R^{\ps}_{\Thetabar}[1/p]$ is normal, which implies that it is reduced.  Let us replace 
$\Gamma_F$ by an arbitrary finitely generated profinite group $\Gamma$ and $\LG$ by a generalised reductive $\OO$-group scheme $H$. If $A$ is an $R^{\ps}_{\Thetabar}$-algebra we let $X^{\gen}_{\Thetabar}(A)$ be the 
set of $R^{\ps}_{\Thetabar}$-condensed representations $\rho: \Gamma \rightarrow H(A)$ such that its $H$-pseudocharacter $\Theta_{\rho}$ is equal to the specialisation of $\Theta^u$ at $A$. 
The condition \textit{$R^{\ps}_{\Thetabar}$-condensed} is a  continuity condition on $\rho$ expressed in an algebraic way. We have shown in \cite{defG} 
that $X^{\gen}_{\Thetabar}$ is representable by a finitely generated 
$R^{\ps}_{\Thetabar}$-algebra $A^{\gen}_{\Thetabar}$. We let $X^{\ps}_{\Thetabar}:=\Spec R^{\ps}_{\Thetabar}$.
\begin{thm}[\Cref{adequate}]\label{intro_adequate} For any finitely generated\footnote{In forthcoming work we prove the assertion for profinite groups $\Gamma$ satisfying Mazur's finiteness condition at $p$ by reducing to the case where $\Gamma$ is finitely generated.} 
profinite group $\Gamma$ and any generalised reductive 
$\OO$-group scheme $H$ the natural map $R^{\ps}_{\Thetabar}\rightarrow A^{\gen}_{\Thetabar}$ induces an isomorphism $R^{\ps}_{\Thetabar}[1/p]\rightarrow (A^{\gen}_{\Thetabar}[1/p])^{H^0}$ and hence a finite adequate homeomorphism 
$X^{\gen}_{\Thetabar} \sslash H^0 \rightarrow X^{\ps}_{\Thetabar}$. 
\end{thm}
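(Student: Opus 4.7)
The plan is to combine the universal property of $R^{\ps}_{\Thetabar}$ with the generalisation of Lafforgue's semisimplification theorem proved in \cite{quast}, together with geometric invariant theory for the reductive group scheme $H^0$ acting by conjugation on $\Spec A^{\gen}_{\Thetabar}$. The pseudocharacter of the universal representation $\rho^u: \Gamma \to H(A^{\gen}_{\Thetabar})$ is $H^0$-invariant by construction, so the canonical map $R^{\ps}_{\Thetabar} \to A^{\gen}_{\Thetabar}$ factors through $(A^{\gen}_{\Thetabar})^{H^0}$. The main content of the theorem is thus to show that the resulting map $\eta: R^{\ps}_{\Thetabar}[1/p] \to (A^{\gen}_{\Thetabar}[1/p])^{H^0}$ is an isomorphism; the finite adequate homeomorphism statement then follows from Alper's formalism, using geometric reductivity of $H^0$ over $\OO$ and finite generation of $A^{\gen}_{\Thetabar}$ over $R^{\ps}_{\Thetabar}$ established in \cite{defG}.

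The first step is bijectivity on $\Qpbar$-points. A $\Qpbar$-point of $X^{\ps}_{\Thetabar}$ is an admissible $\Qpbar$-valued pseudocharacter specialising $\Thetabar$, and by \cite{quast} it lifts to a continuous $H$-semisimple representation $\rho: \Gamma \to H(\Qpbar)$ uniquely up to $\Ghat(\Qpbar)$-conjugation. A $\Qpbar$-point of $\Spec (A^{\gen}_{\Thetabar}[1/p])^{H^0}$ corresponds via classical GIT for the linearly reductive group $H^0_{\Qpbar}$ to a closed $H^0(\Qpbar)$-orbit in $X^{\gen}_{\Thetabar}(\Qpbar)$, and a Matsushima-style argument in the generalised reductive setting identifies such closed orbits with the $H$-semisimple representations. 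The two descriptions match, yielding the desired bijection; finiteness of $\eta$ then follows from Nagata finite generation of invariants for geometrically reductive groups combined with finite generation of $A^{\gen}_{\Thetabar}$ over $R^{\ps}_{\Thetabar}$.

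To upgrade the bijection on closed points into the ring isomorphism, the crucial missing input is reducedness of $(A^{\gen}_{\Thetabar}[1/p])^{H^0}$: a finite surjective map between reduced Jacobson rings inducing a bijection of closed points with matching residue fields must be an isomorphism. In characteristic zero, where $H^0$ is linearly reductive, reducedness of the invariants is equivalent to reducedness of $A^{\gen}_{\Thetabar}[1/p]$ itself. I expect this reducedness to be the main obstacle: the natural plan is to prove smoothness of $X^{\gen}_{\Thetabar}[1/p]$ at closed points corresponding to representations with minimal centraliser in $\Ghat$, via continuous Galois cohomology computations of tangent spaces combined with the observation that at such points the fibre of $X^{\gen}_{\Thetabar} \to X^{\ps}_{\Thetabar}$ is an $H^0$-torsor, and then to globalise via a density argument over each irreducible component. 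The adequate homeomorphism without inverting $p$ then follows, since $\Spec A^{\gen}_{\Thetabar} \to \Spec (A^{\gen}_{\Thetabar})^{H^0}$ is an adequate moduli space morphism by Alper's criterion, and its composition with the finite map $\Spec (A^{\gen}_{\Thetabar})^{H^0} \to X^{\ps}_{\Thetabar}$ is a universal homeomorphism on topological spaces by the bijection on $\Qpbar$-points.
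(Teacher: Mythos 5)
There is a genuine gap, and it sits exactly at the step you flag as "the crucial missing input". The principle you invoke --- that a finite surjective map between reduced Jacobson rings inducing a bijection on closed points with matching residue fields must be an isomorphism --- is false: the inclusion $k[t^2,t^3]\subset k[t]$ is finite, both rings are reduced Jacobson $\QQ$-algebras, and the map on spectra is a universal homeomorphism matching residue fields, yet it is not an isomorphism. Even in characteristic zero, to promote a finite universal homeomorphism to an isomorphism you need (semi)normality of the \emph{target}, i.e.\ of $R^{\ps}_{\Thetabar}[1/p]$ --- but in this paper normality of $R^{\ps}_{\Thetabar}[1/p]$ is deduced \emph{from} the theorem (\Cref{normal}), so your route would be circular. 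A second problem is that your repair strategy (reducedness of $A^{\gen}_{\Thetabar}[1/p]$ via smoothness at points with small centraliser, computed by continuous Galois cohomology, plus density) only makes sense for $\Gamma=\Gamma_F$; the theorem is asserted for an arbitrary finitely generated profinite group $\Gamma$, where neither those cohomological tools nor reducedness of $A^{\gen}_{\Thetabar}[1/p]$ are available --- and indeed the paper's proof never uses, nor needs, reducedness. (Also, smoothness on a dense open would at best give generic reducedness; embedded components are not excluded without a separate $(S_1)$ argument.)

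The paper's argument avoids points and reducedness altogether: choose a surjection $\Sigmahat\twoheadrightarrow\Gamma$ from the profinite completion of a finitely generated discrete group $\Sigma$. Using Chenevier's determinant-law formalism and the identification of Cayley--Hamilton quotients over $R^{\ps,\Sigmahat}_{\Thetabar}[\Sigma]$ and $R^{\ps,\Sigmahat}_{\Thetabar}\br{\Sigmahat}$, one shows (\Cref{Agen_versus_abstract}) that $A^{\gen,\Sigmahat}_{\Thetabar}\cong R^{\ps,\Sigmahat}_{\Thetabar}\otimes_{\OO[\PC^{\Sigma}_H]}\OO[\Rep^{\Sigma}_H]$, i.e.\ the continuity condition is automatic. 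Then the Emerson--Morel comparison for \emph{abstract} groups in characteristic zero (\Cref{EM_dis}), $L[\PC^{\Sigma}_H]\cong L[\Rep^{\Sigma}_H]^{H^0}$ and likewise for $\Gamma$, combined with linear reductivity of $H^0_L$ (exactness of invariants, \Cref{tensor_inv1,tensor_inv2}) applied to the cocartesian square of \Cref{pullup_wins}, yields the isomorphism $R^{\ps,\Gamma}_{\Thetabar}[1/p]\cong(A^{\gen,\Gamma}_{\Thetabar}[1/p])^{H^0}$ directly; the finite adequate homeomorphism then follows from the already established finiteness and universal homeomorphism of \cite[Proposition 7.4]{defG}. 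If you want to salvage a pointwise argument, bijectivity on $\Qpbar$-points (which your first step essentially gives, and which is already in \cite{defG}) only yields the topological statement, never the scheme-theoretic isomorphism; the ring-level input has to come from an invariant-theoretic comparison such as the one above.
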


In \cite{defG} we have already shown that $R^{\ps}_{\Thetabar}\rightarrow (A^{\gen}_{\Thetabar})^{H^0}$
is finite and induces a universal homeomorphism on spectra. Thus the new result is the isomorphism after inverting $p$. This is proved by building on the results of Emerson--Morel \cite{emerson2023comparison}, 
who proved an analogous result, when $\Gamma$ is an abstract group and $X^{\gen}_{\Thetabar}$ is 
replaced by the scheme of representations 
of $\Gamma$ into $H$. If $\Gamma=\Gamma_F$ then we have shown in \cite{defG} that $A^{\gen}_{\Thetabar}[1/p]$ is normal. The normality of $R^{\ps}_{\Thetabar}[1/p]$ follows from this result and \Cref{intro_adequate}.

Let us now explain how we attach an infinitesimal character $\zeta$ to $\Theta^u$. The most naive idea would be to try and construct a representation 
$\rho: \Gamma_F \rightarrow \LG(R^{\ps}_{\Thetabar})$ such that $\Theta_{\rho}= \Theta^u$ and then 
take $\zeta= \zeta_{\rho^{\ana}}$ the infinitesimal character constructed in \cite{DPS}, where $\rho^{\ana}= \rho\otimes_{R^{\ps}_{\Thetabar}} (R^{\ps}_{\Thetabar})^{\ana}$. This naive idea does not work as there are non-trivial families of reducible representations, which have constant pseudocharacter.  
However, we are in much better shape if we restrict to the absolutely
irreducible locus, denoted by the superscript $\irr$ below.  We are motivated by results of Chenevier in 
\cite{che_durham} who constructed a representation with values 
in the units of an Azumaya algebra over the irreducible 
locus of $X^{\ps}_{\Thetabar}$, when  $H=\GL_n$. 
We show for an arbitrary generalised reductive $\OO$-group scheme $H$ and $\Gamma=\Gamma_F$:  
\begin{prop}[\Cref{nice_locus}]\label{intro_nice_locus}  There is an open  subscheme $U$ of $X^{\ps}_{\Thetabar}[1/p]$
and an open subscheme $V$ of $X^{\gen}_{\Thetabar}[1/p]$ such that the  following hold:
\begin{enumerate}
\item $U\subset (X^{\ps}_{\Thetabar}[1/p])^{\irr}$ and $V\subset (X^{\gen}_{\Thetabar}[1/p])^{\irr}$;
\item $U$ and $V$ are both regular;
\item $\pi|_V:V\rightarrow U$ is smooth and surjective;
\item $\dim X^{\ps}_{\Thetabar}[1/p] - \dim (X^{\ps}_{\Thetabar}[1/p]\setminus U) \ge \min(2, [F:\Qp])$;
\item 
$\dim X^{\gen}_{\Thetabar}[1/p] - \dim (X^{\gen}_{\Thetabar}[1/p]\setminus V)\ge \min(2, [F:\Qp])$.
\end{enumerate} 
 \end{prop}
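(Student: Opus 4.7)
The plan is to begin on the open absolutely $H$-irreducible locus $(X^{\gen}_{\Thetabar}[1/p])^{\irr}$ and further restrict to the open subset $V$ on which the tautological representation $\rho^{\univ}\colon \GF \to H(A^{\gen}_{\Thetabar}[1/p])$ satisfies $H^2_{\cont}(\GF, \ad\rho^{\univ}) = 0$. Openness follows from upper semi-continuity of the dimension of $H^2_{\cont}$ combined with local Tate duality. Set $U := \pi(V)$. Since absolute $H$-irreducibility is a condition on the pseudocharacter alone, $U \subseteq (X^{\ps}_{\Thetabar}[1/p])^{\irr}$, which gives~(1) once $U$ is shown to be open.

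\textbf{Smoothness and regularity (parts (2) and (3)).}
On $(X^{\gen}_{\Thetabar}[1/p])^{\irr}$ the stabiliser of any point in $H^0$ equals the finite central subgroup $Z(H^0)$. Combined with \Cref{intro_adequate}, which identifies $R^{\ps}_{\Thetabar}[1/p]$ with $(A^{\gen}_{\Thetabar}[1/p])^{H^0}$, classical GIT for reductive groups acting with finite stabilisers implies that $\pi$ is smooth of relative dimension $\dim H^0 - \dim Z(H^0)$ on this locus; surjectivity onto $U$ is by construction. This yields~(3). For~(2), tangent--obstruction theory for $H$-valued lifts at a point $\rho \in V$ identifies the completion of $A^{\gen}_{\Thetabar}[1/p]$ at $\rho$ with a power-series ring on $Z^1_{\cont}(\GF, \ad\rho)$, the obstruction space $H^2_{\cont}(\GF, \ad\rho)$ being zero by construction; hence $V$ is regular. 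Regularity of $U$ descends along the smooth surjection $\pi|_V$.

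\textbf{Codimension bounds (main obstacle).}
Parts (4) and (5) are the crux; I would first establish~(5). The complement of $V$ in $X^{\gen}_{\Thetabar}[1/p]$ is the union of the reducible locus and the locus $\{H^2_{\cont}(\GF, \ad\rho) \ne 0\}$. For the reducible locus, every reducible $\rho$ factors through a proper parabolic $P \subsetneq H$ with Levi $L$, and the corresponding stratum is dominated by $X^{\gen}_{P, \Thetabar_P}[1/p]$; combining the parabolic-reduction formalism of \cite{defG} with the Euler characteristic formula $\chi(\GF, V) = -[F:\Qp]\dim V$ yields a codimension of at least $\min(2,[F:\Qp])$, because each non-trivial extension between Jordan--H\"older factors contributes codimension $[F:\Qp]$ and there is at least one such extension to be ``lost''. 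For the $H^2$-locus, at an absolutely irreducible $\rho$ local Tate duality identifies nonvanishing of $H^2_{\cont}(\GF, \ad\rho)$ with nonvanishing of $H^0_{\cont}(\GF, (\ad\rho)^{\vee}(1))$, whose vanishing locus has codimension $\ge [F:\Qp]$ by a further Euler characteristic count. Part~(4) then follows from~(5) via the finite adequate homeomorphism of \Cref{intro_adequate} together with the smoothness of $\pi|_V$ from~(3), since codimensions of open/closed decompositions descend along smooth surjective morphisms. The main technical obstacle I anticipate is the careful dimension tracking in the reducible case, in particular making the bound uniform across all proper parabolics and all irreducible subquotient structures.
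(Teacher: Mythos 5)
There is a genuine gap, and it sits exactly at the step your argument leans on hardest: smoothness of $\pi$ over the absolutely irreducible locus. Your claim that on $(X^{\gen}_{\Thetabar}[1/p])^{\irr}$ the $H^0$-stabiliser of a point ``equals the finite central subgroup $Z(H^0)$'' is false in general: for an absolutely irreducible $\rho$ the scheme-theoretic centraliser $S_{\rho}$ has identity component $Z(H)^0$ (not finite -- e.g.\ $\Gm$ for $H=\GL_n$), and, more importantly, its component group $\Delta=S_{\rho}/S^0_{\rho}$ can be non-trivial when $H$ is a general generalised reductive group. Moreover ``reductive group acting with finite stabilisers $\Rightarrow$ the GIT quotient map is smooth'' is not a theorem: a non-trivial finite stabiliser acting non-trivially on the local deformation space produces quotient singularities, and then $\pi$ is \emph{not} smooth at that point. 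This is precisely the obstruction the paper has to overcome: it proves $\widehat{\OO}_{X^{\ps},\pi(x)}\cong R_{\rho_x}^{\Delta}$ by a Luna-type argument, and then shows smoothness of $\pi$ at $x$ \emph{only} at points where both $x$ and $\pi(x)$ are regular, by combining the Chevalley--Shephard--Todd theorem (regularity of both sides forces $\overline{\Delta}$ to be generated by pseudoreflections) with an arithmetic input special to $\Gamma=\Gamma_F$: the Euler--Poincar\'e formula gives $\dim H^1(\Gamma_F,W)\ge [F:\Qp]\dim W\ge 2$ on the non-fixed part $W$, so no non-trivial element of $\overline{\Delta}$ can be a pseudoreflection, whence $\Delta$ acts trivially. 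On your locus $V=\{\text{irreducible}\}\cap\{H^2=0\}$ there can be points $x$ where $X^{\gen}$ is regular (indeed $H^2=0$ does give that) but $\Delta$ acts non-trivially, so $\pi(x)$ is a singular point of $X^{\ps}[1/p]$ and $\pi$ is not smooth at $x$; thus your parts (2) (for $U$) and (3) fail for this choice of $V$, and the subsequent descent of regularity to $U$ collapses with them.

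The fix is what the paper does: take $Z=Z^{\red}\cup X^{\gen}[1/p]^{\sing}\cup \pi^{-1}(X^{\ps}[1/p]^{\sing})$, set $V=X^{\gen}[1/p]\setminus Z$ and $U=X^{\ps}[1/p]\setminus\pi(Z)$, so regularity of both source and target holds by construction and \Cref{main_cor_luna} applies to give smoothness; the codimension bounds for the two singular loci come from normality of $X^{\gen}_{\Thetabar}[1/p]$ and (via \Cref{intro_adequate}) of $R^{\ps}_{\Thetabar}[1/p]$, while the reducible locus and its image are handled by the dimension estimates of \cite{defG} (codimension at least $[F:\Qp]$). Two smaller points in your write-up also need attention: openness of $U=\pi(V)$ is asserted but not proved (the paper gets it from the image of a closed $H^0$-invariant set being closed, via GIT and \Cref{intro_adequate}), and your claimed codimension bound $\ge[F:\Qp]$ for the locus where $H^0(\Gamma_F,(\ad\rho)^{\vee}(1))\neq 0$ is stated without argument; the paper avoids needing any such bound by using the normality/singular-locus route instead of the $H^2$-vanishing locus.
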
 
Above $\pi: X^{\gen}_{\Thetabar} \rightarrow X^{\ps}_{\Thetabar}$ is the map that sends a representation to its $H$-pseudocharacter. It follows from part (3) that $\pi|_V$ has sections \'etale locally on $U$. We obtain an \'etale covering $\{U_i\rightarrow U\}$ by affine schemes $U_i$ and representations $\rho_i\in X^{\gen}_{\Thetabar}(U_i)$ for all
$i\in I$. We let $U_i^{\ana}\in \Rig_{\Qp}$ be the analytification of $U_i$ and let $\rho_i^{\ana}= 
\rho_i\otimes_{\OO(U_i)}\OO(U_i^{\ana})$. If $H=\LG$ then 
we show that the infinitesimal characters $\zeta_{\rho_i^{\ana}}$ constructed in \cite{DPS}
glue to an infinitesimal character 
$\zeta: Z(\Res_{F/\Qp} \mathfrak g) \rightarrow \OO(U^{\ana})$. 

Normality of 
$R^{\ps}_{\Thetabar}[1/p]$ implies  that 
$(\Spf R^{\ps}_{\Thetabar})^\rig$ is normal.
If $F\neq \Qp$ then 
it follows from part (4) that the complement of $U^{\ana}$ in $(\Spf R^{\ps}_{\Thetabar})^{\rig}$ has codimension  at least $2$, thus $\OO(U^{\ana})= (R^{\ps}_{\Thetabar})^{\ana}$ by Riemann's Hebbarkeitssatz. We thus constructed $\zeta:Z(\Res_{F/\Qp} \mathfrak g) \rightarrow (R^{\ps}_{\Thetabar})^{\ana}$. The construction of $\zeta$, when $F=\Qp$, is reduced to this case by 
using the fact that the Sen operator does not 
change if we restrict to the Galois group of a finite extension. 

Let $\rho^u: \Gamma_F \rightarrow \LG(A^{\gen}_{\Thetabar})$ be the universal representation over 
$X^{\gen}_{\Thetabar}$. Let $(A^{\gen}_{\Thetabar})^{\ana}$ be the 
ring of global functions of the analytification $(X^{\gen}_{\Thetabar})^{\ana}$ and let $\rho^{u,\ana}$ be the representation obtained from $\rho^u$  by 
extending scalars to $(A^{\gen}_{\Thetabar})^{\ana}$. We show in \Cref{main}, 
using normality of $X^{\gen}_{\Thetabar}[1/p]$, that $\zeta_{\rho^{u,\ana}}$
takes values in $(R^{\ps}_{\Thetabar})^{\ana}$ and is equal to $\zeta$.
This implies the specialisation property in \Cref{intro_main_L}.

If $G=\GL_n$ then $\LG=\GL_n$ and all the 
ingredients to prove  \Cref{intro_main_L} are available in \cite{che_durham} and 
\cite{BIP_new} - this has also been  observed independently by Benjamin Schraen.  One difficulty we have to 
deal with when defining $U$ in \Cref{intro_nice_locus}  in a more general setting is that 
the centralisers of absolutely irreducible representations need not be connected, and this yields an obstruction to smoothness of $\pi$.

\subsection{Application}
The result allows us to drop the hypothesis 
that the residual representation is irreducible in \cite[Section 9.9]{DPS}, where the authors show  that the subspace of locally analytic vectors of a Hecke eigenspace
in the completed cohomology of a definite unitary group 
affords an infinitesimal character and compute this infinitesimal character 
in terms of the associated Galois representation. The key issue, that we resolve in \Cref{sec_Hecke}, is that the automorphic theory leads to a determinant law valued in a big Hecke algebra and not directly to a Galois representation itself, 
which means that the results of \cite{DPS} cannot be  applied.
Although the application uses 
only the results, when $\LG=\GL_n$, we expect that \Cref{intro_main_L} in the more general setting will 
be useful in the emerging categorical local Langlands correspondence in the locally analytic setting \cite[Section 6.2]{padic_LL}. 

\subsection{Overview by section} In \Cref{laf} we recall the definition of Lafforgue's pseudocharacters and prove \Cref{intro_adequate}. In \Cref{sec_abs_irr}
we prove \Cref{intro_nice_locus}. In \Cref{sec_rel_an} we discuss the analytification of schemes locally of finite type over $\Spec R[1/p]$, where $R$ is a complete local noetherian $\Zp$-algebra. We 
prove that for such a scheme $X$ the functor 
$U\mapsto \OO(U^{\ana})$ is a sheaf for the \'etale topology on $X$. We expect that the results are known to the experts. In \Cref{sec_constr} we carry out the construction of the infinitesimal character and prove \Cref{intro_main_L}. In \Cref{sec_Hecke} we apply our results to study 
locally analytic vectors in Hecke eigenspaces in the setting of definite unitary groups. Don't read Sections \ref{sec_dim_codim} and \ref{section_todd} unless you really want to. 

\subsection{Notation} We let $F$ be a finite extension of $\Qp$ and fix an algebraic closure $\overline{F}$ of $F$. We let $\Gamma_F=\Gal(\overline{F}/F)$ be 
the absolute Galois group of $F$. We let $G$ be a connected reductive group over $F$ split over a finite Galois extension $E$ of $F$ contained in $\overline{F}$. 
We fix a further finite extension $L$ of $\Qp$ - our field of coefficients, and 
let $\OO$ be its ring of integers and $k$ the residue field. Typically in our arguments we have the freedom 
to replace $L$ by a finite extension. We denote by $\LG$ (resp. $\CG$) the $L$-group of 
$G$ (resp. the $C$-group of $G$), the definition is recalled
in \Cref{adm_pseudo} (resp. \Cref{sec_mod_C}). 
These are smooth affine group schemes of finite type over $\ZZ$, and according to our conventions depend on the choice of $E$. (The theorems we prove do not.) In our main theorem we work with $\LG$- and $\CG$-pseudocharacters of $\Gamma_F$. In Sections \ref{laf} and \ref{sec_abs_irr} we prove intermediate results for $\Gamma$ any finitely generated profinite group (instead of $\Gamma_F$) and $H$ any generalised $\OO$-group scheme 
(instead of $\LG$ or $\CG$). In \Cref{sec_Hecke} we are in a global setting and the notation changes completely, for example $F$ is a totally real number field. If $R$ is a ring then 
$R\text{-}\alg$ denotes the category of $R$-algebras. If $X$ is a scheme (or a rigid analytic space) then  both $\OO(X)$ and $\OO[X]$ denote the ring of its global sections.

\subsection{Acknowledgements} We would like to thank 
Ulrich G\"ortz for several stimulating discussions and Brian Conrad, Antoine Ducros  and Klaus K\"unnemann for correspondence regarding Berkovich spaces. We would like to thank Toby Gee and Carl Wang-Erickson for their comments. The paper builds in an essential way on 
the work \cite{DPS} by Gabriel Dospinescu, Benjamin Schraen and V.P.. We thank Gabriel Dospinescu for a detailed list of comments on an earlier draft.  

The research of J.Q. was funded by the Deutsche Forschungsgemeinschaft (DFG, German Research Foundation) – project number 517234220. 

\section{Dimension and codimension}\label{sec_dim_codim}
If $X$ is a topological space its \textit{dimension} $\dim X$ is defined as the supremum of lengths of chains of 
closed irreducible subspaces, \cite[\href{https://stacks.math.columbia.edu/tag/0055}{Tag 0055}]{stacks-project}. If $Z$ is a closed irreducible subset of $X$ then the \textit{codimension  of $Z$ in $X$}, denoted by $\codim(Z, X)$,  is defined as the supremum of lengths 
of closed irreducible subsets in $X$ starting with $Z$, \cite[\href{https://stacks.math.columbia.edu/tag/02I3}{Tag 02I3}]{stacks-project}. It follows from the definition that 
\begin{equation}\label{dim_codim}
\dim Z+ \codim(Z, X)\le \dim X.
\end{equation} 
We say that \textit{the dimension formula holds for $X$} if \eqref{dim_codim} is an equality for all  closed irreducible subsets $Z$ of $X$.
If $X$ is an irreducible scheme  of finite type over a field 
then the dimension formula holds by \cite[Proposition 5.30]{GW1}. However, it may fail for general schemes as the following example shows.
\begin{examp} Let $X=\Spec \Zp[x]$. The ideal $(px-1)$ 
is maximal in $\Zp[x]$, as its residue field is $\Qp$. 
We let $Z$ be the closed point in $X$ defined by this ideal. Then $\dim Z=0$, $\dim X=2$ and  $\codim(Z, X)=1$, as the localisation of $\Zp[x]$ at $(px-1)$ 
coincides with the localisation of $\Qp[x]$ at $(x-p^{-1})$. 
\end{examp}
The purpose of this note is to show that \eqref{dim_codim} is an equality for all irreducible $Z$ if $X$ is equidimensional locally of finite 
type over $S:=(\Spec R)\setminus \{\mm\}$, where 
$R$ is a complete local noetherian ring with a maximal 
ideal $\mm$. Sometimes $S$ is called the \textit{punctured spectrum} of $R$. The result will be used in \Cref{sec_abs_irr} with $R=R^{\ps}_{\Thetabar}$ and $X=X^{\gen}_{\Thetabar}[1/p]$ with the notation of \Cref{laf}.

\begin{lem}\label{main_dim_codim} Let $X$ be an integral scheme, locally of finite type over $S$. Then $\dim \OO_{X,x}=\dim X$ for 
all closed points $x$ of $X$. 
\end{lem}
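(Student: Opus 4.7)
The plan is to reduce to an affine open and apply the dimension formula for universally catenary (UC) rings, together with a careful analysis of residue fields at closed points.

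Let $U = \Spec A \subseteq X$ be an affine open neighbourhood of $x$; then $A$ is a finitely generated $R$-algebra, and a domain since $X$ is integral. Set $\qq_0 = \ker(R \to A)$ and $B = R/\qq_0$, so $B \hookrightarrow A$; then $B$ is a complete local Noetherian domain, hence universally catenary. The closed point $x$ corresponds to a maximal ideal $\mm_x \subseteq A$; let $\pp_x = \mm_x \cap B$. The dimension formula \stackcite{02II} gives
\[
\dim A_{\mm_x} = \mathrm{ht}_B(\pp_x) + \trdeg_B A - \trdeg_{\kappa(\pp_x)} \kappa(\mm_x),
\]
and since $\kappa(\mm_x) = A/\mm_x$ is a field finitely generated as a $B/\pp_x$-algebra, Zariski's lemma gives $\trdeg_{\kappa(\pp_x)} \kappa(\mm_x) = 0$, so $\kappa(\mm_x)$ is finite over $\mathrm{Frac}(B/\pp_x)$.

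The main step is to show $\dim B/\pp_x = 1$ for every closed $x$. For the upper bound $\dim B/\pp_x \leq 1$, choose a $\mathrm{Frac}(B/\pp_x)$-basis $e_1 = 1, e_2, \ldots, e_d$ of $L := A/\mm_x$ and express the finitely many $B/\pp_x$-algebra generators of $L$ and the structure constants $e_i e_j = \sum_k m_{ij}^k e_k$ of this basis as $\mathrm{Frac}(B/\pp_x)$-coefficients in $e_1, \ldots, e_d$. All these coefficients lie in a single localisation $(B/\pp_x)[1/s]$ for a suitable $0 \neq s \in B/\pp_x$ obtained as a common denominator. Expanding an arbitrary $c \in \mathrm{Frac}(B/\pp_x) \subseteq L$ as a $B/\pp_x$-polynomial in the algebra generators and converting to the basis yields an expression for $c$ in $e_1, \ldots, e_d$ with coefficients in $(B/\pp_x)[1/s]$; by uniqueness of the basis decomposition, the trivial decomposition $c = c \cdot e_1$ then forces $c \in (B/\pp_x)[1/s]$. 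Hence $\mathrm{Frac}(B/\pp_x) = (B/\pp_x)[1/s]$. If $\dim B/\pp_x \geq 2$, then by prime avoidance $B/\pp_x$ has infinitely many height-one primes, each of which must contain $s$ (otherwise it would survive in the localisation, which is a field), contradicting the fact that only finitely many height-one primes in a Noetherian domain contain a given nonzero element. The lower bound $\dim B/\pp_x \geq 1$ is immediate from the $S$-hypothesis: $x$ maps to $S$, so $\mm_x \cap R \neq \mm$, and $\pp_x$ is strictly below the maximal ideal of $B$.

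Combining these gives $\dim A_{\mm_x} = (\dim B - 1) + \trdeg_B A$, a quantity that depends only on $X$ (neither on the closed point $x$ nor on the chosen affine open, since $\qq_0$ and $\trdeg_B A$ are determined by the image of the generic point in $\Spec R$ and by the function field of $X$). It follows that $\dim A_{\mm_x} = \dim A = \dim U$, and a standard argument (any maximal chain of irreducible closed subsets of $X$ pulls back to a chain in any affine open containing the generic point of its smallest term) shows $\dim U = \dim X$. The main obstacle is the upper bound $\dim B/\pp_x \leq 1$: this is the subtle structural input, combining Zariski's lemma with the finiteness of prime divisors in a Noetherian domain, and together with the $S$-hypothesis it makes $\mathrm{ht}_B(\pp_x)$ constant across closed points, eliminating the pathology exhibited by $\Spec \Zp[x]$ in the example preceding the lemma.
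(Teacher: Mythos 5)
Your proof is correct, and it takes a genuinely different route from the paper. The paper runs everything through the dimension-function formalism of the Stacks project: it notes that $S$ is Jacobson and universally catenary, builds a dimension function $\delta$ on $\Spec R$, observes that the closed points of $S$ are exactly the points $y$ whose closure in $\Spec R$ is $\{y,\mm\}$, i.e.\ $\delta(y)=1$, so that $\delta-1$ is a dimension function on $S$ vanishing at closed points, and then quotes \stackcite{02QO}. You instead argue by hand with the algebraic dimension formula \stackcite{02II} over the complete local domain $B=R/\ker(R\to A)$, and the heart of your argument --- that $\dim B/\pp_x=1$ for the prime under a closed point, obtained from Zariski's lemma, the Artin--Tate common-denominator argument showing $\mathrm{Frac}(B/\pp_x)=(B/\pp_x)[1/s]$, and Krull's Hauptidealsatz --- is a hands-on substitute for the paper's combination of Jacobson-ness with dimension-function bookkeeping; it isolates exactly the feature of the punctured spectrum that fails in the $\Spec \Zp[x]$ example, namely that closedness of $x$ forces the coheight of its image to be exactly $1$. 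Your route is more self-contained and makes the mechanism transparent, at the cost of redoing material (behaviour of residue fields at closed points, catenarity input) that the paper's citation absorbs in two lines and reuses in the subsequent lemma. Two steps you leave implicit are worth writing out: first, passing from $\dim B/\pp_x=1$ to $\mathrm{ht}_B(\pp_x)=\dim B-1$ uses that the complete local domain $B$ is catenary, so that $\mathrm{ht}(\pp)+\dim(B/\pp)=\dim B$ for every prime $\pp$; second, to conclude $\dim A_{\mm_x}=\dim A$ you need the displayed computation at \emph{every} maximal ideal of $A$, not only at those whose points are closed in $X$ --- this is harmless, since your argument only used that $\mm_x$ is maximal in $A$ and that $U$ maps into $S$, so it applies verbatim to any maximal ideal (alternatively, $X$ is Jacobson, so closed points of $U$ are closed in $X$), but as phrased the constancy claim is stated only for points closed in $X$.
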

\begin{proof} 
We first note that $S$ is Jacobson by  \cite[\href{https://stacks.math.columbia.edu/tag/02IM}{Tag 02IM}]{stacks-project}. It follows from the
Cohen structure theorem for complete local noetherian rings that $R$ is a quotient of a regular ring. Thus $S$ is universally catenary by part (2) of \cite[\href{https://stacks.math.columbia.edu/tag/02JB}{Tag 02JB}]{stacks-project}.

Since $R$ is a local ring, the only Zariski open subset of $\Spec R$ containing the closed point is $\Spec R$ 
itself. It follows from 
\cite[\href{https://stacks.math.columbia.edu/tag/02IC}{Tag 02IC}]{stacks-project}
and its proof 
that $$\delta: \Spec R \rightarrow \ZZ, 
\quad y \mapsto \codim( \{\mm\}, Z_i)-\codim(\overline{\{y\}}, Z_i),$$ 
where $Z_i$ is 
any irreducible component of $\Spec R$ containing $y$, 
is a dimension function in the sense of \cite[\href{https://stacks.math.columbia.edu/tag/02I9}{Tag 02I9}]{stacks-project}. A point $y\in S$ is closed 
in $S$ if and only if its closure in $\Spec R$ is 
equal to $\{y, \mm\}$. It follows from the 
definition of dimension function that this is 
equivalent to $\delta(y)=\delta(\mm)+1=1$.
Thus $\delta': S \rightarrow \ZZ$
defined by $\delta'(y)=\delta(y)-1$ is a dimension 
function on $S$ satisfying $\delta'(y)=0$ for all closed points $y$. The assertion follows from \cite[\href{https://stacks.math.columbia.edu/tag/02QO}{Tag 02QO}]{stacks-project} applied with $Z=X$. 
\end{proof} 

\begin{lem} If $f: X\rightarrow S$ is locally of finite type
then $X$ is Jacobson and universally catenary. Moreover, if $x$ is a closed point of $X$ then $f(x)$ is a closed point of $S$ and 
the extension $\kappa(x)/\kappa(f(x))$ is finite. 
\end{lem}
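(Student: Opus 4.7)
The plan is to derive all four assertions from the previous lemma, which established that $S$ is Jacobson and universally catenary, combined with standard permanence properties of these notions under morphisms locally of finite type.

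The fact that $X$ is Jacobson follows from $f$ being locally of finite type together with $S$ Jacobson, via \cite[\href{https://stacks.math.columbia.edu/tag/01P5}{Tag 01P5}]{stacks-project}. For universal catenarity, this is essentially immediate from the definition: any scheme $Y$ locally of finite type over $X$ is, via $f$, also locally of finite type over $S$, hence catenary; applying this to arbitrary $Y/X$ locally of finite type shows $X$ is universally catenary.

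For the last two assertions, I would reduce to the affine Jacobson Nullstellensatz. Pick affine open neighborhoods $\Spec B \subset X$ of $x$ and $\Spec A \subset S$ of $f(x)$ with $f(\Spec B) \subset \Spec A$ and $B$ a finitely generated $A$-algebra. Since $X$ is Jacobson and $\Spec B$ is open in $X$, the closed points of $\Spec B$ are exactly the closed points of $X$ lying in $\Spec B$ (\cite[\href{https://stacks.math.columbia.edu/tag/005W}{Tag 005W}]{stacks-project}), so $x$ corresponds to a maximal ideal $\mathfrak{m} \subset B$. The Jacobson Nullstellensatz \cite[\href{https://stacks.math.columbia.edu/tag/00GB}{Tag 00GB}]{stacks-project} then gives that $\mathfrak{m} \cap A$ is maximal in $A$ and that $B/\mathfrak{m}$ is a finite extension of $A/(\mathfrak{m} \cap A)$. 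This immediately yields the finiteness of $\kappa(x)/\kappa(f(x))$, and the same Jacobson/closed-point compatibility applied to $S$ (which is Jacobson by the previous lemma) promotes ``$f(x)$ closed in $\Spec A$'' to ``$f(x)$ closed in $S$''.

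The argument is essentially citation-based; the only mild subtlety is the compatibility between closedness in an affine open and closedness in the ambient Jacobson scheme, which is standard. I do not anticipate a substantive obstacle.
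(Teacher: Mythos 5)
Your proposal is correct and takes essentially the same route as the paper: both deduce all four assertions from the previous lemma (that $S$ is Jacobson and universally catenary) together with the standard permanence of these properties under morphisms locally of finite type, the paper by citing Stacks Tags 02J5 and 01TB and you by unwinding those citations via the general Nullstellensatz for Jacobson rings. Only cosmetic caveats: double-check your tag numbers, and note that universal catenarity of $X$ also requires $X$ to be locally Noetherian, which is immediate since $X$ is locally of finite type over the locally Noetherian $S$.
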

\begin{proof} We have shown in the proof of \Cref{main_dim_codim} that $S$ is universally catenary and Jacobson. The assertion of the lemma then follows from
\cite[\href{https://stacks.math.columbia.edu/tag/02J5}{Tag 02J5}]{stacks-project} and
\cite[\href{https://stacks.math.columbia.edu/tag/01TB}{Tag 01TB}]{stacks-project}.
\end{proof} 

\begin{lem}\label{dim_form} Let $X$ be an equidimensional scheme, 
locally of finite type over $S$. Then the dimension
formula holds for $X$. 
\end{lem}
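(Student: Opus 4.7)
The plan is to reduce the dimension formula at each closed irreducible $Z\subseteq X$ to an equality of local ring dimensions inside a single irreducible component, and to exploit the facts (established in the two preceding lemmas) that $X$ is Jacobson, universally catenary, and that \Cref{main_dim_codim} pins down the local dimension at any closed point of an integral scheme locally of finite type over $S$.

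More precisely, fix a closed irreducible subset $Z\subseteq X$, equip it with its reduced induced scheme structure so it becomes integral and locally of finite type over $S$, and choose an irreducible component $W$ of $X$ containing $Z$ (again with the reduced structure). Since $X$ is Jacobson, closed points are dense in every closed subset, so one can pick a closed point $x\in Z$; it is automatically a closed point of $W$ and of $X$. Now I would apply \Cref{main_dim_codim} twice: once to the integral scheme $Z$ to get $\dim\OO_{Z,x}=\dim Z$, and once to the integral scheme $W$ to get $\dim\OO_{W,x}=\dim W=\dim X$, where the last equality is equidimensionality.

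Next I would rewrite the codimension as a local ring dimension. Let $\eta$ be the generic point of $Z$. Prime ideals of the stalk $\OO_{W,\eta}$ biject with irreducible closed subsets of $W$ containing $Z$, giving the tautology $\codim(Z,W)=\dim\OO_{W,\eta}$. Moreover any chain of closed irreducibles in $X$ starting at $Z$ ends inside some irreducible component through $Z$, so $\codim(Z,X)$ is the supremum of $\codim(Z,W')$ over such components $W'$; by equidimensionality the argument below will show each of these equals $\dim X-\dim Z$, so in particular $\codim(Z,X)=\codim(Z,W)$.

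Finally, $\OO_{W,x}$ is a Noetherian local domain, and it is catenary because $X$ (hence $W$) is universally catenary. Applying catenarity to the chain of primes $(0)\subset\pp\subset\mathfrak m_x$, where $\pp$ is the prime of $\OO_{W,x}$ corresponding to $\eta$, gives
\[
\dim(\OO_{W,x})_{\pp}+\dim\bigl(\OO_{W,x}/\pp\bigr)=\dim\OO_{W,x}.
\]
Since $(\OO_{W,x})_\pp=\OO_{W,\eta}$ and $\OO_{W,x}/\pp=\OO_{Z,x}$, and using the identifications above, this reads $\codim(Z,X)+\dim Z=\dim X$, as required. I do not expect a genuine obstacle; the only point to check carefully is that all hypotheses used—Jacobson, universal catenarity, and the equidimensional version of \Cref{main_dim_codim}—are available from the previous two lemmas, and that the passage from $\codim(Z,X)$ to $\codim(Z,W)$ really is justified by the irreducible component argument.
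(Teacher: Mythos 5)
Your argument is correct, but it takes a genuinely different route from the paper's. The paper's proof also begins with \Cref{main_dim_codim} at closed points (giving $\dim \OO_{X,x}=\dim X$ and equidimensionality of $\OO_{X,x}$) together with catenarity inherited from $S$, but it then outsources the remaining work to the literature: it cites \cite[Lemma 2.4]{heinrich} to conclude that $X$ is biequidimensional and \cite[Proposition 2.3]{heinrich} to deduce that biequidimensionality implies the dimension formula. You instead argue directly: pick, by Jacobsonness, a closed point $x\in Z$ and an irreducible component $W\supseteq Z$ (both available from the preceding lemma), identify $\codim(Z,W)$ with $\dim (\OO_{W,x})_{\pp}$ and $\dim Z$ with $\dim(\OO_{W,x}/\pp)$ via \Cref{main_dim_codim} applied to the integral schemes $Z$ and $W$, and then use the standard fact that a catenary noetherian local domain $A$ satisfies $\dim A_{\pp}+\dim A/\pp=\dim A$ (concatenate saturated chains from $(0)$ to $\pp$ and from $\pp$ to the maximal ideal, and compare with a chain of maximal length, which necessarily runs from $(0)$ to the maximal ideal and is saturated). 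Your reduction of $\codim(Z,X)$ to the components $W'\supseteq Z$ is also sound, since any chain of irreducible closed subsets starting at $Z$ lies entirely in one component, and your computation applies verbatim to each such $W'$. What your approach buys is self-containedness — no appeal to biequidimensionality or to \cite{heinrich} — at the cost of writing out the catenary-domain computation; the paper's route is shorter on the page and isolates the intermediate topological notion (biequidimensionality) that encapsulates the same content.
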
 
\begin{proof} If $x$ is a closed point of $X$ and $Y$ is an irreducible component of $X$ containing $x$ then it follows from \Cref{main_dim_codim} that $\dim \OO_{Y,x}= \dim Y =\dim X$, as $X$ is equidimensional by assumption. Thus $\OO_{X,x}$ is equidimensional 
and $\dim \OO_{X,x} = \dim X$. Since $S$ is universally catenary, $X$ and the local ring $\OO_{X,x}$ are catenary. It follows from \cite[Lemma 2.4]{heinrich}
that $X$ is biequidimensional and \cite[Proposition 2.3]{heinrich} implies that the dimension formula holds for $X$.
\end{proof}

\begin{remar}\label{useful_codim_dim} Let $X$ be an equidimensional scheme, of finite type over $S$. 
Let $Z$ be a closed subset of $X$. Then $X$, and hence $Z$, are noetherian. Thus $Z$ has only finitely many 
irreducible components $Z_1, \ldots, Z_r$. We have 
$\dim Z= \max_i \dim Z_i$. Thus if we define 
$\codim(Z, X)= \min_i \codim( Z_i, X)$ then \Cref{dim_form} implies that 
$\dim Z + \codim(Z, X)= \dim X$. 
\end{remar}

\section{Chevalley--Shephard--Todd theorem}\label{section_todd}
Let $V$ be a finite dimensional vector space over a field $K$ and let $\Delta$ be a finite subgroup of $\GL(V)$. 
We assume that the order of $\Delta$ is invertible in $K$. 
Recall that $g\in \GL(V)$ is a \textit{pseudoreflection} 
if $g-1$ has rank $1$. The action of $\Delta$ on $V$ induces 
an action on the symmetric algebra $S(V)$. The subring 
of $\Delta$-invariants $S(V)^{\Delta}$ is a graded $K$-subalgebra
of $S(V)$. 
\begin{thm}[Chevalley--Shephard--Todd]\label{CST} The following are equivalent: 
\begin{enumerate}
\item $\Delta$ is generated by pseudoreflections;
\item $S(V)^{\Delta}$ is a polynomial algebra over $K$;
\item $S(V)$ is a free $S(V)^{\Delta}$-module.
\end{enumerate}
\end{thm}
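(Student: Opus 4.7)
My plan is to prove the equivalences through the chain $(1) \Rightarrow (2) \Rightarrow (3) \Rightarrow (1)$; the content is concentrated in the first implication, while the others reduce to graded commutative algebra.

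For $(1) \Rightarrow (2)$, the classical Chevalley argument, I let $I_+ \subset S(V)^\Delta$ denote the augmentation ideal of positive-degree invariants and choose homogeneous $f_1, \ldots, f_n \in I_+$ whose images form a $K$-basis of $I_+/I_+^2$. Graded Nakayama then shows that the $f_i$ generate $S(V)^\Delta$ as a $K$-algebra, so it remains to prove their algebraic independence. I would argue by contradiction, taking a nontrivial homogeneous relation $\sum g_i f_i = 0$ with $g_i \in S(V)$ of minimal total degree such that not all $g_i$ lie in $I_+ \cdot S(V)$. The pseudoreflection hypothesis enters through the identity $h - s(h) \in \ell_s \cdot S(V)$ valid for every $h \in S(V)$, where $\ell_s \in V^*$ cuts out the reflecting hyperplane of a pseudoreflection $s \in \Delta$; applying this to the syzygy, dividing through by $\ell_s$, and averaging over $\Delta$ (legitimate since $|\Delta|$ is invertible in $K$) yields a nontrivial syzygy of strictly smaller degree, contradicting minimality.

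For $(2) \Rightarrow (3)$, I would invoke the graded Auslander--Buchsbaum formula. The Reynolds operator shows that $S(V)$ is a finitely generated $S(V)^\Delta$-module, and both algebras have Krull dimension $n = \dim_K V$. Since $S(V)$ is a polynomial ring, it is Cohen--Macaulay of depth $n$ over $S(V)^\Delta$, while $S(V)^\Delta$ is regular of dimension $n$ by hypothesis (2); Auslander--Buchsbaum then forces $\mathrm{pd}_{S(V)^\Delta} S(V) = 0$, so $S(V)$ is projective and, being a finitely generated graded module over a graded polynomial ring, free. For $(3) \Rightarrow (1)$, I let $\Delta' \trianglelefteq \Delta$ be the subgroup generated by all pseudoreflections in $\Delta$ (normal because the set of pseudoreflections is closed under conjugation). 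Applying the already-established implications to $\Delta'$ gives that $S(V)$ is free over $S(V)^{\Delta'}$ of rank $|\Delta'|$, while hypothesis (3) together with Galois theory over the fraction fields gives rank $|\Delta|$ over $S(V)^\Delta$; hence $S(V)^{\Delta'}$ is free of rank $|\Delta|/|\Delta'|$ over $S(V)^\Delta$, and both are polynomial rings. A Hilbert series comparison, equivalently computing $\dim_K S(V)/I_+ S(V)$ in two different ways, then forces $|\Delta/\Delta'| = 1$ and therefore $\Delta = \Delta'$.

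The principal obstacle is the algebraic independence step in $(1) \Rightarrow (2)$: this is the unique place where the pseudoreflection hypothesis is genuinely consumed, and the delicate bookkeeping in Chevalley's descent of syzygies is the technical heart of the proof. Once (2) is in hand, the remaining implications are essentially formal consequences of graded commutative algebra and I would expect them to pose no serious difficulty.
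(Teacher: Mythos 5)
For what it is worth, the paper does not prove this theorem at all: it simply cites Bourbaki (chap.~V, \S 5, no.~5, Theorem 4). So the only question is whether your outline is itself a complete proof, and it is not: the step you yourself call the heart, algebraic independence in $(1)\Rightarrow(2)$, is set up incorrectly. You conflate two different statements. What your descent-and-averaging argument proves is Chevalley's key lemma: for a minimal homogeneous generating set $f_1,\dots,f_n$ of $I_+$, every homogeneous syzygy $\sum g_if_i=0$ with $g_i\in S(V)$ has all $g_i\in I_+S(V)$ (with the degree-zero coefficients handled separately via the Reynolds operator and minimality). But an algebraic relation $H(f_1,\dots,f_n)=0$ does not produce a syzygy violating this: once minimality is used to rule out monomials of $H$ that are linear in a single variable, every monomial of $H$ has degree at least two in the $f_j$, so however you rewrite $H(f)=0$ as $\sum g_if_i=0$ the coefficients are again polynomials in the $f_j$ without constant term and hence already lie in $I_+S(V)$ (just as the Koszul relations $f_jf_i-f_if_j=0$ do). Thus the ``minimal counterexample'' you propose to descend need not exist even when the $f_i$ are algebraically dependent. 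The missing bridge is Chevalley's differentiation argument: take a relation $H$ of minimal weighted degree, set $H_i=(\partial H/\partial x_i)(f_1,\dots,f_n)\in S(V)^{\Delta}$, apply the derivations $\partial/\partial x_j$ of $S(V)$ to get syzygies $\sum_i H_i\,\partial f_i/\partial x_j=0$, apply the key lemma to a minimal generating subset of the ideal $(H_1,\dots,H_n)$ of $S(V)^{\Delta}$, and conclude with the Euler identity; and since the hypothesis here is only that $|\Delta|$ is invertible in $K$, not characteristic zero, you must additionally treat the case that all $\partial H/\partial x_i$ vanish identically in characteristic $p$ (e.g.\ pass to an algebraically closed field and extract a $p$-th root of $H$, contradicting minimality).

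There is a second gap at the end of $(3)\Rightarrow(1)$. The rank bookkeeping you assemble is consistent for any index $|\Delta:\Delta'|$: your two computations give $\dim_K S(V)/I_+S(V)=|\Delta|$ and $\dim_K S(V)/I'_+S(V)=|\Delta'|$ for two \emph{different} Hilbert ideals, and the corresponding factorisations of Hilbert series multiply out correctly whatever the index is, so no contradiction follows from them alone. The genuine content of the converse has to enter at exactly this point: either through Solomon's expansion of the Molien series at $t=1$, giving $\prod_i d_i=|\Delta|$ and $\sum_i(d_i-1)=\#\{\text{pseudoreflections of }\Delta\}$ (and the same identities for $\Delta'$, which contains the same pseudoreflections), combined with a degree comparison for the inclusion of polynomial rings $S(V)^{\Delta}\subseteq S(V)^{\Delta'}$; or through Serre's purity-of-the-branch-locus argument, which shows $V/\Delta'\to V/\Delta$ is finite \'etale because all codimension-one ramification comes from pseudoreflections, and hence is an isomorphism since both quotients have a single graded ``origin''. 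Neither ingredient appears in your sketch. Two smaller points in the same step: ``both are polynomial rings'' presupposes (2) for $\Delta$, which under hypothesis (3) you have not yet established (it does follow by faithfully flat descent of regularity along $S(V)^{\Delta}\to S(V)$, but this must be said); and the freeness of $S(V)^{\Delta'}$ over $S(V)^{\Delta}$ needs the Reynolds direct-summand argument together with ``finitely generated graded projective implies graded free''. Your $(2)\Rightarrow(3)$ via the graded Auslander--Buchsbaum formula is fine.
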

\begin{proof} \cite[chap.V, \S 5, no.5, Theorem 4]{Bourbaki_Lie46}.
\end{proof} 

The main result of this section 
\Cref{main_todd}  adds a further equivalent condition, which is used in \Cref{sec_luna}.
Let $P(V)$ be the completion of $S(V)$ at the maximal ideal
$\bigoplus_{n\ge 1} S^n(V)$. Then $P(V)$ is isomorphic
to a formal power series ring in $d:=\dim_K V$ variables. We denote 
its maximal ideal by $\mm$.
The action of $\Delta$ on $V$ induces an action on $P(V)$ 
by continuous ring automorphisms. The subring of $\Delta$-invariants is a closed local subring of $P(V)$ with maximal ideal $\nn:=\mm \cap P(V)^{\Delta}= \mm^{\Delta}$. Since $\Delta$ is a finite group 
the map $P(V)^{\Delta}\rightarrow P(V)$ is finite. This implies that $\nn P(V)$ contains $\mm^k$ for some $k\ge 1$ and hence the $\nn$-adic topology on $P(V)^{\Delta}$ coincides with the subspace topology. Moreover, $\nn^2P(V)$ 
contains $\mm^{2k}$ and hence $\nn^2$ contains $\nn\cap \mm^{2k}$, which implies
that $\nn/\nn^2$ is a finite dimensional 
$K$-vector space, and hence $P(V)^{\Delta}$ is a complete  local  noetherian 
$K$-algebra with residue field $K$. 

\begin{lem}\label{grading} The ideal $\nn$ is generated by 
 homogeneous elements $f_1, \ldots, f_m \in S(V)^{\Delta}$, where $m=\dim_K \nn/\nn^2$.  Moreover, any such family generates
$S(V)^{\Delta}$ as a $K$-algebra.
\end{lem}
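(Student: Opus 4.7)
My plan is to exploit the grading on $S(V)^\Delta$ and identify $P(V)^\Delta$ as the completion of $S(V)^\Delta$ at its augmentation ideal. Since $\Delta$ preserves the grading, a series $\sum_n a_n \in P(V) = \prod_{n\ge 0} S^n(V)$ is $\Delta$-invariant if and only if each homogeneous component $a_n$ is, so $P(V)^\Delta = \prod_{n\ge 0} S^n(V)^\Delta$. Write $I_0 := \bigoplus_{n\ge 1} S^n(V)^\Delta$ for the augmentation ideal of the graded $K$-algebra $S(V)^\Delta$. I would then show that the $I_0$-adic topology on $S(V)^\Delta$ coincides with the topology defined by the filtration $J_k := \bigoplus_{n\ge k} S^n(V)^\Delta$: clearly $I_0^k \subseteq J_k$, and since $S(V)^\Delta$ is a finitely generated $K$-algebra (Hilbert--Noether) one can choose homogeneous generators of degrees bounded by some $D$, which forces $J_{kD} \subseteq I_0^k$. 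Consequently, $P(V)^\Delta$ is the $I_0$-adic completion of $S(V)^\Delta$, $\nn = I_0 \cdot P(V)^\Delta$, and the natural map induces an isomorphism $I_0/I_0^2 \iso \nn/\nn^2$.

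Since $I_0/I_0^2$ is a graded $K$-vector space, it admits a homogeneous basis; choosing lifts produces homogeneous elements $f_1, \ldots, f_m \in I_0 \subset S(V)^\Delta$ whose images form a basis of $\nn/\nn^2$. Applying Nakayama to the complete local noetherian $K$-algebra $P(V)^\Delta$, with residue field $K$ and finitely generated maximal ideal $\nn$, yields $\nn = (f_1, \ldots, f_m)$, proving the first assertion.

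For the second assertion, a graded Nakayama argument suffices. Set $B := K[f_1, \ldots, f_m] \subseteq S(V)^\Delta$ and prove $S^n(V)^\Delta \subseteq B$ by induction on $n$. The case $n = 0$ is trivial, since $S^0(V)^\Delta = K$. For $n \ge 1$, the isomorphism $I_0/I_0^2 \cong \nn/\nn^2$ implies in degree $n$ that the images of those $f_i$ with $\deg f_i = n$ form a $K$-basis of $S^n(V)^\Delta$ modulo $\sum_{i+j=n,\, i,j\ge 1} S^i(V)^\Delta \cdot S^j(V)^\Delta$; by the inductive hypothesis every factor in this latter sum lies in $B$, so writing any $a \in S^n(V)^\Delta$ as a $K$-linear combination of the relevant $f_i$ plus an element of this sum exhibits $a \in B$.

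The main technical point is the identification $\nn/\nn^2 \cong I_0/I_0^2$, which requires a careful comparison of the $I_0$-adic topology on $S(V)^\Delta$ with its graded topology; once this is in hand, both statements reduce to standard Nakayama-type reasoning in the complete local and graded settings, respectively.
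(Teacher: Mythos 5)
Your proof is correct, but it takes a genuinely different route from the paper's. You first establish the structural identification that $P(V)^{\Delta}=\prod_{n\ge 0}S^n(V)^{\Delta}$ is the $I_0$-adic completion of $S(V)^{\Delta}$ (where $I_0=\bigoplus_{n\ge 1}S^n(V)^{\Delta}$), with $\nn=I_0P(V)^{\Delta}$ and $I_0/I_0^2\cong \nn/\nn^2$; this rests on Noether's finiteness theorem for invariants of a finite group (legitimate and not circular, though it is an imported ingredient the paper deliberately avoids) together with the cofinality of the $I_0$-adic and grading filtrations. With that in hand, the first assertion is lifting a homogeneous basis of $I_0/I_0^2$ plus Nakayama, and the second assertion becomes a purely graded induction on degree. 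The paper instead stays inside $P(V)$: it uses only the finiteness of $P(V)^{\Delta}\subseteq P(V)$ (whence $\nn\cap\mm^{2k}\subseteq\nn^2$) to replace lifts of a basis of $\nn/\nn^2$ by finite-degree invariants and extract homogeneous components, and proves the algebra-generation statement via the surjection $K\br{x_1,\ldots,x_m}\twoheadrightarrow P(V)^{\Delta}$ and a weighted-degree decomposition of power series. Your version buys the cleaner isomorphism $I_0/I_0^2\cong\nn/\nn^2$ and a more elementary graded argument for the second part; the paper's is more self-contained, needing no finite generation of $S(V)^{\Delta}$. Two small points to make explicit in your write-up: for the ``any such family'' clause you should note that $m$ homogeneous generators of $\nn$ automatically have images forming a homogeneous basis of $\nn/\nn^2\cong I_0/I_0^2$ (they span, and the dimension is $m$), so your degree-by-degree induction applies to an arbitrary family and not just the one you constructed; and the identity $\nn=I_0P(V)^{\Delta}$ should be justified by the standard facts $\widehat{A}/I\widehat{A}\cong A/I$ and locality of the completion of a noetherian ring at an ideal with residue field $K$.
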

\begin{proof} Let $g_1, \ldots, g_m \in \nn$ 
such that their images form a $K$-basis for $\nn/\nn^2$. Let $t_1, \ldots, t_m\in \oplus_{i\le 2k-1} S^i(V)$ 
be such that   $t_i \equiv g_i \pmod{\mm^{2k}}$ for $1\le i \le m$. 
Since the action of $\Delta$ preserves the  degree, $t_i$ are $\Delta$-invariant, 
and since $\nn^2$ contains $\nn\cap \mm^{2k}$, 
$t_i\equiv g_i \pmod{\nn^2}$. Thus $\nn=(t_1, \ldots, t_m)$ by Nakayama's lemma. If we write $t_i=\sum_{j=1}^{2k-1}
t_{ij}$ with $t_{ij}\in S^j(V)$ then each $t_{ij}$ is fixed by $\Delta$, and hence
$\nn$ is generated by $t_{ij}$ for $1\le i\le m$ 
and $1\le j \le 2k-1$. The image of this set  
in $\nn/\nn^2$ spans it as a $K$-vector space
and thus contains a basis. A subset of $\{t_{ij}\}$ that maps bijectively onto the basis 
generates $\nn$ by Nakayama's lemma.

Let us assume that $f_1, \ldots, f_m \in S(V)^{\Delta}$ with $f_i\in S^{d_i}(V)$ generate $\nn$ as an ideal. 
Since $\dim_K \nn/\nn^2=m$ and $P(V)^{\Delta}$ is a complete local $K$-algebra the map 
\begin{equation}\label{mappy}
K\br{x_1,\ldots, x_m}\twoheadrightarrow P(V)^{\Delta}, \quad F\mapsto F(f_1,\ldots, f_m)
\end{equation}
is surjective. Let  $f\in S(V)^{\Delta}$ be homogeneous of degree $n$. Then 
$f\in P(V)^{\Delta}$ and hence there is $F\in K\br{x_1,\ldots, x_m}$ such that $f= F(f_1,\ldots, f_m)$ in $P(V)$. We define a grading on $K[x_1,\ldots, x_m]$ by letting 
$\deg(x_i)=d_i$ for $1\le i \le m$. 
We may write $F=\sum_{i=0}^{\infty} F_i$ with $F_i\in K[x_1,\ldots, x_n]$ of homogeneous of degree $i$. Since $F_i(f_1,\ldots, f_m)\in S^i(V)$ for all $i\ge 0$, we deduce that $f=F_n(f_1,\ldots, f_m)$. Hence, $f_1, \ldots, f_m$ generate
$S(V)^{\Delta}$ as a $K$-algebra.
\end{proof}

\begin{lem}\label{main_todd} The conditions in \Cref{CST} are equivalent to: 
\begin{itemize}
\item[(4)] $P(V)^{\Delta}$ is formally smooth over $K$. 
\end{itemize}
\end{lem}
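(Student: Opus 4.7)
The plan is to show that condition (4) and condition (2) from \Cref{CST} are both equivalent to the single equality $m = d$, where $d := \dim_K V$ and $m := \dim_K \nn/\nn^2$; since (1)--(3) are already equivalent by \Cref{CST}, this will establish the lemma.

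First I would collect some dimension and integrality data. The map $P(V)^{\Delta} \hookrightarrow P(V)$ is finite, as noted just before \Cref{grading}, and $S(V)^{\Delta} \hookrightarrow S(V)$ is finite by the classical averaging argument. Hence
$$\dim S(V)^{\Delta} = \dim P(V)^{\Delta} = d,$$
and both invariant rings are integral domains, being subrings of the polynomial algebra $S(V)$ and the power series ring $P(V)$ respectively.

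Next I would use \Cref{grading} to fix homogeneous generators $f_1, \ldots, f_m \in S(V)^{\Delta}$ of $\nn$; by the ``moreover'' clause of that lemma, they also generate $S(V)^{\Delta}$ as a $K$-algebra. Equipping $K[x_1, \ldots, x_m]$ with the grading $\deg x_i := \deg f_i$, I obtain a graded surjection of $K$-algebras
$$\pi \colon K[x_1, \ldots, x_m] \twoheadrightarrow S(V)^{\Delta}, \qquad x_i \mapsto f_i,$$
whose completion at the graded maximal ideal is the surjection $\widehat\pi \colon K\br{x_1, \ldots, x_m} \twoheadrightarrow P(V)^{\Delta}$ appearing as \eqref{mappy}. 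The kernels of $\pi$ and $\widehat\pi$ are prime ideals, since their targets are domains. Since both sources are catenary noetherian domains of Krull dimension $m$, $\pi$ (respectively $\widehat\pi$) is an isomorphism if and only if the Krull dimension of its target equals $m$, i.e.\ if and only if $m = d$.

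Condition (2) is exactly the statement that $\pi$ is an isomorphism, hence is equivalent to $m = d$. For (4), I would invoke the standard characterisation: a complete local noetherian $K$-algebra with residue field $K$ is formally smooth over $K$ if and only if it is a regular local ring, equivalently isomorphic to $K\br{y_1, \ldots, y_{m'}}$ with $m' = \dim \nn/\nn^2 = m$ by the Cohen structure theorem. This shows that (4) is equivalent to $\widehat\pi$ being an isomorphism, hence again to $m = d$. Combining, $(2) \Leftrightarrow m = d \Leftrightarrow (4)$, which proves the lemma. The only point I expect to require some care is this last equivalence of formal smoothness with regularity for $P(V)^{\Delta}$; this is standard for complete local noetherian algebras over a field with residue field equal to the base field, and may be cited from the Stacks project if needed.
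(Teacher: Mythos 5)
Your proof is essentially correct and follows the same core mechanism as the paper's: reduce everything to the numerical condition $\dim_K \nn/\nn^2 = d$ (your $m = d$), using the finiteness of $P(V)^{\Delta} \hookrightarrow P(V)$ to pin down the Krull dimension, and using the surjection from \Cref{grading} (and its completion \eqref{mappy}) together with a dimension/domain argument to turn the numerical equality into the structural statement. The $(4) \Rightarrow (2)$ direction of the paper is literally your argument.

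The one place where you elide a step is the assertion ``Condition (2) is exactly the statement that $\pi$ is an isomorphism.'' The implication ``$\pi$ isomorphism $\Rightarrow$ (2)'' is immediate, but the converse is not a tautology: (2) says an isomorphism with \emph{some} polynomial ring exists, and you still need to argue that this forces $m = d$ (hence that your particular $\pi$ is an isomorphism). The argument is short — if $S(V)^{\Delta} \cong K[g_1, \ldots, g_r]$ with the $g_i$ homogeneous of positive degree, then $r = \dim S(V)^{\Delta} = d$ by integrality, and $\nn/\nn^2 \cong \nn_0/\nn_0^2$ (where $\nn_0$ is the irrelevant ideal of $S(V)^{\Delta}$, using that $P(V)^{\Delta}$ is the $\nn_0$-adic completion) has $K$-dimension $r$, so $m = d$ — but it should be said. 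Interestingly, the paper avoids this point altogether in the $(2) \Rightarrow (4)$ direction: it simply completes the polynomial ring at the irrelevant ideal to get a power series ring (in however many variables) and identifies that completion with $P(V)^{\Delta}$, never needing to know that the number of variables equals $m$ or $d$. Your more symmetric ``everything $\Leftrightarrow m = d$'' framing buys a cleaner logical shape at the cost of this one extra (easy) verification. Also note that the fact that $P(V)^{\Delta}$ is the $\nn_0$-adic completion of $S(V)^{\Delta}$ — which both your argument and the paper's rely on — deserves a word: it follows from the continuity of the Reynolds operator $\frac{1}{|\Delta|}\sum_{g\in\Delta} g$, which shows that $P(V)^{\Delta}$ is the closure of $S(V)^{\Delta}$ in $P(V)$.
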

\begin{proof} We observe that since 
$P(V)^{\Delta}\rightarrow P(V)$ is finite, the Krull 
dimension of $P(V)^{\Delta}$ is 
equal to $d$ by \cite[Corollary A.8]{BH}. Hence, 
 (4) is equivalent to the existence
of an isomorphism $P(V)^{\Delta}\cong K\br{x_1, \ldots, x_d}$ as local $K$-algebras, which is equivalent to $\dim_K \nn/\nn^2=d$. 

If (4) holds then by \Cref{grading} there
exist $f_1,\ldots, f_d\in S(V)^{\Delta}$
which generate $\nn$ as an ideal and $S(V)^{\Delta}$ as a $K$-algebra. Moreover, $d=\dim_K \nn/\nn^2$.
The map \eqref{mappy} is an isomorphism 
for  dimension reasons. It maps $K[x_1,\ldots, x_d]$ onto the subalgebra 
of $S(V)^{\Delta}$ generated by $f_1,\ldots, f_d$ and hence induces an 
isomorphism between $K[x_1,\ldots, x_d]$
and $S(V)^{\Delta}$. Hence, (2) holds. 

Conversely, if $S(V)^{\Delta}$ is a polynomial algebra over $K$, then its completion with respect to the ideal $\bigoplus_{n\ge 1} S^n (V)^{\Delta}$ is isomorphic to 
$K\br{x_1,\ldots, x_n}$, as this holds for any maximal ideal 
of $K[x_1,\ldots, x_n]$ with residue field $K$. The completion coincides with the 
closure of $S(V)^{\Delta}$ inside $P(V)$ and hence is equal to $P(V)^{\Delta}$. Thus (4) holds. 
\end{proof}

\section{Lafforgue's pseudocharacters}\label{laf}

In this section we recall the definition of pseudocharacters and the moduli space of condensed representations. The main result of the section is \Cref{adequate} and its \Cref{normal}. Until \Cref{sec_pseudo_def}  we let $\OO$ be any commutative ring.

A \emph{generalised reductive group} $H$ over $\OO$ is a 
smooth affine group scheme, that has reductive geometric fibres and such that $H/H^0$ is finite, where $H^0$ is the neutral component of $H$. For each $n\ge 1$ the group scheme $H$ acts on the $n$-fold product $H^n$ by $g \cdot (g_1, \dots, g_n) = (gg_1g^{-1}, \dots, gg_ng^{-1})$. This induces an algebraic action of $H$ on the affine coordinate ring $\OO[H^n]$ of $H^n$. The submodule $\OO[H^n]^{H^0} \subseteq \OO[H^n]$ is defined as the algebraic invariant module of the $H^0$-representation $\OO[H^n]$. It is an $\OO$-subalgebra, since $H$ acts by $\OO$-linear automorphisms.
In subsequent sections, we will take $H$ to be an $L$-group or a $C$-group of a $p$-adic group.

\begin{defi}\label{LafPC} Let $\Gamma$ be an abstract group and let $A$ be a commutative $\OO$-algebra. An \emph{$H$-pseudocharacter} $\Theta$ of $\Gamma$ over $A$ is a sequence $(\Theta_n)_{n \geq 1}$ of $\OO$-algebra maps
$$\Theta_n : \OO[H^n]^{H^0} \to \mathrm{Map}(\Gamma^n,A)$$ for $n \geq 1$, satisfying the following conditions:
\begin{enumerate}
    \item For each $n,m \geq 1$, each map $\zeta : \{1, \dots, m\} \to \{1, \dots,n\}$, $f \in \OO[H^m]^{H^0}$ and $\gamma_1, \dots, \gamma_n \in \Gamma$, we have
    $$ \Theta_n(f^{\zeta})(\gamma_1, \dots, \gamma_n) = \Theta_m(f)(\gamma_{\zeta(1)}, \dots, \gamma_{\zeta(m)}) $$
    where $f^{\zeta}(g_1, \dots, g_n) = f(g_{\zeta(1)}, \dots, g_{\zeta(m)})$.
    \item For each $n \geq 1$, for each $\gamma_1, \dots, \gamma_{n+1} \in \Gamma$ and each $f \in \OO[H^n]^{H^0}$, we have
    $$ \Theta_{n+1}(\hat f)(\gamma_1, \dots, \gamma_{n+1}) = \Theta_n(f)(\gamma_1, \gamma_2, \dots, \gamma_{n-1}, \gamma_n\gamma_{n+1}) $$
    where $\hat f(g_1, \dots, g_{n+1}) = f(g_1, g_2, \dots, g_{n-1}, g_ng_{n+1})$.
\end{enumerate}
\end{defi}
We denote the set of $H$-pseudocharacters of $\Gamma$ over $A$ by $\PC_H^{\Gamma}(A)$.
If $f : A \to B$ is a homomorphism of $\OO$-algebras, then there is an induced map $f_* : \mathrm{PC}_{H}^{\Gamma}(A) \to \mathrm{PC}_{H}^{\Gamma}(B)$.
For $\Theta \in \mathrm{PC}_{H}^{\Gamma}(A)$, the image $f_*(\Theta)$ is called the \emph{specialisation} of $\Theta$ along $f$ and is denoted by $\Theta \otimes_A B$.
It is easy to verify that specialisation along $f : A \to B$ commutes with composition with $\varphi$, i.e. $(\varphi \circ \Theta) \otimes_A B = \varphi \circ (\Theta \otimes_A B)$.

When $\Gamma$ is a topological group, $A$ is a topological ring and for all $n \geq 1$, the map $\Theta_n$ has image in the set $\Cont(\Gamma^n, A)$ of continuous maps $\Gamma^n \to A$, we say that $\Theta$ is \emph{continuous}. We denote the set of continuous $H$-pseudocharacters of $\Gamma$ with values in $A$ by $\cPC_H^{\Gamma}(A)$.

The functor $A \mapsto \PC^{\Gamma}_H(A)$ is representable by an $\OO$-algebra \cite[Theorem 3.19]{quast}.
Let us write $\Rep^{\Gamma}_H(A) := \Hom(\Gamma, H(A))$.
The functor $A \mapsto \Rep^{\Gamma}_H(A)$ is also representable by an $\OO$-algebra.
To a representation $\rho : \Gamma \to H(A)$, we associate an $H$-pseudocharacter $\Theta_{\rho} \in \PC^{\Gamma}_H(A)$ by the formula
$$ \Theta_{\rho, m}(f)(\gamma_1, \dots, \gamma_m) := f(\rho(\gamma_1), \dots, \rho(\gamma_m))$$ 
for all $m \geq 1$, all $f \in \OO[H^m]^{H^0}$ and all $\gamma_1, \dots, \gamma_m \in \Gamma$.
This defines a morphism of schemes $\Rep^{\Gamma}_H \to \PC^{\Gamma}_H$ by
\begin{equation}\label{rep_to_pc}
    \Rep^{\Gamma}_H(A) \to \PC^{\Gamma}_H(A), \quad \rho \mapsto \Theta_{\rho} 
\end{equation}

\begin{lem}\label{lit_defT}
    If $H^0 \subseteq Z(H)$, then for all $\OO$-algebras $A$ the map \eqref{rep_to_pc} is bijective.
\end{lem}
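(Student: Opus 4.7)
The plan is to use the centrality hypothesis to collapse the invariant-theoretic part of the definition, and then to build an explicit inverse to \eqref{rep_to_pc} out of $\Theta_1$.

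First I would note that if $H^0 \subseteq Z(H)$, then the conjugation action of $H^0$ on $H^n$ is trivial for every $n$, so $\OO[H^n]^{H^0} = \OO[H^n] \cong \OO[H]^{\otimes n}$. Feeding the singleton maps $\zeta : \{1\} \to \{1,\dots,n\}$ with $\zeta(1)=i$ into axiom (1) gives
\[ \Theta_n(1 \otimes \cdots \otimes f \otimes \cdots \otimes 1)(\gamma_1,\dots,\gamma_n) = \Theta_1(f)(\gamma_i) \]
with $f$ in the $i$-th tensor slot, and because $\Theta_n$ is an $\OO$-algebra map this determines $\Theta_n$ on all of $\OO[H^n]$ in terms of $\Theta_1$. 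Concretely, the evaluation $\Theta_n(-)(\gamma_1,\dots,\gamma_n) : \OO[H^n] \to A$ corresponds via Yoneda to the $A$-point $(\rho(\gamma_1),\dots,\rho(\gamma_n))$, where $\rho(\gamma) \in H(A)$ is the point defined below.

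Next I would construct the inverse by setting $\rho(\gamma) \in H(A)$ to be the $A$-point corresponding, via Yoneda, to the $\OO$-algebra homomorphism $\Theta_1(-)(\gamma) : \OO[H] \to A$. The only substantive thing to verify is that $\rho : \Gamma \to H(A)$ is a group homomorphism. Applying axiom (2) at $n=1$ with $\hat f(g_1,g_2) = f(g_1 g_2)$ gives $\Theta_1(f)(\gamma_1\gamma_2) = \Theta_2(\hat f)(\gamma_1,\gamma_2)$, and inserting the Yoneda description of $\Theta_2$ from the previous paragraph rewrites the two sides as $f(\rho(\gamma_1\gamma_2))$ and $f(\rho(\gamma_1)\rho(\gamma_2))$ respectively. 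Since this holds for every $f \in \OO[H]$, Yoneda yields $\rho(\gamma_1\gamma_2) = \rho(\gamma_1)\rho(\gamma_2)$ (and then $\rho(1)=1$ follows automatically).

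Finally I would verify $\Theta_\rho = \Theta$ and injectivity. By construction $\Theta_{\rho,1} = \Theta_1$, and by the first paragraph both $\Theta_{\rho,n}$ and $\Theta_n$ are determined by their values at $n=1$, so $\Theta_\rho = \Theta$. For injectivity, $\Theta_{\rho_1} = \Theta_{\rho_2}$ forces $f(\rho_1(\gamma)) = f(\rho_2(\gamma))$ for every $f$ and $\gamma$, whence $\rho_1 = \rho_2$ by Yoneda. There is no real obstacle: the centrality hypothesis strips away the $H^0$-invariants, and once that is done the argument is a formal combination of the two axioms with Yoneda applied to $\OO[H]$ and $\OO[H^2]$.
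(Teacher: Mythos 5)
Your proof is correct, and it is genuinely different from the one in the paper. The paper disposes of the lemma by citation: it invokes \cite[Proposition 6.3 (1)]{defT}, which rests on the comparison isomorphism $\Rep^{\Gamma}_H \sslash H^0 \cong \PC^{\Gamma}_H$ of \cite[Proposition 6.2]{defT} (proved there for $H^0$ a torus over an arbitrary base), together with the observation that centrality makes the conjugation action of $H^0$ on $\Rep^{\Gamma}_H$ trivial, so the GIT quotient is $\Rep^{\Gamma}_H$ itself. You instead trivialise the invariant theory at the source: $H^0\subseteq Z(H)$ makes the conjugation action on $H^n$ trivial, so $\OO[H^n]^{H^0}=\OO[H^n]\cong \OO[H]^{\otimes n}$, and then you reconstruct $\rho$ directly from $\Theta_1$ via Yoneda, using axiom (1) (with the singleton maps, plus the fact that the slot-wise functions $1\otimes\cdots\otimes f\otimes\cdots\otimes 1$ generate $\OO[H]^{\otimes n}$ as an $\OO$-algebra, which you use implicitly and which is what makes ``determined by $\Theta_1$'' legitimate) to identify the evaluation of $\Theta_n$ with evaluation at $(\rho(\gamma_1),\dots,\rho(\gamma_n))$, and axiom (2) at $n=1$ for multiplicativity of $\rho$; injectivity and $\Theta_\rho=\Theta$ then follow formally, and injectivity really does need $\OO[H]^{H^0}=\OO[H]$, exactly as you use it. What each approach buys: yours is self-contained and elementary, an explicit inverse construction requiring no reductivity or quotient formalism; the paper's is a one-liner given machinery it has anyway and places the lemma inside the same $\Rep\sslash H^0$ versus $\PC$ comparison framework that reappears in \Cref{EM_dis} and \Cref{adequate}.
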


\begin{proof}
    This is \cite[Proposition 6.3 (1)]{defT}.
    Our assumption implies that $H^0$ is a connected commutative reductive group scheme and that the action of $H^0$ on $\Rep^{\Gamma}_H$ by conjugation is trivial. By \cite[Proposition 6.2]{defT} the map $\Rep^{\Gamma}_H \to \PC^{\Gamma}_H$ induces an isomorphism $\Rep^{\Gamma}_H \sslash H^0 \cong \PC^{\Gamma}_H$, but $\Rep^{\Gamma}_H \sslash H^0 = \Rep^{\Gamma}_H$ and the claim follows.
\end{proof}

\subsection{Comparison to the GIT quotient} 
Let $\Gamma$ be an abstract group. The map \eqref{rep_to_pc} is $H^0$-equivariant for the conjugation action on the source and the 
trivial action on the target. Thus it induces a morphism $\Rep^{\Gamma}_H \sslash H^0\rightarrow \PC^{\Gamma}_H$.

\begin{thm}[Emerson--Morel]\label{EM_dis}
    If $\OO$ is a field of characteristic $0$, then the natural map $\Rep^{\Gamma}_H \sslash H^0 \to \PC^{\Gamma}_H$ is an isomorphism.
\end{thm}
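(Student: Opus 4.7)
The plan is to recast the statement as a ring isomorphism. Both $\Rep^{\Gamma}_H \sslash H^0$ and $\PC^{\Gamma}_H$ are affine $\OO$-schemes, and the natural map corresponds to the $\OO$-algebra homomorphism $\OO[\PC^{\Gamma}_H] \to \OO[\Rep^{\Gamma}_H]^{H^0}$ sending the generator associated to a datum $(n, f, \gamma_1,\ldots,\gamma_n)$ to the invariant function $\rho \mapsto f(\rho(\gamma_1),\ldots,\rho(\gamma_n))$. The hypothesis that $\OO$ is a field of characteristic zero is used crucially, since it makes $H^0$ linearly reductive and hence guarantees exactness of the functor $(-)^{H^0}$ together with its compatibility with filtered colimits.

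For surjectivity, I would use that $\Rep^{\Gamma}_H$ is a closed subscheme of $H^{\Gamma} := \prod_{\gamma \in \Gamma} H$, cut out by the multiplicative relations $\rho(\gamma\gamma') = \rho(\gamma)\rho(\gamma')$ and $\rho(1) = 1$. Thus $\OO[\Rep^{\Gamma}_H]$ is a quotient of $\OO[H^{\Gamma}] = \colim_S \OO[H^S]$, where $S$ ranges over finite subsets of $\Gamma$. Linear reductivity of $H^0$ then gives that $\OO[\Rep^{\Gamma}_H]^{H^0}$ is a quotient of $\colim_S \OO[H^S]^{H^0}$. Every element of $\OO[H^S]^{H^0}$ for $S = \{\gamma_1, \ldots, \gamma_n\}$ is by construction the image of the pseudocharacter generator corresponding to $(n, f, \gamma_1,\ldots,\gamma_n)$, establishing surjectivity.

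For injectivity, the crux is to verify that the pseudocharacter relations (1) and (2) in \Cref{LafPC} already generate every relation among the invariant evaluation functions on $\Rep^{\Gamma}_H$. I would do this by exhibiting a two-sided inverse: given an $A$-valued pseudocharacter $\Theta$ for an arbitrary $\OO$-algebra $A$, produce a unique $\OO$-algebra map $\OO[\Rep^{\Gamma}_H]^{H^0} \to A$ compatible with $\Theta$. This is the main obstacle and the content of Emerson--Morel's work; it amounts to a second-fundamental-theorem type statement asserting that all syzygies among $H^0$-invariants on $H^n$ are consequences of functoriality in $n$ together with the multiplication $H \times H \to H$, both of which are precisely recorded by the pseudocharacter axioms. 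Characteristic zero is indispensable here: without it one loses both the linear reductivity used for surjectivity and the invariant-theoretic finiteness results (going back to Weyl and Procesi) underlying the completeness of the relations.
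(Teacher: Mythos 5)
Your overall strategy matches the cited approach: recast the assertion as a ring isomorphism $\OO[\PC^{\Gamma}_H] \to \OO[\Rep^{\Gamma}_H]^{H^0}$, use linear reductivity of $H^0$ over a characteristic-zero field for surjectivity, and recognise that injectivity is a second-fundamental-theorem type input. The surjectivity sketch is essentially correct: $\OO[\Rep^{\Gamma}_H]$ is a quotient of $\OO[H^{\Gamma}] = \colim_S \OO[H^S]$, invariants commute with filtered colimits, and linear reductivity makes $(-)^{H^0}$ exact, so $\OO[\Rep^{\Gamma}_H]^{H^0}$ is generated by images of $\OO[H^S]^{H^0}$, which are exactly the pseudocharacter evaluation functions.

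However, there is a gap in the injectivity step. You defer to Emerson--Morel as if their result proved the exact statement, but \cite[Proposition 2.11 (i)]{emerson2023comparison} is stated only for \emph{connected} $H$. The theorem at hand concerns a generalised reductive group scheme that may be disconnected, and the pseudocharacter axioms are formulated with $H^0$-invariants rather than $H$-invariants. The paper's proof addresses precisely this point: it cites Emerson--Morel for the connected case and then observes that the adaptation given in \cite[Proposition 6.2]{defT} (which handles disconnected $H$ with $H^0$ a torus over an arbitrary base ring) applies verbatim when $\OO$ is a field of characteristic zero and $H$ is possibly disconnected, because the only property of $H^0$ actually used is linear reductivity, which a connected reductive group over a characteristic-zero field always has. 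Your proposal does not engage with this connected-versus-disconnected issue; as written, the injectivity step invokes a theorem whose hypotheses are not met. You should either (a) note explicitly that the argument of Emerson--Morel, or its adaptation, only needs exactness of $H^0$-invariants and therefore extends, or (b) supply the second-fundamental-theorem argument in enough detail to see that it really only touches the neutral component $H^0$ and its representation theory.
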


\begin{proof}
If $H$ is connected then the statement is proved in \cite[Proposition 2.11 (i)]{emerson2023comparison}.  We have adapted their argument to the 
case, when $H$ is possibly disconnected, $H^0$ is a torus and $\OO$ is any commutative ring in \cite[Proposition 6.2]{defT}. The key point is the exactness of  $H^0$-invariants. The argument in \cite[Proposition 6.2]{defT} applies verbatim, when $\OO$ is a field of characteristic zero and $H$ is possibly  disconnected, as then $H^0$ is linearly reductive. 
\end{proof}

\subsection{\texorpdfstring{Deformations of $H$-pseudocharacters}{Deformations of H-pseudocharacters}}\label{sec_pseudo_def} From now on $\OO$ is 
the ring of integers in a finite extension $L$ of $\Qp$ with residue field $k$.
Let $\Gamma$ be a profinite group. Consider a continuous $H$-pseudocharacter $\Thetabar \in \cPC^{\Gamma}_H(k)$.
In \cite[Section 5]{quast} we have introduced a deformation problem, that we will recall here. Let $\Aa_{\OO}$ be the category of artinian local $\OO$-algebras with residue field $k$.
We define the \emph{deformation functor} of $\Thetabar$
$$ \Def_{\Thetabar} : \Aa_{\OO} \to \Set, ~A \mapsto \{\Theta \in \cPC_H^{\Gamma}(A) \mid \Theta \otimes_A k = \Thetabar\} $$
that sends an object $A \in \Aa_{\OO}$ to the set of continuous $H$-pseudocharacters $\Theta$ of $\Gamma$ over $A$ with $\Theta \otimes_A k = \Thetabar$.
It is pro-representable by a complete local $\OO$-algebra $R^{\ps}_{\Thetabar}$ with residue field $k$.
If $\Gamma$ is topologically finitely generated, then $R^{\ps}_{\Thetabar}$ is noetherian \cite[Theorem 5.7]{quast}.
We will add a superscript $\Gamma$ and write $\Def^{\Gamma}_{\Thetabar}$, $R^{\ps, \Gamma}_{\Thetabar}$ to emphasise that  we are working with pseudocharacters of $\Gamma$, and will drop the superscript if the context is clear. 
Denote by $\Theta^u \in \cPC^{\Gamma}_H(R^{\ps}_{\Thetabar})$ the universal deformation of $\Thetabar$. If 
$A$ is an $R^{\ps}_{\Thetabar}$-algebra we will write $\Theta^u_{|A}$ for 
the specialisation of $\Theta^u$ along $R^{\ps}_{\Thetabar} \rightarrow A$. We let $X^{\ps, \Gamma}_{\Thetabar}:=\Spec R^{\ps}_{\Thetabar}$.

\subsection{Chenevier's determinant laws and finitely generated groups}

We first consider the case, that $\Gamma = \widehat \Sigma$ is the profinite completion of a finitely generated group $\Sigma$ and that $H = \GL_d$.
Recall, that $\Theta^u$ corresponds to a \emph{continuous determinant law} 
$D : R^{\ps}_{\Thetabar}\br{\Sigmahat} \to R^{\ps}_{\Thetabar}$ by
\cite[Theorem 4.1 (ii)]{emerson2023comparison},
see \cite[Sections 1.5,  2.30]{che_durham} for a definition of continuous determinant laws.

We denote by $\CH(D) \subseteq R^{\ps}_{\Thetabar}\br{\Sigmahat}$ the \emph{Cayley--Hamilton ideal} of $D$ introduced in \cite[Section 1.17]{che_durham}.
The Cayley--Hamilton ideal $\CH(D)$ is contained in the kernel of $D$, hence $D$ descends to a determinant law on the quotient $R^{\ps}_{\Thetabar}\br{\Sigmahat}/\CH(D)$ \cite[Lemma 1.21]{che_durham} which we will also denote by $D$, and we will call the quotient together with this determinant law the \emph{Cayley--Hamilton quotient}. We say, that a determinant law is \emph{Cayley--Hamilton}, if its Cayley--Hamilton ideal is zero. The Cayley--Hamilton quotient enjoys the following universal property: for all $R^{\ps}_{\Thetabar}$-algebra homomorphisms $f : R^{\ps}_{\Thetabar}\br{\Sigmahat} \to S$ and all Cayley--Hamilton determinant laws $D' : S \to R^{\ps}_{\Thetabar}$, such that $D'|_{R^{\ps}_{\Thetabar}\br{\Sigmahat}} = D$, we have $\CH(D) \subseteq \ker(f) \subseteq \ker(D)$ and $f$ descends to a unique map $R^{\ps}_{\Thetabar}\br{\Sigmahat}/\CH(D) \to S$, such that the restriction of $D'$ to the Cayley--Hamilton quotient gives back $D$.

Via restriction along the map 
$R^{\ps}_{\Thetabar}[\Sigma] \to R^{\ps}_{\Thetabar}\br{\Sigmahat}$ we obtain a determinant law $D|_{R^{\ps}_{\Thetabar}[\Sigma]} : R^{\ps}_{\Thetabar}[\Sigma] \to R^{\ps}_{\Thetabar}$.
Similarly, we can consider the Cayley--Hamilton ideal $\CH(D|_{R^{\ps}_{\Thetabar}[\Sigma]})$.
We observe, that the Cayley--Hamilton quotients coincide:

\begin{lem}\label{ident_CH_quotients}
    The natural map $R^{\ps}_{\Thetabar}[\Sigma]/\CH(D|_{R^{\ps}_{\Thetabar}[\Sigma]}) \to R^{\ps}_{\Thetabar}\br{\Sigmahat}/\CH(D)$ induced by the universal property of the Cayley--Hamilton quotient is an isomorphism.
\end{lem}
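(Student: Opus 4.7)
The plan is to construct an inverse map $B \to A$, where I write $A := R^{\ps}_{\Thetabar}[\Sigma]/\CH(D|_{R^{\ps}_{\Thetabar}[\Sigma]})$ and $B := R^{\ps}_{\Thetabar}\br{\Sigmahat}/\CH(D)$. The natural map $A \to B$ exists by the universal property of the Cayley--Hamilton quotient, since $B$ carries the Cayley--Hamilton determinant law $D_B$ whose restriction along $R^{\ps}_{\Thetabar}[\Sigma] \to R^{\ps}_{\Thetabar}\br{\Sigmahat} \to B$ equals $D|_{R^{\ps}_{\Thetabar}[\Sigma]}$.

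The key input is Chenevier's structural result \cite[Proposition~1.17 \emph{ff}.]{che_durham}: a Cayley--Hamilton $R^{\ps}_{\Thetabar}$-algebra equipped with a determinant law of dimension $d$ is finitely generated as an $R^{\ps}_{\Thetabar}$-module. Applied to $A$, this equips $A$ with its unique Hausdorff complete topology as a finite module over the complete noetherian local ring $R^{\ps}_{\Thetabar}$.

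I would then extend the map $R^{\ps}_{\Thetabar}[\Sigma] \to A$ to a continuous map $R^{\ps}_{\Thetabar}\br{\Sigmahat} \to A$. For each $n\geq 1$ the quotient $A/\mm^n A$ is a finite-length module over $R^{\ps}_{\Thetabar}/\mm^n$, and since $k$ is finite, $A/\mm^n A$ has finite cardinality; consequently the homomorphism $\Sigma \to (A/\mm^n A)^\times$ factors through a finite quotient of $\Sigma$, and therefore extends to $\Sigmahat$. Assembling these over $n$ and extending $R^{\ps}_{\Thetabar}$-linearly yields the desired ring map $R^{\ps}_{\Thetabar}\br{\Sigmahat} \to A$. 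The determinant law $D_A$ on $A$ pulls back to a continuous determinant law on $R^{\ps}_{\Thetabar}\br{\Sigmahat}$ which agrees with $D$ on the dense subalgebra $R^{\ps}_{\Thetabar}[\Sigma]$, so by continuity it equals $D$. Consequently $A$ is a Cayley--Hamilton quotient of $R^{\ps}_{\Thetabar}\br{\Sigmahat}$ with respect to $D$, and the universal property of $B$ produces a map $B \to A$.

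The two maps $A \to B$ and $B \to A$ are mutually inverse because both compositions restrict to the identity on the image of $R^{\ps}_{\Thetabar}[\Sigma]$, which is dense in $B$ and which generates $A$ topologically; finiteness of both sides as $R^{\ps}_{\Thetabar}$-modules then upgrades density to equality. The main obstacle is the continuity step: one must verify that Chenevier's finiteness theorem applies to produce the natural topology on $A$ and that the group homomorphism $\Sigma \to A^\times$ is continuous for this topology, but this is clean once the finite-cardinality argument above is in place.
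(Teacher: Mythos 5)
Your proposal follows essentially the same route as the paper's proof: finiteness of the Cayley--Hamilton quotient of $R^{\ps}_{\Thetabar}[\Sigma]$ as an $R^{\ps}_{\Thetabar}$-module (the paper cites Wang--Erickson rather than Chenevier for this), finiteness of $A/\mm^n A$ to extend $\Sigma \to A^\times$ to $\Sigmahat$, density of the image of $R^{\ps}_{\Thetabar}[\Sigma]$ to identify the induced continuous determinant law with $D$, and the universal property to produce the inverse. The only difference is presentational: you phrase the conclusion as exhibiting a two-sided inverse, while the paper exhibits the extension as a surjection splitting the natural map, which is the same argument.
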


\begin{proof}
    By \cite[Proposition 2.13]{WE_alg} $E \colonequals R^{\ps}_{\Thetabar}[\Sigma]/\CH(D|_{R^{\ps}_{\Thetabar}[\Sigma]})$ is a finitely generated $R^{\ps}_{\Thetabar}$-module. Thus $E/\mm^n E$ is finite as a set, where $\mm$ is the maximal ideal of $R^{\ps}_{\Thetabar}$, and hence $\Sigma\rightarrow (E/\mm^n E)^{\times}$ factors through a finite quotient. We deduce that $R^{\ps}_{\Thetabar}[\Sigma]\twoheadrightarrow E$ extends  to a surjection $R^{\ps}_{\Thetabar}\br{\Sigmahat} \twoheadrightarrow E$.
    Now $E$ is equipped with the determinant law $D_E$ induced by $D|_{R^{\ps}_{\Thetabar}[\Sigma]}$.
    Both $D_E|_{R^{\ps}_{\Thetabar}\br{\Sigmahat}}$ and $D$ restrict to $D|_{R^{\ps}_{\Thetabar}[\Sigma]}$.
    Since the image of $R^{\ps}_{\Thetabar}[\Sigma]$ is dense in $R^{\ps}_{\Thetabar}\br{\Sigmahat}$, this implies $D_E|_{R^{\ps}_{\Thetabar}\br{\Sigmahat}} = D$.
    The universal property of the Cayley--Hamilton quotient yields a surjection $R^{\ps}_{\Thetabar}\br{\Sigmahat}/\CH(D) \twoheadrightarrow E$, whose composition with the natural map $E \to R^{\ps}_{\Thetabar}\br{\Sigmahat}/\CH(D)$ is the identity.
\end{proof}

\subsection{\texorpdfstring{The moduli space $X^{\gen}_{\Thetabar}$ of condensed representations}{The moduli space X gen Thetabar of condensed representations}}
Let $\Gamma$ be an arbitrary topologically finitely generated profinite group and let $R$ be a profinite noetherian $\OO$-algebra. 
For an $R$-algebra $A$ a representation $\rho : \Gamma \to H(A)$ is \emph{$R$-condensed}, if there exists a homomorphism of condensed groups $$\underline{\Gamma} \to H(A_{\disc} \otimes_{R_{\disc}} \underline{R})$$ which recovers $\rho$ after evaluation at a point.
Here $\underline{(-)}$ denotes the functor from topological spaces to condensed sets and $(-)_{\disc}$ takes a set to a discrete condensed set.
See \cite[Section 4]{defG} for a detailed discussion of $R$-condensed representations. One should think 
of $R$-condensed as a continuity condition on the representation. 

\begin{defi}
    Let $A$ be an $R^{\ps}_{\Thetabar}$-algebra.
    Let $X^{\gen, \Gamma}_{\Thetabar}(A)$ be the set of $R^{\ps}_{\Thetabar}$-condensed representations $\rho : \Gamma \to H(A)$, such that $\Theta_{\rho}$ is equal to the specialisation of $\Theta^u$ at $A$.
    This defines a functor $X^{\gen, \Gamma}_{\Thetabar} : R^{\ps}_{\Thetabar}\hyphen\alg \to \Set$.
\end{defi}

In \cite[Proposition 8.3]{defG}, we have shown, that $X^{\gen, \Gamma}_{\Thetabar}$ is representable by a finitely generated $R^{\ps}_{\Thetabar}$-algebra $A^{\gen, \Gamma}_{\Thetabar}$. We will drop the superscript $\Gamma$ and will write $X^{\gen}_{\Thetabar}$, $A^{\gen}_{\Thetabar}$, when the context is clear. 

\subsection{Profinite completions}
We now assume $\Gamma = \Sigmahat$, where $\Sigma$ is a finitely generated discrete group.
In this case we have an alternative description of $X^{\gen, \Sigmahat}_{\Thetabar}$ in terms of abstract representations of $\Sigma$.

\begin{lem}\label{Agen_versus_abstract}
    Let $A$ be an $R^{\ps, \Sigmahat}_{\Thetabar}$-algebra.
    Every representation $\rho : \Sigma \to H(A)$ with $\Theta_{\rho} = (\Theta^u_{|A})|_{\Sigma}$ extends to a unique $R^{\ps, \Sigmahat}_{\Thetabar}$-condensed representation $\widetilde \rho : \Sigmahat \to H(A)$ with $\Theta_{\widetilde \rho} = \Theta^u_{|A}$. In particular, we have a canonical isomorphism
    \begin{equation}\label{iso_Qhat}
    R^{\ps, \Sigmahat}_{\Thetabar} \otimes_{\OO[\PC^\Sigma_H]} \OO[\Rep^\Sigma_H] \cong A^{\gen, \Sigmahat}_{\Thetabar} 
    \end{equation}
    of $R^{\ps, \Sigmahat}_{\Thetabar}$-algebras.
\end{lem}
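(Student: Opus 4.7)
The plan is to deduce the ring isomorphism \eqref{iso_Qhat} from the first assertion by Yoneda. The tensor product $B := R^{\ps, \Sigmahat}_{\Thetabar} \otimes_{\OO[\PC^\Sigma_H]} \OO[\Rep^\Sigma_H]$ represents the functor on $R^{\ps, \Sigmahat}_{\Thetabar}$-algebras sending $A$ to the set of representations $\rho : \Sigma \to H(A)$ with $\Theta_\rho = \Theta^u_{|A}|_\Sigma$, whereas $A^{\gen, \Sigmahat}_{\Thetabar}$ represents the functor of $R^{\ps, \Sigmahat}_{\Thetabar}$-condensed extensions $\tilde\rho : \Sigmahat \to H(A)$ with $\Theta_{\tilde\rho} = \Theta^u_{|A}$. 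Restriction along $\Sigma \hookrightarrow \Sigmahat$ is a natural transformation from the second functor to the first; proving that it is a natural bijection for every $A$ yields the required isomorphism.

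\emph{Uniqueness.} Two $R^{\ps}_{\Thetabar}$-condensed extensions $\tilde\rho_1,\tilde\rho_2$ of $\rho$ define a condensed map $\underline{\Sigmahat} \to H_A \times H_A$, writing $H_A := H(A_{\disc} \otimes_{R^{\ps}_{\Thetabar, \disc}} \underline{R^{\ps}_{\Thetabar}})$. The equaliser of $\tilde\rho_1$ and $\tilde\rho_2$ is the preimage of the diagonal, which is a closed sub-condensed set of $\underline{\Sigmahat}$ because $H$ is separated. Since this subset contains $\underline{\Sigma}$ and $\Sigma$ is dense in $\Sigmahat$, it must equal $\underline{\Sigmahat}$, so $\tilde\rho_1 = \tilde\rho_2$.

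\emph{Existence for $H = \GL_d$.} Let $D$ be the determinant law corresponding to $\Theta^u$. By \Cref{ident_CH_quotients} the Cayley--Hamilton quotient $E := R^{\ps}_{\Thetabar}[\Sigma]/\CH(D|_{R^{\ps}_{\Thetabar}[\Sigma]}) \cong R^{\ps}_{\Thetabar}\br{\Sigmahat}/\CH(D)$ is finite over $R^{\ps}_{\Thetabar}$. The algebra map $A[\Sigma] \to M_d(A)$ induced by $\rho$ is Cayley--Hamilton for the specialisation $D \otimes_{R^{\ps}_{\Thetabar}} A$, hence factors through $A \otimes_{R^{\ps}_{\Thetabar}} E$; the composition $\Sigmahat \to R^{\ps}_{\Thetabar}\br{\Sigmahat}^\times \to E^\times \to \GL_d(A)$ furnishes the extension $\tilde\rho$. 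Finiteness of $E$ over $R^{\ps}_{\Thetabar}$ provides the $R^{\ps}_{\Thetabar}$-condensed structure, and $\Theta_{\tilde\rho} = \Theta^u_{|A}$ holds by construction.

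\emph{Existence for general $H$ and main obstacle.} Fix a closed immersion $H \hookrightarrow \GL_d$. The $\GL_d$-pseudocharacter of the composite $\Sigma \to H(A) \hookrightarrow \GL_d(A)$ agrees with the $\GL_d$-pseudocharacter induced from $\Theta^u_{|A}$ via the natural map $\OO[\GL_d^n]^{\GL_d} \to \OO[H^n]^{H^0}$, so the $\GL_d$ case produces an extension $\tilde\rho : \Sigmahat \to \GL_d(A)$. The density argument from the uniqueness step, applied to the closed sub-condensed set $\tilde\rho^{-1}(H_A) \subseteq \underline{\Sigmahat}$, forces $\tilde\rho$ to factor through $H(A)$; applied to each invariant $f \in \OO[H^n]^{H^0}$ separately, it shows $\Theta_{\tilde\rho} = \Theta^u_{|A}$ as $H$-pseudocharacters. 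The main obstacle will be formalising these condensed-theoretic density and closedness statements rigorously, and in particular verifying that the $R^{\ps, \Sigmahat}_{\Thetabar}$-condensed structure refines the one supplied by the $\GL_d$-case along the induced map of pseudocharacter deformation rings.
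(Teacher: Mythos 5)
Your proposal follows essentially the paper's own route: choose a closed embedding $\tau : H \hookrightarrow \GL_d$, use the determinant law attached to $\tau\circ\Theta^u$ together with \Cref{ident_CH_quotients} so that the linearisation of $\tau\circ\rho$ factors through the Cayley--Hamilton quotient $E$, extend to $\Sigmahat$ along the surjection $R^{\ps,\Sigmahat}_{\Thetabar}\br{\Sigmahat}\twoheadrightarrow E$, deduce condensedness from finiteness of $E$, and read off \eqref{iso_Qhat} from the moduli descriptions of both sides. The condensed-theoretic points you flag as the main obstacle (uniqueness by density, factoring through $H(A)$, descending condensedness along $\tau$) are exactly what the paper delegates to \cite[Lemma 5.2]{defG} and \cite[Lemma 4.16 (2)]{defG}, and the worry about comparing pseudocharacter deformation rings does not arise because the Cayley--Hamilton construction is carried out over $R^{\ps,\Sigmahat}_{\Thetabar}$ itself rather than over the deformation ring of the induced $\GL_d$-pseudocharacter.
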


\begin{proof}  We choose a closed embedding $\tau : H \to \GL_d$ for some $d \geq 1$. We have $\Theta_{\rho} = (\Theta^u_{|A})|_\Sigma$, so $\det(\tau \circ \rho) = D_{\tau \circ \Theta^u|_\Sigma} \otimes_{R^{\ps}} A$, where $R^{\ps}:= R^{\ps, \Sigmahat}_{\Thetabar}$. 
    Hence the linearisation $(\tau \circ \rho)^{\lin} :
    R^{\ps}[\Sigma] \to M_d(A)$ factors over the Cayley--Hamilton quotient
    $E = R^{\ps}[\Sigma]/\CH(D|_{R^{\ps}[\Sigma]}) \to M_d(A)$.
    The isomorphism of \Cref{ident_CH_quotients} induces a natural
    surjection of $R^{\ps}$-algebras $R^{\ps}\br{\Sigmahat} \twoheadrightarrow E$. The equivalence of parts (1) and (4) of \cite[Lemma 5.2]{defG} 
    implies that $\tau\circ \rho: 
    \Sigmahat \rightarrow \GL_d(A)$ is $R^{\ps}$-condensed. Part (2) of \cite[Lemma 4.16]{defG} implies that $\rho: \Sigmahat\rightarrow H(A)$ is $R^{\ps}$-condensed.  The isomorphism of $R^{\ps}$-algebras is immediate from the definition of $A^{\gen, \Sigmahat}_{\Thetabar}$ in \cite[Definition 5.3]{defG}. 
\end{proof}

\begin{lem}\label{tensor_inv2} Let  $M$ and $N$ be modules over  an $L$-algebra $A$. 
Suppose that $H^0_L$ acts $A$-linearly on $M$ and trivially on $N$. 
Then the natural map $ M^{H^{0}}\otimes_A N\rightarrow (M\otimes_A N)^{H^0}$ is an isomorphism of $A$-modules. 
\end{lem}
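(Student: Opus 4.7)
The plan is to exploit the fact that $L$ has characteristic zero and $H^0$ is connected reductive, so $H^0_L$ is linearly reductive. Consequently, the functor of $H^0$-invariants on the category of $A$-linear rational $H^0_L$-representations is exact, and there is a functorial $A$-linear Reynolds operator: for every such representation $V$ a canonical $H^0$-equivariant (with trivial action on the target) projection $R_V \colon V \twoheadrightarrow V^{H^0}$, natural in $V$. In particular $R_V$ is an idempotent endomorphism of $V$ whose image is $V^{H^0}$, so $V = V^{H^0} \oplus \ker R_V$ as $A$-modules.

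First I would check compatibility of the Reynolds operator with tensor products in which one factor has trivial action. Equip $M \otimes_A N$ with the diagonal action of $H^0_L$; since $H^0_L$ acts trivially on $N$, this is the action coming from $M$ alone, and $R_M \otimes_A \id_N$ is an $A$-linear $H^0$-equivariant projection onto $M^{H^0} \otimes_A N$. By the uniqueness (equivalently, naturality) of the Reynolds operator, one identifies
\[
R_{M \otimes_A N} \;=\; R_M \otimes_A \id_N.
\]

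Tensoring the splitting $M = M^{H^0} \oplus \ker R_M$ over $A$ with $N$ then gives
\[
M \otimes_A N \;=\; (M^{H^0} \otimes_A N) \oplus (\ker R_M \otimes_A N),
\]
and taking $H^0$-invariants, i.e. applying $R_{M \otimes_A N}$, identifies $(M \otimes_A N)^{H^0}$ with $M^{H^0} \otimes_A N$. This identification coincides with the natural map in the statement, proving the lemma.

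The only point requiring a small amount of care is the existence and naturality of the Reynolds operator for rational $H^0_L$-comodules that are not finitely generated over $L$ (they are merely $A$-modules); but any such comodule is the filtered colimit of its finite-dimensional (over $L$) subcomodules, and linear reductivity of $H^0_L$ in characteristic zero supplies a compatible system of projections onto invariants, which assemble to the desired functorial Reynolds operator. I expect this verification to be essentially routine given linear reductivity; the substantive input is purely the semisimplicity of $\Rep(H^0_L)$.
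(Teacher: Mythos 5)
Your proof is correct in spirit but takes a genuinely different route from the paper. The paper disposes of the case where $N$ is a free $A$-module (where the claim reduces to the fact that taking $H^0$-invariants commutes with direct sums), then treats a general $N$ by choosing a free presentation $\oplus_J A \to \oplus_I A \to N \to 0$, tensoring with $M$, and using that the invariants functor is exact (linear reductivity) to deduce the isomorphism from the five lemma. You instead decompose $M = M^{H^0} \oplus \ker R_M$ via the Reynolds operator and tensor the splitting with $N$. Both hinge on linear reductivity of $H^0_L$; the paper's presentation argument is slightly more economical, while yours makes the retraction explicit.

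There is one step in your write-up that, as stated, is circular: you assert $R_{M\otimes_A N} = R_M \otimes_A \id_N$ ``by the uniqueness of the Reynolds operator.'' But the Reynolds operator of $M \otimes_A N$ is, by definition, the $H^0$-equivariant projection onto $(M\otimes_A N)^{H^0}$, whereas $R_M \otimes_A \id_N$ is a projection onto $M^{H^0}\otimes_A N$ --- so invoking uniqueness presupposes exactly the equality you are trying to prove. What is actually needed is to check that $(\ker R_M \otimes_A N)^{H^0} = 0$. This does hold: $\ker R_M \otimes_A N$ is a quotient of $\ker R_M \otimes_L N$, which has no nonzero invariants because it is an $L$-direct sum of copies of $\ker R_M$ and invariants commute with direct sums; exactness of the invariants functor then shows the quotient has no invariants either. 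With that supplement your argument is complete. Note that this repair is, in effect, a special case of the paper's presentation-and-exactness device, so the two proofs converge on the same underlying mechanism.
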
 
\begin{proof} The assertion is clear if $N$ is a free $A$-module. 
The general case follows by choosing a presentation 
$\oplus_{j\in J} A\rightarrow \oplus_{i\in I} A\rightarrow N\rightarrow 0$ and using that $H^0_L$ is linearly reductive.
\end{proof} 

\begin{cor}\label{cor_iso_Qhat} The isomorphism \eqref{iso_Qhat} induces an isomorphism 
\begin{equation} 
R^{\ps, \Sigmahat}_{\Thetabar}[1/p] \cong (A^{\gen, \Sigmahat}_{\Thetabar}[1/p])^{H^0}.
\end{equation}
\end{cor}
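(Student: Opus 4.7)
The plan is to invert $p$ in the tensor product description \eqref{iso_Qhat} and then take $H^0$-invariants, using Emerson--Morel in characteristic zero to identify the invariant subring on one side. Explicitly, inverting $p$ in \eqref{iso_Qhat} gives an isomorphism
$$R^{\ps, \Sigmahat}_{\Thetabar}[1/p] \otimes_{L[\PC^{\Sigma}_H]} L[\Rep^{\Sigma}_H] \cong A^{\gen, \Sigmahat}_{\Thetabar}[1/p]$$
of $L$-algebras, where I write $L[\PC^{\Sigma}_H] = \OO[\PC^{\Sigma}_H][1/p]$ and similarly for $\Rep$. The group $H^0_L$ acts by conjugation on the $\Rep$-factor and trivially on the $\PC$-factor, so the action on the tensor product is trivial on $R^{\ps, \Sigmahat}_{\Thetabar}[1/p]$ and $L[\PC^{\Sigma}_H]$-linear on $L[\Rep^{\Sigma}_H]$; this is precisely the setup needed to apply \Cref{tensor_inv2}.

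Taking $H^0$-invariants on both sides and using \Cref{tensor_inv2} with $A = L[\PC^{\Sigma}_H]$, $M = L[\Rep^{\Sigma}_H]$, $N = R^{\ps, \Sigmahat}_{\Thetabar}[1/p]$, I obtain
$$L[\Rep^{\Sigma}_H]^{H^0} \otimes_{L[\PC^{\Sigma}_H]} R^{\ps, \Sigmahat}_{\Thetabar}[1/p] \cong (A^{\gen, \Sigmahat}_{\Thetabar}[1/p])^{H^0}.$$
Now \Cref{EM_dis} of Emerson--Morel, applied with $\OO$ replaced by the field $L$ of characteristic zero, gives $L[\Rep^{\Sigma}_H]^{H^0} = L[\PC^{\Sigma}_H]$, so the left hand side collapses to $R^{\ps, \Sigmahat}_{\Thetabar}[1/p]$, yielding the desired isomorphism.

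There is no real obstacle here: the three ingredients (the integral isomorphism \eqref{iso_Qhat}, the linear reductivity of $H^0_L$ packaged in \Cref{tensor_inv2}, and Emerson--Morel over $L$) combine cleanly. The only point to verify with some care is that the $H^0$-action on $A^{\gen, \Sigmahat}_{\Thetabar}[1/p]$ corresponds under \eqref{iso_Qhat} to the action described above, but this is immediate from the construction of \eqref{iso_Qhat} via the map $\rho \mapsto \Theta_\rho$, which is $H^0$-equivariant with the trivial action on the target.
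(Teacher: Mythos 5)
Your proof is correct and matches the paper's own argument, which likewise combines \eqref{iso_Qhat} (from \Cref{Agen_versus_abstract}), \Cref{tensor_inv2}, and \Cref{EM_dis} in exactly this way; the paper merely states the combination more tersely. Your additional sanity check that the $H^0$-action is the conjugation action on the $\Rep$-factor and trivial on the rest is accurate and worth noting, since it is what makes \Cref{tensor_inv2} applicable.
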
 
\begin{proof}  By \Cref{EM_dis}
the natural map $L[\PC^\Sigma_H] \rightarrow L[\Rep^\Sigma_H]^{H^0}$ is an isomorphism. The assertion follows from \Cref{Agen_versus_abstract} and \Cref{tensor_inv2}.
\end{proof}

\subsection{An adequate homeomorphism} Let $\Gamma$ be a 
finitely generated profinite group. We may choose a surjection 
$\Sigmahat\twoheadrightarrow \Gamma$, where $\Sigma$ is a finitely generated discrete group. We now assume that $\Thetabar \in \cPC_H^{\Gamma}(k)$ and let $R^{\ps, \Gamma}_{\Thetabar}$
be  the universal deformation ring of $\Thetabar$ 
pro-representing $\Def^{\Gamma}_{\Thetabar}$ 
and let $R^{\ps, \Sigmahat}_{\Thetabar}$
be  the universal deformation ring of $\Thetabar$
pro-representing $\Def^{\Sigmahat}_{\Thetabar}$.
The main result of this section is \Cref{adequate}, which improves \cite[Corollary 7.6]{defG}.
Its \Cref{normal} 
is  a key technical ingredient in this paper.

\begin{lem}\label{pushout_wins} 
We have a cocartesian square of $\OO[\PC^{\widehat \Sigma}_H]$-algebras
    \begin{equation}
        \begin{tikzcd}
            \OO[\Rep^{\widehat \Sigma}_H] \ar[r] \ar[d] & A^{\gen, \widehat \Sigma}_{\Thetabar} \ar[d] \\
            \OO[\Rep^{\Gamma}_H] \ar[r] \ar[r] & A^{\gen, \Gamma}_{\Thetabar}
        \end{tikzcd}
    \end{equation}
\end{lem}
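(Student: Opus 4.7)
The plan is to check the universal property of the pushout by unwrapping the functors of points on both sides as $\OO[\PC^{\widehat \Sigma}_H]$-algebras. Let $A$ be an $\OO[\PC^{\widehat \Sigma}_H]$-algebra and write $q \colon \widehat \Sigma \twoheadrightarrow \Gamma$ for the fixed surjection. By definition of the pushout, an $\OO[\PC^{\widehat \Sigma}_H]$-algebra map $\OO[\Rep^{\Gamma}_H] \otimes_{\OO[\Rep^{\widehat \Sigma}_H]} A^{\gen,\widehat \Sigma}_{\Thetabar} \to A$ is a pair consisting of a point $\widetilde \xi \in X^{\gen,\widehat \Sigma}_{\Thetabar}(A)$ (an $R^{\ps,\widehat \Sigma}_{\Thetabar}$-algebra structure on $A$ refining the given $\OO[\PC^{\widehat \Sigma}_H]$-structure together with an $R^{\ps,\widehat \Sigma}_{\Thetabar}$-condensed representation $\widetilde\rho \colon \widehat \Sigma \to H(A)$ whose pseudocharacter matches the specialisation of $\Theta^u$) and an abstract representation $\rho \colon \Gamma \to H(A)$, subject to the compatibility $\widetilde\rho = \rho \circ q$ imposed by the common source $\OO[\Rep^{\widehat \Sigma}_H]$. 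Dually, an $\OO[\PC^{\widehat \Sigma}_H]$-algebra map $A^{\gen,\Gamma}_{\Thetabar} \to A$ is the data of an $R^{\ps,\Gamma}_{\Thetabar}$-algebra structure on $A$ refining the $\OO[\PC^{\widehat \Sigma}_H]$-structure and an $R^{\ps,\Gamma}_{\Thetabar}$-condensed $\rho \colon \Gamma \to H(A)$ with matching pseudocharacter.

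The next step is to produce a bijection between these two data sets. Given data on the pushout side, the identity $\Theta_{\widetilde\rho} = \Theta_{\rho} \circ q$ forces the pseudocharacter of $\widetilde\rho$ on $\widehat \Sigma$ to factor through $\Gamma$. Hence, by the universal property of $R^{\ps,\Gamma}_{\Thetabar}$, which receives a surjection from $R^{\ps,\widehat \Sigma}_{\Thetabar}$ classifying precisely such factorisations, the structure map $R^{\ps,\widehat \Sigma}_{\Thetabar} \to A$ descends uniquely to a refined $R^{\ps,\Gamma}_{\Thetabar}$-algebra structure on $A$; the converse operation is restriction of scalars along $R^{\ps,\widehat \Sigma}_{\Thetabar} \to R^{\ps,\Gamma}_{\Thetabar}$, and these two operations are mutually inverse. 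The abstract representation $\rho$ is carried along unchanged.

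What remains, and is the main obstacle, is identifying the two condensed conditions: $\widetilde\rho = \rho \circ q$ is $R^{\ps,\widehat \Sigma}_{\Thetabar}$-condensed on $\widehat \Sigma$ if and only if $\rho$ is $R^{\ps,\Gamma}_{\Thetabar}$-condensed on $\Gamma$. The easy direction composes the condensed lift of $\rho$ with $\underline q$ and base changes coefficients along $R^{\ps,\widehat \Sigma}_{\Thetabar} \to R^{\ps,\Gamma}_{\Thetabar}$. The converse is the real content: the plan is to use that $q$ is a continuous surjection of profinite groups, so $\underline q$ is an effective epimorphism of condensed sets, and any condensed map out of $\underline{\widehat \Sigma}$ whose underlying set-theoretic map factors through $\Gamma$ descends uniquely along $\underline q$. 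Combined with the factorisation of the coefficient ring to $R^{\ps,\Gamma}_{\Thetabar}$ from the previous paragraph, this should yield the required $R^{\ps,\Gamma}_{\Thetabar}$-condensed structure on $\rho$. I would expect this descent step to be packaged via (or to closely mirror) one of the condensed-representation lemmas of \cite[Section 4]{defG}, in the same spirit as Lemma~4.16 and Lemma~5.2 already invoked in the proof of \Cref{Agen_versus_abstract}.
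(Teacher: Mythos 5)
Your overall route is the same as the paper's: the printed proof consists of the single remark that, since $\widehat\Sigma\to\Gamma$ is surjective, the square is cocartesian ``by looking at the moduli functors the rings represent'', and your unwinding of the $A$-points of both sides, together with isolating the two non-formal points (descending the coefficient structure from $R^{\ps,\widehat\Sigma}_{\Thetabar}$ to $R^{\ps,\Gamma}_{\Thetabar}$, and matching the two condensedness conditions), is exactly the content that remark suppresses. Your first step is essentially right, but ``by the universal property of $R^{\ps,\Gamma}_{\Thetabar}$'' elides something: pro-representability only speaks about artinian (or complete local) test rings, whereas your $A$ is arbitrary. What you actually need is that the kernel of $R^{\ps,\widehat\Sigma}_{\Thetabar}\twoheadrightarrow R^{\ps,\Gamma}_{\Thetabar}$ is generated by the elements $\Theta^u_n(f)(\gamma_1,\dots,\gamma_n)-\Theta^u_n(f)(\delta_1,\dots,\delta_n)$ for tuples with equal image in $\Gamma$; this is true (the quotient by that ideal is complete local noetherian and pro-represents $\Def^{\Gamma}_{\Thetabar}$), and then a map $R^{\ps,\widehat\Sigma}_{\Thetabar}\to A$ factors through $R^{\ps,\Gamma}_{\Thetabar}$ precisely when the specialised pseudocharacter factors through $\Gamma$, which is the statement you use.

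The genuine gap is in the condensedness comparison. The descent principle you invoke is not valid: for the effective epimorphism $\underline q$, a condensed map $\underline{\widehat\Sigma}\to X$ descends only if its two pullbacks to $\underline{\widehat\Sigma\times_\Gamma\widehat\Sigma}$ agree as condensed maps, and agreement of the underlying set-theoretic maps does not imply this unless $X(S)$ is detected pointwise. Here $X=H(A_{\disc}\otimes_{R_{\disc}}\underline{R})$ with $R=R^{\ps,\widehat\Sigma}_{\Thetabar}$ is not of the form $\underline{T}$ for a topological space, and $A\otimes_R\Cont(S,R)\to\Map(S,A)$ need not be injective, so ``the underlying map factors through $\Gamma$'' is strictly weaker than what descent requires. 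Your ``easy direction'' is also misstated: base change of coefficients along $R^{\ps,\widehat\Sigma}_{\Thetabar}\to R^{\ps,\Gamma}_{\Thetabar}$ gives a map $A_{\disc}\otimes_{R_{\disc}}\underline{R}\to A_{\disc}\otimes_{(R^{\ps,\Gamma}_{\Thetabar})_{\disc}}\underline{R^{\ps,\Gamma}_{\Thetabar}}$, i.e.\ from the target you need to the target you have, so composing the lift of $\rho$ with $\underline q$ does not exhibit $\rho\circ q$ as $R^{\ps,\widehat\Sigma}_{\Thetabar}$-condensed. Both directions should instead be handled the way the paper handles the analogous issue in \Cref{Agen_versus_abstract}: choose a closed embedding $H\hookrightarrow\GL_d$ (\cite[Lemma 4.16]{defG}) and use the algebraic characterisation of condensedness in \cite[Lemma 5.2]{defG}, namely that a representation is $R$-condensed if and only if its linearisation factors through a suitable quotient of $R\br{\cdot}$ finite over $R$; with that criterion the transfer of condensedness along the surjections $\widehat\Sigma\twoheadrightarrow\Gamma$ and $R^{\ps,\widehat\Sigma}_{\Thetabar}\twoheadrightarrow R^{\ps,\Gamma}_{\Thetabar}$ is elementary, and your functor-of-points comparison then closes up.
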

\begin{proof}
    Since $\widehat \Sigma \to \Gamma$ is surjective the assertion about cocartesian square follows directly by looking at the moduli functors the rings represent.
\end{proof}

\begin{lem}\label{pullup_wins} 
We have a cocartesian square of $R^{\ps, \Sigmahat}_{\Thetabar}$-algebras
    \begin{equation}\label{eq_pull}
        \begin{tikzcd}
            R^{\ps, \Sigmahat}_{\Thetabar}\otimes_{\OO[\PC^{\Sigmahat}_H]}\OO[\Rep^{\widehat \Sigma}_H] \ar[r] \ar[d] & A^{\gen, \widehat \Sigma}_{\Thetabar} \ar[d] \\
            R^{\ps, \Sigmahat}_{\Thetabar}\otimes_{\OO[\PC^{\Sigmahat}_H]}\OO[\Rep^{\Gamma}_H] \ar[r] \ar[r] & A^{\gen, \Gamma}_{\Thetabar}
        \end{tikzcd}
    \end{equation}
\end{lem}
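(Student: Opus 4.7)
The plan is to deduce the assertion from \Cref{pushout_wins} by a formal tensor product manipulation. To simplify notation, write $R := R^{\ps, \Sigmahat}_{\Thetabar}$, $P := \OO[\PC^{\Sigmahat}_H]$, $\alpha := \OO[\Rep^{\widehat \Sigma}_H]$, and $\beta := \OO[\Rep^\Gamma_H]$. First I would verify that the natural $P$-algebra structure on $A^{\gen, \widehat\Sigma}_{\Thetabar}$ (induced by the classifying map $\alpha \to A^{\gen, \widehat\Sigma}_{\Thetabar}$ for the universal representation) and the natural $R$-algebra structure (coming from its definition as the moduli of deformations) are compatible, i.e.\ the $P$-algebra structure factors as $P \to R \to A^{\gen, \widehat\Sigma}_{\Thetabar}$. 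This holds because both composites $P \to A^{\gen, \widehat\Sigma}_{\Thetabar}$ classify the same abstract pseudocharacter of $\widehat \Sigma$, namely the specialisation of $\Theta^u$ along $R \to A^{\gen, \widehat\Sigma}_{\Thetabar}$. The analogous compatibility holds for $A^{\gen, \Gamma}_{\Thetabar}$ (with $R$ acting via the surjection $R \twoheadrightarrow R^{\ps, \Gamma}_{\Thetabar}$). Consequently $A^{\gen, \widehat\Sigma}_{\Thetabar}$ is naturally an $(R \otimes_P \alpha)$-algebra, so that the pushout in \eqref{eq_pull} is well-defined.

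The key computation is then the following. The surjection $\widehat\Sigma \twoheadrightarrow \Gamma$ induces a surjection $\alpha \twoheadrightarrow \beta$, so $R \otimes_P \beta \cong (R \otimes_P \alpha) \otimes_\alpha \beta$. Applying the standard associativity of tensor products (change of rings for the $(R \otimes_P \alpha)$-module $A^{\gen, \widehat\Sigma}_{\Thetabar}$) yields
\[
A^{\gen, \widehat\Sigma}_{\Thetabar} \otimes_{R \otimes_P \alpha} (R \otimes_P \beta) \cong A^{\gen, \widehat\Sigma}_{\Thetabar} \otimes_\alpha \beta \cong A^{\gen, \Gamma}_{\Thetabar},
\]
where the last isomorphism is \Cref{pushout_wins}. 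Unwinding the identifications confirms that the resulting isomorphism is the canonical map from the pushout to $A^{\gen, \Gamma}_{\Thetabar}$ induced by the square \eqref{eq_pull}, so the square is cocartesian.

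The only subtle point is the compatibility between the $P$- and $R$-algebra structures described above; once this is in place, the rest of the argument is formal and I do not anticipate any real obstacle.
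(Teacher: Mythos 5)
Your argument is correct and is essentially the paper's proof: the paper likewise deduces the square from \Cref{pushout_wins} via the same formal base-change identity $(B\otimes_X Y)\otimes_{A\otimes_X Y}C\cong B\otimes_A C$, which with $X=\OO[\PC^{\Sigmahat}_H]$, $Y=R^{\ps,\Sigmahat}_{\Thetabar}$, $A=\OO[\Rep^{\Sigmahat}_H]$, $B=\OO[\Rep^{\Gamma}_H]$, $C=A^{\gen,\Sigmahat}_{\Thetabar}$ is exactly your computation. Your preliminary check that the $\OO[\PC^{\Sigmahat}_H]$-algebra structure on $A^{\gen,\Sigmahat}_{\Thetabar}$ factors through $R^{\ps,\Sigmahat}_{\Thetabar}$ is a harmless (and correct) elaboration of what the paper leaves implicit.
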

\begin{proof} The assertion follows from \Cref{pushout_wins} 
and an elementary manipulation with tensor products: if 
$B$ is an $A$-algebra, and $A$ and $Y$ are $X$-algebras and 
$C$ an $A\otimes_X Y$-algebra then 
$$ (B\otimes_X Y)\otimes_{A\otimes_X Y} C \cong 
B\otimes_A(A\otimes_X Y)\otimes_{A\otimes_X Y} C \cong B\otimes_A C.$$
\end{proof}

\begin{lem}\label{tensor_inv1} Let $A$, $B$ and $C$ be $L$-algebras on which $H^0_L$ acts
by homomorphisms of $L$-algebras. Let $A\rightarrow B$ and $A\rightarrow C$ be surjective $H^0$-equivariant homomorphisms of $L$-algebras. Then 
$(B\otimes_A C)^{H^0}\cong B^{H^0}\otimes_{A^{H^0}} C^{H^0}.$
\end{lem}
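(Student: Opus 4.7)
The plan is to reduce the assertion to the exactness of the $H^0_L$-invariants functor, which holds since $H^0_L$ is a connected reductive group over the characteristic-zero field $L$, and is therefore linearly reductive.

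First I would set $I\colonequals\ker(A\to B)$ and $J\colonequals\ker(A\to C)$. Both are $H^0$-stable ideals of $A$, since the quotient maps are $H^0$-equivariant, and by standard properties of the tensor product we have $B\otimes_A C\cong A/(I+J)$ as $H^0$-equivariant $L$-algebras. The target of the isomorphism we want to prove can therefore be rewritten as $A^{H^0}/(I+J)^{H^0}$ (after invoking linear reductivity), while the source can be rewritten as $A^{H^0}/(I^{H^0}+J^{H^0})$, so the problem reduces to the identity $(I+J)^{H^0}=I^{H^0}+J^{H^0}$.

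Next, the key inputs from linear reductivity of $H^0_L$ are (i) applied to $0\to I\to A\to B\to 0$, it gives $B^{H^0}=A^{H^0}/I^{H^0}$, and similarly $C^{H^0}=A^{H^0}/J^{H^0}$; so $B^{H^0}\otimes_{A^{H^0}}C^{H^0}=A^{H^0}/(I^{H^0}+J^{H^0})$. Input (ii) is applied to the $H^0$-equivariant surjection $I\oplus J\twoheadrightarrow I+J$, $(i,j)\mapsto i+j$: taking invariants preserves this surjection, which yields exactly $(I+J)^{H^0}=I^{H^0}+J^{H^0}$. Combining (i) and (ii) with the identification $(A/(I+J))^{H^0}=A^{H^0}/(I+J)^{H^0}$ (again from exactness applied to $0\to I+J\to A\to A/(I+J)\to 0$) produces the desired isomorphism.

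There is no real obstacle here; the only point to double-check is that $H^0_L$ is genuinely linearly reductive so that the invariants functor is exact on all $L$-linear $H^0$-representations, which is the content of Weyl's theorem for connected reductive groups in characteristic zero (this is the same input already used in the proof of \Cref{tensor_inv2} and in \Cref{EM_dis}). Everything else is a straightforward diagram chase with ideals and tensor products.
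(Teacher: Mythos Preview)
Your proof is correct and follows essentially the same approach as the paper: both arguments identify $B\otimes_A C\cong A/(I+J)$, use linear reductivity of $H^0_L$ in characteristic zero to compute $B^{H^0}\otimes_{A^{H^0}}C^{H^0}\cong A^{H^0}/(I^{H^0}+J^{H^0})$, and then deduce $(I+J)^{H^0}=I^{H^0}+J^{H^0}$ from the exactness of invariants on the surjection $I\oplus J\twoheadrightarrow I+J$ (the paper phrases this last step via the short exact sequence $0\to I\cap J\to I\oplus J\to I+J\to 0$, which is equivalent).
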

\begin{proof} Let $I:= \Ker( A\twoheadrightarrow B)$ and let $J:=\Ker(A\twoheadrightarrow C)$. Since $L$ has characteristic zero, $H^0_L$ is linearly reductive, and thus $A^{H^0}\rightarrow B^{H^0}$
is surjective with kernel $I^{H^0}$. Similarly, 
$A^{H^0}\rightarrow C^{H^0}$ is surjective with kernel $J^{H^0}$. 
We conclude that $B^{H^0}\otimes_{A^{H^0}} C^{H^0} \cong A^{H^0}/(I^{H^0}+ J^{H^0})$. By considering the exact sequence 
$0\rightarrow I\cap J \rightarrow I \oplus J \rightarrow I+J \rightarrow 0$ we deduce that $(I+J)^{H^0}= I^{H^0}+J^{H^0}$. The assertion follows
from the exact sequence $0\rightarrow I+J \rightarrow A\rightarrow B\otimes_A C\rightarrow 0$.
\end{proof} 
\begin{thm}\label{adequate}
    The natural map $R^{\ps, \Gamma}_{\Thetabar}[1/p] \to (A^{\gen, \Gamma}_{\Thetabar}[1/p])^{H^0}$ is an isomorphism.
    In particular, the map 
    $X^{\gen, \Gamma}_{\Thetabar}\sslash H^0 \rightarrow X^{\ps, \Gamma}_{\Thetabar}$  is a finite adequate homeomorphism.
\end{thm}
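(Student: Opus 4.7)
The plan is to adapt the argument of \Cref{cor_iso_Qhat} from $\widehat\Sigma$ to its quotient $\Gamma$, using the cocartesian square of \Cref{pullup_wins}. Since \Cref{Agen_versus_abstract} makes the top horizontal map of that square an isomorphism, the cocartesian property forces the bottom horizontal map to be an isomorphism as well, giving
$$
A^{\gen, \Gamma}_{\Thetabar} \cong R^{\ps, \widehat\Sigma}_{\Thetabar} \otimes_{\OO[\PC^{\widehat\Sigma}_H]} \OO[\Rep^\Gamma_H].
$$
Inverting $p$ and applying \Cref{tensor_inv2} with $A = L[\PC^{\widehat\Sigma}_H]$ (trivial $H^0$-action), $M = L[\Rep^\Gamma_H]$ (conjugation action) and $N = R^{\ps, \widehat\Sigma}_{\Thetabar}[1/p]$ (trivial action) then yields
$$
(A^{\gen, \Gamma}_{\Thetabar}[1/p])^{H^0} \cong R^{\ps, \widehat\Sigma}_{\Thetabar}[1/p] \otimes_{L[\PC^{\widehat\Sigma}_H]} L[\Rep^\Gamma_H]^{H^0}.
$$

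Viewing $\Gamma$ as an abstract group, \Cref{EM_dis} applies (its input, linear reductivity of $H^0_L$ in characteristic zero, is available) and gives $L[\Rep^\Gamma_H]^{H^0} \cong L[\PC^\Gamma_H]$. It then remains to identify
$R^{\ps, \widehat\Sigma}_{\Thetabar}[1/p] \otimes_{L[\PC^{\widehat\Sigma}_H]} L[\PC^\Gamma_H]$
with $R^{\ps, \Gamma}_{\Thetabar}[1/p]$. For this I would observe that the closed condition of a pseudocharacter of $\widehat\Sigma$ factoring through $\Gamma$ realises $\PC^\Gamma_H$ as a closed subscheme of $\PC^{\widehat\Sigma}_H$ cut out by an ideal $J \subseteq \OO[\PC^{\widehat\Sigma}_H]$, and that in the artinian local setting continuous pseudocharacters of $\widehat\Sigma$ that factor through $\Gamma$ are precisely the continuous pseudocharacters of $\Gamma$. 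Passing to the formal neighbourhood of $\Thetabar$, and using that completion commutes with taking quotients by finitely generated ideals, one obtains $R^{\ps, \Gamma}_{\Thetabar} \cong R^{\ps, \widehat\Sigma}_{\Thetabar}/J R^{\ps, \widehat\Sigma}_{\Thetabar}$, which after inverting $p$ matches the expression above.

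Finally, for the \emph{in particular} statement, \cite[Corollary 7.6]{defG} already provides that $R^{\ps, \Gamma}_{\Thetabar} \to (A^{\gen, \Gamma}_{\Thetabar})^{H^0}$ is finite and induces a universal homeomorphism on spectra; combined with the isomorphism after inverting $p$ just established, both the kernel and cokernel of this map are annihilated by a power of $p$, which by definition makes the map a finite adequate homeomorphism. I expect the main technical delicacy to lie in verifying the identification $R^{\ps, \Gamma}_{\Thetabar} \cong R^{\ps, \widehat\Sigma}_{\Thetabar}/J R^{\ps, \widehat\Sigma}_{\Thetabar}$, since one must check that the continuous and abstract pseudocharacter conditions do not decouple in the formal neighbourhood of $\Thetabar$; the remaining steps are essentially a book-keeping combination of the tools that already underlie \Cref{cor_iso_Qhat}.
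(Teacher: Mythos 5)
There is a genuine gap at your very first step. \Cref{Agen_versus_abstract} identifies $A^{\gen,\Sigmahat}_{\Thetabar}$ with $R^{\ps,\Sigmahat}_{\Thetabar}\otimes_{\OO[\PC^{\Sigma}_H]}\OO[\Rep^{\Sigma}_H]$, a tensor product taken over the coordinate rings attached to the \emph{discrete} group $\Sigma$; the top horizontal arrow of \eqref{eq_pull} has source $R^{\ps,\Sigmahat}_{\Thetabar}\otimes_{\OO[\PC^{\Sigmahat}_H]}\OO[\Rep^{\Sigmahat}_H]$, built from the \emph{abstract} representation and pseudocharacter schemes of the profinite group $\Sigmahat$. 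These are not the same ring, and the arrow in question is in general only surjective, not an isomorphism: its source classifies abstract homomorphisms $\Sigmahat\to H(A)$ with the prescribed pseudocharacter, and such homomorphisms need not be $R^{\ps}$-condensed (e.g.\ for $H=\GL_2$ and a reducible $\Thetabar$, any abstract additive character $c:\Sigmahat\to A$ gives a unipotent representation $\bigl(\begin{smallmatrix}1&c\\0&1\end{smallmatrix}\bigr)$ with the same pseudocharacter, and most such $c$ are discontinuous). The content of \Cref{Agen_versus_abstract} is precisely that condensedness is automatic for representations of the discrete group $\Sigma$, not of $\Sigmahat$. Consequently you cannot push the (nonexistent) isomorphism through the cocartesian square, the integral identification $A^{\gen,\Gamma}_{\Thetabar}\cong R^{\ps,\Sigmahat}_{\Thetabar}\otimes_{\OO[\PC^{\Sigmahat}_H]}\OO[\Rep^{\Gamma}_H]$ is unjustified (and false for the same reason), and your subsequent use of \Cref{tensor_inv2} to compute $(A^{\gen,\Gamma}_{\Thetabar}[1/p])^{H^0}$ has no basis.

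The paper's proof needs only the weaker facts that the top horizontal and left vertical arrows of \eqref{eq_pull} are surjective, and then invokes \Cref{tensor_inv1} (exactness of $H^0$-invariants in characteristic zero for a pushout of surjections) after inverting $p$, together with \Cref{EM_dis} for both $\Sigma$ and $\Gamma$, to obtain an isomorphism $R^{\ps,\Sigmahat}_{\Thetabar}[1/p]\otimes_{L[\PC^{\Sigmahat}_H]}L[\PC^{\Gamma}_H]\to(A^{\gen,\Gamma}_{\Thetabar}[1/p])^{H^0}$. Your final step --- identifying $R^{\ps,\Sigmahat}_{\Thetabar}[1/p]\otimes_{L[\PC^{\Sigmahat}_H]}L[\PC^{\Gamma}_H]$ with $R^{\ps,\Gamma}_{\Thetabar}[1/p]$, which you rightly flag as delicate (and whose sketch via a finitely generated ideal $J$ and completion is itself shaky, since $\OO[\PC^{\Sigmahat}_H]$ is not noetherian) --- is avoided in the paper by a sandwich argument: the surjection $R^{\ps,\Sigmahat}_{\Thetabar}\to R^{\ps,\Gamma}_{\Thetabar}$ gives a surjection from that tensor product onto $R^{\ps,\Gamma}_{\Thetabar}[1/p]$ through which the isomorphism factors, forcing $R^{\ps,\Gamma}_{\Thetabar}[1/p]\to(A^{\gen,\Gamma}_{\Thetabar}[1/p])^{H^0}$ to be an isomorphism. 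Your treatment of the ``in particular'' statement is essentially right (finiteness and universal homeomorphism come from \cite{defG}, and adequacy then follows from Alper's definition), but the core of the theorem is not established by your argument as written.
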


\begin{proof}  We will deduce the first assertion 
from \Cref{tensor_inv1} by applying it to the cocartesian 
square in \Cref{pullup_wins} after inverting $p$. 

\Cref{Agen_versus_abstract} implies that the top horizontal arrow in \eqref{eq_pull} is surjective. The first vertical arrow in \eqref{eq_pull} is also surjective as $\Gamma$ is a quotient of $\Sigmahat$. Hence, after inverting $p$ we are in 
a setting, where \Cref{tensor_inv1} applies. 

By \Cref{EM_dis}, the natural map $L[\PC^{\Sigmahat}_H] \to L[\Rep^{\Sigmahat}_H]^{H^0}$ is an isomorphism. Thus \Cref{tensor_inv2} and  \Cref{iso_Qhat} imply that the map obtained by inverting
$p$ and taking $H^0$-invariants of the top row of \eqref{eq_pull}
\begin{equation} 
(R^{\ps, \Sigmahat}_{\Thetabar}[1/p]\otimes_{L[\PC^{\Sigmahat}_H]} L[\Rep^{\Sigmahat}_H])^{H^0}\rightarrow
(A^{\gen, \Sigmahat}_{\Thetabar}[1/p])^{H^0}
\end{equation}
is an isomorphism. Hence, \Cref{tensor_inv1} implies that the map obtained by inverting $p$ and taking $H^0$-invariants 
of the bottom row of \eqref{eq_pull} 
\begin{equation}
(R^{\ps, \Sigmahat}_{\Thetabar}[1/p]\otimes_{L[\PC^{\Sigmahat}_H]} L[\Rep^{\Gamma}_H])^{H^0}\rightarrow
(A^{\gen, \Gamma}_{\Thetabar}[1/p])^{H^0}
\end{equation}
is also an isomorphism. By \Cref{EM_dis}, the natural map $L[\PC^{\Gamma}_H] \to L[\Rep^{\Gamma}_H]^{H^0}$ is an isomorphism. Thus \Cref{tensor_inv2} implies that
\begin{equation}\label{nearly_there}
R^{\ps, \Sigmahat}_{\Thetabar}[1/p]\otimes_{L[\PC^{\Sigmahat}_H]} L[\PC^{\Gamma}_H] \rightarrow (A^{\gen, \Gamma}_{\Thetabar}[1/p])^{H^0}
\end{equation}
is an isomorphism. Since $\Gamma$ is a quotient of $\Sigmahat$, 
the map $R^{\ps, \Sigmahat}_{\Thetabar}\rightarrow R^{\ps, \Gamma}_{\Thetabar}$ is surjective, and thus induces a surjection 
$R^{\ps, \Sigmahat}_{\Thetabar} \otimes_{\OO[\PC^{\Sigmahat}_H]} \OO[\PC^{\Gamma}_H] \twoheadrightarrow R^{\ps, \Gamma}_{\Thetabar}$. 
Since \eqref{nearly_there} factors through 
$R^{\ps, \Gamma}_{\Thetabar}[1/p]\rightarrow (A^{\gen, \Gamma}_{\Thetabar}[1/p])^{H^0}$ we deduce that this map is an isomorphism.

   In \cite[Proposition 7.4]{defG}, we have shown that the map $R^{\ps, \Gamma}_{\Thetabar} \to (A^{\gen, \Gamma}_{\Thetabar})^{H^0}$ is finite and induces a universal homeomorphism on spectra.
    It follows by \cite[Definition 3.3.1]{alper}, that it is an adequate homeomorphism.
\end{proof}

\begin{cor}\label{normal}
    If $\Gamma=\Gamma_F$ is an absolute Galois group of a $p$-adic local field $F$ then $R^{\ps}_{\Thetabar}[1/p]$ is normal.
\end{cor}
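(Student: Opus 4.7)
The plan is to combine \Cref{adequate} with a normality result for $A^{\gen}_{\Thetabar}[1/p]$ that was established in the companion paper \cite{defG}, together with the standard fact that linearly reductive invariants of a normal ring remain normal. Concretely, by \Cref{adequate} we have a natural isomorphism
\begin{equation*}
R^{\ps, \Gamma_F}_{\Thetabar}[1/p] \cong \bigl(A^{\gen, \Gamma_F}_{\Thetabar}[1/p]\bigr)^{H^0},
\end{equation*}
so the problem reduces to showing that the right hand side is normal.

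First I would invoke the result from \cite{defG} (mentioned in the introductory discussion of \Cref{intro_nice_locus}) asserting that $A^{\gen, \Gamma_F}_{\Thetabar}[1/p]$ is normal; this is where the hypothesis $\Gamma = \Gamma_F$ is used, via the local deformation theory of representations of $p$-adic Galois groups. Then I would argue that the ring of $H^0$-invariants of a normal noetherian $L$-algebra, on which the connected reductive $L$-group $H^0_L$ acts algebraically, is again normal. Since $L$ has characteristic zero, $H^0_L$ is linearly reductive, and the Reynolds operator $A^{\gen}_{\Thetabar}[1/p] \twoheadrightarrow (A^{\gen}_{\Thetabar}[1/p])^{H^0}$ splits the inclusion as a map of $(A^{\gen}_{\Thetabar}[1/p])^{H^0}$-modules. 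The standard argument (using that elements integral over the invariants are integral over the ambient ring, combined with the splitting to descend integral closure) then shows that $(A^{\gen}_{\Thetabar}[1/p])^{H^0}$ is normal.

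Putting these two ingredients together with the isomorphism of \Cref{adequate} yields that $R^{\ps}_{\Thetabar}[1/p]$ is normal. The main non-formal input is the normality of $A^{\gen, \Gamma_F}_{\Thetabar}[1/p]$, which is imported from \cite{defG}; the descent to invariants is a routine consequence of linear reductivity in characteristic zero.
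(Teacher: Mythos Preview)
Your proposal is correct and follows essentially the same approach as the paper: invoke \Cref{adequate} to identify $R^{\ps}_{\Thetabar}[1/p]$ with $(A^{\gen}_{\Thetabar}[1/p])^{H^0}$, cite the normality of $A^{\gen}_{\Thetabar}[1/p]$ from \cite{defG}, and then use that taking invariants under a linearly reductive group preserves normality. The paper's proof is terser (it simply asserts that normality is preserved under taking invariants and cites \cite[Corollary 15.24]{defG}), while you spell out the Reynolds-operator justification, but the logic is identical.
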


\begin{proof}
    \cite[Corollary 15.24]{defG} says 
    that $A^{\gen}_{\Thetabar}[1/p]$ is normal. Since normality is preserved under taking invariants the assertion follows from \Cref{adequate}.
\end{proof}

\section{Absolutely irreducible locus and smoothness}\label{sec_abs_irr}
Let $\Gamma$ be a topologically finitely generated profinite group and 
let $H$ be a generalised reductive group scheme over $\OO$ as in \Cref{sec_pseudo_def}. We assume that 
$H^0$ is split, $H/H^0$ is a constant group scheme and $H(\OO)\rightarrow (H/H^0)(\OO)$ is surjective. This can always be achieved after extending scalars 
by \cite[Proposition 2.8]{defG}. We fix $\Thetabar\in \cPC^{\Gamma}_H(k)$ and let $X^{\ps}:= X^{\ps, \Gamma}_{\Thetabar}$ and $X^{\gen}:= X^{\gen, \Gamma}_{\Thetabar}$.

Let $\kappa$ 
be an $\OO$-algebra, which is a field and let $\overline{\kappa}$ 
be its algebraic closure. A representation $\rho: \Gamma\rightarrow H(\kappa)$ is \textit{absolutely irreducible} if the composition $\Gamma \overset{\rho}{\longrightarrow} H(\kappa)\rightarrow (H/H^0)(\kappa)$ is surjective and  
 $\rho(\Gamma)$ is not contained
in $P(\overline{\kappa})$ for any proper 
parabolic\footnote{Since our $H$ is not
 necessarily connected there are some subtleties defining parabolic subgroups. 
 We refer the interested reader to \cite[Section 2.1, Example 9.14]{defG}.} subgroup of $H_{\kappabar}$. 
 
 Let $\ast$ be the closed point of $X^{\ps}$ and let 
 $Y$ be its preimage in $X^{\gen}$. The \textit{absolutely irreducible 
 locus} $(X^{\gen})^{\irr}$ in $X^{\gen}$ is defined in \cite[Definition 9.12]{defG}. It is
 an open subscheme of $X^{\gen}\setminus Y$ characterised by the property 
 that $x\in X^{\gen}\setminus Y$ lies in $(X^{\gen})^{\irr}$
 if and only if $\rho_x: \Gamma \rightarrow H(\kappa(x))$ is absolutely irreducible, where
  $\rho_x$ is  the specialisation of the universal representation 
over $X^{\gen}$ at $x$.
 The assumption that 
 $\Gamma \overset{\rhobar}{\longrightarrow} H(k) 
 \rightarrow (H/H^0)(k)$ is surjective and $H/H^0$ is constant implies that $\rho_x(\Gamma)$ surjects onto $(H/H^0)(\kappa(x))$. 
 
 The \textit{absolutely irreducible locus} $(X^{\ps})^{\irr}$ in $X^{\ps}$ is an open subscheme of $X^{\ps} \setminus \{\ast\}$ 
 characterised by the property that 
 $y\in X^{\ps} \setminus \{\ast\}$ lies in $(X^{\ps})^{\irr}$
 if and only if $\Theta^u_y$ is an
 $H$-pseudocharacter of an 
 (absolutely) irreducible representation $\rho: \Gamma \rightarrow H(\overline{\kappa(y)})$. The reconstruction theorem  \cite[Theorem 3.8]{quast} implies that all such $\rho$ are in the same $H^0$-conjugacy class and hence the  
 locus is well defined. 
 
 We remove $\ast$, even if $\rhobar$ is absolutely irreducible, because
 then the schemes $(X^{\ps})^{\irr}$ and $(X^{\gen})^{\irr}$ are Jacobson. 
 
 Let $\pi: X^{\gen} \rightarrow X^{\ps}$ be the morphism, which sends a representation to 
 its $H$-pseudocharacter. Then $(X^{\gen})^{\irr}= \pi^{-1}((X^{\ps})^{\irr})$.
 
 The main result of this section is \Cref{cover_nice_locus}, in which we assume that $\Gamma=\Gamma_F$, where $F$ is a $p$-adic local field. We establish the existence of an open subscheme $U$ of 
 $(X^{\ps})^{\irr}[1/p]$ and a covering 
 $\{\Spec A_i\rightarrow U\}_{i\in I}$ for the \'etale topology on $U$ together 
 with a family representations $\rho_i: \Gamma_F\rightarrow H(A_i)$ such that $\Theta_{\rho_i}= \Theta^u_{|A_i}$. We produce
 the representations $\rho_i$ by showing that 
 $\pi: \pi^{-1}(U) \rightarrow U$ is a smooth map 
 and then using the existence of sections \'etale 
 locally on the target. We also show that if $F\neq \Qp$ 
 then the complement of $U$ in $X^{\ps}[1/p]$ has codimension 
 at least $2$, and the same holds for the complement 
 of $\pi^{-1}(U)$ in $X^{\gen}[1/p]$. This is important 
 for the application of Riemann's Hebbarkeitssatz
 in \Cref{sec_constr}.
 
 If $H=\GL_n$ then stronger results have been proved by Chenevier \cite{che_durham}. He shows
 that the Cayley--Hamilton algebra is an Azumaya
 algebra $\mathcal A$ over $(X^{\ps}_{\Thetabar})^{\irr}$. 
 One may then 
 choose an \'etale covering $\{U_i\}_{i\in I}$ of 
 $(X^{\ps}_{\Thetabar})^{\irr}$,
 such that $\mathcal A|_{U_i}$ is a matrix algebra.
 By restricting the natural representation
 $\rho: \Gamma \rightarrow \mathcal A^*$ to
 $U_i$ one obtains the representations $\rho_i$.  
 The reason that the situation is much better in 
 the case $H=\GL_n$ is that it follows from Schur's
 lemma that the $H^0$-centraliser of an absolutely irreducible representation is $\Gm$. In particular, 
 it is smooth and connected. This need not be the case, when $H$ is generalised reductive. We avoid the smoothness problem by working in characteristic zero. However, we have to do some honest work to deal with the fact that the centralisers of the absolutely irreducible representations might not be connected. In this case the (unframed) deformation ring of $\rho_x$ need not 
 be isomorphic to the deformation ring of its pseudocharacter, see \Cref{main_luna}, and this would yield an obstruction to smoothness of $\pi$ at $x$. We deal with this problem by showing in \Cref{main_cor_luna} that 
 if $\Gamma=\Gamma_F$ and both $x$ and $\pi(x)$ are regular then the obstruction to smoothness of  
 $\pi$ at $x$ vanishes. This is done by 
 comparing the (unframed) deformation ring of $\rho_x$ with the completion of $\OO_{X^{\ps}, \pi(y)}$ in \Cref{main_luna}, the Chevalley--Shephard--Todd theorem and some arguments
 related to Luna's fundamental lemma. This section  benefited from a close reading of \cite[Theorem 4.1]{BHKT} and \cite[Theorem 53]{sikora}.

\subsection{Deformations of absolutely irreducible representations}

Let $\kappa$ be an $\OO$-algebra, which is either a finite or a local field. 
In this subsection we let $H$ be a generalised reductive group defined over $\kappa$ with constant
component group.
Let $\rho: \Gamma \rightarrow H(\kappa)$ 
be a continuous absolutely irreducible representation. We recall some results in deformation theory of $\rho$, following \cite{BHKT}, where the group $H$ is assumed to be connected. 

Let $\mathfrak A_{\kappa}$ be the category of local artinian $\kappa$-algebras with residue field $\kappa$.
If $A\in \mathfrak A_{\kappa}$ then $A$ is a finite dimensional $\kappa$-vector space and it inherits a topology from $\kappa$.
The functor 
$D^{\square}_{\rho}: 
\mathfrak A_{\kappa}
\rightarrow \Set$, which 
sends $(A, \mm_A)$ to the set 
of continuous representations 
$\rho_A: \Gamma\rightarrow H(A)$,
which are equal to $\rho$ modulo
$\mm_A$ is pro-represented by a
complete local noetherian
$\kappa$-algebra
$R^{\square}_{\rho}$.

If $G$ is an affine algebraic group scheme over $\kappa$ we let $G^{\wedge}: \mathfrak A_{\kappa}\rightarrow \Group$ be the functor which sends $A$ to 
$\Ker(G(A)\rightarrow G(\kappa))$. The
functor $G^{\wedge}$ is pro-represented by $\widehat{\OO}_{G,1}$ - the completion of the local ring at the identity $\OO_{G, 1}$ with respect to the maximal ideal.
We note that  $G^{\wedge}$ depends
only on the neutral component of $G$, so that 
$G^{\wedge}=G^{0,\wedge}$.

We let $S_{\rho}$ be the scheme 
theoretic centraliser of $\rho$ in $H^0$.
Since $\rho$ is absolutely irreducible  \cite[Proposition 9.7]{defG} implies that 
the underlying reduced subschemes of the neutral components of $S_{\rho}$ and 
the centre $Z(H)$ of $H$ coincide. In particular, if $S_{\rho}$ is smooth then $S_{\rho}^0=Z(H)^0$. Moreover, we have established in the proof of \cite[Proposition 2.9]{defG} that $Z(H)^0$ is diagonalisable. 

If $Z(H)^0$ is smooth  then the morphism  $H^0\rightarrow
H^0/Z(H)^0$ is smooth.
Hence, $\widehat{\OO}_{H^0, 1}$ is isomorphic to a formal power series ring over 
$\widehat{\OO}_{H^0/Z(H)^0, 1}$. 
This implies that the sequence 
\begin{equation}\label{mod_stab}
0\rightarrow Z(H)^{0,\wedge}(A)\rightarrow H^{0,\wedge}(A)\rightarrow (H^0/Z(H)^0)^{\wedge}(A)\rightarrow 0 \end{equation}
is exact for all $A\in \mathfrak A_{\kappa}$. The group $H^{0, \wedge}(A)$ acts on $D^{\square}_{\rho}(A)$ by conjugation. Since the induced action of $Z(H)^0(A)$ is trivial, it follows from \eqref{mod_stab} that the action factors through 
the quotient $(H^0/Z(H)^0)^{\wedge}(A)$.

\begin{lem}\label{KW} If $S_{\rho}$ is smooth then the action of $(H^0/Z(H)^0)^{\wedge}$ on $D^{\square}_{\rho}$ is free. 
\end{lem}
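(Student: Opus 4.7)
The plan is to argue by induction on the length of $A\in \mathfrak A_{\kappa}$ that every stabiliser of an element of $D^{\square}_{\rho}(A)$ under the action of $(H^0/Z(H)^0)^{\wedge}(A)$ is trivial. Unwinding the definition, freeness amounts to showing: for every $A\in \mathfrak A_{\kappa}$, every lift $\rho_A\in D^{\square}_{\rho}(A)$ of $\rho$, and every $\tilde g\in H^{0,\wedge}(A)$ with $\tilde g \rho_A \tilde g^{-1}=\rho_A$, one has $\tilde g\in Z(H)^{0,\wedge}(A)$. The case $A=\kappa$ is trivial since $H^{0,\wedge}(\kappa)=\{1\}$.

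For the inductive step, pick a nonzero ideal $I\subset A$ with $\mm_A\cdot I=0$. The reduction $\bar{\tilde g}\in H^{0,\wedge}(A/I)$ centralises $\rho_{A/I}$, so by the inductive hypothesis and the exact sequence \eqref{mod_stab}, we have $\bar{\tilde g}\in Z(H)^{0,\wedge}(A/I)$. Smoothness of $Z(H)^0$ (guaranteed by $S_{\rho}$ smooth together with $S_{\rho,\red}^0=Z(H)^0_{\red}$) lets us lift $\bar{\tilde g}$ to some $z\in Z(H)^{0,\wedge}(A)$. Then $\tilde g':=\tilde g z^{-1}$ lies in the kernel of $H^{0,\wedge}(A)\to H^{0,\wedge}(A/I)$, which, because $H^0$ is smooth and $\mm_A\cdot I=0$, is canonically identified with $\mathfrak h \otimes_{\kappa} I$ via $\xi\mapsto 1+\xi$, where $\mathfrak h=\Lie(H^0)$. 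Crucially, under this identification conjugation by an element of $H^0(A)$ corresponds to the adjoint action of its reduction in $H^0(\kappa)$, because $\mm_A\cdot I=0$ kills all higher corrections.

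Writing $\tilde g'=1+\xi$, the centralising condition $\tilde g'\rho_A(\gamma)=\rho_A(\gamma)\tilde g'$ therefore becomes $\Ad(\rho(\gamma))(\xi)=\xi$ for all $\gamma\in\Gamma$, i.e.\ $\xi\in \mathfrak h^{\Gamma}\otimes_{\kappa}I$. Since $S_{\rho}$ is assumed smooth, $\Lie(S_{\rho})=\mathfrak h^{\Gamma}$; combined with $S_{\rho}^0=Z(H)^0$ from the smoothness hypothesis, this yields $\mathfrak h^{\Gamma}=\Lie(Z(H)^0)$. Hence $\xi\in \Lie(Z(H)^0)\otimes_{\kappa} I$, so $\tilde g'\in Z(H)^{0,\wedge}(A)$ and consequently $\tilde g=\tilde g' z\in Z(H)^{0,\wedge}(A)$, completing the induction.

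The only genuinely delicate point is the identification of the kernel of $H^{0,\wedge}(A)\to H^{0,\wedge}(A/I)$ as a $\Gamma$-module with $\mathfrak h \otimes_{\kappa} I$ equipped with the adjoint action of $\rho$; once this is in place, the argument reduces to the identification $\mathfrak h^{\Gamma}=\Lie(Z(H)^0)$ that is forced by the hypothesis that $S_{\rho}$ is smooth. Everything else is bookkeeping with small extensions.
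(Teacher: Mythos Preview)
Your proof is correct, but it proceeds differently from the paper's argument. The paper interprets freeness as the assertion that the graph map $(H^0/Z(H)^0)^{\wedge}\times D^{\square}_{\rho} \to D^{\square}_{\rho} \times D^{\square}_{\rho}$, $(g,\rho)\mapsto (\rho,g\rho g^{-1})$, is a closed immersion of formal schemes, and then observes that by Nakayama this reduces to checking injectivity on $\kappa[\varepsilon]$-points; there it embeds $H$ into some $\GL_d$ and notes that since $\varepsilon^2=0$, centralising a lift $\rho'$ over $\kappa[\varepsilon]$ is the same as centralising $\rho$ itself, forcing $g\in S_{\rho}^{\wedge}(\kappa[\varepsilon])=Z(H)^{0,\wedge}(\kappa[\varepsilon])$. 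You instead establish triviality of stabilisers for all $A$ directly, by induction on the length of $A$ through small extensions. Both arguments hinge on the same Lie-algebra identity $\mathfrak h^{\Gamma}=\Lie S_{\rho}=\Lie Z(H)^0$ (the second equality being where smoothness of $S_{\rho}$ enters), but the paper's Nakayama reduction is shorter, while your induction makes the mechanism more transparent and avoids the auxiliary embedding into $\GL_d$. One small remark: the identity $\Lie S_{\rho}=\mathfrak h^{\Gamma}$ actually holds without assuming $S_{\rho}$ smooth, since it is a tangent-space computation at the identity; smoothness is only needed to conclude $S_{\rho}^0=Z(H)^0$ and hence pass to $\Lie Z(H)^0$.
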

\begin{proof} The assumption that $S_{\rho}$ is smooth implies that $S^0_{\rho}=Z(H)^0$ and hence $Z(H)^0$ is smooth. We have to show that the map 
\begin{equation}\label{eq_KW}
(H^0/Z(H)^0)^{\wedge}\times D^{\square}_{\rho} \rightarrow D^{\square}_{\rho} \times D^{\square}_{\rho},\quad  (g, \rho)\mapsto (\rho, g \rho g^{-1})
\end{equation}is a closed immersion. This is equivalent to showing that the map of local 
rings $R^{\square}_{\rho}\wtimes
R^{\square}_{\rho}\rightarrow
\widehat{\OO}_{H^0/Z(H)^0,1}\wtimes R^{\square}_{\rho}$
is surjective. By Nakayama's lemma this is equivalent to
showing that \eqref{eq_KW} is injective, when evaluated
at the dual numbers. 

Let $g\in H^{0,
\wedge}(\kappa[\varepsilon])$ and let $\rho'\in 
D^{\square}_{\rho}(\kappa[\varepsilon])$ such that $g \rho'(\gamma) g^{-1}=\rho'(\gamma)$ for all $\gamma\in \Gamma$. After choosing a closed immersion of $H$ in some $\GL_d$ 
we may write $\rho'(\gamma)= \rho(\gamma)+ \varepsilon c(\gamma)$
and $g=1 + \varepsilon a$ 
for $c(\gamma), a\in M_d(\kappa)$. We have $g \varepsilon c(\gamma) g^{-1}= \varepsilon c(\gamma)$ as 
$\varepsilon^2=0$. Thus $g$ fixes $\rho'$ if and only if $g\in S_{\rho}(\kappa[\varepsilon])$. 
Since $g\in H^{0, \wedge}(\kappa[\varepsilon])$ we deduce 
that $g\in S^{\wedge}_\rho(\kappa[\varepsilon])$. 
Since $S^0_{\rho}=Z(H)^0$ we obtain that 
$g\in Z(H)^{0, \wedge}(\kappa[\varepsilon])$. 
It follows from \eqref{mod_stab}  that the
$(H^0/Z(H)^0)^{\wedge}(\kappa[\varepsilon])$-stabiliser of $\rho'$ is trivial and hence  \eqref{eq_KW} is injective.
\end{proof}
 
It follows from \Cref{KW} and \cite[Proposition 2.5]{KW2} that if $S_{\rho}$ is smooth then the functor $D_{\rho}: \mathfrak A_{\kappa}\rightarrow \Set$, which 
sends $A$ to the set of $(H^0/Z(H)^0)^{\wedge}(A)$-orbits in $D^{\square}_{\rho}(A)$ is represented by a complete local noetherian 
$\kappa$-algebra $R_{\rho}$. The map $D^{\square}_{\rho}\rightarrow D_{\rho}$
which sends a framed deformation to its orbit induces a formally smooth 
homomorphism of local $\kappa$-algebras $R_{\rho}\rightarrow R_{\rho}^{\square}$. We choose a continuous representation $\rho^{\ver}: \Gamma\rightarrow H(R_{\rho})$ such that its orbit is the universal object 
in $D_{\rho}(R_{\rho})$. This induces a 
homomorphism of local $\kappa$-algebras
$R^{\square}_{\rho}\rightarrow R_{\rho}$ such that its composition with $R_{\rho}\rightarrow R^{\square}_{\rho}$ is the identity. The map 
\begin{equation}
 (H^0/Z(H)^0)^{\wedge}(A)\times h_{R_{\rho}}(A) \rightarrow D^{\square}_{\rho}(A), \quad (g,\varphi)
\mapsto  g (\rho^{\ver}\otimes_{R_{\rho}, \varphi} A ) g^{-1}
\end{equation} 
is bijective by \cite[Proposition 2.5]{KW2}. Hence, it induces an isomorphism of local $\kappa$-algebras 
\begin{equation}\label{Ohat}
R^{\square}_{\rho} \cong R_{\rho}\wtimes \Ohat_{H^0/Z(H)^0, 1}.
\end{equation}
\begin{remar} We will apply the results 
of this section only in characteristic zero, where $S_{\rho}$ is automatically smooth, \cite[Theorem 3.23]{milne_alg}. 
\end{remar}

\subsection{An application of Luna's fundamental lemma}\label{sec_luna}
In this subsection we will study, when the morphism $\pi: X^{\gen} \rightarrow X^{\ps}$ is smooth 
at a closed point $x$ of $X^{\gen}[1/p]$  such that the representation $\rho_x: \Gamma\rightarrow H(\kappa(x))$ is absolutely irreducible.
 The main result is \Cref{main_cor_luna}, which asserts that 
if $x$ and  $y:=\pi(x)$ are both regular points 
and $\Gamma$ is the absolute Galois group of 
a $p$-adic local field then $\pi$ is smooth at $x$. This result gets
applied in the next subsection in \Cref{nice_locus}.

Let $H_x$ be the base change of $H$ to 
$\kappa(x)$ and let $S_{\rho_x}$ be the scheme theoretic 
$H^0_x$-centraliser of $\rho_x$. Since $\kappa(x)$ has characteristic zero $S_{\rho_x}$ is smooth. Its 
 component group is a finite \'etale group scheme
by \cite[Proposition 2.37]{milne_alg}. The residue fields $\kappa(x)$ and $\kappa(y)$ are finite extensions of $L$. After replacing $L$ by a finite extension 
we may assume that $\kappa(x)=\kappa(y)=L$ and $S_{\rho_x}/S^0_{\rho_x}$ is a constant group
scheme corresponding to a finite group $\Delta$. 

The non-triviality of $\Delta$ can lead to an 
obstruction to smoothness 
of $\pi$ at $x$. This problem does not occur 
if $H=\GL_n$ as it follows from Schur's lemma that $S_{\rho_x}=\Gm$ and so $\Delta$ is trivial. 
Since $X^{\ps}[1/p]$ is isomorphic to the GIT-quotient of $X^{\gen}[1/p]$ by $H^0$ by \Cref{adequate}, we relate the completions
of local rings at $x$ and $y$ via arguments in the spirit of Luna's fundamental lemma, and then control the 
obstruction to smoothness caused by non-triviality of $\Delta$ via the Chevalley--Shephard--Todd theorem.

We will apply the results of the previous subsection 
with $\rho=\rho_x$, $\kappa=\kappa(x)$ and $H=H_x$. 
Proposition 5.16 in \cite{defG}
implies that $R^{\square}_{\rho_x}$ is naturally isomorphic to the completion of the local ring 
$\OO_{X^{\gen},x}$ with respect to its maximal ideal. Let $\Theta_y$ be the 
$H$-pseudocharacter of $\rho_x$ and 
let $R_{\Theta_y}$ be the completion\footnote{It can be shown that $R_{\Theta_y}$ is isomorphic
to the universal deformation ring of $\Theta_y$. We do 
not pursue this here as we do not use this description below.} 
of $\OO_{X^{\ps},y}$. The map 
$\OO_{X^{\ps}, y}\rightarrow \OO_{X^{\gen}, x}\rightarrow R^{\square}_{\rho_x}$ induces 
a homomorphism of local $\kappa(y)$-algebras
$R_{\Theta_y}\rightarrow R^{\square}_{\rho_x}$. 
 Since conjugation does
not change the pseudocharacter the map 
$\OO_{X^{\ps}, y}\rightarrow R^{\square}_{\rho_x}$ 
factors through $\OO_{X^{\ps}, y}\rightarrow R_{\rho_x}$ and hence induces a map of local $\kappa(y)$-algebras  $R_{\Theta_y}\rightarrow R_{\rho_x}$. 

\begin{lem}\label{finite_ver} The map $R_{\Theta_y} \rightarrow R_{\rho_x}$ is finite.
\end{lem}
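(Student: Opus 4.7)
The plan is to apply the topological form of Nakayama's lemma: I will verify that the quotient $R_{\rho_x}/\mathfrak m_{\Theta_y} R_{\rho_x}$ is zero-dimensional (hence finite-dimensional over $\kappa(y)$), and then deduce module-finiteness. The dimension computation will be carried out by leveraging the decomposition \eqref{Ohat} and comparing it with a geometric description of the scheme-theoretic fibre $\pi^{-1}(y)$.

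First I would use that the map $R_{\Theta_y}\to R^{\square}_{\rho_x}$ factors through $R_{\rho_x}$, as has already been observed before the lemma, since pseudocharacters are $H^0$-conjugation invariant. Reducing the isomorphism \eqref{Ohat} modulo $\mathfrak m_{\Theta_y}R^{\square}_{\rho_x}$ therefore gives
\[
R^{\square}_{\rho_x}/\mathfrak m_{\Theta_y}R^{\square}_{\rho_x} \;\cong\; \bigl(R_{\rho_x}/\mathfrak m_{\Theta_y}R_{\rho_x}\bigr) \wtimes \widehat{\OO}_{H^0/Z(H)^0,1},
\]
whose Krull dimension equals $\dim(R_{\rho_x}/\mathfrak m_{\Theta_y}R_{\rho_x}) + \dim H^0/Z(H)^0$.

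Next I would compute the same quantity from the geometric side. Since $R^{\square}_{\rho_x}$ is the completion of $\OO_{X^{\gen}[1/p],x}$ (as recalled before the lemma), the dimension of $R^{\square}_{\rho_x}/\mathfrak m_{\Theta_y}R^{\square}_{\rho_x}$ equals $\dim \OO_{\pi^{-1}(y),x}$. By the reconstruction theorem \cite[Theorem 3.8]{quast} applied over $\overline{\kappa(y)}$ together with the absolute irreducibility of $\rho_x$, every geometric point of the fibre $\pi^{-1}(y)$ is $H^0(\overline{\kappa(y)})$-conjugate to $\rho_x$, so $\pi^{-1}(y)_{\mathrm{red}}$ coincides with the smooth orbit $H^0\cdot x \cong H^0/S_{\rho_x}$. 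Nilpotents do not affect Krull dimension, hence $\dim \OO_{\pi^{-1}(y),x} = \dim H^0/S_{\rho_x}$, and since the neutral component $S^0_{\rho_x}$ equals $Z(H)^0$ (by \cite[Proposition 9.7]{defG} and smoothness in characteristic zero), this equals $\dim H^0/Z(H)^0$.

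Combining the two calculations yields $\dim(R_{\rho_x}/\mathfrak m_{\Theta_y}R_{\rho_x}) = 0$, so the quotient is an artinian local ring and hence finite-dimensional over $\kappa(y)$. Consequently $\mathfrak m_{\Theta_y}R_{\rho_x}$ is $\mathfrak m_{\rho_x}$-primary, so the $\mathfrak m_{\Theta_y}$-adic topology on $R_{\rho_x}$ coincides with its own $\mathfrak m_{\rho_x}$-adic topology, making $R_{\rho_x}$ complete and Hausdorff for the $\mathfrak m_{\Theta_y}$-adic topology. The topological version of Nakayama's lemma then yields that $R_{\rho_x}$ is finitely generated as an $R_{\Theta_y}$-module. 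The main delicate point is the identification of the reduced fibre $\pi^{-1}(y)_{\mathrm{red}}$ with the $H^0$-orbit of $x$, which hinges on the reconstruction theorem for absolutely irreducible pseudocharacters; once this is in place the rest is dimension-counting and Nakayama.
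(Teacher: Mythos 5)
Your proof is correct and follows essentially the same route as the paper: reduce via Nakayama (using completeness of $R_{\rho_x}$) to showing the fibre ring is zero-dimensional, transfer the computation through the isomorphism \eqref{Ohat} to the completed local ring of $X^{\gen}\times_{X^{\ps}}\Spec\kappa(y)$ at $x$, and bound its dimension by $\dim H^0/Z(H)^0$. The only difference is that where the paper cites \cite[Corollary 9.9]{defG} for the dimension of the fibre, you re-derive it on the spot by identifying the reduced fibre with the orbit $H^0/S_{\rho_x}$ via the reconstruction theorem, which is in substance the same input.
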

\begin{proof} Let $A$ be the 
fibre ring $\kappa(y)\otimes_{R_{\Theta_y}} R_{\rho_x}$. By Nakayama's lemma 
it is enough to show that $A$ is a finite dimensional 
$\kappa(y)$-vector space. Since $R_{\rho_x}$ is noetherian, it is enough to show that the Krull dimension of 
$A$ is zero. Since 
the map $R_{\Theta_y}\rightarrow R_{\rho_x}^{\square}$
factors through $R_{\Theta_y}\rightarrow R_{\rho_x}$ 
from \eqref{Ohat} we obtain 
\begin{equation}
B:= R^{\square}_{\rho_x}\otimes_{R_{\Theta_y}}\kappa(y) \cong A \wtimes \Ohat_{H_x^0/Z(H_x)^0,1}
\end{equation}
Thus it is enough to show that $\dim B \le \dim H_x -\dim Z(H_x)$. Since $R^{\square}_{\rho_x}$ is isomorphic to the completion of the local ring at $x$, 
$B$ is the 
completion of the local ring at $x$ of the 
fibre $X^{\gen} \times_{X^{\ps}} \Spec \kappa(y)$. Since $\rho_x$ is absolutely irreducible the 
fibre has dimension $\dim H_x -\dim Z(H_x)$ by 
\cite[Corollary 9.9]{defG}. Since the fibre 
is of finite type over $\kappa(y)$ 
the assertion follows. 
\end{proof}

If $A\in \mathfrak A_{\kappa(x)}$ then 
$S_{\rho_x}(A)$ acts on $D^{\square}_{\rho_x}(A)$ by conjugation. Since $S_{\rho_x}$ is smooth we have $S_{\rho_x}^0=Z(H_x)^0$.
Thus the 
action factors through the quotient
$(S_{\rho_x}/S^0_{\rho_x})(A)=\Delta$. 
Since $S_{\rho_x}(A)$ normalises $H^{0,\wedge}_x(A)$ this action induces 
an action of $\Delta$ on $D_{\rho_x}(A)$, 
and hence an action of $\Delta$ on $R_{\rho_x}$. 
The choice of $\rho^{\ver}: \Gamma\rightarrow H(R_{\rho_x})$ such that its equivalence 
class is the universal object in $D_{\rho_x}(R_{\rho_x})$ induces a morphism 
\begin{equation}\label{schaum1}
\Spec R_{\rho_x}\rightarrow X^{\gen}\times_{X^{\ps}} \Spec R_{\Theta_y}.
\end{equation}
The action of $H^0$ on $X^{\gen}$ induces a morphism 
\begin{equation}\label{schaum2}
  (H^0_x/Z(H_x)^0)\times( X^{\gen}\times_{X^{\ps}} \Spec R_{\Theta_y}) \rightarrow X^{\gen}\times_{X^{\ps}} \Spec R_{\Theta_y}.
\end{equation}
By composing \eqref{schaum1} and \eqref{schaum2} we obtain a morphism 
\begin{equation}\label{schaum3}
(H^0_x/Z(H_x)^0)\times \Spec R_{\rho_x}\rightarrow X^{\gen}\times_{X^{\ps}} \Spec R_{\Theta_y}.
\end{equation}
The morphism \eqref{schaum3} is $\Delta$-invariant 
for the action $g\cdot (h, \rho) = (hg^{-1}, g \rho g^{-1})$ on the source and the trivial action on the target.  Hence, \eqref{schaum3} factors through \begin{equation}\label{etale_mor}
(H^0_x/Z(H_x)^0)\times^\Delta \Spec R_{\rho_x}\rightarrow X^{\gen} \times_{X^{\ps}}\Spec R_{\Theta_y}.
\end{equation} 
\begin{prop} The morphism \eqref{etale_mor} 
is \'etale at the point $(1, x)$. 
\end{prop}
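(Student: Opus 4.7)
The plan is to verify étaleness by checking that the induced map on completed local rings is an isomorphism, using the identification \eqref{Ohat} as the key input.

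First, I will reduce étaleness of \eqref{etale_mor} at the image of $(1,x)$ to étaleness of \eqref{schaum3} at $(1,x)$ itself. The $\Delta$-action $g\cdot(h,\rho) = (hg^{-1}, g\rho g^{-1})$ on the source is free, since the action on the first coordinate is right translation by elements of the subgroup $\Delta = S_{\rho_x}/Z(H_x)^0 \hookrightarrow H^0_x/Z(H_x)^0$, and this translation is free. Consequently the quotient morphism
$$q\colon (H^0_x/Z(H_x)^0) \times \Spec R_{\rho_x} \to (H^0_x/Z(H_x)^0) \times^{\Delta} \Spec R_{\rho_x}$$
is a $\Delta$-torsor, in particular étale at $(1,x)$. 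It therefore suffices to show that the composition \eqref{schaum3} is étale at $(1,x)$.

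Next, I compute completed local rings. The completion of the source at $(1,x)$ is $\Ohat_{H^0_x/Z(H_x)^0,\,1} \wtimes_L R_{\rho_x}$. For the target, \cite[Proposition 5.16]{defG} identifies $\OO_{X^{\gen},x}^{\wedge}$ with $R^{\square}_{\rho_x}$, and since $R_{\Theta_y}$ is already the completion of $\OO_{X^{\ps},y}$, the completion of $X^{\gen} \times_{X^{\ps}} \Spec R_{\Theta_y}$ at $x$ is also $R^{\square}_{\rho_x}$.

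The crux is then to verify that the map on completions induced by \eqref{schaum3} coincides with the inverse of the isomorphism of \eqref{Ohat}. This is a matter of unwinding the functor-of-points descriptions: both assignments arise from the rule $(g,\varphi) \mapsto g(\rho^{\ver} \otimes_{R_{\rho_x},\varphi} A) g^{-1}$ on $A$-points for $A \in \mathfrak A_{\kappa(x)}$ and $(g,\varphi) \in (H^0_x/Z(H_x)^0)^{\wedge}(A) \times h_{R_{\rho_x}}(A)$. Once this identification is made, the induced map on completions is an isomorphism, and since \eqref{schaum3} is locally of finite type between locally noetherian schemes, the standard criterion for étaleness gives the claim. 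The main obstacle is the bookkeeping involved in matching the construction of \eqref{schaum3} via \eqref{schaum1} and \eqref{schaum2} with the orbit-map description of \eqref{Ohat}; no genuinely new geometric input is required.
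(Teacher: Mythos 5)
Your argument follows the paper's proof essentially step for step: you reduce to \eqref{schaum3} via the free $\Delta$-action (the paper invokes SGA1 for \'etaleness of the quotient map, you phrase it as a $\Delta$-torsor, which is the same point), you identify the completed local ring of the source at $(1,x)$ with $\Ohat_{H^0_x/Z(H_x)^0,1}\wtimes R_{\rho_x}$ and that of the target with $R^{\square}_{\rho_x}$, you observe that the induced map on completions is exactly the isomorphism \eqref{Ohat}, and you conclude by the criterion that a morphism locally of finite type which induces an isomorphism on completed local rings is \'etale at the point. This is the paper's argument.

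The one place where you are too quick is the clause ``since \eqref{schaum3} is locally of finite type between locally noetherian schemes''. This is not automatic and is exactly where the paper needs \Cref{finite_ver}: the source contains the factor $\Spec R_{\rho_x}$, a complete local ring, while the target is of finite type over $\Spec R_{\Theta_y}$, so finite-typeness of \eqref{schaum3} comes down to the map $R_{\Theta_y}\rightarrow R_{\rho_x}$ being of finite type, which holds only because it is finite --- the content of \Cref{finite_ver}, whose proof uses \eqref{Ohat} together with the dimension of the fibre over $y$ from \cite[Corollary 9.9]{defG}, i.e.\ absolute irreducibility of $\rho_x$. The finite-type hypothesis cannot be waved away: an isomorphism on completed local rings alone does not imply \'etaleness (the completion morphism $\Spec \widehat{\OO}_{Y,y}\rightarrow \Spec \OO_{Y,y}$ is the standard counterexample), so without this input the last step does not go through. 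If you insert a citation of \Cref{finite_ver} at this point, your proof coincides with the one in the paper.
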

\begin{proof}
Since $\Delta$ acts freely on $(H^0_x/Z(H_x)^0)\times 
\Spec R_{\rho_x}$ the morphism  to its quotient by $\Delta$ is \'etale by \cite[Exp.\,V, Cor. 2.3]{SGA1}. Thus it is enough to establish that \eqref{schaum3} is \'etale at the 
point $(1,x)$. It follows from \eqref{Ohat} that \eqref{schaum3} induces an 
isomorphism between the completed local 
rings at $(1,x)$ and at its image. Since 
$X^{\gen} \rightarrow X^{\ps}$ is of finite type, \Cref{finite_ver} implies that \eqref{schaum3} is 
of finite type. Thus the map is \'etale at $(1, x)$ by \cite[Corollary 18.66]{GW2}.
\end{proof}

\begin{prop}\label{main_luna} The map $R_{\Theta_y}\rightarrow R_{\rho_x}$ induces an isomorphism $R_{\Theta_y}\cong 
R_{\rho_x}^{\Delta}$.
\end{prop}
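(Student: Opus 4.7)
The plan is to pass to $H^0$-invariants on both sides of the étale morphism \eqref{etale_mor} and identify the resulting rings, using \Cref{adequate} and the fact that $H^0_L$ is linearly reductive in characteristic zero. The natural map $R_{\Theta_y} \to R_{\rho_x}^{\Delta}$ is well defined: it factors the finite map of \Cref{finite_ver} because conjugation does not alter pseudocharacters, so the image of $R_{\Theta_y}$ in $R_{\rho_x}$ is $\Delta$-invariant. The goal is to prove that this natural map is an isomorphism.

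First I would compute the $H^0$-GIT quotient of the source of \eqref{etale_mor}. The action of $H^0$ on $(H^0_x/Z(H_x)^0)\times^{\Delta}\Spec R_{\rho_x}$ factors through $H^0/Z(H^0)$, which acts freely by left translation on the first factor. Quotienting the pre-image $(H^0_x/Z(H_x)^0)\times\Spec R_{\rho_x}$ by this free action gives back $\Spec R_{\rho_x}$, and accounting for the residual $\Delta$-action yields $\Spec R_{\rho_x}^{\Delta}$ as the GIT quotient.

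Next I would compute the $H^0$-GIT quotient of the target $X^{\gen}\times_{X^{\ps}}\Spec R_{\Theta_y}$. Because $y\in X^{\ps}[1/p]$, the ring $R_{\Theta_y}$ is an $L$-algebra, and since $H^0_L$ is linearly reductive the formation of invariants commutes with the flat base change $\Spec R_{\Theta_y}\to X^{\ps}[1/p]$. Applying \Cref{adequate} to identify $X^{\gen}[1/p]\sslash H^0$ with $X^{\ps}[1/p]$, the GIT quotient of the target simplifies to
\[
(X^{\gen}[1/p]\sslash H^0)\times_{X^{\ps}[1/p]}\Spec R_{\Theta_y}\;=\;\Spec R_{\Theta_y}.
\]

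Since \eqref{etale_mor} is $H^0$-equivariant and étale at $(1,x)$, passing to GIT quotients produces a morphism $\Spec R_{\rho_x}^{\Delta}\to \Spec R_{\Theta_y}$, and I would invoke a Luna-slice style argument to conclude that this descended morphism is itself étale at the closed point: the orbits of $(1,x)$ and of its image are closed (they are singletons in the respective quotients), so étaleness of an $H^0$-equivariant map between schemes with linearly reductive $H^0$-action descends to the GIT quotients at such points. Since both $R_{\Theta_y}$ and $R_{\rho_x}^{\Delta}$ are complete local noetherian $L$-algebras with residue field $L$, an étale morphism between them must be an isomorphism. Finally, one checks that the descended isomorphism coincides with the natural map $R_{\Theta_y}\to R_{\rho_x}^{\Delta}$ by chasing through the construction of $\rho^{\ver}$. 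The main technical obstacle I anticipate is formalising the Luna-type descent of étaleness through the non-finite reductive quotient, and verifying the coincidence of the induced and natural maps on the nose.
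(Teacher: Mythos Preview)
Your computation of the two GIT quotients is correct and indeed matches what the paper does (cf.\ \eqref{etale_n} and \eqref{etale_m}). The gap is precisely where you flag it: the descent of \'etaleness from \eqref{etale_mor} to the map on GIT quotients. Luna's fundamental lemma is a statement about schemes of finite type over a field, but neither $(H^0_x/Z(H_x)^0)\times^{\Delta}\Spec R_{\rho_x}$ nor $X^{\gen}\times_{X^{\ps}}\Spec R_{\Theta_y}$ is of finite type over $L$, since $R_{\rho_x}$ and $R_{\Theta_y}$ are complete local rings of positive dimension. There is no off-the-shelf Luna lemma in this generality, and formulating one would require care with limits and base change that your sketch does not supply.

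The paper avoids this entirely by a different and more elementary route. Rather than descending \'etaleness, it base-changes \eqref{etale_mor} to $\Spec R_{\Theta_y}/\mm^n$ for each $n$, so that both source $X_n$ and target $Y_n$ become of finite type over $L$. It then shows that the resulting map $X_n\to Y_n$ is not merely \'etale but an \emph{isomorphism}: the reconstruction theorem for absolutely irreducible pseudocharacters (\cite[Theorem 3.8]{quast}) forces $X_n(\Omega)\to Y_n(\Omega)$ to be bijective for every algebraically closed $\Omega$, so the map is universally injective; combined with \'etaleness (propagated from $(1,x)$ by the transitive $H^0$-action on geometric points) this gives an open immersion, and bijectivity on geometric points upgrades it to an isomorphism. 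Taking $H^0$-quotients of an isomorphism is trivial, yielding $R_{\Theta_y}/\mm^n\cong (R_{\rho_x}/\mm^n R_{\rho_x})^{\Delta}$, and the inverse limit (with exactness of $\Delta$-invariants) finishes. The key insight you are missing is this use of the reconstruction theorem to upgrade ``\'etale at a point'' to ``isomorphism'' over each artinian base; once you have that, no Luna-type descent is needed at all.
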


\begin{proof}
Let $\mm$ be the maximal ideal of $R_{\Theta_y}$. For each $n\ge 1$, we base change   \eqref{etale_mor} to $\Spec R_{\Theta_y}/\mm^n$ to obtain 
 a morphism
\begin{equation}\label{etale_mor_n}
X_n:= H^0_x\times^{S_{\rho_x}} \Spec R_{\rho_x}/\mm^n R_{\rho_x}\rightarrow Y_n:=X^{\gen} \times_{X^{\ps}}\Spec R_{\Theta_y}/\mm^n,
\end{equation} 
which is \'etale at $(1,x)$. In the above description of $X_n$ we use that the order of $\Delta$ is invertible, so that taking $\Delta$-invariants is exact. We have 
\begin{equation}\label{etale_n}
X_n\sslash H^0_x\cong (\Spec R_{\rho_x}/\mm^n R_{\rho_x})\sslash S_{\rho}\cong \Spec (R_{\rho_x}/\mm^n R_{\rho_x})^{\Delta}
\end{equation}
Since 
reductive groups are linearly reductive in characteristic zero, \Cref{adequate} implies that
\begin{equation}\label{etale_m}
Y_n \sslash H^0_x\cong \Spec R_{\Theta_y}/\mm^n.
\end{equation}
We claim that \eqref{etale_mor_n} is an isomorphism. 
The claim,  \eqref{etale_n}
and \eqref{etale_m} imply that 
$R_{\Theta_y}/\mm^n\rightarrow (R_{\rho_x}/\mm^n R_{\rho_x})^{\Delta}$ is an isomorphism. 
By passing to the projective 
limit and using the exactness of $\Delta$-invariants we obtain 
the desired isomorphism.

To prove the claim, we note that if $\Omega$ is an algebraically closed field containing $L$ then  $X_n(\Omega)$ is 
just an $H^0(\Omega)$-orbit of $\rho_x$ in $X^{\gen}(\Omega)$, and it follows 
from the reconstruction theorem \cite[Theorem 3.8]{quast} that 
\eqref{etale_n} induces a bijection between 
$X_n(\Omega)$ and $Y_n(\Omega)$. Hence, \eqref{etale_mor_n} is universally injective by \cite[\href{https://stacks.math.columbia.edu/tag/01S4}{Tag 01S4}]{stacks-project}. Since 
\eqref{etale_mor_n} is \'etale at 
$(1,x)$ and $H^0_x(\Omega)$ acts transitively on $X_n(\Omega)$, the map is \'etale. Thus \eqref{etale_mor_n} 
is an open immersion by \cite[\href{https://stacks.math.columbia.edu/tag/025G}{Tag 025G}]{stacks-project}. Since it induces a bijection on geometric points the map is an isomorphism. 
\end{proof}

Let $\mm$ be the maximal ideal of $R_{\rho_x}$. 
The action of $\Delta$ on $R_{\rho_x}$ induces an action on $\mm/\mm^2$, which is canonically 
isomorphic to $\kappa(x)$-dual of 
$H^1(\Gamma_F, \Lie H_x)$. Since the action 
of $\Delta$ is semi-simple there is 
a $\Delta$-invariant $\kappa(x)$-subspace
$V\subset \mm$, which maps isomorphically onto $\mm/\mm^2$. This induces a $\Delta$-equivariant map of $\kappa(x)$-algebras $S(V)\rightarrow R_{\rho_x}$ and a surjective $\Delta$-equivariant map of local $\kappa(x)$-algebras
\begin{equation}\label{PV_surj}
P(V)\twoheadrightarrow R_{\rho_x}
\end{equation}
with the notations of \Cref{section_todd}. Let $\overline{\Delta}$ be the image of $\Delta$ in $\GL(V)$.
It follows from \eqref{PV_surj} that the action of 
$\Delta$ on $R_{\rho_x}$ factors through $\overline{\Delta}$.

\begin{cor}\label{58} Suppose $x$ and $y$ are regular points of 
$X^{\gen}$ and $X^{\ps}$, respectively. Then
$\overline{\Delta}$ is generated by pseudoreflections.
\end{cor}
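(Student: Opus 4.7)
The plan is to use \Cref{main_todd} as the criterion: it suffices to show that $P(V)^{\overline{\Delta}}$ is formally smooth over $\kappa(x)$. To get our hands on this ring, we will first upgrade the surjection \eqref{PV_surj} to an isomorphism $P(V) \cong R_{\rho_x}$ using regularity of $x$, and then invoke \Cref{main_luna} together with regularity of $y$ to identify $P(V)^{\overline{\Delta}}$ with a formal power series ring.

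First I would exploit regularity at $x$. By \cite[Proposition 5.16]{defG} the ring $R^{\square}_{\rho_x}$ is the completion of $\OO_{X^{\gen}, x}$, so regularity of $x$ implies that $R^{\square}_{\rho_x}$ is regular. Since we are in characteristic zero, $S_{\rho_x}$ is smooth, so the results of the previous subsection apply and \eqref{Ohat} gives
\[
R^{\square}_{\rho_x} \cong R_{\rho_x} \wtimes \Ohat_{H^0_x/Z(H_x)^0, 1}.
\]
The second factor is a formal power series ring over $\kappa(x)$ since $H^0_x/Z(H_x)^0$ is smooth. A standard comparison of Krull dimensions with embedding dimensions (which are both additive for $\wtimes$) then forces $R_{\rho_x}$ itself to be regular of Krull dimension $\dim_{\kappa(x)} V$. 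Hence the surjection \eqref{PV_surj} is a surjection between two regular local rings of the same dimension, which is an isomorphism: $P(V) \cong R_{\rho_x}$ as $\Delta$-equivariant $\kappa(x)$-algebras. In particular, $R_{\rho_x}^{\Delta} = R_{\rho_x}^{\overline{\Delta}} = P(V)^{\overline{\Delta}}$.

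Next I would bring in regularity at $y$. By \Cref{main_luna},
\[
R_{\Theta_y} \cong R_{\rho_x}^{\Delta} \cong P(V)^{\overline{\Delta}}.
\]
Regularity of $y$ means that $\OO_{X^{\ps}, y}$ is regular, hence so is its completion $R_{\Theta_y}$. As $R_{\Theta_y}$ is a complete regular local noetherian $\kappa(y)$-algebra with residue field $\kappa(y) = \kappa(x)$, it is a formal power series ring over $\kappa(x)$, in particular formally smooth over $\kappa(x)$. Thus $P(V)^{\overline{\Delta}}$ is formally smooth over $\kappa(x)$, and \Cref{main_todd} (applicable since $|\overline{\Delta}|$ is invertible in characteristic zero) concludes that $\overline{\Delta}$ is generated by pseudoreflections. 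The main subtlety is the step upgrading \eqref{PV_surj} to an isomorphism: one has to match up dimensions correctly using the smoothness of $H^0_x/Z(H_x)^0$, which is why the characteristic-zero hypothesis is essential.
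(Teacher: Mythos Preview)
Your proposal is correct and follows essentially the same route as the paper: use regularity at $x$ together with \eqref{Ohat} to upgrade \eqref{PV_surj} to an isomorphism $P(V)\cong R_{\rho_x}$, then apply \Cref{main_luna} and regularity at $y$ to see that $P(V)^{\overline{\Delta}}\cong R_{\Theta_y}$ is formally smooth, and conclude via \Cref{main_todd}. The paper phrases the first step slightly differently (invoking the Cohen structure theorem directly rather than your dimension/embedding-dimension comparison), but the content is the same.
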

\begin{proof} The local rings at $x$ and $y$ are regular by assumption. Since we are in the equicharacteristic 
setting, their 
completions are isomorphic to formal power series rings over the 
residue field, which we have arranged to be $L$. 
Since the completion of the local 
ring at $x$ is isomorphic to $R^{\square}_{\rho_x}$ by \cite[Proposition 5.16]{defG}, it follows from \eqref{Ohat} that 
$R_{\rho_x}$ is also isomorphic to 
a formal power series ring over $L$.
This implies that \eqref{PV_surj} is an isomorphism.
Since the action of $\Delta$
on $R_{\rho_x}$ factors through 
$\overline{\Delta}$, 
the assertion follows from \Cref{main_luna} and \Cref{main_todd}.
\end{proof}

\begin{cor}\label{main_cor_luna} Assume that $\Gamma=\Gamma_F$, where $F$ is a finite extension of $\Qp$.  If $x$ and $y$ are regular points then $\pi: X^{\gen}\rightarrow X^{\ps}$ is smooth at $x$.
\end{cor}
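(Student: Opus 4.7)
The plan is to reduce smoothness of $\pi$ at $x$ to formal smoothness of the induced local homomorphism $R_{\Theta_y}\to R^{\square}_{\rho_x}$ on completed local rings. Using \Cref{main_luna} and \eqref{Ohat}, this homomorphism factors as
\[
R_{\Theta_y}\cong R_{\rho_x}^{\Delta}\hookrightarrow R_{\rho_x}\hookrightarrow R_{\rho_x}\wtimes \Ohat_{H^0_x/Z(H_x)^0,1}\cong R^{\square}_{\rho_x},
\]
in which the second arrow is plainly formally smooth, so the whole question reduces to controlling the inclusion of invariants $R_{\rho_x}^{\Delta}\hookrightarrow R_{\rho_x}$ after completed tensor with the smooth factor.

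Next, by \Cref{58} the regularity of $x$ and $y$ implies that $\overline\Delta$ is generated by pseudoreflections, so the Chevalley--Shephard--Todd theorem (\Cref{CST}) applies: $R_{\rho_x}^{\Delta}$ is itself a regular local $L$-algebra of the same Krull dimension as $R_{\rho_x}$, and $R_{\rho_x}$ is a free $R_{\rho_x}^{\Delta}$-module of rank $|\overline\Delta|$. Combined with the formal smoothness of the second inclusion, this already yields flatness of $R^{\square}_{\rho_x}$ over $R_{\Theta_y}$ of the correct relative dimension $\dim(H^0_x/Z(H_x)^0)$, matching the expected fibre dimension of $\pi$ at $x$ computed in the proof of \Cref{finite_ver}.

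The remaining and main step is to upgrade flatness to smoothness, i.e.\ to verify regularity of the closed fibre. Since $R_{\rho_x}\otimes_{R_{\rho_x}^{\Delta}} L$ has $L$-length $|\overline\Delta|$, this reduces to showing $|\overline\Delta|=1$, which is where the hypothesis $\Gamma=\Gamma_F$ enters essentially. Since $\Delta$ centralises $\rho_x(\Gamma_F)$, its $\Ad$-action on $\Lie H_x$ is $\Gamma_F$-equivariant and hence descends to an action on $H^1(\Gamma_F,\Lie H_x)$, which can be identified with the tangent space $V^*$ of $D_{\rho_x}$. Decomposing into $\Delta$-isotypic components $\Lie H_x=\bigoplus_\chi V_\chi$, a nontrivial pseudoreflection in $\overline\Delta$ would force a nontrivial character $\chi$ with $\dim H^1(\Gamma_F,V_\chi)=1$. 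The hard part will be to exclude this: the local Euler--Poincar\'e formula gives the lower bound $\dim H^1(\Gamma_F,V_\chi)\ge [F:\Qp]\dim V_\chi$, which combined with local Tate duality and a careful bookkeeping of $h^0$ and $h^2$ for each isotypic piece should rule out the pseudoreflection and force $\overline\Delta=1$, whence smoothness of $\pi$ at $x$ follows.
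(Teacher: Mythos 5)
Your reduction is sound and in fact runs parallel to the paper's: granting \Cref{58}, \Cref{main_luna} and \eqref{Ohat}, smoothness of $\pi$ at $x$ comes down to showing $\overline{\Delta}$ is trivial (the paper phrases this as: $\overline{\Delta}$ contains no pseudoreflection, hence, being generated by pseudoreflections, is trivial, so $R_{\Theta_y}\to R_{\rho_x}$ is an isomorphism and $\widehat{\OO}_{X^{\gen},x}$ is a power series ring over $\widehat{\OO}_{X^{\ps},y}$). Your bookkeeping for a putative pseudoreflection $g$ is also correct as far as it goes: writing $W$ for the non-fixed part of $\Lie H_x$ under $g$, one gets $\dim H^1(\Gamma_F,W)\le 1$, the Euler--Poincar\'e formula gives $\dim H^1(\Gamma_F,W)\ge [F:\Qp]\dim W$, and $W\neq 0$ because a nontrivial element of $\overline{\Delta}$ must act nontrivially on $\Lie H_x$. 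For $F\neq\Qp$ this already yields the contradiction.

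The genuine gap is the case $F=\Qp$, which the statement includes, and which you leave to "local Tate duality and careful bookkeeping of $h^0$ and $h^2$". That cannot work: the configuration to be excluded is a one-dimensional $\Gamma_{\Qp}$-stable piece $W$ with $h^0=h^2=0$ and $h^1=1$, and this is exactly what Euler--Poincar\'e and duality predict for any character of $\Gamma_{\Qp}$ that is neither trivial nor cyclotomic, so no contradiction can come from these tools alone (even self-duality of the adjoint representation only pairs eigenvalues $\zeta$ with $\zeta^{-1}$ and leaves a one-dimensional $(-1)$-eigenspace possible). The missing input is structural, about the adjoint action itself: lift $g$ to a finite-order element $h\in S_{\rho_x}(\overline{L})$; in characteristic zero $h$ is semisimple, hence lies in a maximal torus $T$ of $H^0_{\overline{L}}$, and nontriviality of its action on $\Lie H_x$ produces a root $\alpha$ with $\alpha(h)\neq 1$; then also $(-\alpha)(h)\neq 1$, so both root spaces $\mathfrak g_{\pm\alpha}$ lie in $W$ and $\dim W\ge 2$. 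Feeding this into Euler--Poincar\'e gives $\dim H^1(\Gamma_F,W)\ge [F:\Qp]\cdot 2\ge 2$ even for $F=\Qp$, contradicting the rank-one condition. Without this root-pairing argument (or an equivalent lower bound on $\dim W$), your proof does not close.
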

\begin{proof} We claim that in this case $\overline{\Delta}$ does not contain a pseudoreflection. \Cref{58} implies that $\overline{\Delta}$ is trivial and \Cref{main_luna} implies that $R_{\Theta_y}\rightarrow R_{\rho_x}$ is an isomorphism. 
It follows from \eqref{Ohat} that 
$\widehat{\OO}_{X^{\gen},x}$ is isomorphic to the ring of formal 
power series over $\widehat{\OO}_{X^{\ps}, y}$. 
Since the map is of finite type, the assertion follows from
\cite[Theorem 18.63 (iii)]{GW2}.

To prove the claim let $g\in \overline{\Delta}$ be non-trivial. We may canonically decompose 
\begin{equation}\label{decompose}
\Lie H_x= V'\oplus W,
\end{equation}
where $g-1$ is zero on $V'$ and invertible on $W$.
We claim that $\dim W\ge 2$. To verify this we extend the scalars to the algebraic closure of $L$.
We may choose 
an element $h\in S_{\rho_x}(\overline{L})$, such that $h$ has finite order and it is mapped to $g$. Since we
are in characteristic zero $h$ is semisimple and thus it  is  contained in 
a maximal torus $T$ of $H^0_{\overline{L}}$.
Since the action of $g$ on $\Lie H_x$ is non-trivial, 
\cite[Theorem 13.18 (1)]{borel} implies that 
 there is a root $\alpha: T\rightarrow \Gm$ such that $\alpha(h)\neq 1$.  Then $(-\alpha)(h)\neq 1$ and hence $\dim W\ge 2$.

Since $h$ centralises $\rho_x(\Gamma_F)$ the decomposition \eqref{decompose} is $\Gamma_F$-invariant. It follows from the Euler--Poincar\'e characteristic formula that $\dim H^1(\Gamma_F, W)\ge [F:\Qp]\dim W$.
Since $H^1(\Gamma_F, \Lie H_x)\cong V^*$ and $g-1$ is invertible on a direct summand $H^1(\Gamma_F, W)$,  we deduce that $g$ cannot be a pseudoreflection
and hence the claim is proved.
\end{proof} 

\begin{remar} We note that if  
the assertion of \Cref{main_cor_luna} 
holds after replacing $L$ by a finite 
field extension then it already holds over $L$. Thus the extension of scalars made at the beginning of the section 
to trivialise the component group of $S_{\rho_x}$ and to arrange $\kappa(x)=\kappa(y)=L$ does not affect this statement.
\end{remar}

\subsection{Arithmetic setting} In this subsection we assume that $\Gamma$ is the absolute Galois group of a $p$-adic local field $F$. 

The morphism $\pi: X^{\gen} \rightarrow X^{\ps}$ induces an isomorphism $X^{\gen}[1/p]\sslash H^0\cong X^{\ps}[1/p]$ by \Cref{adequate}. Thus $\pi(X^{\gen}[1/p])=X^{\ps}[1/p]$ by  \cite[Theorem 3 (ii)]{seshadri}. 

The schemes $X^{\gen}[1/p]$ 
and $X^{\ps}[1/p]$ are Jacobson, and the 
residue fields of closed points are finite 
extensions of $L$ by \cite[Lemmas 3.17, 3.18]{BIP_new}. Thus open loci are uniquely determined by their $\Qpbar$-points.

Let $V^{\irr}$ be the absolutely irreducible locus in $X^{\gen}[1/p]$ defined in \cite[Definition 9.12]{defG}. It is 
an open subscheme of  $X^{\gen}[1/p]$, 
and its $\Qpbar$-points correspond to 
 $x\in X^{\gen}(\Qpbar)$ such that 
 $\rho_x(\Gamma_F)$ is not contained in
 any proper parabolic  subgroup of $H(\Qpbar)$. Let $Z^{\red}$ be the complement of $V^{\irr}$ in $X^{\gen}[1/p]$. Then 
 $Z^{\red}$ is a closed $H^0$-invariant subspace of
 $X^{\gen}[1/p]$ and hence its image $\pi(Z^{\red})$ is 
 a closed subset of $X^{\ps}[1/p]$ by \cite[Theorem 3]{seshadri} and \Cref{adequate}. We let $U^{\irr}$ be its complement. We have $V^{\irr}=\pi^{-1}(U^{\irr})$.

\begin{lem}\label{topo} Let $Z$ be a closed $H^0$-invariant 
subscheme of $X^{\gen}[1/p]$. Then the following hold: 
\begin{enumerate}
\item $\pi(Z)$ is closed in $X^{\ps}[1/p]$;
\item $V^{\irr}\cap Z = \pi^{-1}(U^{\irr}\cap \pi(Z))$;
\item if $V^{\irr}\cap Z$ is dense in $Z$ 
then $U^{\irr}\cap \pi(Z)$ is dense in $\pi(Z)$ and 
$$\dim X^{\gen}[1/p] - \dim Z = \dim X^{\ps}[1/p] - \dim \pi(Z);$$
\item if $Z$ contains $Z^{\red}$ then $Z=\pi^{-1}(\pi(Z))$ and hence the map 
$$X^{\gen}[1/p]\setminus Z \rightarrow X^{\ps}[1/p]\setminus \pi(Z)$$
is well defined and surjective.
\end{enumerate}
\end{lem}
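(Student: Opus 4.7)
All four parts will be deduced from two inputs: \Cref{adequate}, which identifies $\pi:X^{\gen}[1/p]\to X^{\ps}[1/p]$ with the adequate $H^0$-quotient, and the reconstruction theorem \cite[Theorem 3.8]{quast}, which says that on the absolutely irreducible locus the $H$-pseudocharacter determines a representation up to $H^0$-conjugacy.

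Part (1) is the standard assertion that an adequate quotient sends closed invariant subsets to closed subsets, via \cite[Theorem 3]{seshadri}. For part (2), the identity $V^{\irr}=\pi^{-1}(U^{\irr})$ gives $\pi^{-1}(U^{\irr}\cap\pi(Z))=V^{\irr}\cap\pi^{-1}(\pi(Z))$, so it suffices to show the inclusion $V^{\irr}\cap\pi^{-1}(\pi(Z))\subseteq Z$. For a geometric point $\bar x$ of $V^{\irr}\cap\pi^{-1}(\pi(Z))$, pick a geometric point $\bar z$ of $Z$ with $\pi(\bar z)=\pi(\bar x)$; since $\pi(\bar x)\in U^{\irr}$ we have $\bar z\in V^{\irr}$, and reconstruction places $\bar z$ and $\bar x$ in a single $H^0$-orbit, so $\bar x\in Z$ by $H^0$-invariance of $Z$. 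Part (4) then combines (2) with the definition of $Z^{\red}$: any $x\in\pi^{-1}(\pi(Z))$ either lies in $V^{\irr}$, in which case (2) forces $x\in Z$, or lies in $Z^{\red}\subseteq Z$; surjectivity on complements is immediate from the surjectivity of $\pi$ itself, which follows from \Cref{adequate} and \cite[Theorem 3 (ii)]{seshadri}.

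The delicate part is (3). Density of $U^{\irr}\cap\pi(Z)$ in $\pi(Z)$ is formal: by (1) the map $\pi|_Z:Z\to\pi(Z)$ is a continuous surjection, hence sends the dense subset $V^{\irr}\cap Z$ to a dense subset of $\pi(Z)$, and by (2) this image is contained in $U^{\irr}\cap\pi(Z)$. For the dimension identity I would observe that, by \cite[Proposition 9.7, Corollary 9.9]{defG}, the morphism $\pi|_{V^{\irr}}:V^{\irr}\to U^{\irr}$ is equidimensional of relative dimension $d:=\dim H^0-\dim Z(H)^0$, since at every point of $V^{\irr}$ the neutral component of the $H^0$-stabiliser is $Z(H)^0$. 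Part (2) implies that $V^{\irr}\cap Z$ is saturated for $\pi|_{V^{\irr}}$ with image $U^{\irr}\cap\pi(Z)$, so $\dim(V^{\irr}\cap Z)=\dim(U^{\irr}\cap\pi(Z))+d$; combined with the two density statements this yields $\dim Z=\dim\pi(Z)+d$. The same reasoning for $X^{\gen}[1/p]$ itself produces $\dim X^{\gen}[1/p]=\dim X^{\ps}[1/p]+d$, using the dimension formula of \Cref{sec_dim_codim} (which applies, since $X^{\gen}[1/p]$ is locally of finite type over the punctured spectrum of $R^{\ps}_{\Thetabar}$) together with the fact that in the arithmetic setting $V^{\irr}$ is dense in every component of $X^{\gen}[1/p]$ that meets it. The main obstacle I foresee is bookkeeping for components of $Z$ or $X^{\gen}[1/p]$ that fail to meet $V^{\irr}$: the hypothesis of (3) rules out such components of $Z$, and for the global equality one reduces component-by-component using \Cref{useful_codim_dim}.
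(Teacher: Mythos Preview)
Your proposal is correct and follows essentially the same approach as the paper. Parts (1), (2), and (4) match almost verbatim: the paper invokes \cite[Theorem 3]{seshadri} for (1), argues (2) via the reconstruction theorem exactly as you do, and proves (4) by splitting $\pi^{-1}(\pi(Z))$ into its $V^{\irr}$ and $Z^{\red}$ pieces.

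For part (3) there is one simplification worth noting. Your density argument (continuous surjections send dense subsets to dense subsets) is actually a bit more direct than the paper's, which instead takes the closure $W$ of $U^{\irr}\cap\pi(Z)$ and shows $Z\subset\pi^{-1}(W)$. For the dimension identity, however, the paper avoids the bookkeeping you anticipate: rather than re-deriving $\dim X^{\gen}[1/p]-\dim X^{\ps}[1/p]=d$ from density of $V^{\irr}$ component by component, it simply cites the explicit dimension formulas \cite[Corollaries 13.28, 13.31]{defG}, which give both dimensions directly and show the schemes are equidimensional. Likewise, for $\dim(V^{\irr}\cap Z)-\dim(U^{\irr}\cap\pi(Z))=d$ the paper cites \cite[Corollary 9.10]{defG}, and for passing from the dense open to the whole of $Z$ (resp.\ $\pi(Z)$) it invokes the Jacobson property via \stackcite{0DRT}. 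Your route through equidimensionality of $\pi|_{V^{\irr}}$ and \Cref{useful_codim_dim} works but is more circuitous; the obstacle you foresee about components missing $V^{\irr}$ does not actually arise, since in the arithmetic setting $V^{\irr}$ is dense in all of $X^{\gen}[1/p]$.
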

\begin{proof}
Part (1) follows from \cite[Theorem 3 (iii)]{seshadri}. If $y$ is a geometric point of  $U^{\irr}\cap \pi(Z)$ then all $x\in \pi^{-1}(y)$ 
are in the same $H^0$-orbit and at least one $x$ lies in $Z$. Since $Z$
is $H^0$-invariant, we get an inclusion $\pi^{-1}(U^{\irr}\cap \pi(Z))\subset V^{\irr}\cap Z$. Since $U^{\irr}=\pi(V^{\irr})$ the other inclusion is trivial. Hence we obtain part (2). 

For part (3) we let $W$ be the closure of $U^{\irr}\cap \pi(Z)$ in $X^{\ps}[1/p]$. Since $\pi(Z)$ is closed by part (1) we have $W\subset \pi(Z)$. Since 
$\pi^{-1}(W)$ is closed and contains $V^{\irr}\cap Z$ we deduce that $Z\subset \pi^{-1}(W)$ and hence $\pi(Z)\subset W$. The schemes 
$X^{\gen}[1/p]$ and $X^{\ps}[1/p]$ 
are noetherian, catenary and Jacobson. We
have 
\begin{equation}\label{alpha}
\dim Z= \dim (V^{\irr}\cap Z), \quad \dim \pi(Z)=\dim (U^{\irr}\cap \pi(Z))
\end{equation}
by \cite[\href{https://stacks.math.columbia.edu/tag/0DRT}{Tag 0DRT}]{stacks-project}. It follows from \cite[Corollary 9.10]{defG} and part (2) that 
\begin{equation}\label{beta}
\dim (V^{\irr}\cap Z) - \dim (U^{\irr}\cap \pi(Z)) = 
\dim H_x - \dim Z(H_x).
\end{equation}
Moreover, Corollaries 13.28, 13.31 in \cite{defG} imply that 
\begin{equation}\label{gamma}
\dim X^{\gen}[1/p]-\dim X^{\ps}[1/p]= \dim H_x - \dim Z(H_x).
\end{equation}
Putting \eqref{alpha}, \eqref{beta} and \eqref{gamma} together we obtain part (3). 

For part (4) we may write 
$\pi(Z)= (U^{\irr}\cap \pi(Z)) \cup \pi(Z^{\red})$. Since $\pi^{-1}(\pi(Z^{\red}))=Z^{\red}$
and $Z= (V^{\irr}\cap Z) \cup Z^{\red}$ the assertion follows from part (2).
\end{proof}

\begin{prop}\label{nice_locus} Assume that $F\neq \Qp$. Then there is an open  subscheme $U$ of $X^{\ps}[1/p]$
and an open subscheme $V$ of $X^{\gen}[1/p]$ such that the  following hold:
\begin{enumerate}
\item $U\subset U^{\irr}$ and $V\subset V^{\irr}$;
\item $U$ and $V$ are both regular;
\item $\pi|_V:V\rightarrow U$ is smooth and surjective;
\item $\dim X^{\ps}[1/p] - \dim (X^{\ps}[1/p]\setminus U) \ge 2$;
\item 
$\dim X^{\gen}[1/p] - \dim (X^{\gen}[1/p]\setminus V)\ge 2$.
\end{enumerate} 
 \end{prop}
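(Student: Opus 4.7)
The plan is to define
\[
V := V^{\irr} \cap (X^{\gen}[1/p])^{\reg} \cap \pi^{-1}((X^{\ps}[1/p])^{\reg}), \qquad U := \pi(V).
\]
By \Cref{main_cor_luna}, $\pi$ is smooth at every point of $V$, so $\pi|_V$ is smooth, hence flat, hence open. Consequently $U$ is open in $X^{\ps}[1/p]$ and is regular as the image of a regular scheme under a smooth morphism. Properties (1)--(3) are then immediate: $V \subseteq V^{\irr}$ by definition and $U \subseteq \pi(V^{\irr}) = U^{\irr}$; both $V$ and $U$ are regular; and $\pi|_V$ is smooth and surjective onto $U$ by construction.

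The substance lies in the codimension bounds (4) and (5), which rest on three inputs. First, $X^{\gen}[1/p]$ is normal by \cite[Corollary 15.24]{defG} and $X^{\ps}[1/p]$ is normal by \Cref{normal}; both are excellent, being of finite type over the punctured spectrum of the complete noetherian local ring $R^{\ps}_{\Thetabar}$, so Serre's criterion $R_1$ implies that their singular loci are closed of codimension at least $2$. Second, on the absolutely irreducible locus, $\pi|_{V^{\irr}} \colon V^{\irr} \to U^{\irr}$ has equidimensional fibers of dimension $d := \dim H^0 - \dim Z(H)^0$ by \cite[Corollary 9.10]{defG}, so the fiber-dimension formula preserves codimension across $\pi|_{V^{\irr}}$. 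Third, and this is where $F \neq \Qp$ enters, the reducible locus $Z^{\red} := X^{\gen}[1/p] \setminus V^{\irr}$ has codimension at least $2$ in $X^{\gen}[1/p]$, and $\pi(Z^{\red}) = X^{\ps}[1/p] \setminus U^{\irr}$ has codimension at least $2$ in $X^{\ps}[1/p]$; this is obtained by stratifying $Z^{\red}$ by proper parabolic subgroups of $H$ and applying the Euler--Poincar\'e formula for $\Gamma_F$-cohomology, whose dependence on $[F:\Qp] \geq 2$ supplies the needed factor.

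Granting these, (5) follows by the decomposition
\[
X^{\gen}[1/p] \setminus V \,=\, (X^{\gen}[1/p])^{\sing} \,\cup\, Z^{\red} \,\cup\, \pi^{-1}((X^{\ps}[1/p])^{\sing}).
\]
The first two pieces have codimension $\geq 2$ by the first and third inputs, and the third is controlled by splitting along $V^{\irr}$ and $Z^{\red}$: its intersection with $Z^{\red}$ is codimension $\geq 2$, and its intersection with $V^{\irr}$ equals $\pi|_{V^{\irr}}^{-1}((X^{\ps}[1/p])^{\sing} \cap U^{\irr})$, whose codimension in $V^{\irr}$ equals that of $(X^{\ps}[1/p])^{\sing} \cap U^{\irr}$ in $U^{\irr}$ by the second input. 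For (4), every $y \in X^{\ps}[1/p] \setminus U$ lies in one of $\pi(Z^{\red})$, $(X^{\ps}[1/p])^{\sing}$, or the set $B$ of points of $U^{\irr} \cap (X^{\ps}[1/p])^{\reg}$ whose single $H^0$-orbit fiber is contained in $(X^{\gen}[1/p])^{\sing}$; the first two are codimension $\geq 2$ by the inputs, and for $B$ the fiber-dimension formula applied to $\pi^{-1}(B) \subseteq V^{\irr} \cap (X^{\gen}[1/p])^{\sing}$ transfers the codimension-$2$ bound on the singular locus to $B$ itself. Combining via \Cref{useful_codim_dim} completes both estimates.

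The main obstacle is the third input, namely the codimension bound for $Z^{\red}$ and $\pi(Z^{\red})$ under $F \neq \Qp$. This requires a careful stratification of $Z^{\red}$ by parabolic subgroups of $H$ together with Galois-cohomological dimension counts; the constant $[F:\Qp] \geq 2$ appearing in the Euler--Poincar\'e formula is what drives the bound and is precisely why the hypothesis $F \neq \Qp$ is needed. The remainder of the proof is organizational, combining normality, \Cref{main_cor_luna}, and the fiber-dimension formula.
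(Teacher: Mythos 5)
Your proposal is correct and follows essentially the same strategy as the paper: the same $V$, the same decomposition of the complement into the singular loci and $Z^{\red}$, and the same three inputs (normality of both sides, \Cref{main_cor_luna} for smoothness over the regular irreducible locus, and the $[F:\Qp]\ge 2$ codimension bound for the reducible locus via \cite{defG}), combined using \Cref{topo} and \Cref{useful_codim_dim}. The only cosmetic differences are that you deduce openness and regularity of $U$ from smooth-implies-open plus flat descent of regularity, whereas the paper gets both directly from the GIT fact that $\pi$ sends closed $H^0$-invariant sets to closed sets (\Cref{topo}) and the observation that $U$ avoids the singular locus by construction; and you compress the Jacobson argument needed to upgrade smoothness at closed points of $V$ (which is what \Cref{main_cor_luna} gives) to smoothness on all of $V$.
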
 
\begin{proof}  We let 
\begin{equation}
Z= Z^{\red}\cup X^{\gen}[1/p]^{\sing} \cup \pi^{-1}(X^{\ps}[1/p]^{\sing}),
\end{equation}
where the superscript $\sing$ denotes the singular locus. Since the schemes $X^{\gen}[1/p]$ and 
$X^{\ps}[1/p]$ are excellent, this locus is
closed. Moreover, $Z$ is $H^0$-invariant.
We let $V=X^{\gen}[1/p]\setminus Z$ and
$U=X^{\ps}[1/p]\setminus \pi(Z)$. 
Parts (1) and (4) of \Cref{topo} 
imply that $U$ is open in $X^{\ps}[1/p]$
and $\pi(V)=U$. Parts (1) and (2) of the
proposition hold by construction. 
The locus in $V$, where $\pi|_V$ is not smooth is closed. If this locus is non-empty then it would contain a closed point of $V$. It follows from (2) and \Cref{main_cor_luna} that this is not possible. Hence $\pi|_V$ is smooth and part (3) of the proposition holds. 

We will now verify the estimates on the codimensions of $Z$ in $X^{\gen}[1/p]$ and $\pi(Z)$ in $X^{\ps}[1/p]$. We note that the schemes 
$X^{\gen}[1/p]$ and $X^{\ps}[1/p]$ are equidimensional by \cite[Corollaries 13.28, 13.31]{defG} and \Cref{useful_codim_dim} applies here with $R=R^{\ps}_{\Thetabar}$.
We let 
\begin{equation}
V'= V^{\irr}\cap X^{\gen}[1/p]^{\sing}, \quad V''= V^{\irr}\cap \pi^{-1}(X^{\ps}[1/p]^{\sing})
\end{equation}
and denote by $Z'$ and $Z''$ their closures
in $X^{\gen}[1/p]$. Then $Z=Z'\cup Z'' \cup Z^{\red}$ and it is enough to show that 
$Z', Z'', Z^{\red}$ have codimension at least 
$2$ in $X^{\gen}[1/p]$ and $\pi(Z')$, $\pi(Z'')$ and $\pi(Z^{\red})$ have codimension  at least $2$ in $X^{\ps}[1/p]$. 

Since $X^{\gen}[1/p]$ is normal by \cite[Corollary 15.24]{defG} and $V^{\irr}$ is an open subscheme, $V^{\irr}$ is also normal.
Since $V'$ is the singular locus in in $V^{\irr}$, we get that its codimension in $V^{\irr}$ is at least $2$. 
It follows from \eqref{alpha} that $\dim Z'=\dim V'$. 
Since 
$V^{\irr}$ is dense in $X^{\gen}[1/p]$ we have 
$\dim V^{\irr}=\dim X^{\gen}[1/p]$ and hence
$Z'$ has codimension at least $2$ in $X^{\gen}[1/p]$. Part (3) of \Cref{topo} implies that $\pi(Z')$ has codimension at least $2$ in $X^{\ps}[1/p]$.

Since $X^{\ps}[1/p]$ is normal by \Cref{normal} and $U^{\irr}$ is an open 
subscheme, $U^{\irr}$ is also normal. 
We claim that $U^{\irr}\cap \pi(Z'')$ is equal to the singular locus in $U^{\irr}$. 
Since $V^{\irr}=\pi^{-1}(U^{\irr})$, 
it follows
from the definition of $V''$ that 
it contains 
the preimage of the singular locus in $U^{\irr}$, which implies that the singular 
locus is contained in $\pi(V'')$, which is equal to $U^{\irr}\cap \pi(Z'')$ by part (2) \Cref{topo}. The other 
inclusion is given by 
\begin{equation}
U^{\irr}\cap \pi(Z'')\subset U^{\irr} \cap (\pi(Z^{\red})\cup X^{\ps}[1/p]^{\sing})= U^{\irr}\cap X^{\ps}[1/p]^{\sing}.
\end{equation}
The claim and normality of $U^{\irr}$ imply that $U^{\irr}\cap \pi(Z'')$ has codimension at least $2$. It follows from 
\eqref{beta} that $\dim (U^{\irr}\cap \pi(Z''))= \dim \pi(Z'')$. Since $U^{\irr}$ is dense in $X^{\ps}[1/p]$ we have 
$\dim U^{\irr}=\dim X^{\ps}[1/p]$. 
Hence $\pi(Z'')$ has codimension  at least $2$ in $X^{\ps}[1/p]$. Part (3) 
of \Cref{topo} implies that $Z''$ has 
codimension  at least $2$ in
$X^{\gen}[1/p]$.

 It follows from  \cite[Proposition 13.14, 13.17]{defG} that 
 \begin{equation}\label{FQp1}
 \dim X^{\gen}[1/p] - \dim Z^{\red}\ge [F:\Qp].
 \end{equation}
 We note that the hypothesis (DIM) in the statement 
 of \cite[Proposition 13.17]{defG} is verified in 
 \cite[Theorem 13.26]{defG}. 
 If $\rho_x(\Gamma_F)$ is contained in a proper parabolic subgroup $P$ then its $H$-semi\-simp\-li\-fi\-ca\-tion takes values in a Levi subgroup $L$ of $P$. Thus $\pi(x)$ lies in the image of the natural map $X^{\ps}_{\overline{\Psi}}[1/p]\rightarrow X^{\ps}_{\Thetabar}[1/p]$, for some $\Psibar\in \cPC^{\Gamma}_L(k)$.
 Moreover, this map is finite by \cite[Corollary 6.5]{defG} and $\pi(Z^{\red})$ is the union of images taken over all proper Levi subgroups $L$ of $H$ containing a given split maximal 
 torus and all $\Psibar\in \cPC^{\Gamma}_L(k)$, which map to 
 $\Thetabar$, by construction of the absolutely irreducible locus in \cite[Section 9.2]{defG}. Since 
 \begin{equation}
 \dim X^{\ps}[1/p]= \dim H_k [F:\Qp] +\dim Z(H_k)
 \end{equation}
 by \cite[Corollary 13.13]{defG}, it follows from \cite[Proposition 2.16]{defG} that 
 \begin{equation}\label{FQp2}
 \dim X^{\ps}[1/p] - \dim \pi(Z^{\red}) \ge [F:\Qp].
 \end{equation}
 The assumption $F\neq \Qp$ implies that $[F:\Qp]\ge 2$ and this finishes the proof of parts (4) and (5).
\end{proof}
\begin{remar} If $F=\Qp$ then  \Cref{nice_locus} holds if there is no representation $\rhobar_1: \Gamma_F\rightarrow L(\kbar)$, 
where $L$ is a 
Levi subgroup of $H$ 
such that $\dim H -\dim L=2$ and 
the $H$-semisimplification of $\rhobar_1$ is equal to $\rhobar^{\mathrm{ss}}$.
However, it does not 
hold in general. For example,
if $F=\Qp$ and $G=\GL_2$ and $\rhobar$ is reducible then by examining the arguments of \cite[Lemma 3.29]{BIP_new} one sees that the
reducible locus has codimension $1$
and thus part (5) cannot hold. Nevertheless, the Proposition 
is true if we replace the bound $2$ by $1$ in parts (4) and (5) as the assumption $F\neq \Qp$ is only invoked
when using the bounds \eqref{FQp1} and \eqref{FQp2}.
\end{remar}

\begin{cor}\label{cover_nice_locus} Let $U\subset X^{\ps}[1/p]$ be 
an open subscheme as in \Cref{nice_locus}. There
is an \'etale covering $\{U_i\rightarrow U\}_{i\in I}$ by affine schemes 
$U_i=\Spec A_i$ and a family of representations $\{\rho_i: \Gamma_F \rightarrow H(A_i)\}_{i\in I}$ such that 
$\Theta_{\rho_i}= \Theta^u_{|A_i}$. 
\end{cor}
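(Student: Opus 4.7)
The plan is to deduce this directly from \Cref{nice_locus} by combining the smoothness of $\pi|_V$ with the moduli-theoretic description of $X^{\gen}_{\Thetabar}$. Let $V = \pi^{-1}(U) \subset X^{\gen}[1/p]$; by parts (2) and (3) of \Cref{nice_locus}, the map $\pi|_V : V \to U$ is smooth and surjective between regular schemes.

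First, I would recall that any smooth surjective morphism of schemes admits sections étale-locally on the target (e.g.\ \stackcite{055U}): given any $u \in U$, one can find an étale neighbourhood $U' \to U$ of $u$ such that $\pi|_V$ has a section over $U'$. By choosing such étale neighbourhoods at every closed point of $U$ and then, if necessary, replacing each by an affine open cover, we obtain an étale covering $\{U_i = \Spec A_i \to U\}_{i \in I}$ together with morphisms $s_i : U_i \to V$ satisfying $\pi \circ s_i = (U_i \to U \hookrightarrow X^{\ps}[1/p])$.

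Next, I would convert each $s_i$ into a representation. Composing $s_i$ with the open immersion $V \hookrightarrow X^{\gen}[1/p] \hookrightarrow X^{\gen}$ gives an $R^{\ps}_{\Thetabar}$-algebra map $A^{\gen}_{\Thetabar} \to A_i$, which by the definition of $X^{\gen}_{\Thetabar}$ classifies an $R^{\ps}_{\Thetabar}$-condensed representation $\rho_i : \Gamma_F \to H(A_i)$ with $\Theta_{\rho_i}$ equal to the specialisation of $\Theta^u$ along $R^{\ps}_{\Thetabar} \to A_i$. Since $\pi \circ s_i$ is the canonical map $U_i \to U \subset X^{\ps}$, the induced map $R^{\ps}_{\Thetabar} \to A_i$ is exactly the structural map, so $\Theta_{\rho_i} = \Theta^u_{|A_i}$, as required.

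There is no serious obstacle here once \Cref{nice_locus} is in hand: both inputs (étale-local existence of sections for smooth surjections, and the functor-of-points description of $X^{\gen}_{\Thetabar}$ classifying condensed representations with prescribed pseudocharacter) are already available. The mildest care needed is in passing from an arbitrary étale cover to one by affines, which is automatic since $U_i \to U$ is locally of finite presentation.
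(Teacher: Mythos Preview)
Your proposal is correct and follows essentially the same route as the paper: the paper's proof also invokes the existence of \'etale-local sections for the smooth surjection $\pi|_V:V\to U$ (citing \cite[Corollaire 17.16.3 (ii)]{EGA4_4} rather than the Stacks Project) and then reads off the representations $\rho_i\in X^{\gen}(A_i)$ from the sections $s_i:\Spec A_i\to V\to X^{\gen}$. Your additional verification that $\Theta_{\rho_i}=\Theta^u_{|A_i}$ via the condition $\pi\circ s_i=(U_i\to U)$ is exactly what is implicit in the paper's sentence ``give rise to $\rho_i\in X^{\gen}(A_i)$''.
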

\begin{proof} Since $\pi|_V: V\rightarrow U$
is smooth and surjective, it has sections \'etale locally on $U$ by \cite[Corollaire 17.16.3 (ii)]{EGA4_4}. The sections 
$s_i: \Spec A_i \rightarrow V \rightarrow X^{\gen}$ give rise to $\rho_i\in X^{\gen}(A_i)$.
\end{proof}

\section{Relative analytification}\label{sec_rel_an}
We will introduce an analytification functor for schemes 
locally of finite type over $\Spec R[1/p]$, where $R$ is a complete local noetherian $\OO$-algebra with finite residue field, prove its universal 
property in \Cref{universal},  and will relate it 
to the analytification functor of K\"opf for schemes  
locally of finite type over affinoids. 

The main result of this section is \Cref{etale_cover}, which shows that if $Y$ is a scheme locally of finite type over $\Spec R[1/p]$ then the functor
$U\mapsto \OO_{U^{\ana}}(U^{\ana})$ is a sheaf 
of rings for the \'etale topology on $Y$. We prove this result by reducing to the case, when
$Y$ is a scheme locally of finite type over
$\Spec A$, where $A$ is an affinoid algebra,
which is handled in \Cref{etale_cover_aff} by 
using Berkovich spaces. For the application 
it is important that the functor $Y\mapsto Y^{\ana}$ 
preserves normality. We establish this in \Cref{normal_rig} 
by showing that the completions of the local rings $\OO_{Y^{\ana},y}$ 
and $\OO_{Y, \pi(y)}$
are isomorphic. 
\subsection{Schemes over affinoids}
Let $A$ be an $L$-affinoid algebra in the sense of \cite[Definition 6.1.1/1]{BGR}, so that $A$ is 
a quotient of a Tate algebra $T_n$ over $L$ for 
some $n\ge 0$. When working with 
Berkovich spaces we will add the adjective 
\textit{strictly} to distinguish them from more general affinoid algebras in Berkovich's theory. Let $\varphi: Y \rightarrow \Spec A$ 
be a scheme locally of finite type over $A$. We will recall some
results of K\"opf \cite{Koepf} on the relative analytification of $Y$. 
A modern reference for this is \cite[Section 2]{conrad_ample}. 
By \cite[Satz 1.4]{Koepf}, reproved in \cite[Example 2.2.11]{conrad_ample}, there 
exists a rigid analytic space $\varphi^{\ana}: Y^{\ana} \rightarrow \Sp(A)$ together with a morphism $\pi: Y^{\ana}\rightarrow Y$ of 
locally $G$-ringed  spaces over $A$, such that for every 
rigid analytic space $T\rightarrow \Sp(A)$ we have a natural 
bijection 
\begin{equation}\label{universal_koepf}
\Hom_{\Sp(A)}(T, Y^{\ana})\cong \Hom_{\Spec A}(T, Y), \quad f \mapsto \pi\circ f,
\end{equation}
where the left-hand-side are morphisms of rigid analytic spaces over 
$\Sp(A)$ and the right-hand-side are morphisms of locally $G$-ringed  spaces over $A$. We refer the reader to \cite[Definition 2.2.2]{conrad_ample} for the definition of the category of locally $G$-ringed  spaces over $A$. This category contains the category of 
locally ringed spaces (and hence the category of schemes) over $\Spec A$ and the category of rigid analytic varieties over $\Sp(A)$ as full subcategories. 

A similar analytification 
construction is carried out in \cite[Section 2.6]{berkovich_ihes}
in the setting of Berkovich analytic spaces, where
the universal property can be formulated by working in 
the category of locally ringed spaces. 
We will denote the resulting analytification 
by $\varphi^{\Ber}: Y^{\Ber}\rightarrow \MM(A)$.

\begin{prop}\label{points} Let $Y$ be a scheme locally of finite type over $\Spec A$, where $A$ is an affinoid algebra. The map $\pi: Y^{\ana}\rightarrow Y$ induces a bijection 
between $Y^{\ana}$ and the set of points in $Y$ with the residue field 
a finite extension of $L$. Moreover, if $y\in Y^{\ana}$ then the 
natural map $\widehat{\OO}_{Y^{\ana}, y}\rightarrow \widehat{\OO}_{Y, \pi(y)}$ is an isomorphism of complete local rings.
\end{prop}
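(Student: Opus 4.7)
The plan is to reduce both assertions to the affine case and then extract them from the universal property \eqref{universal_koepf} by applying it to suitably chosen test rigid analytic spaces.

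Both statements are local on $Y$, so I may assume $Y = \Spec B$ with $B = A[x_1,\ldots,x_n]/I$ finitely generated over $A$. Then $Y$ is a closed subscheme of $\mathbb{A}^n_A$, and by functoriality of analytification $Y^{\ana}$ is a closed analytic subspace of $(\mathbb{A}^n_A)^{\ana}$, which admits an admissible covering by the closed polydiscs $\Sp A\langle x_1/r,\ldots,x_n/r\rangle$ as $r \to \infty$. Consequently $Y^{\ana}$ is admissibly covered by the affinoids $\Sp B_r$ with $B_r := A\langle x_1/r,\ldots,x_n/r\rangle \otimes_{A[x_1,\ldots,x_n]} B$, and every classical point of $Y^{\ana}$ corresponds to a maximal ideal of some $B_r$, whose residue field is finite over $L$.

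For the bijection on points, the Jacobson property of affinoid algebras, inherited by $B$, ensures that every maximal ideal of $B$ has residue field finite over $L$. I apply \eqref{universal_koepf} with $T = \Sp L'$ for each finite extension $L'/L$, obtaining a natural bijection between $L'$-valued points of $Y^{\ana}$ over $\Sp A$ and $L'$-valued points of $Y$ over $\Spec A$; taking the union over all finite $L'$ and quotienting by $\Gal(\overline{L}/L)$ gives the desired bijection between classical points of $Y^{\ana}$ and closed points of $Y$ with residue field finite over $L$, realised by $\pi$. Concretely, an $L$-algebra map $B \to L'$ with $L'/L$ finite factors through $B_r$ once $r$ is large enough to dominate the absolute values of the images of $x_1,\ldots,x_n$, producing a classical point of $Y^{\ana}$; conversely, the contraction of a maximal ideal of any $B_r$ to $B$ is maximal, and the two assignments are inverse.

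For the isomorphism of complete local rings, fix $y \in Y^{\ana}$, set $y' := \pi(y)$, and write $L' := \kappa(y) = \kappa(y')$, finite over $L$. For a local Artinian $L$-algebra $R$ with residue field $L'$, the finite-dimensional $L$-vector space $R$ is automatically a (strictly) affinoid $L$-algebra and $\Sp R$ is a rigid point. By the general principle that morphisms of locally $G$-ringed, respectively locally ringed, spaces $\Sp R \to Y^{\ana}$, respectively $\Spec R \to Y$, sending the unique closed point to $y$, respectively $y'$, correspond to local $L$-algebra homomorphisms $\OO_{Y^{\ana},y} \to R$, respectively $\OO_{Y,y'} \to R$, inducing the identity on residue fields, restricting the bijection \eqref{universal_koepf} to morphisms based at $y$ on the left and at $y' = \pi(y)$ on the right (which are matched by the point-correspondence proved above) identifies $\Hom(\widehat{\OO}_{Y^{\ana},y},R)$ with $\Hom(\widehat{\OO}_{Y,y'},R)$ naturally in $R$. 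The resulting isomorphism of the functors pro-represented by $\widehat{\OO}_{Y^{\ana},y}$ and $\widehat{\OO}_{Y,y'}$ yields the claimed isomorphism of complete local rings.

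The main obstacle is the pro-representability statement on the analytic side: that morphisms $\Sp R \to Y^{\ana}$ based at $y$, for $R$ local Artinian, are classified by local ring maps out of $\widehat{\OO}_{Y^{\ana},y}$ (and compatibly so with the $A$-structure). By choosing an affinoid neighbourhood $\Sp B_r \ni y$, this reduces to the classical fact that for an affinoid algebra $C$ with maximal ideal $\mm$, the $\mm$-adic completion of the localisation $C_{\mm}$ equals the complete local ring of $\Sp C$ at the corresponding point, and pro-represents its infinitesimal deformations. The remaining bookkeeping, essentially matching closed points with their Galois-orbits of $L'$-valued points, is routine.
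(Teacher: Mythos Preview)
Your argument is correct and is essentially the standard one: the paper's own proof of this proposition is simply a citation to K\"opf \cite[Satz 2.1]{Koepf} and Conrad \cite[Example 2.2.11]{conrad_ample}, and your strategy of specialising the universal property \eqref{universal_koepf} to $T=\Sp L'$ (for the bijection on points) and to $T=\Sp R$ with $R$ local Artinian (for the completed local rings) is precisely the method those references use. Indeed, the paper itself spells out the same argument in the analogous setting of \Cref{deJong_like}, so your proposal matches both the cited proofs and the paper's internal conventions.
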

\begin{proof} \cite[Satz 2.1]{Koepf}, \cite[Example 2.2.11]{conrad_ample}. 
\end{proof} 
Using the excellence of the local rings of rigid analytic spaces one
may transfer information between the local rings $\OO_{Y^{\ana}, y}$
and $\OO_{Y, \pi(y)}$ and their completions, obtaining the following
corollaries. 
 \begin{cor} $\dim Y^{\ana}=\dim Y$.
\end{cor}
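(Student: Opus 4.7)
The plan is to compute both dimensions as the supremum of dimensions of local rings at classical points and then transport the computation through the completion isomorphism from the preceding proposition.

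First I would recall that an affinoid algebra $A$ is Jacobson, so $\Spec A$ is a Jacobson scheme and the closed points of $\Spec A$ have residue field finite over $L$. Since $Y$ is locally of finite type over $\Spec A$, it follows that $Y$ is Jacobson, every closed point of $Y$ has residue field a finite extension of $L$, and $\dim Y = \sup_{y\in Y\text{ closed}} \dim \OO_{Y,y}$. By \Cref{points} the map $\pi : Y^{\ana} \to Y$ identifies $Y^{\ana}$ as a set with exactly the locus of points of $Y$ whose residue field is finite over $L$, in particular with the set of closed points of $Y$.

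Next I would use the isomorphism $\widehat{\OO}_{Y^{\ana},y}\cong \widehat{\OO}_{Y,\pi(y)}$ from \Cref{points}. Both local rings are noetherian (the left-hand side because local rings of rigid analytic spaces are noetherian, the right-hand side because $Y$ is locally of finite type over the noetherian ring $A$), and for noetherian local rings the dimension is preserved under $\mathfrak{m}$-adic completion. Hence
\begin{equation*}
\dim \OO_{Y^{\ana},y} \;=\; \dim \widehat{\OO}_{Y^{\ana},y} \;=\; \dim \widehat{\OO}_{Y,\pi(y)} \;=\; \dim \OO_{Y,\pi(y)}
\end{equation*}
for every $y\in Y^{\ana}$.

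Finally I would invoke the fact that for a rigid analytic space $X$ one has $\dim X = \sup_{x\in X}\dim \OO_{X,x}$; this is standard and follows, for instance, from the admissible covering by affinoids together with the corresponding statement for affinoid algebras. Taking the supremum of the chain of equalities above over $y\in Y^{\ana}$, and matching $y$ with the closed point $\pi(y)$ of $Y$, yields
\begin{equation*}
\dim Y^{\ana}=\sup_{y\in Y^{\ana}}\dim \OO_{Y^{\ana},y}=\sup_{y\in Y\text{ closed}}\dim \OO_{Y,y}=\dim Y.
\end{equation*}

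The only mildly delicate point is the standard fact that the dimension of a rigid analytic space is controlled by the local rings at its classical points (so that one does not pick up contributions from some adic or Berkovich-type generic point); once this is granted the statement is a formal consequence of \Cref{points} and the invariance of dimension under completion. Note that excellence is not actually needed for the dimension equality itself; it is useful for transferring other properties such as normality, which is why it appears in the surrounding discussion.
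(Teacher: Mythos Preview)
Your argument is correct and is essentially the standard one: the paper simply cites K\"opf \cite[Folgerung 2.5]{Koepf}, and what you have written is the underlying reasoning behind that citation, namely using the Jacobson property of affinoids, the bijection on closed points and the completion isomorphism from \Cref{points}, together with invariance of Krull dimension under completion of noetherian local rings.
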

\begin{proof} \cite[Folgerung 2.5]{Koepf}.
\end{proof}

\begin{cor} $Y^{\ana}$ is normal (resp. reduced) if and only if $Y$ is 
normal (resp. reduced).
\end{cor}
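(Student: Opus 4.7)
The plan is to reduce both normality and reducedness of $Y$ and $Y^{\ana}$ to the same statement on completed local rings, and then apply the isomorphism $\widehat{\OO}_{Y^{\ana},y}\cong\widehat{\OO}_{Y,\pi(y)}$ supplied by the preceding Proposition. The input from excellence will do most of the work.

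First I would record that both sides have excellent noetherian local rings: for $Y$, because $A$ is excellent (as a quotient of the excellent Tate algebra $T_n$), so every scheme locally of finite type over $A$ has excellent stalks; for $Y^{\ana}$, because local rings of rigid analytic spaces over $L$ are excellent (Kiehl). For an excellent noetherian local ring $R$, reducedness and normality transfer in both directions between $R$ and $\widehat R$: the forward direction uses the $G$-ring property, while the reverse direction uses faithful flatness of $R\to\widehat R$. Combining these two equivalences with the isomorphism of completions from the preceding Proposition shows that $\OO_{Y,\pi(y)}$ is reduced (resp.\ normal) if and only if $\OO_{Y^{\ana},y}$ is reduced (resp.\ normal), for every $y\in Y^{\ana}$.

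The implication ``$Y$ reduced/normal $\Rightarrow Y^{\ana}$ reduced/normal'' is then immediate: every stalk of $Y^{\ana}$ gets tested. For the converse, I would use that, by the preceding Proposition, $\pi$ identifies $Y^{\ana}$ with the set of closed points of $Y$ (these are precisely the points with residue field finite over $L$, since $A$ is Jacobson). As $Y$ is locally of finite type over the Jacobson ring $A$, it is itself Jacobson, so every non-empty closed subset of $Y$ contains a closed point. The non-reduced locus (the support of the nilradical of $\OO_Y$) is closed, and the non-normal locus is closed by openness of the normal locus for noetherian excellent schemes. If $Y^{\ana}$ is reduced (resp.\ normal) but $Y$ is not, this closed failure locus would contain a closed point $\pi(y)$, whose stalk would be non-reduced (resp.\ non-normal); by the equivalence of the previous paragraph this would contradict the hypothesis on $Y^{\ana}$.

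The main obstacle is essentially bookkeeping: verifying excellence of the stalks on both sides and closedness of the non-normal locus, which are standard once one notes that $A$ is excellent and Jacobson. There is no genuine arithmetic difficulty; the whole argument is a ``local property $+$ equality of completions $+$ Jacobson density of closed points'' template.
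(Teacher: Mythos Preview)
Your argument is correct and is precisely the standard route: excellence on both sides, the isomorphism of completions from the preceding proposition, and Jacobson density of closed points to run the converse. The paper does not spell this out at all; it simply cites K\"opf \cite[Folgerung 2.5, 2.6]{Koepf}, whose proof is essentially what you have written. So there is no genuine difference in approach---you have just unpacked the reference.
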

\begin{proof} \cite[Folgerung 2.5, 2.6]{Koepf}. 
\end{proof}

\begin{lem}\label{ana_Ber} Let $A$ be a strict $L$-affinoid algebra
and let $Y \rightarrow \Spec A$ be a finite type morphism of schemes.
Let $Y^{\ana}$ be the analytification of $Y$ as a rigid analytic
variety and let $Y^{\Ber}$ be the analytification of $Y$ as
a Berkovich space. Then $\OO(Y^{\ana})=\OO(Y^{\Ber})$.
\end{lem}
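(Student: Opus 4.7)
The plan is to reduce to the affine case and then exhibit compatible increasing unions of strictly affinoid pieces on both sides.

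First I would reduce to the case when $Y=\Spec B$ is affine. Both analytifications are constructed by gluing: an affine open cover $\{U_i\}$ of $Y$ gives rise to an admissible cover $\{U_i^{\ana}\}$ of $Y^{\ana}$ by admissible opens, and to a cover $\{U_i^{\Ber}\}$ of $Y^{\Ber}$ by (strictly) analytic subdomains, with the obvious compatibilities on overlaps (the intersections $U_i\cap U_j$ are again open in $Y$, so affinable, and their analytifications are the intersections inside $Y^{\ana}$ respectively $Y^{\Ber}$). Since the structure sheaves on both sides satisfy the sheaf axiom for these covers, $\OO(Y^{\ana})$ and $\OO(Y^{\Ber})$ are each computed by the equalizer of
\[
\prod_i \OO(U_i^{\ast}) \rightrightarrows \prod_{i,j}\prod_k \OO(V_{ijk}^{\ast}),
\]
where $V_{ijk}$ is an affine open cover of $U_i\cap U_j$ and $\ast\in\{\ana,\Ber\}$. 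Thus the general case follows once the affine case is settled.

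Next I would handle the affine case. Write $B=A[x_1,\dots,x_n]/I$ with $I=(f_1,\dots,f_m)$, so that $Y$ is cut out of $\mathbb{A}^n_A$ by $f_1,\dots,f_m$. Both $\mathbb{A}^{n,\ana}_A$ and $\mathbb{A}^{n,\Ber}_A$ admit an increasing exhaustion by the strictly $L$-affinoid pieces $\Sp(A\langle x/r\rangle)$, respectively $\MM(A\langle x/r\rangle)$, as $r\to\infty$ runs through a sequence in $\sqrt{|L^\times|}$. Cutting out by the $f_j$ gives an exhaustion
\[
Y^{\ana}=\bigcup_r \Sp(B_r),\qquad Y^{\Ber}=\bigcup_r \MM(B_r),\qquad B_r:=A\langle x/r\rangle/I,
\]
where the covers are admissible on the rigid side and consist of strictly affinoid domains on the Berkovich side, and the transition maps on global sections are the same on both sides.

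Finally I would invoke the fundamental fact that for a strictly $L$-affinoid algebra $C$ one has $\OO(\Sp(C))=C=\OO(\MM(C))$, so that
\[
\OO(Y^{\ana})=\varprojlim_r \OO(\Sp(B_r))=\varprojlim_r B_r=\varprojlim_r \OO(\MM(B_r))=\OO(Y^{\Ber}),
\]
the outer equalities being the sheaf axiom applied to the exhaustions above. The only place where the two theories could genuinely diverge is whether the Berkovich exhaustion really computes the global sections as an inverse limit of affinoid rings in the same way as the rigid one; this is straightforward because the $\MM(B_r)$ form an increasing sequence of strictly affinoid domains whose union is all of $Y^{\Ber}$, so $\OO(Y^{\Ber})$ is the inverse limit of the $\OO(\MM(B_r))$ by the sheaf property of the Berkovich structure sheaf. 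No genuine obstacle arises; the main point to be careful about is matching up admissible affinoid opens on the rigid side with strictly affinoid domains on the Berkovich side, which is the content of the comparison between the two theories in the strictly analytic case.
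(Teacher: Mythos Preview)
Your argument is correct, but the paper proves the lemma by a different and shorter route. Instead of reducing to the affine case and exhibiting matching affinoid exhaustions, the paper identifies global sections with morphisms to the affine line via the universal property of analytification:
\[
\OO(Y^{\ana})\cong \Hom_{\Sp(A)}\bigl(Y^{\ana},(\mathbb A^1_A)^{\ana}\bigr),\qquad
\OO(Y^{\Ber})\cong \Hom_{\MM(A)}\bigl(Y^{\Ber},(\mathbb A^1_A)^{\Ber}\bigr),
\]
and then invokes Berkovich's comparison theorem \cite[Theorem 1.6.1]{berkovich_ihes} to conclude that the two Hom sets agree, since $Y^{\ana}$ and $Y^{\Ber}$ (and likewise the two analytifications of $\mathbb A^1_A$) correspond under that equivalence. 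This avoids any cover or exhaustion altogether. Your approach is more hands-on: it makes explicit that both theories are built from the same strictly affinoid pieces $B_r$, and only uses the comparison at the level of a single affinoid rather than the full categorical statement. The trade-off is length and the need for the reduction step; the paper's argument is one line once the universal properties are quoted, but it leans on a black-box equivalence of categories.
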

\begin{proof} Let $\mathbb A^1_A=\Spec A[t]$. It follows 
from the universal property of analytification, \cite[Theorem 2.2.5 (1)]{conrad_ample} and \cite[Proposition 2.6.1]{berkovich_ihes}, 
that 
\begin{equation}\label{ana1}
\OO(Y^{\ana}) \cong \Hom_{\Spec A}(Y^{\ana}, \mathbb A^1_A)\cong \Hom_{\Sp(A)}(Y^{\ana}, (\mathbb A^1_A)^{\ana})
\end{equation}

\begin{equation}\label{ana2}
\OO(Y^{\Ber}) \cong \Hom_{\Spec A}(Y^{\Ber}, \mathbb A^1_A)\cong \Hom_{\MM(A)}(Y^{\Ber}, (\mathbb A^1_A)^{\Ber})
\end{equation}
where $\Hom_{\Spec A}$ stands for morphisms of locally ($G$-)ringed  spaces over $\Spec A$.

It follows from the construction of the 
analytification in the references cited above that $Y^{\ana}$ and $Y^{\Ber}$  (resp. 
$(\mathbb A^1_A)^{\ana}$ and $(\mathbb A^1_A)^{\Ber}$) correspond to each other under 
\cite[Theorem 1.6.1]{berkovich_ihes}. In particular,
we have a natural isomorphism 
\begin{equation}\label{ana3}
\Hom_{\MM(A)}(Y^{\Ber}, (\mathbb A^1_A)^{\Ber})\cong 
\Hom_{\Sp(A)}(Y^{\ana}, (\mathbb A^1_A)^{\ana}).
\end{equation}
Putting \eqref{ana1}, \eqref{ana2} and \eqref{ana3} together we obtain the claim. 
\end{proof}

\begin{prop}
\label{etale_cover_aff} Let $A$ be a strict 
$L$-affinoid algebra and let $\{\varphi_i: U_i\rightarrow \Spec A\}_{i\in I}$ be a covering for the \'etale topology on $\Spec A$. Then the  following equaliser diagram 
\begin{equation}\label{equaliser}
 A \rightarrow \prod_{i\in I} \OO(U^{\ana}_i) \rightrightarrows \prod_{(i,j)\in I \times I} \OO(U^{\ana}_{ij})
\end{equation}
is exact, where $U_{ij}= U_i \times_{\Spec A} U_j$. 
\end{prop}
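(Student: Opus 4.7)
The plan is to transfer the question to Berkovich analytic geometry, where étale descent for the structure sheaf is a standard result. By \Cref{ana_Ber} the exactness of \eqref{equaliser} is equivalent to the exactness of
\[
 A \rightarrow \prod_{i\in I} \OO(U^{\Ber}_i) \rightrightarrows \prod_{(i,j)\in I \times I} \OO(U^{\Ber}_{ij}),
\]
and, using quasi-compactness of $\Spec A$, I may replace $\{\varphi_i\}$ by a finite subcover and take each $U_i$ to be affine.

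Next I would verify that $\{U_i^{\Ber} \to \MM(A)\}_{i \in I}$ is an étale covering of Berkovich analytic spaces. The universal property of Berkovich analytification implies that the functor $(-)^{\Ber}$, from schemes locally of finite type over $\Spec A$ to Berkovich spaces over $\MM(A)$, preserves fibre products, so $U_{ij}^{\Ber} = U_i^{\Ber} \times_{\MM(A)} U_j^{\Ber}$. Étaleness of $U_i^{\Ber} \to \MM(A)$ can be read off from the Berkovich analogue of the isomorphism on completed local rings in \Cref{points} together with flatness of analytification; surjectivity of $\coprod_i U_i^{\Ber} \to \MM(A)$ follows because the scheme-theoretic cover is surjective on closed points and every point of $\MM(A)$ specialises to a point with residue field finite over $L$.

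Finally, I would invoke étale descent for the structure sheaf in Berkovich geometry: on any Berkovich analytic space $X$ the presheaf $U \mapsto \OO_X(U)$ is a sheaf for the étale topology. Applied to $X = \MM(A)$ and the cover $\{U_i^{\Ber}\}$, and using that $A = \OO(\MM(A))$ since $A$ is strict affinoid, this gives exactness of the displayed sequence. The main obstacle I anticipate is the careful bookkeeping in the second step: verifying that a scheme-theoretic étale morphism analytifies to a Berkovich-étale one (by reduction to the standard étale case $A[t]_f/(P)$, where the analytification is cut out by the same equations in the open locus $f\ne 0$) and that the scheme-theoretic étale covering becomes a topologically surjective family in $\MM(A)$. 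These are technical but well within Berkovich's foundations, so the main work is really organisational rather than conceptual.
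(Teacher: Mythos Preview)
Your overall strategy coincides with the paper's: pass to Berkovich spaces via \Cref{ana_Ber}, check that the analytified family is an \'etale cover of $\MM(A)$, and invoke that the structure sheaf is a sheaf for the \'etale topology on Berkovich spaces. The paper handles each technical step by direct citation to \cite{berkovich_ihes}: compatibility of $(-)^{\Ber}$ with fibre products (Proposition~2.6.1), \'etaleness of $\varphi_i^{\Ber}$ (Proposition~3.3.11), surjectivity (Proposition~2.6.8), and the \'etale sheaf property (Example~4.1.6).

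Your surjectivity argument, however, is wrong as stated. Berkovich spaces of strictly affinoid algebras are Hausdorff, so every point is closed and there is no non-trivial specialisation; in particular the claim that every point of $\MM(A)$ specialises to a point with residue field finite over $L$ is false (think of the Gauss point of the closed unit disc). A correct argument along your lines would use that \'etale morphisms of Berkovich spaces are open and that rigid points are dense in $\MM(A)$, but it is cleaner to cite Berkovich's result that surjective morphisms of finite-type $A$-schemes analytify to surjective morphisms, as the paper does. Your reduction to a finite affine subcover is harmless but unnecessary.
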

\begin{proof} We claim that  $\{\varphi^{\Ber}_i: U_i^{\Ber}\rightarrow \MM(A)\}_{i\in I}$ 
is an \'etale covering of $\MM(A)$. Since  
\begin{equation}
U_{ij}^{\Ber}\cong U_i^{\Ber}\times_{\MM(A)} U_j^{\Ber}.
\end{equation}
by \cite[Proposition 2.6.1]{berkovich_ihes}
and $\OO_X$ is a sheaf for the \'etale topology on 
Berkovich spaces, \cite[Example 4.1.6]{berkovich_ihes}, 
the claim implies that the equaliser diagram 
\begin{equation}\label{equaliser2}
 A \rightarrow \prod_{i\in I} \OO(U^{\Ber}_i) \rightrightarrows \prod_{(i,j)\in I \times I} \OO(U^{\Ber}_{ij})
\end{equation}
is exact and \Cref{ana_Ber}
implies that \eqref{equaliser} is exact. 

The claim is proved in the beginning of \cite[Section 4.1]{berkovich_ihes}. We will spell out the details. We note that $\varphi_i^{\Ber}$ are
\'etale by \cite[Proposition 3.3.11]{berkovich_ihes}.
Hence, it is enough to show that 
\begin{equation}\label{union}
\MM(A)= \bigcup_{i\in I} \varphi_i^{\Ber}(U^{\Ber}_i).
\end{equation}
Let $U$ be the disjoint union of the $U_i$ and let $\varphi: U\rightarrow \Spec A$ be 
a morphism such that $\varphi|_{U_i}= \varphi_i$.  
 Since $\varphi$ is surjective 
\cite[Proposition 2.6.8]{berkovich_ihes} 
implies that $\varphi^{\Ber}$ is surjective. Since
$U^{\Ber}$ is a disjoint union of $U_i^{\Ber}$ we 
obtain \eqref{union}.
\end{proof}

\subsection{\texorpdfstring{Schemes over $\Spec R[1/p]$}{Schemes over Spec R[1/p]}}
We will define a variant of the analytification functor considered above. Let $R$ be a complete local noetherian $\OO$-algebra with finite residue field and maximal ideal $\mm$ and let $Y$ be a scheme locally of finite type over $X:=\Spec R[1/p]$.
We will construct  a relative analytification $Y^{\ana}\rightarrow \mathfrak X$ of $Y$, where $\mathfrak X:=(\Spf R)^{\rig}$ is the Berthelot's rigid generic
fibre, \cite[Section 7]{deJong}. In particular, $\mathfrak X$ is a rigid analytic space over $L$.

We proceed as follows. 
We may write $\mathfrak X$ as an increasing 
union of  open affinoid
subdomains $\mathfrak X= \bigcup_{i\in I} \Sp(A_i)$.
For each $i$, let $Y_i$ be the fibre product 
of $\Spec A_i$ and $Y$ over $\Spec R$. Then $Y_i$ is a scheme 
locally of finite type over $\Spec A_i$ and we obtain 
a rigid analytic space $Y_i^{\ana}\rightarrow \Sp(A_i)$ satisfying the 
universal property \eqref{universal_koepf} as well as a morphism 
of locally G-ringed spaces $\pi_i: Y_i^{\ana}\rightarrow Y_i$. 
If $j<i$ then $Y_j = \Spec A_j \times_{\Spec A_i} Y_i$ and the universal property implies 
$Y_j^{\ana}$ is canonically isomorphic to 
$\Sp(A_j)\times_{\Sp(A_i)} Y_{i}^{\ana}$. 
We glue $\{Y_i^{\ana}\}_{i\in I}$ 
using these isomorphisms to obtain
$Y^{\ana}$. 

It follows from the construction that $\Sp(A_i)\times_{\mathfrak X} Y^{\ana}\cong Y_i^{\ana}$. 
Since the covering of $\mathfrak X$ is admissible, for any morphism 
$\Sp(B)\rightarrow \mathfrak X$ in $\Rig_L$ there exists $i\in I$ such that it factors 
through $\Sp(A_i)\rightarrow \mathfrak X$. We thus have 
\begin{equation}\label{is_ok}
\Sp(B)\times_{\mathfrak X} Y^{\ana}\cong \Sp(B)\times_{\Sp A_i} Y_i^{\ana}\cong (\Spec B \times_{X} Y)^{\ana},
\end{equation}
where the last isomorphism follows from the universal property of the 
analytification of schemes locally of finite type over $\Spec B$. We deduce from \eqref{is_ok} that the construction 
of $Y^{\ana}$ is independent of the covering. 

\begin{remar}
In practice, the schemes we deal with in this paper are affine and we think of the analytification as follows.  If $Y=\Spec S[1/p]$, where $S=R[y_1,\ldots, y_n]$ 
and $R=\OO\br{x_1,\ldots, x_d}$  then $Y^{\ana}$ is a product $\mathbb{D}^d \times_{\Sp(L)} (\mathbb A^n_L)^{\ana}$, where $\mathbb{D}^d$ is an open $d$-dimensional unit disk and $(\mathbb A^n_L)^{\ana}$ 
is the analytification of an $n$-dimensional affine space over $L$, 
\cite[Example 9.3.4/1]{BGR}. In general, we may present 
$$R=\OO\br{x_1, \ldots, x_d}/(f_1, \ldots, f_s), \quad S=R[x_1,\ldots, x_n]/(g_1,\ldots, g_m)$$ and then $Y^{\ana}$ 
is the closed locus inside $\mathbb{D}^d \times_{\Sp(L)} (\mathbb A^n)^{\ana}$ cut out by these equations. 
\end{remar}

\begin{lem}\label{clofib} The functor $Y\mapsto Y^{\ana}$ from the category of  schemes locally of finite type 
over $X$ to the category of rigid analytic spaces over $\mathfrak X$ maps open (resp. closed) immersions to open (resp. closed) immersions and commutes with fibre products.
\end{lem}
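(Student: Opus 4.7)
The plan is to reduce everything to the analogous, known statements for K\"opf's analytification over an affinoid base, and then glue. The key input is the identity
\[ \Sp(B) \times_{\mathfrak X} Y^{\ana} \cong (\Spec B \times_X Y)^{\ana} \]
from \eqref{is_ok}, valid for every morphism $\Sp(B) \to \mathfrak X$ from an affinoid, together with the admissible covering $\mathfrak X = \bigcup_{i\in I}\Sp(A_i)$ used in the construction of $Y^{\ana}$. Being an open (resp.\ closed) immersion of rigid analytic spaces can be tested on an admissible covering of the target, so these properties of $f^{\ana} : Y^{\ana} \to Z^{\ana}$ can be checked after base change to each $\Sp(A_i)$, where they reduce via \eqref{is_ok} to the corresponding statements for $f_i^{\ana}:Y_i^{\ana}\to Z_i^{\ana}$ with $Y_i = Y\times_X \Spec A_i$ and $Z_i = Z\times_X\Spec A_i$.

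I would then invoke the known behaviour of K\"opf's analytification over an affinoid, namely that it sends open immersions of $A$-schemes locally of finite type to open immersions of rigid spaces over $\Sp(A)$, and likewise for closed immersions (see \cite[Example 2.2.11]{conrad_ample} and the references therein, or \cite[Satz 1.4]{Koepf}). Applied to $f_i : Y_i \to Z_i$, which is an open (resp.\ closed) immersion since open and closed immersions are stable under base change, this gives the desired property of $f^{\ana}$ after gluing.

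For the fibre product assertion, given morphisms $Y \to Z$ and $Y' \to Z$ of schemes locally of finite type over $X$, I would compare the two rigid analytic spaces $(Y\times_Z Y')^{\ana}$ and $Y^{\ana}\times_{Z^{\ana}} Y'^{\ana}$ over $\mathfrak X$ by checking the comparison after pulling back to each affinoid $\Sp(A_i)$. Using \eqref{is_ok} and that fibre products in rigid spaces commute with base change, both sides become $(Y_i \times_{Z_i} Y'_i)^{\ana}$ and $Y_i^{\ana} \times_{Z_i^{\ana}} Y_i'^{\ana}$ respectively, which agree by the fact that K\"opf's analytification commutes with fibre products over $\Sp(A_i)$ (another standard consequence of the universal property, see \cite[Example 2.2.11]{conrad_ample}). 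The resulting local isomorphisms are compatible on overlaps by the universal property of K\"opf's construction, so they glue to an isomorphism over $\mathfrak X$. There is no real obstacle here: the content of the proof is the reduction to the affinoid case, after which everything is immediate from the classical results of K\"opf and Conrad cited in the preceding subsection.
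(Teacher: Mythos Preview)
Your proposal is correct and follows essentially the same approach as the paper: reduce to the affinoid base via the isomorphism \eqref{is_ok} and the admissible covering $\{\Sp(A_i)\}$, then invoke K\"opf's results in that setting. The paper's proof is simply a terser version of what you wrote, citing the specific references \cite[Folgerung 1.3 2)]{Koepf} for open immersions, \cite[Hilfsatz 2.10]{Koepf} for closed immersions, and \cite[Satz 1.8]{Koepf} for fibre products.
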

\begin{proof} It is enough to verify these assertions after base changing to $\Spec A_i$. It follows from the isomorphism \eqref{is_ok} that we are then in the setting considered by K\"opf. The assertion about open immersions is \cite[Folgerung 1.3 2)]{Koepf}, the assertion 
about closed immersions is \cite[Hilfsatz 2.10]{Koepf} and the assertion about the
fibre products is \cite[Satz 1.8]{Koepf}.
\end{proof}

\begin{prop}\label{algebraic} Let $A$ be 
an affinoid $L$-algebra. There is a natural isomorphism 
$$\Hom_{\Rig_L}(\Sp(A), \mathfrak X)\cong \Hom_{\OO\text{-}\alg}(R, A).$$
Moreover, every $\varphi\in \Hom_{\OO\text{-}\alg}(R, A)$ is continuous
for the $\mm$-adic topology on $R$ and the Banach space topology on $A$.
\end{prop}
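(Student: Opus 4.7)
The plan is to combine the universal property of Berthelot's rigid generic fibre with the Cohen structure theorem and a reciprocal-polynomial trick. First, by Cohen's theorem, write $R = \OO\br{x_1, \ldots, x_d}/I$ for some ideal $I$; then $\mathfrak X$ is the closed analytic subspace of the open unit polydisc $\mathbb{D}^d_L = (\Spf \OO\br{x_1, \ldots, x_d})^{\rig}$ cut out by $I$. The universal property of the open polydisc identifies $\mathbb{D}^d_L(A)$ with $(A^{\circ\circ})^d$, and hence identifies $\Hom_{\Rig_L}(\Sp(A), \mathfrak X)$ with the set of tuples $(a_1, \ldots, a_d) \in (A^{\circ\circ})^d$ such that $f(a_1, \ldots, a_d) = 0$ for all $f \in I$. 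These are precisely the continuous $\OO$-algebra homomorphisms $R \to A$ for the $\mm$-adic topology on the source and the Banach topology on the target.

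It thus suffices to show that every abstract $\OO$-algebra homomorphism $\varphi \colon R \to A$ is automatically continuous; this also gives the moreover-part. Lifting $\varphi$ along the surjection $\OO\br{x_1, \ldots, x_d} \twoheadrightarrow R$, the problem reduces to showing: for any $\OO$-algebra homomorphism $\psi \colon \OO\br{y_1, \ldots, y_d} \to A$, each $b_i := \psi(y_i)$ lies in $A^{\circ\circ}$. By Tate's theorem, $A$ is Jacobson with finite residue fields at closed points, and topological nilpotence in a strict affinoid can be detected on classical points by the maximum modulus principle. It therefore suffices to prove the following: any $\OO$-algebra homomorphism $\psi \colon \OO\br{y_1, \ldots, y_d} \to E$, with $E/L$ a finite extension, sends each $y_i$ into the maximal ideal $\mm_E$ of $\OO_E$.

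The main obstacle is this last step, which I would handle by a reciprocal-polynomial trick. Suppose for contradiction that $|b|_E \ge 1$ for $b := \psi(y_i)$. Then $b \neq 0$ and $b^{-1} \in \OO_E$ is integral over $\OO$, so it has a monic minimal polynomial $Q(T) = T^n + q_{n-1} T^{n-1} + \cdots + q_0 \in \OO[T]$. Define the reciprocal polynomial
\[
Q^{*}(T) \;:=\; T^n Q(T^{-1}) \;=\; 1 + q_{n-1} T + q_{n-2} T^2 + \cdots + q_0 T^n \;\in\; \OO[T].
\]
Since its constant term is $1$, the element $Q^{*}(y_i) \in \OO\br{y_1, \ldots, y_d}$ is congruent to $1$ modulo $\mm$ and is therefore a unit. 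However,
\[
\psi(Q^{*}(y_i)) \;=\; Q^{*}(b) \;=\; b^n Q(b^{-1}) \;=\; 0,
\]
contradicting that $\psi$ sends units to units. Hence $|b|_E < 1$, as required.

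Once $b_i \in A^{\circ\circ}$ for each $i$, a short additional argument (using that $R$ is local with $R^{\times} = R \setminus \mm$, together with the fact that the uniformiser of $\OO$ is topologically nilpotent in $A$) shows $\psi(R) \subseteq A^{\circ}$. The continuity of $\psi$ then follows from the ultrametric bound $\|\psi(m)\|_{\sup} \le c^{N}$ for $m \in \mm^N$, where $c < 1$ is a common bound on the supremum seminorms of the finitely many generators of $\psi(\mm)$. This simultaneously yields the natural isomorphism $\Hom_{\Rig_L}(\Sp(A), \mathfrak X) \cong \Hom_{\OO\text{-}\alg}(R, A)$ and the stated continuity.
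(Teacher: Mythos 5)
Your proposal follows essentially the same route as the paper's proof: reduce to a power-series presentation of $R$, and prove automatic continuity by showing that the images of the coordinates are topologically nilpotent, detecting this at the closed points of $\Sp(A)$ (whose residue fields are finite \emph{over} $L$, not finite) via the maximum modulus principle. Your reciprocal-polynomial argument is correct and is a pleasant self-contained substitute for the paper's appeal to \cite[Lemma 3.17]{BIP_new}: it proves exactly that any $\OO$-algebra homomorphism $\OO\br{y_1,\ldots,y_d}\rightarrow E$, with $E/L$ finite, sends the variables into $\mm_E$.

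The concluding continuity step, however, has a genuine gap when $A$ is not reduced. Your final estimate bounds the \emph{supremum seminorm}: $\|\psi(m)\|_{\sup}\le c^N$ for $m\in\mm^N$. The supremum seminorm does not define the Banach topology of $A$ in the presence of nilpotents (in $A=L[\epsilon]/(\epsilon^2)$ the element $b\epsilon$ has supremum seminorm $0$ for every $b\in L$), so such a bound does not show $\psi(\mm^N)\rightarrow 0$ in $A$, which is what continuity means. Moreover, even after replacing the supremum seminorm by a residue norm $|\cdot|_\alpha$, writing $m=\sum_j r_jg_j$ with $g_j$ degree-$N$ monomials in the generators and $r_j\in R$ forces you to bound $|\psi(r_j)|_\alpha$ uniformly in $r_j\in R$; your deferred ``short additional argument'' can only give $\psi(R)\subseteq A^{\circ}$ (true, and provable using that the residue field of $R$ is finite, e.g.\ via $f^{q}-f\in\mm$), but for non-reduced $A$ the subring $A^{\circ}$ is \emph{unbounded} (it contains $L\epsilon$ in the example above), so this does not suffice. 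As written, your argument therefore establishes the proposition only for reduced $A$, whereas it is needed for arbitrary affinoids, e.g.\ when \Cref{universal} is applied with $T=\Sp(B)$, $B$ artinian, in \Cref{deJong_like}.

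For comparison, the paper's proof works with the residue norm throughout: from $|\varphi(x_i)|_{\sup}<1$ it deduces, via the proof of \cite[Corollary 3.1/18]{bosch} (the spectral radius formula, valid also for non-reduced $A$), that $\rho_i^{-n}|\varphi(x_i)^n|_{\alpha}$ is bounded for some $\rho_i<1$, and then factors $\varphi$ through the affinoid algebra $T_{n,\rho}$ of series converging on the polydisc of radius $\rho$, using its universal property. If you want to repair your argument along your own lines, the cleanest patch for non-reduced $A$ is to compare $\varphi$ with the continuous evaluation map $\psi$ at $(\varphi(x_1),\ldots,\varphi(x_d))$ and to prove $\varphi=\psi$ by testing against the artinian quotients $A/\mm_x^n$ rather than only against the residue fields $\kappa(x)$: one checks that an $\OO$-algebra homomorphism from $\OO\br{x_1,\ldots,x_d}$ to a finite-dimensional local $L$-algebra is determined by the images of the variables (Taylor expansion around the point, after the harmless base change to $\OO_{E'}$), and $\bigcap_{x,n}\ker\bigl(A\rightarrow A/\mm_x^n\bigr)=0$; testing at closed points alone, as you do, only yields agreement modulo the nilradical.
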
 
\begin{proof} A morphism $\Sp(A)\rightarrow \mathfrak X$ 
induces a continuous ring homomorphism $\OO(\mathfrak X)\rightarrow A$. By composing it with the natural map $R\rightarrow \OO(\mathfrak X)$ we obtain a homomorphism 
of $\OO$-algebras $R\rightarrow A$. Since $R[1/p]$ is dense in $\OO(\mathfrak X)$ we obtain an injection $\Hom_{\Rig_L}(\Sp(A), \mathfrak X)\hookrightarrow \Hom_{\OO\text{-}\alg}(R, A).$

Let $\alpha: T_m\twoheadrightarrow A$ be a surjection of $L$-algebras with $T_m$ a Tate algebra over $L$, and let $|\cdot|_{\alpha}$ be the induced residue norm on $A$, \cite[Section 3.1]{bosch}.
Let $\varphi: R \rightarrow A$ 
be an $\OO$-algebra homomorphism. We claim that if $a\in \mm$ then 
there exists $c\in L$ such that $|c|<1$ and an integer $r\ge 1$, such that the sequence $|c|^{-n/r}|\varphi(a)^n|_{\alpha} $ for $n\ge 1$ is bounded. 

For every $x\in \Sp(A)$ the residue field 
$\kappa(x)$ is a finite extension of $L$, thus the kernel of $R[1/p]\rightarrow A\rightarrow \kappa(x)$ is a maximal ideal of 
$R[1/p]$. It follows from \cite[Lemma 3.17]{BIP_new} that 
the image of $\mm$ in $\kappa(x)$ is contained in the maximal ideal of the ring
of integers of $\kappa(x)$. 
The maximum principle \cite[Theorem 3.1/15]{bosch} implies that the supremum norm 
$|\varphi(a)|_{\sup}<1$. 
If $|\varphi(a)|_{\sup}=0$ then 
$\varphi(a)$ is nilpotent and then 
claim follows trivially. Otherwise, it follows from the proof of \cite[Corollary 3.1/18]{bosch} that there exists an integer $r\ge 1$ and $c\in L$ 
with $|c|<1$ such that the sequence $|c^{-n} f^{rn}|_{\alpha}$, $n\ge 1$ is bounded. Using $|xy|_{\alpha}\le |x|_{\alpha} |y|_{\alpha}$ we obtain   
\begin{equation}
|c|^{-(rn+a)/r} | f^{rn+a}|_{\alpha} \le C |c^{-n} f^{rn}|_{\alpha},
\end{equation}
where  $0\le a \le r-1$ and $C=\max\{ |c|^{-a/r} |f^a|_{\alpha}: 0\le a \le r-1\}$, 
which implies the claim.  

Let us assume that
$R=\OO\br{x_1, \ldots, x_n}$.
The claim applied with
$a=x_i$ yields $c_i\in L$ and $r_i\ge 1$ 
such that the sequence $|c_i|^{-n/r_i}|\varphi(x_i)^n|_{\alpha}$, $n\ge 1$ is bounded. Thus $\varphi$ factors as $R\rightarrow T_{n, \rho}\rightarrow A$, where $T_{n, \rho}$ is a subalgebra of $L\br{x_1,\ldots, x_n}$ 
consisting of the power series converging on the polydisc of radius 
$\rho=(|c_1|^{1/r_1}, \ldots, |c_n|^{1/r_n})$ in $\overline{L}^n$. The algebra $T_{n, \rho}$ is affinoid by \cite[Theorem 6.1.5/4]{BGR}. 
Moreover, $\Sp(T_{n, \rho})$ 
for radii $\rho=(\rho_1, \ldots, \rho_n)$ with 
$0< \rho_i <1$ and $\rho_i \in |\overline{L}|$ form an admissible covering of $\mathfrak X= \mathbb D^n$. 
We thus obtain a morphism $\Sp(A)\rightarrow \Sp(T_{n, \rho})\rightarrow \mathfrak X$ which maps to 
$\varphi$. Since both $R\rightarrow T_{n, \rho}$
and $T_{n, \rho}\rightarrow A$ are continuous, 
$\varphi: R\rightarrow A$ is a continuous homomorphism. 

In the general case we choose a surjection 
$\pi:\OO\br{x_1,\ldots, x_n}\twoheadrightarrow R$ of $\OO$-algebras. This yields a closed immersion 
$\mathfrak X \subset \mathbb D^n$. Then 
$\varphi: R \rightarrow  A$ induces a morphism 
$\Sp A \rightarrow \mathbb D^n$ by the previous part, which factors as $\Sp(A)\rightarrow \mathfrak X\subset \mathbb D^n$. The $\mm$-adic topology on $R$ coincides with the quotient topology defined by $\pi$ and the continuity of $\varphi$ follows.
\end{proof}

If $T$ is a rigid analytic space over $L$ then the ring 
of global functions $\OO(T)$ carries a natural topology, 
such that if $T= \Sp A$ an affinoid then the topology is just
the Banach space topology on $A$ and if $T$ is arbitrary and 
$\{U_i\rightarrow T\}_{i\in I}$ is a covering of $T$ by affinoids for the analytic Grothendieck topology on rigid analytic
spaces then the natural topology on $\OO(T)$ coincides
with the subspace topology induced by $\OO(T)\rightarrow \prod_{i\in I} \OO_{U_i}(U_i)$ for the Banach space topology in $\OO_{U_i}(U_i)$ and the product topology on the target, 
\cite[Section 4.1]{DPS}. 

\begin{cor}\label{bsch} There is a natural isomorphism
$$\Hom_{\Rig_L}(T, \mathfrak X)\cong \Hom_{\OO\text{-}\alg}(R, \OO(T)).$$
Moreover, every $\varphi\in \Hom_{\OO\text{-}\alg}(R, \OO(T))$ is continuous.
\end{cor}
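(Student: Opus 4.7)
The plan is to reduce everything to the affinoid case handled in \Cref{algebraic} by choosing an admissible affinoid covering and using that $\OO(T)$ has the subspace topology.

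First I would fix an admissible covering $\{U_i = \Sp(A_i)\}_{i\in I}$ of $T$ by open affinoid subdomains, together with admissible affinoid coverings $\{U_{ijk} = \Sp(A_{ijk})\}_k$ of each intersection $U_i \cap U_j$. A morphism $f : T \to \mathfrak{X}$ in $\Rig_L$ is the same data as a family of morphisms $f_i : U_i \to \mathfrak{X}$ which agree on overlaps. By \Cref{algebraic} each $f_i$ corresponds to a (necessarily continuous) $\OO$-algebra homomorphism $\varphi_i : R \to A_i$, and the agreement of $f_i$ and $f_j$ on $U_i \cap U_j$ translates, via the injectivity in \Cref{algebraic}, into the statement that the images of $\varphi_i$ and $\varphi_j$ in $\prod_k A_{ijk}$ coincide. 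By definition of $\OO(T)$ as the equaliser of $\prod_i A_i \rightrightarrows \prod_{i,j,k} A_{ijk}$, such a compatible family is exactly an $\OO$-algebra homomorphism $R \to \OO(T)$.

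Conversely, given $\varphi \in \Hom_{\OO\text{-}\alg}(R, \OO(T))$, composing with the restriction maps $\OO(T) \to A_i$ yields $\OO$-algebra homomorphisms $\varphi_i : R \to A_i$, which by \Cref{algebraic} are continuous and give rise to morphisms $f_i : U_i \to \mathfrak{X}$. These automatically agree on overlaps by the same injectivity, so they glue to a unique morphism $f : T \to \mathfrak{X}$ in $\Rig_L$. The two constructions are clearly inverse to one another, giving the claimed bijection, which is natural in $T$.

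For the continuity statement, recall that the natural topology on $\OO(T)$ is the subspace topology for the embedding $\OO(T) \hookrightarrow \prod_i A_i$ with the product of Banach space topologies. Given $\varphi : R \to \OO(T)$, each composition $R \to \OO(T) \to A_i$ is continuous for the $\mm$-adic topology on $R$ by \Cref{algebraic}, hence the composition $R \to \prod_i A_i$ is continuous, and therefore $\varphi$ itself is continuous. There is no real obstacle here since all the work has been done in \Cref{algebraic}; the only thing to check carefully is that the gluing/equaliser description of $\OO(T)$ is compatible with the gluing of morphisms to $\mathfrak{X}$, which is routine.
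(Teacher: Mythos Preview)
Your proof is correct and follows exactly the same approach as the paper: reduce to the affinoid case of \Cref{algebraic} by taking an admissible affinoid covering of $T$. The paper's proof is simply the one-line version of what you have written out in detail.
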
 
\begin{proof} If $T$ is affinoid then the assertion follows from 
\Cref{algebraic}. The general case follows by considering an admissible 
covering of $T$ by affinoids. 
\end{proof}

\begin{cor}\label{bsch_aff} Let $Y=\Spec S[1/p]$, where  $S$ is a finite type $R$-algebra. Then there is a natural isomorphism $\Hom_{\Rig_L}(T, Y^{\ana})\cong \Hom_{\OO\text{-}\alg}(S, \OO(T))$.
\end{cor}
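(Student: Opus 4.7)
The plan is to chain together the universal property of $\mathfrak X$ established in Corollary~\ref{bsch} with K\"opf's universal property \eqref{universal_koepf} for the analytification of a finite type scheme over an affinoid base, first affinoid-locally on $T$ and then by gluing.

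First I would treat the case $T=\Sp(B)$ affinoid. A morphism $f\colon \Sp(B)\to Y^\ana$ has a structure morphism $\varphi := \varphi^\ana \circ f\colon \Sp(B)\to \mathfrak X$, which by Corollary~\ref{bsch} corresponds to an $\OO$-algebra homomorphism $\psi\colon R\to B$. By \eqref{is_ok}, there is a canonical isomorphism $\Sp(B)\times_{\mathfrak X} Y^\ana \cong (\Spec B\times_X Y)^\ana = (\Spec(B\otimes_R S))^\ana$, where $B$ is viewed as an $R$-algebra via $\psi$ (and $S\otimes_R B = S[1/p]\otimes_{R[1/p]} B$ since $B$ is an $L$-algebra). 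The factorization of $f$ through this base change is a section over $\Sp(B)$, i.e.\ an element of $\Hom_{\Sp(B)}(\Sp(B), (\Spec(B\otimes_R S))^\ana)$. By K\"opf's universal property \eqref{universal_koepf}, this set is in natural bijection with $\Hom_{\Spec B}(\Sp(B), \Spec(B\otimes_R S))$, which, since the target is a finite type affine $B$-scheme, is the same as the set of $B$-algebra homomorphisms $B\otimes_R S\to B$, i.e.\ the set of $\OO$-algebra maps $S\to B$ whose restriction to $R$ equals $\psi$. Varying $\psi$ yields the bijection $\Hom_{\Rig_L}(\Sp(B), Y^\ana)\cong \Hom_{\OO\text{-}\alg}(S, B)$, manifestly natural in $B$.

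For general $T$, I would choose an admissible affinoid cover $\{T_j=\Sp(B_j)\}_{j\in J}$ together with admissible affinoid refinements of the overlaps $T_j\cap T_k$. Both functors $U\mapsto \Hom_{\Rig_L}(U, Y^\ana)$ and $U\mapsto \Hom_{\OO\text{-}\alg}(S, \OO(U))$ are sheaves on $T$ for the analytic Grothendieck topology: the first because $Y^\ana$ is a rigid analytic space, the second because $\OO_T$ is a sheaf of $L$-algebras. Applying the affinoid case on each $T_j$ and on the refining affinoids of the overlaps, the bijection glues to the required natural isomorphism over $T$.

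The argument is essentially a bookkeeping of universal properties and I do not expect a serious obstacle. The only point meriting a brief verification is that the identification $\Sp(B)\times_{\mathfrak X} Y^\ana \cong (\Spec(B\otimes_R S))^\ana$ is independent of the member of the admissible affinoid cover $\{\Sp(A_i)\}$ of $\mathfrak X$ through which $\varphi$ factors; this is exactly the independence established after \eqref{is_ok} in the construction of $Y^\ana$.
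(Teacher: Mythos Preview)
Your argument is correct, but it takes a different route from the paper's. The paper reduces immediately to the case $S=R[x_1,\ldots,x_n]$ (the general case following from \Cref{clofib} for closed immersions), observes that then $Y^{\ana}\cong\mathfrak X\times_{\Sp(L)}(\mathbb A^n_L)^{\ana}$, and computes
\[
\Hom_{\Rig_L}(T,Y^{\ana})\cong\Hom_{\Rig_L}(T,\mathfrak X)\times\OO(T)^n\cong\Hom_{\OO\text{-}\alg}(R,\OO(T))\times\OO(T)^n\cong\Hom_{\OO\text{-}\alg}(S,\OO(T)),
\]
using \Cref{bsch} and the classical universal property of $(\mathbb A^n_L)^{\ana}$ over $L$. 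This handles arbitrary $T$ in one stroke, without a separate gluing step. Your approach instead keeps $S$ general, pulls the whole question back along an affinoid $\Sp(B)\to\mathfrak X$ via \eqref{is_ok}, invokes K\"opf's universal property over the affinoid base $\Sp(B)$, and then glues. Your method is more intrinsic in that it avoids choosing a presentation of $S$ and makes the dependence on K\"opf's theorem transparent; the paper's method is shorter and sidesteps the need to verify naturality of the affinoid-level bijection for the gluing. One small remark: in your identification $\Hom_{\Spec B}(\Sp(B),\Spec(B\otimes_R S))=\Hom_{B\text{-}\alg}(B\otimes_R S,B)$, the qualifier ``finite type'' is irrelevant---this holds for morphisms of locally ringed spaces into any affine scheme.
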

\begin{proof} It is enough to deal with the case, when $S=R[x_1, \ldots, x_n]$.
Then $Y^{\ana}\cong \mathfrak X\times_{\Sp(L)} (\mathbb A^n_L)^{\ana}$, where $(\mathbb A^n_L)^{\ana}$ is the analytification of the $n$-dimensional affine space over $L$. We have
\begin{equation}
\begin{split}
\Hom_{\Rig_L}(T, Y^{\ana})&\cong \Hom_{\Rig_L}(T, \mathfrak X)\times \Hom_{\Rig_L}(T, (\mathbb A^n_L)^{\ana})\\
&\cong \Hom_{\Rig_L}(T, \mathfrak X)\times \OO(T)^n\\
&\cong\Hom_{\OO\text{-}\alg}(R, \OO(T))\times \OO(T)^n\\
& \cong\Hom_{\OO\text{-}\alg}(S, \OO(T)),
\end{split}
\end{equation}
where the first isomorphism follows from the universal property of fibre products, the second follows from the properties of analytification functor over $L$, \cite[Lemma 5.4/2]{bosch}, the 
third isomorphism is given by \Cref{bsch} and the 
last one follows from the universal property of polynomial rings. 
\end{proof} 

\begin{thm}\label{universal} Let $(Y, \OO_Y)$ be a scheme locally of finite type over $X=\Spec R[1/p]$. Then there is a 
morphism of locally $G$-ringed  
$L$-spaces $(\pi, \pi^*): (Y^{\ana}, \OO_{Y^{\ana}})\rightarrow (Y, \OO_Y)$ satisfying the
following universal property: 

Given $(T, \OO_T)\in \Rig_L$ and a morphism of locally $G$-ringed $L$-spaces $(T, \OO_T)\rightarrow
(Y, \OO_Y)$, the latter factors through 
$(\pi, \pi^*)$ via the unique morphism of $L$-rigid spaces $(T, \OO_T)\rightarrow (Y^{\ana}, \OO_{Y^{\ana}})$.
\end{thm}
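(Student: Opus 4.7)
The plan is to verify the universal property by first making the morphism $\pi: Y^{\ana} \to Y$ explicit, then handling uniqueness and existence separately, each time reducing to affine targets so that we can invoke \Cref{bsch_aff} together with Köpf's analytification for schemes locally of finite type over affinoids. The map $\pi$ itself is obtained by gluing: for the admissible cover $\mathfrak{X} = \bigcup_{i \in I} \Sp(A_i)$ used in the construction of $Y^{\ana}$, one has $Y_i = \Spec A_i \times_{\Spec R} Y$, and Köpf's construction supplies a morphism $\pi_i: Y_i^{\ana} \to Y_i$ of locally $G$-ringed spaces. Composing with $Y_i \to Y$ and invoking the uniqueness in Köpf's universal property on overlaps shows these glue to a well-defined $\pi$.

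For uniqueness, suppose $f_1, f_2: T \to Y^{\ana}$ are morphisms of $L$-rigid spaces with $\pi \circ f_1 = \pi \circ f_2 = g$. Choose an admissible affinoid cover $\{T_k = \Sp(B_k)\}$ of $T$, refined so that for each $k$ the restriction $g|_{T_k}$ factors through an affine open $U_{j(k)} = \Spec S_{j(k)}[1/p] \subseteq Y$, both $f_1|_{T_k}$ and $f_2|_{T_k}$ land in $U_{j(k)}^{\ana}$, and everything sits inside some $Y_i^{\ana}$. Köpf's uniqueness clause then gives $f_1|_{T_k} = f_2|_{T_k}$, and varying $k$ yields $f_1 = f_2$. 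For existence, cover $T$ by admissible affinoids $\{T_k = \Sp(B_k)\}$ such that each $g|_{T_k}$ factors through some affine open $U_{j(k)} = \Spec S_{j(k)}[1/p]$ of $Y$. The morphism $g|_{T_k}: T_k \to U_{j(k)}$ of locally $G$-ringed $L$-spaces corresponds to an $L$-algebra homomorphism $S_{j(k)}[1/p] \to B_k$, equivalently to an $\OO$-algebra map $S_{j(k)} \to B_k$. By \Cref{bsch_aff} this produces a unique morphism $f_k: T_k \to U_{j(k)}^{\ana} \subseteq Y^{\ana}$ of rigid spaces, and by construction $\pi \circ f_k = g|_{T_k}$. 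The already-proved uniqueness lets the $f_k$ glue to $f: T \to Y^{\ana}$ with $\pi \circ f = g$.

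The main technical hinge is the translation between morphisms of locally $G$-ringed $L$-spaces into an affine scheme $\Spec A$ and $L$-algebra homomorphisms $A \to \OO_T(T)$; this is the $G$-ringed analogue of the familiar statement for locally ringed spaces into affine schemes, and is exactly what makes \Cref{bsch_aff} applicable to a morphism that is a priori only a morphism of locally $G$-ringed spaces. Unwinding Conrad's definition of morphisms of locally $G$-ringed spaces and checking that the identification is functorial in admissible opens of $T$ is the only delicate point; everything else is routine bookkeeping and the gluing compatibilities are forced by the uniqueness statement applied to overlaps.
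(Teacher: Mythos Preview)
Your proposal is correct and follows essentially the same approach as the paper: reduce to the affine case via \Cref{bsch_aff} and then glue. The only organisational difference is that the paper glues the analytifications of an affine open cover of $Y$ to produce a space $Z$ with the universal property and then checks $Z\cong Y^{\ana}$ via \eqref{is_ok}, whereas you work directly with $Y^{\ana}$ and glue on an admissible affinoid cover of the test object $T$; both arguments hinge on the same ``technical hinge'' you identify, namely the identification of morphisms of locally $G$-ringed $L$-spaces into an affine scheme with algebra homomorphisms on global sections.
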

\begin{proof} If $R=\OO$ then $R[1/p]=L$ and the
statement is proved in \cite[Proposition 5.4/4]{bosch}.
\Cref{algebraic} enables us to carry out the same 
argument for more general $R$. In particular, if 
$Y$ is affine then the assertion follows from \Cref{bsch_aff}, which is the analog of \cite[Lemma 5.4/2]{bosch}. We now proceed as in the proof 
of \cite[Proposition 5.4/4]{bosch}: we cover $Y$ by 
affine open subschemes $U_i$ and then glue $U_i^{\ana}$ along $(U_i\cap U_j)^{\ana}$ to get 
a rigid analytic space $Z$ with the required 
universal property. To verify that $Z$ is canonically isomorphic to
$Y^{\ana}$ one may  observe that if $\Sp(B)\rightarrow \mathfrak X$ is 
a morphism in $\Rig_L$ then $\Sp(B)\times_{\mathfrak X} Z$ satisfies the
universal property of the analytification of $Y\times_X \Spec B$ over
$\Sp(B)$, \cite[Satz 1.4]{Koepf}, and then the isomorphism follows
from \eqref{is_ok}.
\end{proof} 
\begin{cor}\label{rel_an_fun} The relative analytification $Y\mapsto Y^{\ana}$ 
defines a functor from the category of schemes locally of finite type over $X$
to the category of rigid analytic spaces over $\mathfrak X$.
\end{cor}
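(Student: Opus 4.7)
The plan is to deduce functoriality formally from the universal property established in \Cref{universal}. Given a morphism $f : Y \to Y'$ of schemes locally of finite type over $X = \Spec R[1/p]$, I would like to produce a morphism $f^{\ana} : Y^{\ana} \to Y'^{\ana}$ in $\Rig_L$ over $\mathfrak{X}$, and then check that this assignment respects identities and composition.

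First I would form the composition of morphisms of locally $G$-ringed $L$-spaces
\[ Y^{\ana} \xrightarrow{\pi_Y} Y \xrightarrow{f} Y'. \]
Since $Y'$ is locally of finite type over $X$, \Cref{universal} applied to $Y'$ produces a unique morphism $f^{\ana} : Y^{\ana} \to Y'^{\ana}$ of rigid $L$-spaces such that $\pi_{Y'} \circ f^{\ana} = f \circ \pi_Y$. Moreover, $f^{\ana}$ is a morphism over $\mathfrak X$: indeed, the two compositions $Y^{\ana} \to Y'^{\ana} \to \mathfrak X$ and $Y^{\ana} \to \mathfrak X$ both factor the same map $Y^{\ana} \to Y \to \Spec R$ of locally $G$-ringed spaces, so by the uniqueness part of the universal property for $\mathfrak X$ (which is the case $Y' = \Spec R[1/p]$, equivalently \Cref{bsch}) they coincide.

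Finally, I would verify the two functoriality axioms using the uniqueness clause of \Cref{universal}. For the identity, the morphism $\id_{Y^{\ana}}$ satisfies $\pi_Y \circ \id_{Y^{\ana}} = \id_Y \circ \pi_Y$, so by uniqueness $(\id_Y)^{\ana} = \id_{Y^{\ana}}$. For a composable pair $Y \xrightarrow{f} Y' \xrightarrow{g} Y''$, both $(g \circ f)^{\ana}$ and $g^{\ana} \circ f^{\ana}$ are morphisms $Y^{\ana} \to Y''^{\ana}$ of rigid $L$-spaces over $\mathfrak X$ whose composition with $\pi_{Y''}$ equals $g \circ f \circ \pi_Y$, so they agree by uniqueness.

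There is no real obstacle here, as all the substance is already absorbed into \Cref{universal}; the only mild point to watch is ensuring that $f^{\ana}$ lives over $\mathfrak X$ rather than merely over $\Sp(L)$, which is handled by the uniqueness observation above using \Cref{bsch}.
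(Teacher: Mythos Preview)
Your proof is correct and follows the same approach as the paper: compose $\pi_Y$ with $f$ and invoke the universal property of \Cref{universal} to obtain $f^{\ana}$, then note that composition is respected. In fact you are more careful than the paper, which simply asserts that compatibility with composition is ``clear from the construction'' without spelling out the uniqueness argument or the check that $f^{\ana}$ lies over $\mathfrak X$.
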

\begin{proof} Let $\varphi: Y_1\rightarrow Y_2$ be a morphism of schemes
locally of finite type over $X$. Then the composition $Y_1^{\ana}\rightarrow 
Y_1 \rightarrow Y_2$ is a morphism of  locally ringed $G$-spaces over $X$. 
It induces a morphism $\varphi^{\ana}: Y_1^{\ana} \rightarrow Y_2^{\ana}$ by \Cref{universal}. It is clear from the construction that the analytification 
of morphisms is compatible with composition.
\end{proof}

\begin{prop}\label{deJong_like} The morphism $\pi: Y^{\ana}\rightarrow Y$ 
induces a bijection between $Y^{\ana}$ and the set of closed points of $Y$. Moreover, if $y\in Y^{\ana}$ maps to $\pi(y)\in Y$ then 
the completions of the 
local rings $\OO_{Y^{\ana}, y}$ and $\OO_{Y, \pi(y)}$ 
 are naturally isomorphic. 
\end{prop}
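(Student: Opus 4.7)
The plan is to reduce both assertions to their affinoid analog, \Cref{points}, via the admissible affinoid cover $\mathfrak{X} = \bigcup_{i \in I} \Sp(A_i)$ used to construct $Y^{\ana}$, together with the identification $\Sp(A_i) \times_{\mathfrak X} Y^{\ana} \cong Y_i^{\ana}$ from \eqref{is_ok}.

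First I would argue that the closed points of $Y$ are exactly those whose residue field is finite over $L$. The scheme $X = \Spec R[1/p]$ is an open subscheme of the punctured spectrum of $R$, which is Jacobson by the argument used in the proof of \Cref{main_dim_codim}; hence $X$ is Jacobson. By \cite[Lemma 3.17]{BIP_new} the closed points of $X$ have residue field finite over $L$, and since $Y$ is locally of finite type over $X$, the usual Jacobson dictionary (\stackcite{01TB}) then identifies the closed points of $Y$ with those whose residue field is finite over $L$.

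For the bijection on points, take $y \in Y^{\ana}$. Its image $\xi \in \mathfrak X$ is classical and hence lies in some $\Sp(A_i)$, so $y \in Y_i^{\ana}$, and \Cref{points} sends it to a point of $Y_i$ with residue field finite over $L$, whose image in $Y$ is therefore a closed point. Conversely, every closed point $y' \in Y$ has image $x \in X$ closed, and \Cref{bsch} matches $x$ with a classical point $\xi \in \mathfrak X$; choosing $\Sp(A_i) \ni \xi$, the point $y'$ lifts uniquely to a closed point of $Y_i$ which, again by \Cref{points}, comes from a unique point of $Y_i^{\ana} \subset Y^{\ana}$. Injectivity of $\pi$ on closed points follows because any two preimages of $y'$ lie over the same classical point of $\mathfrak X$, hence simultaneously in some common $Y_i^{\ana}$, where \Cref{points} applies.

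For the isomorphism of completions, since $Y_i^{\ana}$ is an admissible open of $Y^{\ana}$ we have $\widehat{\OO}_{Y^{\ana}, y} = \widehat{\OO}_{Y_i^{\ana}, y}$, which \Cref{points} identifies with $\widehat{\OO}_{Y_i, \pi(y)}$. Using $Y_i = Y \times_X \Spec(A_i)$ and base change of completions along the local homomorphism $\OO_{X,x} \to \OO_{\Spec A_i, \xi'}$, it remains to show this latter map becomes an isomorphism after completion. Applying \Cref{points} to $\Spec A_i$ over itself, its target identifies with $\widehat{\OO}_{\mathfrak X, \xi}$, so the heart of the matter is the comparison $\widehat{\OO}_{X, x} \cong \widehat{\OO}_{\mathfrak X, \xi}$ at corresponding closed points. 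I expect this step to be the main obstacle: both sides are complete noetherian local rings with residue field $\kappa(x)$, and the identification should be extracted from \Cref{bsch} by matching the artinian quotients $R[1/p]/\mm_x^n$ with $A_i/\mm_{\xi}^n$ via the universal property of the inclusion $\Sp(A_i) \hookrightarrow \mathfrak X$. This comparison, standard for Berthelot's rigid generic fibre, will then conclude the proof.
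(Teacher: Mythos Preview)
Your argument is correct, and in fact the paper explicitly mentions this route as a valid alternative: reduce to the affinoid case via \Cref{points} and then invoke de Jong's comparison $\widehat{\OO}_{X,x}\cong\widehat{\OO}_{\mathfrak X,\xi}$ for the base. The base-change-of-completions step you need, $\widehat{\OO}_{Y_i,(y',\xi')}\cong \widehat{\OO}_{Y,y'}\wtimes_{\widehat{\OO}_{X,x}}\widehat{\OO}_{A_i,\xi'}$, is standard for noetherian fibre products once the residue fields agree, so there is no gap.

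The paper, however, takes a shorter path that bypasses both the reduction to the cover and the tensor-product computation: it applies the universal property of \Cref{universal} directly to $Y$, with $T=\Sp(L')$ for finite extensions $L'/L$ to get the bijection on points, and with $T=\Sp(B)$ for artinian local $L$-algebras $B$ to get the isomorphism of completions. Note that this is precisely the idea you isolate as your ``main obstacle'' for $Y=X$ --- matching artinian quotients via the universal property --- but applied to general $Y$ in one stroke. Your reduction to the base case is therefore unnecessary: the same probe by $\Sp(B)$ already identifies $\widehat{\OO}_{Y^{\ana},y}$ and $\widehat{\OO}_{Y,\pi(y)}$ as representing the same functor on artinian $L$-algebras. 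What your approach buys is that each step stays close to K\"opf's affinoid theory; what the paper's approach buys is economy, since the universal property of \Cref{universal} was set up exactly to absorb this kind of argument.
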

\begin{proof} If $Y=X=\Spec R[1/p]$ then the assertion is 
proved in \cite[Lemma 7.1.9]{deJong}. One may deduce the proposition
by combining this result with \Cref{points}. However, we will give a 
a different argument  by following the proof of \cite[Theorem 2.2.5 4), Example 2.2.11]{conrad_ample}.

It follows from \cite[Lemma 3.17, 3.18(4)]{BIP_new} that the set of  closed points of $Y$ coincides with  the set of points of $Y$ such that the residue field
is a finite extension of $L$. The assertion about bijection follows 
by applying \Cref{universal} with  $T=\Sp(L')$ for all finite extensions $L'$ of $L$. The assertion about completions follows from applying 
\Cref{universal} with $T=\Sp(B)$ for all artinian $L$-algebras $B$. 
\end{proof}

\Cref{deJong_like} together with the excellence
of local rings of $Y^{\ana}$ and $Y$ allows us to argue 
as in \cite[Section 2.1]{Koepf}, \cite[Section 1.2]{conrad} and
implies the following corollaries. 

\begin{cor}\label{dim_rig} $\dim Y^{\ana}=\dim Y$.
\end{cor}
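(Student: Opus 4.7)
The plan is to mimic the proof of K\"opf's dimension formula in the affinoid case (\cite[Folgerung 2.5]{Koepf}) by computing dimensions on both sides as suprema of local dimensions at closed points, and then comparing via \Cref{deJong_like}.

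First I would note that $X=\Spec R[1/p]$ is Jacobson (by arguments as in \Cref{sec_dim_codim}, or by \cite[Lemmas 3.17, 3.18]{BIP_new}); hence $Y$, being locally of finite type over $X$, is Jacobson by \cite[\href{https://stacks.math.columbia.edu/tag/02J5}{Tag 02J5}]{stacks-project}. In particular, the closed points of $Y$ are dense and $\dim Y = \sup_{y} \dim \OO_{Y,y}$, the supremum taken over closed points $y$ of $Y$. On the rigid side, every point $y\in Y^{\ana}$ has a local ring of finite Krull dimension (since locally it sits inside some $Y_i^{\ana}$ coming from an affinoid base), and the dimension of a rigid analytic space is likewise computed as the supremum of $\dim \OO_{Y^{\ana},y}$ over all $y \in Y^{\ana}$ (this is standard, and in any case follows from K\"opf's theory applied to each $Y_i^{\ana}$).

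Next I would invoke \Cref{deJong_like}: the map $\pi\colon Y^{\ana}\to Y$ gives a bijection between $Y^{\ana}$ and the set of closed points of $Y$, and for every $y\in Y^{\ana}$ the completed local rings $\widehat{\OO}_{Y^{\ana},y}$ and $\widehat{\OO}_{Y,\pi(y)}$ are canonically isomorphic. Since both local rings are noetherian, the Krull dimension is preserved under completion, so
\begin{equation*}
\dim \OO_{Y^{\ana},y} \;=\; \dim \widehat{\OO}_{Y^{\ana},y} \;=\; \dim \widehat{\OO}_{Y,\pi(y)} \;=\; \dim \OO_{Y,\pi(y)}.
\end{equation*}
Taking the supremum over $y\in Y^{\ana}$ on the left and over closed points $\pi(y)\in Y$ on the right gives $\dim Y^{\ana} = \dim Y$.

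The main potential obstacle is simply bookkeeping: making sure that the dimension of $Y^{\ana}$ as a (possibly non-quasi-compact) rigid analytic space really is attained as a supremum of local dimensions. If one prefers to avoid this, the cleanest workaround is to reduce to the quasi-compact case by covering $\mathfrak X$ by an increasing family of affinoid opens $\Sp(A_i)$, setting $Y_i := Y\times_{X} \Spec A_i$, applying K\"opf directly to get $\dim Y_i^{\ana}=\dim Y_i$, and then observing that every closed point of $Y$ lies in some $Y_i$ (by the Jacobson property together with \Cref{algebraic}) and every point of $Y^{\ana}$ lies in some $Y_i^{\ana}$, so both suprema are computed over the same indexing.
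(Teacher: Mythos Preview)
Your argument is correct and follows essentially the same route as the paper: use \Cref{deJong_like} to identify completed local rings at matching points, then pass from completions back to the local rings (noetherianness suffices for the dimension statement; the paper's mention of excellence is aimed at the subsequent corollaries on normality and reducedness), and compute dimensions as suprema of local dimensions over closed points using that $Y$ is Jacobson. The alternative reduction to the affinoid case via the covering $\{\Sp(A_i)\}$ is also fine and amounts to the same thing.
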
 

\begin{cor}\label{normal_rig}  $Y^{\ana}$ is normal (resp. reduced) if and only if 
$Y$ is normal (resp. reduced).
\end{cor}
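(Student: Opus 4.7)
The plan is to transfer normality and reducedness between $Y$ and $Y^{\ana}$ through the canonical isomorphism of completed local rings supplied by \Cref{deJong_like}, following the template of \cite[Section 2.1]{Koepf} and \cite[Section 1.2]{conrad}.

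First I would reduce the problem to a pointwise statement at closed points. Both normality and reducedness are local properties, and since they are preserved under further localisation while every prime is contained in a maximal ideal, $Y$ is normal (resp. reduced) if and only if $\OO_{Y,y}$ is normal (resp. reduced) for every closed point $y \in Y$. On the rigid analytic side, every point of $Y^{\ana}$ is already ``closed'' in the sense that its residue field is a finite extension of $L$, so the corresponding criterion reads: $Y^{\ana}$ is normal (resp. reduced) if and only if $\OO_{Y^{\ana},y}$ is so for every $y \in Y^{\ana}$.

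Second I would pass to completions using excellence. The ring $R$ is complete noetherian local and hence excellent, so $R[1/p]$ is excellent and therefore every local ring $\OO_{Y,y}$ is excellent as well; local rings on rigid analytic spaces over $L$ are likewise excellent by the theorem of Kiehl. For an excellent local ring $A$, the properties ``normal'' and ``reduced'' descend from $\hat A$ to $A$ along the faithfully flat extension $A \to \hat A$, and ascend because the formal fibres are geometrically regular, hence geometrically normal and geometrically reduced. So at each closed point of $Y$ and each point of $Y^{\ana}$ we may replace the local ring by its completion without affecting normality or reducedness.

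Combining these two reductions with the bijection $\pi : Y^{\ana} \iso \{\text{closed points of } Y\}$ and the canonical isomorphism $\widehat{\OO}_{Y^{\ana},y} \cong \widehat{\OO}_{Y,\pi(y)}$ supplied by \Cref{deJong_like} yields the equivalence in both directions. I do not anticipate any genuine obstacle here, since \Cref{deJong_like} already carries the essential geometric content and the remaining ingredients (excellence of $R$ and of local rings on rigid spaces, plus the transfer of normality and reducedness across completion for excellent rings) are standard.
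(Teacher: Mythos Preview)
Your proposal is correct and follows exactly the approach the paper indicates: the paper does not spell out a proof but states that \Cref{deJong_like} together with the excellence of the local rings of $Y^{\ana}$ and $Y$ allows one to argue as in \cite[Section 2.1]{Koepf} and \cite[Section 1.2]{conrad}, which is precisely the reduction-to-completions argument you give.
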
 
\begin{prop}\label{etale_cover} Let $Y$ be a scheme locally of finite type over $X$. Let $\{U_i\rightarrow Y\}_{i\in I}$ be an \'etale covering of $Y$ by $X$-schemes. Then the
equaliser diagram 
\begin{equation}\label{eq0}
\OO(Y^{\ana})\rightarrow \prod_{i\in I} \OO(U_i^{\ana})\rightrightarrows
\prod_{(i,j)\in I\times I} \OO(U_{ij}^{\ana})
\end{equation}
is exact, where $U_{ij}= U_i\times_X U_j$.
\end{prop}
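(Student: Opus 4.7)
The plan is to reduce to the affinoid case \Cref{etale_cover_aff} in two steps: first to $Y$ affine by a comparison argument for Grothendieck topologies, and then from the quasi-Stein $\mathfrak X$ to its strict affinoid admissible opens via inverse limits.

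\textbf{Reduction to the affine case.} First I would verify that the presheaf $F:U\mapsto \OO(U^{\ana})$ on the big étale site of $Y$ is a Zariski sheaf: by \Cref{clofib}, a Zariski cover of $X$-schemes analytifies to an admissible open cover of rigid spaces (compatibly with fibre products), and the structure sheaf satisfies the sheaf condition for the admissible topology. By the standard comparison principle for Grothendieck topologies (an étale presheaf is a sheaf iff it is a Zariski sheaf whose restriction to every affine open is an étale sheaf), it suffices to treat the case $Y = \Spec B$ affine. Refining each $U_i$ by a cover by affine opens, I may further assume $U_i = \Spec B_i$ is affine, and hence each $U_{ij}$ is affine as well by \Cref{clofib}.

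\textbf{Extension of the affinoid statement.} Next I would extend \Cref{etale_cover_aff} to the assertion: if $Z$ is a scheme of finite type over $\Spec A$ (with $A$ a strict $L$-affinoid algebra) and $\{V_i \to Z\}$ is an étale cover, then $\OO(Z^{\ana}) \to \prod_i \OO(V_i^{\ana}) \rightrightarrows \prod_{i,j} \OO(V_{ij}^{\ana})$ is an equalizer. The proof goes through verbatim to that of \Cref{etale_cover_aff}: by \cite[Propositions 3.3.11, 2.6.8]{berkovich_ihes} the induced $\{V_i^{\Ber} \to Z^{\Ber}\}$ is an étale cover of Berkovich spaces; by \cite[Example 4.1.6]{berkovich_ihes}, $\OO$ is a sheaf for the étale topology on Berkovich analytic spaces; and \Cref{ana_Ber} identifies the Berkovich and rigid global sections.

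\textbf{Passage to the limit.} Finally, write $\mathfrak X = \bigcup_n \Sp(A_n)$ as an increasing admissible union of strict affinoid opens coming from the quasi-Stein structure of Berthelot's rigid generic fibre. Set $Y_n := Y \times_X \Spec A_n$, $U_{i,n} := U_i \times_X \Spec A_n$ and $U_{ij,n} := U_{ij} \times_X \Spec A_n$. By the construction of relative analytification one has $Y_n^{\ana} = Y^{\ana} \times_{\mathfrak X} \Sp(A_n)$, and $\{Y_n^{\ana}\}$ is an admissible cover of $Y^{\ana}$: any affinoid morphism $\Sp(B) \to Y^{\ana}$ projects to a morphism $\Sp(B) \to \mathfrak X$, which factors through some $\Sp(A_n)$ by admissibility of the cover of $\mathfrak X$, and hence through $Y_n^{\ana}$. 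It follows that $\OO(Y^{\ana}) = \varprojlim_n \OO(Y_n^{\ana})$, and analogously for the $U_i$ and $U_{ij}$. For each $n$, the preceding step supplies an equalizer for the étale cover $\{U_{i,n} \to Y_n\}$, and since $\varprojlim_n$ and the products over $I$ and $I\times I$ are limits, which commute with equalizers, taking $\varprojlim_n$ yields the desired exactness of \eqref{eq0}.

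The main conceptual obstacle is ensuring that relative analytification is compatible with the base change $R[1/p] \to A_n$ (so that the affinoid exhaustion really cuts $Y^{\ana}$ into the expected pieces) and that Berkovich's theory provides the required étale-sheaf property; both are supplied by results already in this section, most notably the universal property of \Cref{universal} together with \Cref{ana_Ber} and \Cref{clofib}.
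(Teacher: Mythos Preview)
Your proof is correct and takes a genuinely different route from the paper's.

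\textbf{Comparison.} The paper does not reduce to $Y$ affine. Instead it picks an \emph{arbitrary} admissible affinoid cover $\{\Sp(A_m)\}_{m\in M}$ of $Y^{\ana}$ (and a further affinoid cover of the pairwise intersections), forms the fibre products $U_{im}=\Spec A_m\times_Y U_i$, and arranges everything into a $3\times 3$ grid of equaliser diagrams: the first row is the target diagram, the second and third rows are products (over $m$, respectively over $(m,n,t)$) of instances of \Cref{etale_cover_aff}, and the first two columns are the ordinary sheaf condition for the admissible cover of $Y^{\ana}$ and of each $U_i^{\ana}$. A diagram chase then gives exactness of the first row. In particular the paper never invokes the comparison principle for Grothendieck topologies, never needs the exhaustion of $\mathfrak X$ to be increasing, and never isolates an ``extension of \Cref{etale_cover_aff} to schemes of finite type over an affinoid'' as a separate statement.

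Your approach trades the $3\times 3$ diagram for three more standard moves: (i) reduce to $Y$ affine and $U_i$ affine via the Zariski/\'etale comparison principle, (ii) extend \Cref{etale_cover_aff} from $\Spec A$ to schemes of finite type over $\Spec A$ (same Berkovich argument, now using the \'etale-sheaf property of $\OO$ on an arbitrary Berkovich space rather than just on $\MM(A)$), and (iii) exploit that the affinoid exhaustion of $\mathfrak X$ pulls back to an \emph{increasing} admissible cover of $Y^{\ana}$, so the gluing is simply an inverse limit of equalisers. This is more modular and arguably easier to read; the price is the extra input in step (i), where you need that analytification sends Zariski covers to \emph{admissible} covers (this is standard in K\"opf's setting and globalises, but is not literally contained in \Cref{clofib}, which only gives open immersions and fibre products). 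The paper's route, by contrast, works uniformly without first cutting $Y$ into affines and makes no use of the special increasing structure of the cover of $\mathfrak X$.
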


\begin{proof} 
Let $\{\Sp(A_m)\rightarrow Y^{\ana}\}_{m\in M}$ be an admissible covering by affinoid subdomains. For each 
pair $(m,n)\in M\times M$ let 
 $\{\Sp(A_{mnt})\}_{t\in M_{mn}}$
be an admissible covering of $\Sp(A_m)\times_{Y^{\ana}} \Sp(A_n)$
by affinoid subdomains. Then the equaliser diagram 
\begin{equation}\label{eq1}
\OO(Y^{\ana})\rightarrow \prod_{m\in M} A_m \rightrightarrows \prod_{(m,n)\in M\times M, t\in M_{mn}} A_{mnt}
\end{equation}
is exact. For each $(i,m)\in I\times M$ let $U_{im}= \Spec A_m\times_{Y} U_i$ and let $U_{im}^{\ana}$ be the relative 
analytification of $U_{im}$ over $\Sp(A_m)$. Then 
 $\{U_{im}\rightarrow \Spec A_m \}_{i\in I}$ is an \'etale covering of $\Spec A_m$ and \Cref{etale_cover_aff} gives an exact equaliser diagram: 
\begin{equation}\label{eq2}
A_{m}\rightarrow \prod_{i\in I} \OO(U_{im}^{\ana})\rightrightarrows \prod_{(i,j)\in I\times I} \OO(U_{imjm}^{\ana}),
\end{equation}
where $U_{imjm}$ is the fibre product of $U_{im}$ and $U_{jm}$ over $\Spec A_{m}$. (We will employ the same notation scheme with more complicated index sets below.) Similarly, for $(m, n)\in M\times M$ and $t\in M_{mn}$ and $i\in I$ we define $U_{imnt}=\Spec A_{mnt}\times_Y U_i$ and obtain an exact equaliser diagram: 
\begin{equation}\label{eq3}
A_{mnt}\rightarrow \prod_{i\in I} \OO(U_{imnt}^{\ana})\rightrightarrows \prod_{(i,j)\in I\times I} \OO(U_{imntjmnt}^{\ana}).
\end{equation}
It follows from the construction of $U_i^{\ana}$ that 
$U_{im}^{\ana}\cong \Sp(A_m)\times_{Y^{\ana}} U_i^{\ana}$. 
Since $\{\Sp(A_m)\}_{m\in M}$ is an admissible covering of $Y^{\ana}$, 
$\{U_{im}^{\ana}\}_{m\in M}$ is an admissible covering of $U_i^{\ana}$. Similarly $\{U_{imnt}^{\ana}\}_{t\in M_{mn}}$ is an admissible covering 
for $U_{im}^{\ana}\times_{U_i^{\ana}}U^{\ana}_{in}$. We thus obtain an exact equaliser diagram: 
\begin{equation}\label{eq4} 
\OO(U_i^{\ana})\rightarrow \prod_{m\in M} \OO(U_{im}^{\ana})\rightrightarrows
\prod_{(m,n)\in M\times M, t\in M_{mn}}\OO(U_{imnt}^{\ana}).
\end{equation}
The assertion follows by considering 
the diagram 
\begin{equation} 
\eqref{eq0}\rightarrow \prod_{m\in M} \eqref{eq2}\rightrightarrows \prod_{(m,n)\in M\times M, t\in M_{mn}} \eqref{eq3}
\end{equation}
obtained by restriction maps:
\begin{equation}
\begin{tikzcd}
\OO(Y^{\ana})\arrow[r]\arrow[d] & \prod_{i} \OO(U_i^{\ana})\arrow[r, shift left]
        \arrow[r, shift right]\ar[d] & 
\prod_{i,j} \OO(U_{ij}^{\ana})\arrow[d] \\
\prod_m A_{m}\arrow[r]
\arrow[d, shift left]
        \arrow[d, shift right]& \prod_{i, m} \OO(U_{im}^{\ana})\arrow[r, shift left]
        \arrow[r, shift right]\arrow[d, shift left]
        \arrow[d, shift right]& \prod_{i,j,m} \OO(U_{imjm}^{\ana}) \arrow[d, shift left]
        \arrow[d, shift right]\\
        \prod_{m,n,t}A_{mnt}\arrow[r] &\prod_{i,m,n,t} \OO(U_{imnt}^{\ana})\arrow[r, shift left]
        \arrow[r, shift right]& \prod_{i,j,m,n,t} \OO(U_{imntjmnt}^{\ana})
  \end{tikzcd}
\end{equation}
so that \eqref{eq0} is the first row, $\prod_m \eqref{eq2}$ is the second row, $\prod_{m,n,t} \eqref{eq3}$ is the third row, \eqref{eq1} is the first column and $\prod_{i}\eqref{eq4}$ is the second column. The exactness of \eqref{eq0} follows from the exactness of the first and second columns and the second and third rows in the diagram. 
\end{proof}

Let $\mathfrak a$ be an ideal of $R$. Let $Z= \Spec (R/\mathfrak a)[1/p]$, $\mathfrak Z:=(\Spf R/\mathfrak a)^{\rig}$, $U=X\setminus Z$  and $U^{\rig}=\mathfrak X\setminus \mathfrak Z$. Then $U$ is an 
open subscheme of $X$ and $U^{\rig}$ is a Zariski open rigid subvariety 
of $\mathfrak X$. 
\begin{prop}\label{complement} We have 
\begin{equation}
(U\times_X Y)^{\ana}\cong U^{\rig}\times_{\mathfrak X} Y^{\ana}, \quad 
(Z\times_X Y)^{\ana}\cong \mathfrak Z \times_{\mathfrak X} Y^{\ana}.
\end{equation}
Moreover, $(U\times_X Y)^{\ana}$ is the complement of $(Z\times_X Y)^{\ana}$ in $Y^{\ana}$.
\end{prop}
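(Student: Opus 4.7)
The plan is to reduce both isomorphisms to the affinoid case already handled by K\"opf's theory via Lemma \ref{clofib}, using the admissible covering $\mathfrak X = \bigcup_{i \in I}\Sp(A_i)$ and the gluing description of $Y^{\ana}$ from \eqref{is_ok}. Throughout, write $Y_i := Y \times_{\Spec R} \Spec A_i$.

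I would dispatch the closed case first. The morphism of formal schemes $\Spf(R/\mathfrak a) \hookrightarrow \Spf R$ is a closed immersion, so $\mathfrak Z$ is a closed analytic subspace of $\mathfrak X$ with $\mathfrak Z \cap \Sp(A_i) = \Sp(A_i/\mathfrak a A_i)$; in particular $\{\Sp(A_i/\mathfrak a A_i)\}_{i \in I}$ is an admissible affinoid covering of $\mathfrak Z$. Setting $Z_i := Z \times_X \Spec A_i = \Spec(A_i/\mathfrak a A_i)$, the base change $(Z \times_X Y) \times_{\Spec R} \Spec A_i$ is the closed subscheme $Z_i \times_{\Spec A_i} Y_i$ of $Y_i$. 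Applying Lemma \ref{clofib} in K\"opf's setting over $A_i$ gives
\[
(Z_i \times_{\Spec A_i} Y_i)^{\ana} \cong Z_i^{\ana} \times_{\Sp(A_i)} Y_i^{\ana} \cong \Sp(A_i/\mathfrak a A_i) \times_{\Sp(A_i)} Y_i^{\ana}.
\]
These identifications are compatible with base change $\Sp(A_j) \to \Sp(A_i)$ for $j < i$, so they glue along the covering to an isomorphism $(Z \times_X Y)^{\ana} \cong \mathfrak Z \times_{\mathfrak X} Y^{\ana}$.

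For the open case I would argue symmetrically. The locus $U^{\rig} \cap \Sp(A_i) = \Sp(A_i) \setminus \Sp(A_i/\mathfrak a A_i)$ is an admissible open in $\Sp(A_i)$ (covered by rational subdomains $\{|f| \geq \varepsilon\}$ as $f$ runs over generators of $\mathfrak a$ in $A_i$ and $\varepsilon \to 0$), and $U \times_X \Spec A_i$ is the corresponding quasi-compact open subscheme of $\Spec A_i$. Lemma \ref{clofib} (together with K\"opf's compatibility of analytification with fibre products over $A_i$) yields
\[
((U \times_X Y) \times_{\Spec R} \Spec A_i)^{\ana} \cong (U^{\rig} \cap \Sp(A_i)) \times_{\Sp(A_i)} Y_i^{\ana},
\]
and gluing produces the isomorphism $(U \times_X Y)^{\ana} \cong U^{\rig} \times_{\mathfrak X} Y^{\ana}$. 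The complementarity assertion is then formal: set-theoretically $U^{\rig} \sqcup \mathfrak Z = \mathfrak X$, so $U^{\rig} \times_{\mathfrak X} Y^{\ana}$ and $\mathfrak Z \times_{\mathfrak X} Y^{\ana}$ are set-theoretic complements in $Y^{\ana}$, and combining this with the two isomorphisms just established concludes the proof.

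The main obstacle is the verification in the open case that $U^{\rig} \cap \Sp(A_i)$ really is the analytification of the open subscheme $U \times_X \Spec A_i$ over $A_i$, since $\mathfrak a$ need not be principal; one may need to cover $U \cap \Spec A_i$ by distinguished opens $D(f)$ for $f \in \mathfrak a A_i$, analytify each, and check that the resulting rigid space is $\Sp(A_i) \setminus \Sp(A_i/\mathfrak a A_i)$. An alternative cleaner route would be to bypass gluing and use the universal property of Theorem \ref{universal} directly: a morphism $T \to Y^{\ana}$ corresponds to $T \to Y$, and under the bijection of Corollary \ref{bsch} the condition that the composite $T \to X$ factors through $Z$ (resp.\ $U$) is equivalent to the composite $T \to \mathfrak X$ factoring through $\mathfrak Z$ (resp.\ $U^{\rig}$), which identifies the functors represented by both sides of each claimed isomorphism.
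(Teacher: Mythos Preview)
Your argument is correct and follows essentially the same route as the paper (reduce to K\"opf's affinoid theory via the covering, then glue), but the paper organises it more economically and thereby sidesteps the ``main obstacle'' you flag. Rather than analytifying $U \times_X Y$ and $Z \times_X Y$ over each $\Sp(A_i)$ separately and gluing, the paper first shows directly that $Z^{\ana} = \mathfrak Z$ (by your computation $(\Spec A_i \times_X Z)^{\ana} = \Sp(A_i/\mathfrak a A_i)$) and that $U^{\ana} = U^{\rig}$ (both are open rigid subvarieties of $\mathfrak X$ with the same underlying set of points, hence equal --- no need to write down an admissible covering of the Zariski open), and then invokes the fibre-product clause of Lemma~\ref{clofib} once: $(U \times_X Y)^{\ana} \cong U^{\ana} \times_{\mathfrak X} Y^{\ana} = U^{\rig} \times_{\mathfrak X} Y^{\ana}$, and similarly for $Z$. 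This is exactly your ``alternative cleaner route'' in spirit, and it avoids the verification you were worried about in the open case. The complementarity statement is handled by the same topological observation applied to $Y^{\ana}$ instead of $\mathfrak X$.
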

\begin{proof} The complement of $\mathfrak Z \times_{\mathfrak X} Y^{\ana}$ in $Y^{\ana}$ is equal to $U^{\rig}\times_{\mathfrak X} Y^{\ana}$, as
both are open rigid subvarieties of $Y^{\ana}$ with the same underlying topological space. The same argument 
shows that $U^{\rig}=U^{\ana}$. 
If $\{\Sp(A_i)\}_{i\in I}$ is an admissible covering of $\mathfrak X$ 
then $\Spec A_i\times_X Z\cong \Spec A_i/\mathfrak a A_i$. Thus 
$(\Spec A_i\times_X Z)^{\ana}= \Sp(A_i/\mathfrak a A_i)$, which implies that $Z^{\ana}=\mathfrak Z$. The assertion follows from \Cref{clofib}.
\end{proof}
\section{Construction of the infinitesimal character}\label{sec_constr}

Let $G$ be a connected reductive group defined 
over $F$ and split over a finite Galois extension $E$ of $F$. Its $L$-group is a generalised reductive group scheme $\LG$ defined over $\Spec \ZZ$ of the
form $\Ghat\rtimes \underline{\Gal(E/F)}$, where 
$\Ghat$ is a split connected reductive group scheme over $\ZZ$ 
with its based root datum dual to that of $G_E$, $\underline{\Gal(E/F)}$ is a constant group scheme associated to the finite group $\Gal(E/F)$. We refer the reader to \cite[Section 2]{DPS} for the construction of the action of $\underline{\Gal(E/F)}$
on $\Ghat$. Our definition of the 
$L$-group depends on the choice of 
$E$, and we set things up this way, because we want $\LG$ to be of finite type over $\Spec \ZZ$. This choice is completely harmless, as one can easily check that replacing $E$ by a finite extension does not affect the statements.  

If $A$ is a ring then 
a  representation $\rho: \Gamma_F \rightarrow \LG(A)$ is called \textit{admissible} if the 
composition of $\rho$ with the natural projection $\LG(A)\rightarrow \underline{\Gal(E/F)}(A)$ 
maps $g\in \Gamma_F$ to the constant function 
$\Spec A\rightarrow \Gal(E/F)$ with value $q(g)$, 
where $q: \Gamma_F \rightarrow \Gal(E/F)$ is the quotient map. 

Let $X$ be a rigid analytic space over $L$.
As explained before \Cref{bsch}, the ring 
of global functions $\OO_X(X)$ carries a natural topology, 
such that if $X= \Sp A$ an affinoid then the topology is just
the Banach space topology on $A$.

Let $\rho: \Gamma_F \rightarrow \LG(\OO_X(X))$ be a continuous 
admissible representation. Following  \cite[Definition 4.1]{DPS} we attach to $\rho$ a $\Qp$-algebra homomorphism 
$$\zeta_{\rho}: Z(\Res_{F/\Qp} \mathfrak g)\rightarrow \OO_X(X),$$
where $\mathfrak g$ is the Lie algebra of $G$, $\Res_{F/\Qp} \mathfrak g$ is the Lie algebra
of the restriction of scalars $\Res_{F/\Qp} G$, 
and $Z(\Res_{F/\Qp} \mathfrak g)$ is the centre of the 
universal enveloping algebra of 
$\Res_{F/\Qp} \mathfrak g$.

Let us sketch the construction of $\zeta_{\rho}$. Let $\{U_i\rightarrow X\}_{i\in I}$
be an admissible covering of $X$ by affinoids $U_i=\Sp(A_i)$. 
The map $\OO_X(X)\rightarrow A_i$ induces a continuous admissible 
representation $\rho_i: \Gamma_F \rightarrow \LG(A_i)$. 
 The first step 
is to attach to $\rho_i$ the Sen operator $\Theta_{\Sen, \rho_i}\in (\Cp \wtimes A_i)\otimes_L \ghat$, where $\ghat$ 
is the Lie algebra of $\Ghat_L$, \cite[Lemma 4.6]{DPS}.  This induces a homomorphism of $L$-algebras $S(\ghat^*)\rightarrow \Cp \wtimes A_i$, 
where $S(\ghat^*)$ is the symmetric algebra of the 
$L$-linear dual of $\ghat$. By restricting this homomorphism to the subring $S(\ghat^*)^{\Ghat}\subset S(\ghat^*)$ we obtain 
a homomorphism of $L$-algebras 
$\theta_{\rho_i}: S(\ghat^*)^{\Ghat} \rightarrow \Cp\wtimes A_i$.
It is shown in \cite[Lemma 4.10]{DPS} that  $\theta_{\rho_i}$ takes values in $E\otimes_{\Qp} A_i$, where $E$ is any finite 
Galois extension of $F$ such that $G$ splits over $E$. 
The construction of the Sen operator is functorial in $U_i$ and 
hence the homomorphisms  $\theta_{\rho_i}$ glue to a homomorphism 
of $L$-algebras $\theta_{\rho}: S(\ghat^*)^{\Ghat} \rightarrow E \otimes_{\Qp} \OO_X(X)$. Let us assume that $L$ is large enough 
so that there are $[E:\Qp]$ embeddings of $E$ into $L$ and let
$\sigma: F\hookrightarrow L$ be a $\Qp$-algebra homomorphism 
and $\tau: E \hookrightarrow L$ be any  $\Qp$-algebra homomorphism 
extending $\sigma$. There is an $L$-algebra homomorphism 
$\zeta_{\rho, \sigma}: Z(\mathfrak g)\otimes_{F, \sigma} L \rightarrow \OO_X(X)$ defined as the composition
\begin{equation}\label{one}
Z(\mathfrak g)\otimes_{F, \sigma} L \overset{\cong}{\longrightarrow} S(\that^*)^W\overset{\cong}{\longrightarrow}S(\ghat^*)^{\Ghat}
               \overset{\theta_{\rho}}{\longrightarrow} E\otimes \OO_X(X)
\overset{m_{\tau}}{\longrightarrow} \OO_X(X),
\end{equation}
where the first arrow comes from the Harish-Chandra isomorphism, the 
second is Chevalley's restriction theorem and the last arrow is the map $x\otimes a\mapsto \tau(x)a$. Although the first and the last 
arrow depend on the choice of $\tau$ it is checked in \cite[Lemma 4.18]{DPS} that the composition depends only on $\sigma$. 
Finally, since 
$$Z(\Res_{F/\Qp}\mathfrak g)\otimes_{\Qp} L \cong \bigotimes_{\sigma: F \hookrightarrow L} Z(\mathfrak g)\otimes_{F, \sigma} L$$
we may define $\zeta_{\rho}:= \otimes_{\sigma} \zeta_{\rho, \sigma}$ and then use 
adjunction to obtain a 
homomorphism of $\Qp$-algebras.

\begin{lem}[\cite{DPS}]\label{conj_same} Let $g\in \Ghat(\OO_X(X))$ and 
let $\leftidx{^g}{\rho}{}:\Gamma_F \rightarrow \LG(\OO_X(X))$
be the representation $\leftidx{^g}{\rho}{}(\gamma):= g \rho(\gamma)g^{-1}$. Then $\zeta_{\rho}= \zeta_{\leftidx{^g}{\rho}{}}.$
\end{lem}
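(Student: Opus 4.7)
The plan is to reduce the assertion to a $\Ghat$-equivariance statement for the Sen operator, and then invoke the fact that the whole construction of $\zeta_\rho$ factors through $\Ghat$-invariants. First I would reduce to the affinoid setting: fix an admissible covering $\{U_i = \Sp(A_i)\}_{i \in I}$ of $X$ and let $g_i \in \Ghat(A_i)$ be the restriction of $g$. Since $\zeta_\rho$ is defined by gluing the $\zeta_{\rho_i}$ over the $U_i$, and each $\zeta_{\rho_i}$ is built from $\theta_{\rho_i}$ via \eqref{one}, it suffices to show that $\theta_{\rho_i} = \theta_{\grho_i}$ for every $i \in I$.

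The key input is the transformation law
\[
\Theta_{\Sen, \grho_i} \;=\; \Ad(g_i) \cdot \Theta_{\Sen, \rho_i}
\]
in $(\Cp \wtimes A_i) \otimes_L \ghat$, where $\Ad(g_i)$ acts through the adjoint representation of $\Ghat(A_i)$ on $\ghat$, extended $(\Cp \wtimes A_i)$-linearly. This should follow from the functoriality of Sen's construction: since $g_i$ is independent of $\gamma \in \Gamma_F$, conjugation by $g_i$ commutes with the $\Gamma_F$-action on $\Ghat(\Cp \wtimes A_i)$. Tracing through the construction of the $\LG$-valued Sen operator in \cite[Section 4]{DPS} --- which proceeds by embedding $\Ghat \hookrightarrow \GL_d$, applying the classical Sen operator, and then showing the result lies in $\ghat$ independently of the embedding --- the case of $\GL_d$ is immediate, as the classical Sen operator intertwines conjugation by $g_i$ with matrix conjugation; the compatibility with the embedding then translates this into the $\Ad(g_i)$-action on $\ghat$.

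Granted this transformation law, the induced $L$-algebra homomorphism $S(\ghat^*) \to \Cp \wtimes A_i$ attached to $\grho_i$ equals the one attached to $\rho_i$ precomposed with the automorphism of $S(\ghat^*)$ dual to $\Ad(g_i)$. But by definition $\Ghat$ acts trivially on the invariant subring $S(\ghat^*)^{\Ghat}$, so the two homomorphisms coincide after restriction to this subring; this yields $\theta_{\rho_i} = \theta_{\grho_i}$. Since the remaining steps of \eqref{one} --- the Harish-Chandra isomorphism, Chevalley's restriction theorem, and the multiplication $m_\tau$ --- do not involve $\rho$, we conclude $\zeta_{\rho, \sigma} = \zeta_{\grho, \sigma}$ for every embedding $\sigma : F \hookrightarrow L$, and hence $\zeta_\rho = \zeta_{\grho}$ after tensoring.

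The only real content is the transformation law for the Sen operator; the rest of the argument is the formal observation that passing to $\Ghat$-invariants kills inner automorphisms of $\LG$. I expect the hard part to be isolating this compatibility cleanly in the framework of \cite[Section 4]{DPS}, since there the $\LG$-valued Sen operator is only defined after the auxiliary choice of a faithful embedding into some $\GL_d$, and independence from this choice is used tacitly.
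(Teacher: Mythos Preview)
Your argument is correct and matches the paper's proof essentially step for step: the paper cites \cite[Lemma 4.10]{DPS} for $\theta_\rho = \theta_{\grho}$, explains that the key point is that the Sen operators are $\Ghat$-conjugate so the induced maps agree on $S(\ghat^*)^{\Ghat}$, and then observes that the remaining maps in \eqref{one} are independent of $\rho$. You have spelled out the transformation law for the Sen operator in more detail than the paper does, but this is exactly the content being invoked.
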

\begin{proof} By the last part of \cite[Lemma 4.10]{DPS} 
we have $\theta_{\rho}= \theta_{\grho}$. The main  
point in the proof is that the Sen operators are conjugate by $g$, 
and hence the restriction of the corresponding homomorphisms 
$S(\ghat^*)\rightarrow \Cp\wtimes A_i$
to the subring of $\Ghat$-invariant functions yields 
the same homomorphism. Since the rest of the terms 
in \eqref{one} do not depend on $\rho$ we obtain the lemma.  
\end{proof}
\subsection{An application of the  Hebbarkeitssatz}
Let $\rhobar: \Gamma_F \rightarrow \LG(k)$
be a continuous admissible representation,
let $\Thetabar$ be its  $\LG$-pseudocharacter, let $R^{\ps}$ be the
universal deformation ring of $\Thetabar$. The scheme 
$X^{\gen}:=X^{\gen}_{\Thetabar}$ is of finite type over $\Spec R^{\ps}$ and we let $A^{\gen}:=A^{\gen}_{\Thetabar}$ be the ring of its global sections. To ease the notation we let $R^{\ana}$ be the ring of global sections of the 
rigid space $(\Spf R^{\ps})^{\rig}$ and let $A^{\ana}$ 
be the ring of global sections of the analytification 
of $X^{\gen}$ in the sense of
\Cref{universal}. Let $\rho^{\ana}: \Gamma_F \rightarrow 
\LG(A^{\ana})$ be the representation obtained from the
universal representation 
$\rho^{\mathrm{u}}: \Gamma_F \rightarrow 
\LG(A^{\gen})$
by extending scalars along $ A^{\gen}\rightarrow A^{\ana}$.

\begin{lem}\label{cont_adm} Given $(T, \OO_T)\in \Rig_L$ and a morphism of locally $G$-ringed $L$-spaces $(T, \OO_T)\rightarrow
(X^{\gen}, \OO_{X^{\gen}})$, the representation 
$\rho: \Gamma_F\overset{\rho^{\ana}}{\rightarrow} \LG(A^{\ana})\rightarrow \LG(\OO_T(T))$ is continuous and admissible. In particular, $\rho^{\ana}$ is continuous and admissible. 
\end{lem}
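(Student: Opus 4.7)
The plan is to factor the given morphism through the analytification and verify continuity and admissibility locally on $T$. By \Cref{universal} applied with $Y = X^{\gen}$ viewed as a scheme locally of finite type over $\Spec R^{\ps}[1/p]$, the morphism $(T, \OO_T) \to (X^{\gen}, \OO_{X^{\gen}})$ factors uniquely through a morphism $T \to (X^{\gen})^{\ana}$ in $\Rig_L$ lying over $(\Spf R^{\ps})^{\rig}$. Composing with the universal representation $\rho^{\mathrm u}$ and the induced ring map $A^{\ana} \to \OO_T(T)$ recovers $\rho$; in particular, the special case $T = (X^{\gen})^{\ana}$ with the identity morphism yields the ``in particular'' clause about $\rho^{\ana}$.

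Fix an admissible affinoid cover $\{\Sp B_i\}_{i \in I}$ of $T$. Since $\OO_T$ is a sheaf and its natural topology is the subspace topology inside $\prod_i B_i$, continuity of $\rho$ is equivalent to continuity of each composition $\rho_i : \Gamma_F \to \LG(B_i)$. By \Cref{bsch_aff}, the morphism $\Sp B_i \to (X^{\gen})^{\ana}$ corresponds to an $\OO$-algebra homomorphism $\varphi_i : A^{\gen} \to B_i$, and by \Cref{bsch} its restriction $R^{\ps} \to B_i$ is continuous for the $\mm$-adic topology on $R^{\ps}$ and the Banach topology on $B_i$. Since $\rho^{\mathrm u}$ is $R^{\ps}$-condensed by the definition of $X^{\gen}$, the equivalent characterisations of $R^{\ps}$-condensedness in \cite[Lemma 5.2]{defG}, combined with \cite[Lemma 4.16]{defG} to reduce from $\LG$ to $\GL_d$ via a closed embedding, imply that the base change $\rho_i = \varphi_i \circ \rho^{\mathrm u}$ is continuous.

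For admissibility, observe that the condition is preserved under arbitrary $\OO$-algebra maps: if $f : A \to B$ is such a map and $\rho : \Gamma_F \to \LG(A)$ is admissible, then the image of each $g$ in $\underline{\Gal(E/F)}(B)$ equals $f_\ast$ applied to the constant function with value $q(g)$, which remains a constant function with value $q(g)$. It therefore suffices to check that $\rho^{\mathrm u}$ itself is admissible. This follows from the admissibility of $\rhobar$: since $\underline{\Gal(E/F)}$ is \'etale over $\OO$, the composite $\Gamma_F \to \LG(A^{\gen}) \to \underline{\Gal(E/F)}(A^{\gen})$ is a map into the locally constant $\Gal(E/F)$-valued functions on $\Spec A^{\gen}$, and the equality of two such functions is both open and closed. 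As the value at every closed point of $\Spec A^{\gen}$ lying over the closed point of $\Spec R^{\ps}$ equals $q(g)$ by admissibility of $\rhobar$ and the reconstruction theorem \cite[Theorem 3.8]{quast}, and as every connected component of $\Spec A^{\gen}$ meets this fiber, the equality holds globally.

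The principal technical point is the continuity of $\rho_i$, which rests on the equivalence between $R^{\ps}$-condensedness of the universal representation and continuity of its base changes along continuous $R^{\ps}$-algebra maps to Banach algebras, supplied by the condensed-representation formalism of \cite{defG}. Once this input is granted, the remaining ingredients---the universal property of the analytification and the sheaf-theoretic reduction to the affinoid cover---are entirely formal.
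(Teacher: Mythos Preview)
Your continuity argument is essentially the paper's: factor through the analytification via \Cref{universal}, obtain a continuous map $R^{\ps}\to\OO_T(T)$, and invoke the condensed-representation machinery of \cite{defG}. The paper cites \cite[Lemma~5.10(2), Proposition~8.3]{defG} directly rather than first passing to an affinoid cover, but the content is the same.

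For admissibility you take a different route. The paper uses functoriality of $X^{\gen}$ in the target group \cite[Proposition~8.5]{defG}: the quotient $f:\LG\to\LG/\Ghat$ induces a morphism $X^{\gen}_{\Thetabar}\to X^{\gen}_{f\circ\Thetabar}$, and since $\LG/\Ghat$ is finite constant one has $X^{\gen}_{f\circ\Thetabar}=X^{\ps}_{f\circ\Thetabar}=\Spec\OO$ with universal representation $q$. Pulling back along $X^{\gen}_{\Thetabar}\to\Spec\OO$ gives $f\circ\rho^u=q$ on all of $\Spec A^{\gen}$ at once, with no connectedness hypothesis. Your argument instead checks $f\circ\rho_x=q$ at closed points of the special fibre and then appeals to ``every connected component of $\Spec A^{\gen}$ meets this fibre'' to propagate the equality. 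That last assertion is true but is not justified and does not hold for a general finite-type algebra over a complete local ring (e.g.\ $\Zp\times\Qp$ over $\Zp$). Here one must argue that $\Spec A^{\gen}$ is connected: idempotents of $A^{\gen}$ are $\Ghat$-invariant because the connected group $\Ghat$ stabilises every clopen subset, and $(A^{\gen})^{\Ghat}$ is local since $R^{\ps}\to(A^{\gen})^{\Ghat}$ is a finite universal homeomorphism by \cite[Proposition~7.4]{defG}. Finally, the reconstruction theorem is heavier than needed at the closed points; \Cref{lit_defT} applied to $\LG/\Ghat$ already shows that $f\circ\rho_x$ is determined by its pseudocharacter $f\circ\Thetabar\otimes\kappa(x)$.
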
 
\begin{proof} The universal property of analytification \Cref{universal} yields a morphism of rigid analytic spaces 
from $T$ to $(X^{\gen})^{\ana}$ and hence a continuous 
ring homomorphism $A^{\ana}\rightarrow \OO_T(T)$. 
The maps $R^{\ps}\rightarrow R^{\ana}\rightarrow A^{\ana}$ are also continuous and continuity of $\rho$ follows from 
\cite[Lemma 5.10 (2), Proposition 8.3]{defG}.

The admissibility follows from the functoriality of 
$X^{\gen}$. Namely, the quotient map $f: \LG\rightarrow \LG/\Ghat$ induces a morphism of schemes $X^{\gen}_{\Thetabar} \rightarrow X^{\gen}_{f\circ \Thetabar}$ by \cite[Proposition 8.5]{defG}. Since
the neutral component of $\LG/\Ghat$
is trivial $X^{\gen}_{f\circ \Thetabar}= X^{\ps}_{f\circ \Thetabar}$ by \Cref{lit_defT}. Since $\LG/\Ghat$ is a constant group
scheme $X^{\ps}_{f\circ \Thetabar}=\Spec \OO$.
Thus
for all $R^{\ps}$-algebras $B$ such that $\Spec B$ 
is connected and for all $\rho\in X^{\gen}_{f\circ \Thetabar}(B)$ the homomorphisms $\Gamma_F\overset{\rho}{\longrightarrow}
\LG(B)\rightarrow (\LG/\Ghat)(B)=\Gal(E/F)$ are the same. Since $\rhobar\in X^{\gen}_G(k)$ is admissible we deduce that $\rho^u$ is admissible, which implies that $\rho^{\ana}$ is admissible.

The assertion concerning $\rho^{\ana}$ follows by choosing $T=(X^{\gen})^{\ana}$. 
\end{proof} 

\begin{prop}\label{work} If $F\neq \Qp$ then 
$\theta_{\rho^{\ana}}$ takes values in $E\otimes_{\Qp} R^{\ana}$.
\end{prop}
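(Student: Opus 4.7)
The plan is to execute the strategy outlined in the introduction: construct a candidate
$\theta : S(\ghat^*)^{\Ghat} \to E \otimes_{\Qp} R^{\ana}$ by producing the Sen operator map étale-locally on the pseudocharacter side, and then use Riemann's Hebbarkeitssatz to extend across the (codimension $\geq 2$) locus where a genuine representation of $\Gamma_F$ is not available. First, since $F \neq \Qp$, \Cref{nice_locus} supplies open subschemes $U \subset X^{\ps}[1/p]$ and $V = \pi^{-1}(U) \subset X^{\gen}[1/p]$, both regular, with $\pi|_V : V \to U$ smooth and surjective and with complements of codimension at least $2$. \Cref{cover_nice_locus} then yields an étale cover $\{U_i = \Spec A_i \to U\}_{i \in I}$ together with representations $\rho_i : \Gamma_F \to \LG(A_i)$ satisfying $\Theta_{\rho_i} = \Theta^u_{|A_i}$. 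Analytifying via \Cref{sec_rel_an} and arguing as in \Cref{cont_adm}, the $\rho_i$ become continuous admissible representations $\rho_i^{\ana} : \Gamma_F \to \LG(\OO(U_i^{\ana}))$, to which the construction of \cite{DPS} attaches Sen operator maps $\theta_i := \theta_{\rho_i^{\ana}} : S(\ghat^*)^{\Ghat} \to E \otimes_{\Qp} \OO(U_i^{\ana})$.

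Next I would glue the $\theta_i$ to a map on $U^{\ana}$. On each overlap $U_{ij} = U_i \times_U U_j$ the restrictions of $\rho_i$ and $\rho_j$ lift the same pseudocharacter $\Theta^u_{|A_{ij}}$ and lie in the absolutely irreducible locus, so the reconstruction theorem \cite[Theorem 3.8]{quast} makes them $H^0$-conjugate at every geometric point. The transporter between them is a torsor under the scheme-theoretic centralizer, which is smooth in characteristic zero, hence the transporter itself is smooth and admits sections étale-locally; after such a refinement there exists a $\Ghat$-valued conjugator identifying $\rho_i$ with $\rho_j$, and \Cref{conj_same} forces $\theta_i = \theta_j$ on the refined overlap. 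The étale sheaf property \Cref{etale_cover}, applied to $E \otimes_{\Qp}(-)$ (a finite direct sum of copies of $\OO$), then produces a unique $\theta : S(\ghat^*)^{\Ghat} \to E \otimes_{\Qp} \OO(U^{\ana})$. I expect this gluing step, namely the production of an étale-local $\Ghat$-valued conjugator between $\rho_i$ and $\rho_j$, to be the main technical obstacle: it is exactly the point where the potential non-connectedness of the centralizer makes life harder than in Chenevier's $\GL_n$ picture, but the characteristic-zero smoothness of centralizers (as exploited throughout \Cref{sec_abs_irr}) keeps it tractable.

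Finally I would invoke Riemann's Hebbarkeitssatz twice. By \Cref{normal}, $R^{\ps}[1/p]$ is normal, and $A^{\gen}[1/p]$ is normal by \cite[Corollary 15.24]{defG}; so both $(\Spf R^{\ps})^{\rig} = (X^{\ps}[1/p])^{\ana}$ and $(X^{\gen})^{\ana}$ are normal via \Cref{normal_rig}. Combining \Cref{dim_rig} with parts (4) and (5) of \Cref{nice_locus}, the analytified complements of $U^{\ana}$ and $V^{\ana}$ have codimension at least $2$, so restriction yields $\OO(U^{\ana}) = R^{\ana}$ and $\OO(V^{\ana}) = A^{\ana}$. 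The first identification upgrades $\theta$ to $S(\ghat^*)^{\Ghat} \to E \otimes_{\Qp} R^{\ana}$. To match it with $\theta_{\rho^{\ana}}$ I would rerun the gluing argument for the étale cover $\{V_i := U_i \times_U V \to V\}_{i \in I}$: on $V_i^{\ana}$ the two representations $p_1^*\rho_i^{\ana}$ and $p_2^*(\rho^{\ana}|_{V^{\ana}})$ have the same pseudocharacter and are absolutely irreducible, hence $\Ghat$-conjugate étale-locally, so \Cref{conj_same} together with \Cref{etale_cover} forces $(\pi^{\ana})^*\theta = \theta_{\rho^{\ana}}|_{V^{\ana}}$ in $E \otimes_{\Qp} \OO(V^{\ana})$. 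Transferring this equality through the Hebbarkeitssatz isomorphism $A^{\ana} \cong \OO(V^{\ana})$ shows that $\theta_{\rho^{\ana}}$ itself takes values in the image of $E \otimes_{\Qp} R^{\ana}$, completing the proof.
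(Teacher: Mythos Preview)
Your overall architecture matches the paper's: build $\theta$ over $U^{\ana}$ by gluing the $\theta_{\rho_i^{\ana}}$, apply Hebbarkeitssatz to land in $R^{\ana}$, then compare with $\theta_{\rho^{\ana}}$ over $V^{\ana}$ and apply Hebbarkeitssatz again. The difference lies in how you handle the two comparison steps, and there you overcomplicate things in a way that introduces a genuine gap.

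You propose to produce, after an \'etale refinement of $U_{ij}$, an actual $\Ghat$-valued element conjugating $\rho_i$ to $\rho_j$, by arguing that the transporter is a torsor under the centraliser $S_{\rho_i}$, which is smooth, hence admits \'etale-local sections. But the smoothness you can cite from \Cref{sec_abs_irr} is smoothness of each \emph{fibre} $S_{\rho_{i,x}}$ over its residue field (a characteristic-zero fact). What you need is smoothness of the family $S_{\rho_i} \to U_{ij}$, i.e.\ flatness, and this is not automatic: in the absolutely irreducible locus the neutral component is always $Z(H)^0$, but the component group $\Delta$ can jump from point to point, so the centraliser need not be flat over $U_{ij}$. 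Without flatness, the transporter need not be smooth over $U_{ij}$ and \'etale-local sections are not guaranteed. The same objection applies to your second comparison over $V_i$.

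The paper avoids this entirely. Since $R^{\ps}[1/p]$ is normal (\Cref{normal}), $U_{ij}^{\ana}$ is reduced (\Cref{normal_rig}), so two maps into $E\otimes_{\Qp}\OO(U_{ij}^{\ana})$ agree iff they agree at every $\Qpbar$-point. At such a point the two representations share the pseudocharacter $\Theta^u_y$ with $y$ in the irreducible locus, hence are $\Ghat(\Qpbar)$-conjugate by the reconstruction theorem, and \Cref{conj_same} gives $\theta_{\rho_{i,x}^{\ana}}=\theta_{\rho_{j,x}^{\ana}}$. This pointwise argument, together with \Cref{etale_cover}, yields the glued $\theta'$. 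The final comparison with $\theta_{\rho^{\ana}}$ is handled the same way: $\OO(V^{\ana})=A^{\ana}$ by Hebbarkeitssatz and normality of $X^{\gen}[1/p]$, so equality of $\theta'_{A^{\ana}}$ and $\theta_{\rho^{\ana}}$ is checked on $V^{\ana}(\Qpbar)$, where it reduces to the pointwise identity $\theta_{\rho^{\ana}_x}=\theta'_{\pi(x)}$. No conjugator over a base ring is ever needed; the non-connectedness of centralisers, which you correctly flag as the delicate point, simply does not enter.
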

\begin{proof} Let 
$\{U_i\rightarrow
U\}_{i\in I}$ 
be the covering and let 
$\{\rho_i: \Gamma_F \rightarrow \LG(A_i)\}_{i\in I}$ be the family of representations constructed in \Cref{cover_nice_locus}. Let $A_i^{\ana}$ be the 
ring of global sections of $U_i^{\ana}$ and let $\rho_i^{\ana}:\Gamma_F \rightarrow \LG(A_i^{\ana})$ be the 
representation obtained from $\rho_i$ by extending scalars along $A_i \rightarrow A_i^{\ana}$. 
It follows from \Cref{cont_adm} that $\rho_i^{\ana}$ are admissible and continuous. 
We claim that the homomorphisms 
$\theta_{\rho_i^{\ana}}: S(\ghat^*)^{\Ghat}\rightarrow 
E\otimes_{\Qp} A_i^{\ana}$ agree on the intersections 
$U_{ij}^{\ana}$, where $U_{ij} =U_i\times_{U} U_j$.

It follows from \Cref{normal} that $
R^{\ps}[1/p]$ is reduced, which implies that $U^{\ana}$ and hence $U_{ij}^{\ana}$ are reduced by \Cref{normal_rig}.
Thus to check that $\theta_{\rho^{\ana}_i}$ and 
$\theta_{\rho^{\ana}_j}$ 
are equal after extension of scalars to $\OO(U_{ij}^{\ana})$ it is enough to check that their specialisations 
at closed points of $U_{ij}^{\ana}$ 
are equal. Let $x\in U_{ij}^{\ana}(\Qpbar)$ 
and let $y$ be its image in $U^{\ana}$. Then
$\rho_{i,x}^{\ana}$ and $\rho_{j, x}^{\ana}$ have the same 
$\LG$-pseudocharacter equal to 
the specialisation of $\Theta^u$ at $y$. Since 
$y$ by construction lies in the irreducible locus  we deduce that $\rho_{i,x}^{\ana}$ and $\rho_{i,y}^{\ana}$ are
$\Ghat(\Qpbar)$-conjugate. It follows 
from \Cref{conj_same} that $\theta_{\rho_{i, x}^{\ana}}=
\theta_{\rho_{j,x}^{\ana}}$. Since $\theta_{\rho_{i, x}^{\ana}}$
is the specialisation of $\theta_{\rho_i^{\ana}}$ at $x$ 
we obtain the claim. 

The claim  and \Cref{etale_cover}   imply that $\theta_{\rho_i}$ glue 
to a homomorphism of $L$-algebras 
$\theta':S(\ghat^*)^{\Ghat}\rightarrow E\otimes_{\Qp} \OO(U^{\ana})$. \Cref{conj_same} implies that 
for every $y\in U^{\ana}(\Qpbar)$ the specialisation 
$\theta'_y$ is equal to $\theta_{\rho}$, where
$\rho: \Gamma_F \rightarrow \LG(\Qpbar)$ is any 
continuous admissible representation
with  pseudocharacter equal to $\Theta^u_y$. 

Let $V=\pi^{-1}(U)$, let $Z$ be the complement of $V$ in $X^{\gen}[1/p]$. \Cref{complement} implies that the analytification $V^{\ana}$ is equal to  the complement of $Z^{\ana}$ in $(X^{\gen})^{\ana}$. 
\Cref{deJong_like} implies that $V(\Qpbar)=V^{\ana}(\Qpbar)$. Thus,
if $x\in V^{\ana}(\Qpbar)$ and $y$ is the image of $x$ in $(\Spf R^{\ps})^{\rig}(\Qpbar)$ then the 
$\LG$-pseudocharacter of $\rho^{\ana}_x$ is equal to $\Theta^u_y$. Hence, we obtain 
\begin{equation}\label{special}
\theta_{\rho^{\ana}_x} = \theta'_y, \quad \forall x\in V^{\ana}(\Qpbar).
\end{equation}

Since $(\Spf R^{\ps})^{\rig}$ is normal, and the complement of $U^{\ana}$  has codimension  at 
least $2$, $\OO(U^{\ana})=R^{\ana}$ by the 
Riemann's Hebbarkeitssatz, \cite[Satz 10]{bartenwerfer}, 
\cite[Proposition 2.13]{lutkebohmert}. Hence, $\theta'$ takes 
values in $E\otimes_{\Qp} R^{\ana}$.

Let $\theta'_{A^{\ana}}$ be the homomorphism obtained by composing 
$\theta'$ with the map 
$E\otimes_{\Qp} R^{\ana}\rightarrow 
E\otimes_{\Qp} A^{\ana}$. We claim 
that $\theta'_{A^{\ana}}= \theta_{\rho^{\ana}}$. 
The claim implies the proposition. 
To prove the claim we apply the Hebbarkeitssatz again. 
It follows from \Cref{dim_rig} and \Cref{nice_locus} that 
\begin{equation}
\dim (X^{\gen})^{\ana} -\dim Z^{\ana} = \dim X^{\gen}[1/p] - \dim Z \ge 2.
\end{equation} 
 Since $X^{\gen}[1/p]$ is normal by \cite[Theorem 15.18]{defG}, $(X^{\gen})^{\ana}$ is also normal by \Cref{normal_rig}. Thus 
$\OO(V^{\ana})= A^{\ana}$. Hence, two functions in $A^{\ana}$ are equal if and only if their specialisations
at all $x\in V^{\ana}(\Qpbar)$ are equal. The specialisation of $\theta_{\rho^{\ana}}$ at $x\in V^{\ana}(\Qpbar)$ is equal to $\theta_{\rho^{\ana}_x}$ and hence the claim follows from \eqref{special}.
\end{proof}

\begin{thm}\label{main} 
The infinitesimal character
$\zeta_{\rho^{\ana}}: Z(\Res_{F/\Qp}\mathfrak g) \rightarrow A^{\ana}$ takes values in $R^{\ana}$ and 
$\theta_{\rho^{\ana}}: S(\ghat^*)^{\Ghat}\rightarrow E\otimes A^{\ana}$ takes values in $E\otimes R^{\ana}$.
\end{thm}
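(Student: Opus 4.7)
The plan is to derive the $\zeta$-claim from the $\theta$-claim and then establish the $\theta$-claim, splitting according to whether $F=\Qp$.

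For the reduction of $\zeta$ to $\theta$: the infinitesimal character $\zeta_{\rho^{\ana},\sigma}$ attached to an embedding $\sigma:F\hookrightarrow L$ is constructed in \eqref{one} as a composition whose last two arrows are $\theta_{\rho^{\ana}}$ followed by the multiplication map $m_\tau:E\otimes A^{\ana}\to A^{\ana}$, $x\otimes a\mapsto\tau(x)a$. Since $\tau(E)\subset L\subset R^{\ana}$, the map $m_\tau$ restricts to $E\otimes R^{\ana}\to R^{\ana}$. Hence, once $\theta_{\rho^{\ana}}$ is known to take values in $E\otimes R^{\ana}$, each $\zeta_{\rho^{\ana},\sigma}$ takes values in $R^{\ana}$; tensoring over $\sigma$ and using the adjunction $Z(\Res_{F/\Qp}\mathfrak g)\otimes_{\Qp}L\cong\bigotimes_{\sigma}Z(\mathfrak g)\otimes_{F,\sigma}L$ yields $\zeta_{\rho^{\ana}}$ with values in $R^{\ana}$. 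If $F\neq\Qp$, the $\theta$-claim is precisely \Cref{work}, and we are done.

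Assume now $F=\Qp$. I reduce to the previous case by restriction on the Galois side. Enlarge $E$ if necessary to contain a finite extension $F'$ of $F$ with $[F':\Qp]\ge 2$; this is harmless because the statement of the theorem is independent of the choice of $E$. Set $\Thetabar':=\Thetabar|_{\Gamma_{F'}}$: admissibility of $\rhobar$ ensures that $\rhobar|_{\Gamma_{F'}}$ factors through $\LG_{F'}=\Ghat\rtimes\Gal(E/F')\hookrightarrow\LG_F$, so $\Thetabar'$ is a continuous admissible $\LG_{F'}$-pseudocharacter of $\Gamma_{F'}$. Functoriality of the moduli of pseudocharacters and of condensed representations under the inclusion $\Gamma_{F'}\hookrightarrow\Gamma_F$ yields a commutative square of $\OO$-algebras
\[
\begin{tikzcd}
R^{\ps,\Gamma_{F'}}_{\Thetabar'} \ar[r]\ar[d] & A^{\gen,\Gamma_{F'}}_{\Thetabar'} \ar[d] \\
R^{\ps} \ar[r] & A^{\gen},
\end{tikzcd}
\]
which, after applying the analytification functor of \Cref{rel_an_fun}, produces a commutative square whose bottom row is $R^{\ana}\to A^{\ana}$. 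By construction, $\rho^{\ana}|_{\Gamma_{F'}}$ is the base change of the universal analytic $\Gamma_{F'}$-representation $\rho^{\ana,F'}:\Gamma_{F'}\to\LG_{F'}((A^{\gen,\Gamma_{F'}}_{\Thetabar'})^{\ana})$ along $(A^{\gen,\Gamma_{F'}}_{\Thetabar'})^{\ana}\to A^{\ana}$.

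Now I use two standard facts of Sen theory: the Sen operator is invariant under restriction to an open subgroup, so $\theta_{\rho^{\ana}}=\theta_{\rho^{\ana}|_{\Gamma_{F'}}}$; and $\theta$ is functorial under base change, so $\theta_{\rho^{\ana}|_{\Gamma_{F'}}}$ is the pushforward of $\theta_{\rho^{\ana,F'}}$ along $E\otimes(A^{\gen,\Gamma_{F'}}_{\Thetabar'})^{\ana}\to E\otimes A^{\ana}$. Applying \Cref{work} to $F'$ (which is permitted since $F'\neq\Qp$), $\theta_{\rho^{\ana,F'}}$ lands in $E\otimes(R^{\ps,\Gamma_{F'}}_{\Thetabar'})^{\ana}$; the factorisation $(R^{\ps,\Gamma_{F'}}_{\Thetabar'})^{\ana}\to R^{\ana}\to A^{\ana}$ then shows that the pushforward $\theta_{\rho^{\ana}}$ lands in $E\otimes R^{\ana}$, as required. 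The main obstacle is the bookkeeping in the restriction step: verifying that restricting a $\Gamma_F$-pseudocharacter or a generic $\Gamma_F$-representation yields compatible universal maps of deformation/moduli rings that commute with analytification, while keeping careful track of the fact that the $L$-group itself depends on the base field. None of this is deep, but it must be carried out mechanically from the universal properties recalled in \Cref{laf} together with the functoriality of relative analytification from \Cref{rel_an_fun}.
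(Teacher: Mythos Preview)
Your proof is correct and follows essentially the same strategy as the paper: reduce the $\zeta$-claim to the $\theta$-claim via \eqref{one}, and for $F=\Qp$ reduce to \Cref{work} over a larger field by restricting on the Galois side, using the invariance of the Sen operator under restriction to an open subgroup and functoriality of the moduli spaces under restriction. The only difference is cosmetic: you restrict to an intermediate field $F'$ with $[F':\Qp]\ge 2$ and work with $\LG_{F'}$-pseudocharacters, whereas the paper restricts all the way to the splitting field $E$ (having first arranged $E\neq\Qp$), so that the relevant $L$-group becomes simply $\Ghat$; this makes the ``$L$-group depends on the base field'' bookkeeping you flag at the end disappear, and the required restriction maps $R^{\ps}_E\to R^{\ps}_F$, $A^{\gen}_E\to A^{\gen}_F$ are supplied directly by \cite[Section 8.4]{defG}.
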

\begin{proof} If $F\neq \Qp$ then the assertion 
follows directly from \Cref{work} and \eqref{one}. We will prove the theorem by reducing to this case. 

As before let $E$ be an extension of $F$ such that 
$G$ splits over $E$. We assume that $E\neq \Qp$, 
even if $G$ splits over $\Qp$. 
Let $\Thetabar_E\in \cPC^{\Gamma_E}_{\Ghat}(k)$ be the 
restriction of $\Thetabar$ to $\Gamma_E$. 
Let $X^{\gen}_E:= X^{\gen, \Gamma_E}_{\Thetabar_E}$, let $A^{\gen}_E$ be the ring of its global sections, $R^{\ps}_E$ the deformation ring 
of $\Thetabar_E$. 
Let $A^{\ana}_E$ be the ring of global sections 
of the analytification of $X^{\gen}_E$ and 
let $R^{\ana}_E$ be the ring of global sections
of $(\Spf R^{\ps}_E)^{\rig}$. Similarly, we add the subscript $F$, for example
 $X^{\gen}_F$, $A^{\ana}_F$ and $R^{\ana}_F$, 
 to indicate that we are working with $\LG$-valued $\Gamma_F$-representations 
 and pseudocharacters. 
 
 Restricting pseudocharacters and representations to $\Gamma_E$ induces a finite homomorphism of 
 local $R^{\ps}_E$-algebras $R^{\ps}_E\rightarrow R^{\ps}_F$ and a homomorphism of $R^{\ps}_E$-algebras
 $A^{\gen}_E\rightarrow A^{\gen}_F$  such that the map
 $R^{\ps}_E\rightarrow A^{\gen}_F$ factors 
 as $R^{\ps}_E\rightarrow R^{\ps}_F\rightarrow A^{\gen}_F$ by \cite[Section 8.4]{defG}. Since relative analytification is functorial by \Cref{rel_an_fun},
  the map induces
 a continuous homomorphism $R^{\ana}_E$-algebras
 $A^{\ana}_E \rightarrow A^{\ana}_F$, such that 
 the image of $R^{\ana}_E$ is contained in $R^{\ana}_F$. Since the construction of $\theta$ is  functorial we obtain that 
 \begin{equation}\label{restiction} 
 \theta_{\rho^{\ana}_F |_{\Gamma_E}} = \theta_{\rho^{\ana}_E}\otimes_{A^{\ana}_E} A^{\ana}_F.
 \end{equation} 
 \Cref{work} implies that $\theta_{\rho^{\ana}_F |_{\Gamma_E}}$ takes values in the image 
 of $E\otimes_{\Qp} R^{\ana}_E$ in $E\otimes_{\Qp} A^{\ana}_F$, which is contained in $E\otimes_{\Qp}R^{\ana}_F$. 
 
 It follows from the construction of the Sen operator 
  that 
  $\Theta_{\Sen,\rho_F^{\ana}}=\Theta_{\Sen,\rho_F^{\ana}|_{\Gamma_E}}$, see the end of the proof of \cite[Theorem 3.1]{DPS} and 
  the Tannakian argument in the proof of 
  \cite[Lemma 4.6]{DPS}. Thus 
  \begin{equation}\label{restiction2} 
 \theta_{\rho^{\ana}_F |_{\Gamma_E}} = \theta_{\rho^{\ana}_F}
 \end{equation} 
 and the theorem follows from \eqref{one}. \end{proof}
 
 \begin{cor}\label{special_again} Let $\zeta_{\rho^{\ana},y}: Z(\Res_{F/\Qp} \mathfrak g)\rightarrow \Qpbar$  be the specialisation of $\zeta_{\rho^{\ana}}$ at $y\in (\Spf R^{\ps})^{\rig}(\Qpbar)$. Then $\zeta_{\rho^{\ana},y} =\zeta_{\rho}$, where 
 $\rho: \Gamma_F \rightarrow \LG(\Qpbar)$ is any continuous 
 admissible representation with $\Theta_{\rho}=\Theta^u_y$. 
 \end{cor}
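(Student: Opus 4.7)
The plan is to realise the specialisation $\zeta_{\rho^{\ana},y}$ as $\zeta_{\rho^{\ana}_x}$ for a $\Qpbar$-point $x$ of $(X^{\gen})^{\ana}$ lying above $y$, and then to identify the latter with $\zeta_\rho$ via the invariance of $\zeta$ under passage to the $\LG$-pseudocharacter.

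First I would lift $y$ to such an $x$. The morphism $\pi \colon X^{\gen}[1/p] \to X^{\ps}[1/p]$ is surjective (as recalled at the beginning of the arithmetic subsection of \Cref{sec_abs_irr}) and of finite type between Jacobson schemes. By \Cref{deJong_like}, $y$ corresponds to a closed point of $\Spec R^{\ps}[1/p]$ whose residue field is a finite extension of $L$ equipped with an embedding into $\Qpbar$. The fibre $\pi^{-1}(y)$ is then a nonempty scheme of finite type over $\kappa(y)$, and any of its closed points yields a closed point $x$ of $X^{\gen}[1/p]$ whose residue field is again finite over $L$. Fixing an embedding $\kappa(x) \hookrightarrow \Qpbar$ extending the one on $\kappa(y)$ and applying \Cref{deJong_like} a second time produces $x \in (X^{\gen})^{\ana}(\Qpbar)$ with $\pi(x) = y$.

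Next, by \Cref{main} the homomorphism $\zeta_{\rho^{\ana}}$ factors through a map $Z(\Res_{F/\Qp}\mathfrak g) \to R^{\ana}$, so that $\pi(x) = y$ together with the functoriality of the Sen operator construction yields $\zeta_{\rho^{\ana},y} = \zeta_{\rho^{\ana}_x}$, where $\rho^{\ana}_x \colon \Gamma_F \to \LG(\Qpbar)$ is the specialisation of $\rho^{\ana}$ at $x$. Since $\Theta_{\rho^{\ana}_x} = \Theta^u_y = \Theta_\rho$, the $\LG$-semisimplifications of $\rho$ and $\rho^{\ana}_x$ are $\Ghat(\Qpbar)$-conjugate; combining the fact, recalled in the introduction, that $\zeta_\rho$ depends only on the $\LG$-semisimplification of $\rho$ with \Cref{conj_same} then gives $\zeta_{\rho^{\ana}_x} = \zeta_\rho$, and hence the claim. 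The only nontrivial input is \Cref{main} itself; the lift of $y$ to $x$ and the invariance of $\zeta$ under $\LG$-semisimplification are essentially bookkeeping given the results already in hand.
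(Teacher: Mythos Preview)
Your argument is correct, but it takes a longer detour than the paper's. The paper observes that the given representation $\rho$ \emph{itself} defines a point $x\in X^{\gen}(\Qpbar)$ lying over $y$ (via \Cref{deJong_like}), so that $\rho^{\ana}_x=\rho$ on the nose; then functoriality of $\zeta$ gives $\zeta_\rho = x\circ\zeta_{\rho^{\ana}}$, and \Cref{main} finishes. By contrast, you first lift $y$ to an \emph{arbitrary} $x$ using surjectivity of $\pi$, and then must compare $\rho^{\ana}_x$ with $\rho$ through the pseudocharacter, invoking the fact (from \cite{DPS}) that $\zeta_\rho$ depends only on the $\LG$-semisimplification together with \Cref{conj_same}. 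This extra step is avoidable precisely because there is a canonical choice of lift, namely $\rho$; once you see that, the appeal to semisimplification-invariance becomes unnecessary and the proof collapses to the two lines in the paper.
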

 \begin{proof} The representation $\rho$ defines 
 a point in $x\in X^{\gen}(\Qpbar)$ and hence a $\Qpbar$-point in the rigid generic fibre of $A^{\gen}$ by \Cref{deJong_like}. Using the functoriality of the infinitesimal  character we have that $\zeta_{\rho}$ 
 is equal to the composition 
 $$Z(\Res_{F/\Qp}\mathfrak g)\overset{\zeta_{\rho^{\ana}}}{\longrightarrow} A^{\ana} \overset{x}{\longrightarrow} \Qpbar.$$
  The assertion follows from \Cref{main}.
 \end{proof}

 \begin{lem}\label{descent} Let $K$ be a $\Qp$-subalgebra of $L$ and let  $\OO_K$ be its ring of integers. 
 Let $S$ be a complete local 
 noetherian $\OO_K$-subalgebra of $R^{\ps}$.
  If the following hold 
 \begin{enumerate}
 \item the natural map $\OO\otimes_{\OO_K} S \rightarrow R^{\ps}$ is an isomorphism;
 \item $\Theta^u$ takes values in $S$,
 \end{enumerate}
 then $\theta_{\rho^{\ana}}$ takes values in $E\otimes S^{\ana}$ and $\zeta_{\rho^{\ana}}$ takes values 
 in $S^{\ana}$ - the ring of global 
 sections of $(\Spf S)^{\rig}$. 
 \end{lem}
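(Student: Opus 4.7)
The strategy is a descent argument. I would first use condition (1) to realize $R^{\ana}$ as a free $S^{\ana}$-module, then formulate a pointwise criterion for when an element of $R^{\ana}$ lies in $S^{\ana}$, and finally verify this criterion on the image of $\zeta_{\rho^{\ana}}$ using \Cref{special_again} and condition (2).

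First, since $\OO$ is free of finite rank over $\OO_K$, condition (1) gives $R^{\ps}=\OO\otimes_{\OO_K} S$ with no completion needed. Presenting $S$ as a quotient of $\OO_K\br{x_1,\ldots,x_n}$ and base changing identifies Berthelot's rigid generic fibre $\mathfrak X := (\Spf R^{\ps})^{\rig}$ with $\mathfrak Y\times_{\Sp K}\Sp L$, where $\mathfrak Y := (\Spf S)^{\rig}$. Taking global sections produces $R^{\ana}\cong L\otimes_K S^{\ana}$ (an ordinary tensor product since $L/K$ is finite), exhibiting $R^{\ana}$ as a free $S^{\ana}$-module of rank $d:=[L:K]$.

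For the descent criterion, I fix a $K$-basis $\ell_1=1,\ell_2,\ldots,\ell_d$ of $L$; every $f\in R^{\ana}$ then has a unique expression $f=\sum_i\ell_i s_i$ with $s_i\in S^{\ana}$, and $f\in S^{\ana}$ iff $s_i=0$ for $i\ge 2$. For $\bar y\in\mathfrak Y(\Qpbar)$ with induced $\Qp$-embedding $\tau\colon K\hookrightarrow\Qpbar$, the $d$ lifts $y_1,\ldots,y_d\in\mathfrak X(\Qpbar)$ correspond to the $d$ embeddings $\sigma_j\colon L\hookrightarrow\Qpbar$ extending $\tau$, and $f(y_j)=\sum_i\sigma_j(\ell_i)s_i(\bar y)$. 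The Dedekind isomorphism $L\otimes_K\Qpbar\cong\Qpbar^d$, $\ell\otimes c\mapsto(\sigma_j(\ell)c)_j$, shows that the vectors $(\sigma_j(\ell_i))_j$, $i=1,\ldots,d$, form a $\Qpbar$-basis of $\Qpbar^d$. Hence constancy of $f(y_j)$ in $j$ (that is, $(f(y_j))_j=c\cdot v_1$ for some $c$) forces $s_i(\bar y)=0$ for $i\ge 2$ by unique expansion in this basis. Faithfully flat descent of reducedness along $S[1/p]\to R^{\ps}[1/p]=L\otimes_K S[1/p]$, combined with the normality of $R^{\ps}[1/p]$ from \Cref{normal} and \Cref{normal_rig}, makes $S^{\ana}$ reduced, so elements of $S^{\ana}$ are detected by their values at $\Qpbar$-points; the vanishing of $s_i(\bar y)$ for every $\bar y$ then forces $s_i=0$ globally. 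The analogous criterion for $E\otimes_{\Qp}S^{\ana}\hookrightarrow E\otimes_{\Qp}R^{\ana}$ follows by applying the same argument coordinate-wise in a $\Qp$-basis of $E$, noting that $E\otimes_{\Qp}S^{\ana}$ is automatically reduced in characteristic zero.

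To apply the criterion, for each $z\in Z(\Res_{F/\Qp}\mathfrak g)$, \Cref{main} places $\zeta_{\rho^{\ana}}(z)\in R^{\ana}$, and by \Cref{special_again} its value at $y\in\mathfrak X(\Qpbar)$ depends on $y$ only through the pseudocharacter $\Theta^u_y$. Condition (2) forces $\Theta^u$ to factor through $S\hookrightarrow R^{\ps}$, so $\Theta^u_y$ depends on $y$ only through its image $\bar y\in\mathfrak Y(\Qpbar)$; hence the specializations of $\zeta_{\rho^{\ana}}(z)$ at any two lifts of $\bar y$ coincide, and the criterion yields $\zeta_{\rho^{\ana}}(z)\in S^{\ana}$. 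An identical argument handles $\theta_{\rho^{\ana}}(z')$: using \Cref{conj_same} together with the introduction's observation that $\zeta_{\rho}$ depends only on the $\LG$-semisimplification of $\rho$—and that $\theta$ can be reconstructed componentwise from the family $\{\zeta_{\rho,\sigma}\}$ via the formula \eqref{one}—one sees that $\theta_{\rho^{\ana}}(z')$ is constant on the fibres of $\mathfrak X(\Qpbar)\to\mathfrak Y(\Qpbar)$, so the descent places it in $E\otimes_{\Qp}S^{\ana}$. The main obstacle I anticipate is justifying this last step rigorously, since the pseudocharacter invariance of $\theta$ (and not merely of $\zeta$) is required; it is available because the $\tau$-components of $\theta_{\rho}$ are recovered from $\zeta_{\rho,\tau|_F}$ via \eqref{one} after enlarging $L$ to contain all embeddings of $E$.
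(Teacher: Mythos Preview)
Your argument is correct and is in essence the paper's proof: both reduce the descent to specializations at $\Qpbar$-points (via reducedness) and then invoke \Cref{special_again} together with condition (2). The only difference is cosmetic. The paper first enlarges $L$ so that $L/K$ is Galois and checks $\Gal(L/K)$-invariance of $\zeta_{\rho^{\ana}}$ inside the reduced ring $R^{\ana}$, concluding that the image lies in $(R^{\ana})^{\Gal(L/K)}=S^{\ana}$; you instead fix a $K$-basis of $L$ and use a direct fibre-constancy criterion. Once $L/K$ is Galois the fibres of $\mathfrak X(\Qpbar)\to\mathfrak Y(\Qpbar)$ are precisely the $\Gal(L/K)$-orbits, so the two criteria coincide.

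One simplification for the $\theta$-claim: you do not need to reconstruct $\theta$ from the $\zeta_{\rho,\sigma}$ via \eqref{one}. The proof of \Cref{special_again} goes through verbatim with $\theta$ in place of $\zeta$, since both are functorial in the rigid space and \Cref{main} places both in (the $E$-tensor of) $R^{\ana}$; hence the specialization of $\theta_{\rho^{\ana}}$ at $y$ already depends only on $\Theta^u_y$, and your fibre-constancy argument applies componentwise in a $\Qp$-basis of $E$. The paper's proof does not spell out the $\theta$-part either.
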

 \begin{proof}  After enlarging $L$ we may assume that $L/K$ is a Galois extension. It follows 
 from (1) that $R^{\ana}\cong L\otimes_K S^{\ana}$. The action of $\Gal(L/K)$ on $L$ 
 induces an action on $R^{\ana}$ by ring automorphisms $g\mapsto \phi_g$. Let 
 $\zeta_{\rho^\rig}^g$ be the composition of 
 $\zeta_{\rho^{\ana}}$ with $\phi_g$. 
 We claim that $\zeta_{\rho^{\ana}}^g= \zeta_{\rho^{\ana}}$ for all $g\in \Gal(L/K)$. 
 The claim implies that $\zeta_{\rho^{\ana}}$ 
 takes values in $(R^{\ana})^{\Gal(L/K)}=S^{\ana}$. 
 
 Since $R^{\ana}$ is reduced it is enough to verify 
 that the specialisations of $\zeta_{\rho^{\ana}}$
 and $\zeta^g_{\rho^{\ana}}$ at $y\in (\Spf R^{\ps})^{\rig}(\Qpbar)$ 
 are the same. Now 
 $\zeta^g_{\rho^{\ana},y}= \zeta_{\rho^{\ana}, y\circ \phi_g}$.
 It follows from (2) that $\Theta^u_y= \Theta^u_{y\circ \phi_g}$ and hence \Cref{special_again} implies that 
 $\zeta_{\rho^{\ana}, y\circ \phi_g}=\zeta_{\rho^{\ana}, y}$. 
 \end{proof}

\subsection{Rigid analytic space of admissible pseudocharacters}\label{adm_pseudo}
The morphism $f:\LG \rightarrow \LG/\Ghat$ induces a morphism 
$\PC^{\Gamma_F}_{\LG} \rightarrow \PC^{\Gamma_F}_{\LG/\Ghat}$, $\Theta\mapsto f\circ \Theta$.
Since $\LG/\Ghat=\underline{\Gal(E/F)}$ is a constant group scheme, its neutral component is trivial, 
and \Cref{lit_defT} implies 
that the 
morphism 
$\Rep^{\Gamma_F}_{\LG/\Ghat}\rightarrow  \PC^{\Gamma_F}_{\LG/\Ghat}$, $\rho \mapsto \Theta_{\rho}$  is an isomorphism. We say that $\Theta\in \PC^{\Gamma_F}_{\LG/\Ghat}(A)$ is \emph{admissible} if 
$f\circ \Theta$ corresponds to 
a homomorphism $\rho: \Gamma_F \rightarrow \underline{\Gal(E/F)}(A)$, which maps $g\in \Gamma_F$ to the constant function $\Spec A \rightarrow \Gal(E/F)$ with value $q(g)$, where $q: \Gamma_F\rightarrow \Gal(E/F)$ is the quotient map. If $\rho\in \Rep^{\Gamma_F}_{\LG}(A)$ then $\rho$ is admissible 
if and only if $\Theta_{\rho}$ is admissible.

We will show that the functor 
$\tilde{X}^{\adm}_{\LG}: \Rig_{\Qp}^{\op}\rightarrow \Set$, which 
maps a rigid analytic space $Y$ to the set of continuous admissible $\LG$-pseudocharacters $\Theta: \Gamma_F\rightarrow \LG(\OO(Y))$ is representable by a quasi-Stein rigid 
analytic space $X^{\adm}_{\LG}$ using the results of \cite{quast}. We will define a homomorphism 
of $\Qp$-algebras $\zeta^{\univ}: Z(\Res_{F/\Qp} \mathfrak g)\rightarrow \OO(X^{\adm}_{\LG})$ using the results 
of the previous section and the description of $X^{\adm}_{\LG}$ obtained from \cite{quast}. If $Y\in \Rig_{\Qp}$ then an admissible continuous $\LG$-pseudocharacter 
$\Theta: \Gamma_F \rightarrow \OO(Y)$ defines a morphism 
of rigid spaces $Y\rightarrow X^{\adm}_{\LG}$ and 
we may then define $\zeta_{Y,\Theta}$ as the composition 
of $\zeta^{\univ}$ and $\OO(X^{\adm}_{\LG})\rightarrow \OO(Y)$. 

We will first recall the results of \cite[Section 6]{quast}, 
which generalise the results of Chenevier \cite{che_durham} in the case of $\GL_n$. Let $H$ be a generalised reductive group 
scheme defined over $\Zp$. The functor $\tilde{X}_H: \Rig_{\Qp}^{\op}\rightarrow \Set$, which sends $Y$ to the 
set of continuous $H$-pseudocharacters $\Theta: \Gamma_F \rightarrow \OO(Y)$ is represented by a quasi-Stein rigid 
analytic space 
\begin{equation}\label{XH}
X_H= \coprod_{z\in |\PC^{\Gamma_F}_H|} (\Spf R^{\ps}_{\Theta_z})^{\rig},
\end{equation}
by \cite[Theorem 6.21]{quast}, where 
$|\PC^{\Gamma_F}_H|\subset \PC^{\Gamma_F}_H$ is the subset of closed points $z$ with finite residue field $k_z$, such that the canonical $H$-pseudocharacter $\Theta_z\in \PC^{\Gamma_F}_H(k_z)$ attached to $z$ is continuous for the discrete topology on $k_z$ and $R^{\ps}_{\Theta_z}$ 
represents the deformation problem $\Def_{\Theta_z}: \mathfrak A_{W(k_z)}\rightarrow \Set$ of $\Theta_z$, 
where $W(k_z)$ is the ring of Witt vectors of $k_z$. 
This description of $R^{\ps}_{\Theta_z}$ is implicit in the proof of \cite[Theorem 6.21]{quast}.

We apply this with $H=\LG$. 
\begin{lem} The functor 
$\tilde{X}^{\adm}_{\LG}$ is represented by  a quasi-Stein rigid analytic space
\begin{equation}\label{admLG}
X^{\adm}_{\LG}= \coprod_{z\in |\PC^{\Gamma_F,\adm}_{\LG}|} (\Spf R^{\ps}_{\Theta_z})^{\rig},
\end{equation}
where $|\PC^{\Gamma_F,\adm}_{\LG}|=\{z\in |\PC^{\Gamma_F}_{\LG}|:  \Theta_z \text{ is  admissible}\}$. \end{lem}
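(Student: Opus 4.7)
The plan is to realize $X^{\adm}_{\LG}$ as the subspace of $X_{\LG}$ from \eqref{XH} obtained by keeping only those components on which the residual pseudocharacter is already admissible. Since $\tilde X^{\adm}_{\LG}$ is a subfunctor of $\tilde X_{\LG}$, which is representable by \eqref{XH} applied with $H=\LG$, it suffices to characterise, for each rigid space $Y$, which morphisms $\varphi_\Theta : Y \to X_{\LG}$ correspond to admissible $\Theta\in \tilde X_{\LG}(Y)$.

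First I would set up the reduction: given $\Theta \in \tilde X_{\LG}(Y)$, pulling back the decomposition \eqref{XH} along $\varphi_\Theta$ yields an admissible decomposition $Y = \coprod_z Y_z$, where $Y_z = \varphi_\Theta^{-1}((\Spf R^{\ps}_{\Theta_z})^{\rig})$. I would then show that $\Theta$ is admissible if and only if $Y_z = \emptyset$ for every non-admissible $\Theta_z$, which is precisely the statement that $\varphi_\Theta$ factors through the disjoint union on the right-hand side of \eqref{admLG}.

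The heart of the argument is to reduce admissibility of $\Theta|_{Y_z}$ to admissibility of the residual pseudocharacter $\Theta_z$. By \Cref{lit_defT} applied to $\LG/\Ghat = \underline{\Gal(E/F)}$ (whose neutral component is trivial), the composite $f\circ \Theta^u_z$ corresponds bijectively to a continuous homomorphism $\Gamma_F \to \underline{\Gal(E/F)}(R^{\ps}_{\Theta_z})$. Since $R^{\ps}_{\Theta_z}$ is local, $\Spec R^{\ps}_{\Theta_z}$ is connected, so $\underline{\Gal(E/F)}(R^{\ps}_{\Theta_z})$ consists of constant functions $\Spec R^{\ps}_{\Theta_z} \to \Gal(E/F)$ and is canonically identified with $\Gal(E/F)$; the homomorphism obtained this way coincides with the one attached to the residual pseudocharacter $\Theta_z$. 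Pulling back along the continuous $R^{\ps}_{\Theta_z}$-algebra map $R^{\ps}_{\Theta_z} \to \OO(Y_z)$ preserves constant-valuedness, so the admissibility of $\Theta|_{Y_z}$ is literally equivalent to admissibility of $\Theta_z$.

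Finally, a disjoint union of quasi-Stein rigid spaces is quasi-Stein, so $X^{\adm}_{\LG}$ inherits this property from its components $(\Spf R^{\ps}_{\Theta_z})^{\rig}$, each of which is quasi-Stein by construction of the rigid generic fiber of a complete local noetherian formal scheme. No serious obstacle is anticipated; the only subtle point is the identification of $f\circ \Theta^u_z$ with a constant homomorphism into $\Gal(E/F)$, where both connectedness of $\Spec R^{\ps}_{\Theta_z}$ and finiteness of $\Gal(E/F)$ are essential.
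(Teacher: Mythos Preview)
Your argument is correct and follows essentially the same line as the paper: the paper simply packages your componentwise analysis as the fibre product $X^{\adm}_{\LG} = X_{\LG} \times_{X_{\LG/\Ghat}} \Sp \Qp$, where $\Sp \Qp \to X_{\LG/\Ghat}$ is the point determined by the quotient map $q$, and then invokes \eqref{XH}. Both approaches rest on the same observation that on each local component the composite $f \circ \Theta^u_z$ is the constant homomorphism determined by the residual $\Theta_z$.
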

\begin{proof} The quotient map $q:\Gamma_F \rightarrow \Gal(E/F)$ 
gives us a point in $X_{\LG/\Ghat}(\Qp)$. It follows 
from the definition of admissible pseudocharacters  that $\tilde{X}^{\adm}_{\LG}$ is represented by $X_{\LG}\times_{X_{\LG/\Ghat}} \Sp \Qp$, where the morphism 
$X_{\LG}\rightarrow X_{\LG/\Ghat}$ is induced by $f$ and $\Sp \Qp \rightarrow X_{\LG/\Ghat}$ is induced by $q$. The assertion follows from \eqref{XH}.
\end{proof}

\begin{thm}\label{main_L} There exists a unique  family of $\Qp$-algebra 
homomorphisms $$\zeta_{Y,\Theta}: Z(\Res_{F/\Qp} \mathfrak g)\rightarrow \OO(Y)$$ indexed by pairs $Y\in \Rig_{\Qp}$ and 
$\Theta\in X^{\adm}_{\LG}(Y)$ with the following properties:
\begin{enumerate} 
\item (functoriality) for every morphism $\varphi: Y'\rightarrow Y$ in $\Rig_{\Qp}$ we have $$\varphi^*(\zeta_{Y, \Theta})= \zeta_{Y', \varphi^* \Theta};$$
\item (specialisation) for every $Y\in \Rig_{\Qp}$ and every 
$y\in Y(\Qpbar)$ the specialisation 
$$\zeta_{y, \Theta}:  Z(\Res_{F/\Qp} \mathfrak g)
\overset{\zeta_{Y,\Theta}}{\longrightarrow} \OO(Y)\overset{y}{\longrightarrow} \Qpbar$$
is equal to $\zeta_{\rho}$, where $\rho: \Gamma_F \rightarrow 
\LG(\Qpbar)$ is any continuous representation with 
$\Theta_{\rho}= \Theta\otimes_{\OO(Y), y} \Qpbar$. 
\end{enumerate}
\end{thm}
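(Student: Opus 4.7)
The plan is to use the representability of $\tilde X^{\adm}_{\LG}$ by $X^{\adm}_{\LG}$ to reduce the construction to producing a single universal homomorphism $\zeta^{\univ} : Z(\Res_{F/\Qp} \mathfrak g) \to \OO(X^{\adm}_{\LG})$; one then sets $\zeta_{Y, \Theta} := \varphi_\Theta^*(\zeta^{\univ})$, where $\varphi_\Theta : Y \to X^{\adm}_{\LG}$ is the morphism classifying $\Theta$, so that functoriality (1) is tautological. Since $X^{\adm}_{\LG} = \coprod_z (\Spf R^{\ps}_{\Theta_z})^{\rig}$ is a disjoint union indexed by closed points $z$, it suffices to construct $\zeta^{\univ}$ on each component separately. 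For uniqueness I argue that $(\Spf R^{\ps}_{\Theta_z})^{\rig}$ is reduced, so that global sections are determined by their values at $\Qpbar$-points, which are in turn fixed by the specialisation property (2); reducedness follows by combining \Cref{normal} (normality, hence reducedness, of $R^{\ps}_{\Theta_z}[1/p]$) with \Cref{normal_rig}.

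For existence on a fixed component, write $\Thetabar := \Theta_z$ with residue field $k_z$, and enlarge $L$ so that $\OO$ contains $W(k_z)$. Set $R := \OO \otimes_{W(k_z)} R^{\ps}_{\Theta_z}$: this is the universal deformation ring of $\Thetabar$ in the sense of \Cref{sec_pseudo_def}, so \Cref{main} applied to the universal representation $\rho^{\mathrm u} : \Gamma_F \to \LG(A^{\gen})$ and its analytification $\rho^{\ana}$ yields a $\Qp$-algebra homomorphism $\zeta_{\rho^{\ana}} : Z(\Res_{F/\Qp} \mathfrak g) \to A^{\ana}$ whose image lies in the global sections $R^{\ana}$ of $(\Spf R)^{\rig}$. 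To land in the subring $(R^{\ps}_{\Theta_z})^{\ana}$ cutting out our component, I apply \Cref{descent} with $K := W(k_z)[1/p]$, $\OO_K = W(k_z)$ and $S := R^{\ps}_{\Theta_z}$: hypothesis (1) is the defining base change, and hypothesis (2) holds because the universal pseudocharacter on $R$ is the scalar extension of the one over $R^{\ps}_{\Theta_z}$. Taking the disjoint union over components assembles $\zeta^{\univ}$.

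The specialisation property (2) is then read off from \Cref{special_again}: given $y \in Y(\Qpbar)$, the point $\varphi_\Theta(y)$ lies in the component indexed by the semisimple residual $\LG$-pseudocharacter of $\Theta_y$, and \Cref{special_again} identifies its image under $\zeta^{\univ}$ with $\zeta_\rho$ for any continuous admissible $\rho$ lifting $\Theta_y$. I expect the main obstacle to be precisely this descent of coefficients: \Cref{main} is formulated over the auxiliary ring of integers $\OO$ and produces an $L$-algebra homomorphism, whereas the statement demands a $\Qp$-algebra homomorphism valued in sections of the $W(k_z)$-flat rigid space $(\Spf R^{\ps}_{\Theta_z})^{\rig}$. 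Bridging this gap is the content of \Cref{descent}, which itself feeds on the same reducedness input (via \Cref{normal} and \Cref{normal_rig}) and on \Cref{special_again} to run the Galois-descent argument for $L / W(k_z)[1/p]$.
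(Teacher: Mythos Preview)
Your proposal is correct and follows essentially the same approach as the paper: reduce to the universal case via representability, construct $\zeta^z$ on each component using \Cref{main}, descend to $(R^{\ps}_{\Theta_z})^{\ana}$ via \Cref{descent}, and deduce uniqueness from reducedness. The only point you leave implicit is that enlarging $L$ must also make the residue field $k$ large enough for the reconstruction theorem \cite[Theorem 3.8]{quast} to produce a continuous $\rhobar:\Gamma_F\to\LG(k)$ with pseudocharacter $\Theta_z\otimes_{k_z}k$, since the whole setup leading to $A^{\gen}$, $\rho^{\mathrm u}$ and \Cref{main} begins from such a $\rhobar$.
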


\begin{proof} We first fix $z\in |\PC^{\Gamma_F,\adm}_{\LG}|$
and let $\Theta^{u,z}: \Gamma_F \rightarrow R^{\ps}_{\Theta_z}$ 
be the universal deformation of $\Theta_z$. 
Let 
$L$ be any extension of $\Qp$ with the ring of integers $\OO$ and residue field $k$, which contains $k_{z}$ as a subfield. 
Then $\OO$ is naturally a $W(k_z)$-algebra and if we let $\Thetabar= \Theta_z\otimes_{k_z} k$ then the deformation functor $\Def_{\Thetabar}: \mathfrak A_{\OO}\rightarrow \Set$
of $\Thetabar$ is represented by $\OO\otimes_{W(k_z)} R^{\ps}_{\Theta_z}$ and $\OO\otimes_{W(k_z)} \Theta^{u,z}$
is the universal deformation of $\Thetabar$.
It follows from the reconstruction theorem \cite[Theorem 3.8]{quast} that if $k$ is large enough 
then there exist a continuous representation 
$\rhobar: \Gamma_F\rightarrow \LG(k)$ such that its 
pseudocharacter is equal to $\Thetabar$. In this case we are 
in the situation of the previous subsection with $R^{\ps}=\OO\otimes_{W(k_z)} R^{\ps}_{\Theta_z}$. 
We let $\zeta^z:= \zeta_{\rho^{\ana}}$. \Cref{descent} implies 
that $\zeta^z$ takes values in $(R^{\ps}_{\Theta_z})^{\ana}$. 
It follows from \Cref{special_again} that the specialisation property holds for $\zeta^z$. We define
$$\zeta^{\univ}: Z(\Res_{F/\Qp} \mathfrak g)\rightarrow \OO(X^{\adm}_{\LG})=\prod_{z\in |\PC^{\Gamma_F,\adm}_{\LG}|} (R^{\ps}_{\Theta_z})^{\ana}$$
as the product $\zeta^{\univ}:=\prod_z \zeta^z$. Since the specialisation 
property holds for $\zeta^z$ it also holds for $\zeta^{\univ}$. If $Y\in \Rig_{\Qp}$ and $\Theta\in X^{\adm}_{\LG}(Y)$ then $\Theta$ defines a morphism 
$\varphi_{\Theta}: Y\rightarrow X^{\adm}_{\LG}$ in $\Rig_{\Qp}$ and 
we define $$\zeta_{Y, \Theta}:= \varphi_{\Theta}^* (\zeta^{\univ}).$$ By construction $\zeta_{Y, \Theta}$
satisfies both functoriality and specialisation properties. 
Moreover, if $Y= X^{\adm}_{\LG}$ and $\Theta=\Theta^{\univ}$ the universal pseudocharacter over $X^{\adm}_{\LG}$ then 
$\zeta_{Y, \Theta}=\zeta^{\univ}$.

Let us prove the uniqueness of the family. If $Y$ is reduced
then $\OO(Y)$ injects into the product of the residue fields
at $y\in Y(\Qpbar)$ and hence $\zeta_{Y, \Theta}$ is 
uniquely determined by the specialisation property. Thus $\zeta_{X^{\adm}_{\LG}, \Theta^{\univ}}$ is
uniquely determined. If $Y\in \Rig_{\Qp}$ and 
$\Theta\in X^{\adm}_{\LG}(Y)$ then the functoriality property implies that $\zeta_{Y, \Theta}=\varphi_{\Theta}^*(\zeta_{X^{\adm}_{\LG}, \Theta^{\univ}})$ and hence the whole family is uniquely determined. 
\end{proof}

\subsection{\texorpdfstring{Modification for $C$-groups}{Modification for C-groups}}\label{sec_mod_C}
A conjecture of Buzzard--Gee \cite[Conjecture 5.3.4]{BG} predicts that Galois representations
attached to $C$-algebraic automorphic forms should take values in a $C$-group, which is a variant of an $L$-group. Motivated by this 
conjecture we will prove an analog of \Cref{main_L} for pseudocharacters 
valued in $C$-groups. To paraphrase \cite{BG}: the two theorems
``differ by half the sum of positive roots''.

Let $G$ be a connected reductive group over 
a $p$-adic local field $F$, which splits over 
a finite Galois extension $E$ of $F$. The $C$-group of $G$ is denoted by $\CG$ and is defined as the $L$-group $\LG^T$ of a certain central extension 
\begin{equation} 
0 \rightarrow \Gm \rightarrow G^T \rightarrow G \rightarrow 0
\end{equation}
defined in \cite[Proposition 5.3.1]{BG}. In 
\cite[Section 1]{zhu} Xinwen Zhu provides a very nice direct description of $\CG$ and shows that 
\begin{equation}\label{CG}
\CG\cong \Ghat \rtimes ( \Gm \times \underline{\Gal(E/F)}) 
\cong  \Ghat^T \rtimes \underline{\Gal(E/F)}\cong \LG \rtimes \Gm. 
\end{equation}
Let $d:\CG \rightarrow \Gm$ be the projection induced by the last isomorphism in \eqref{CG}. If $A$ is a $\Zp$-algebra then we say that a representation $\rho: \Gamma_F \rightarrow \CG(A)$ is \textit{$C$-admissible} if it is admissible as 
a representation into the $L$-group $\LG^T(A)$ in the sense of the previous subsection and 
$d\circ \rho$ is equal to the $p$-adic cyclotomic character $\chi_{\cyc}$.

We will now recall a construction of \cite[Section 4.7]{DPS}, which to a continuous  $C$-admissible representation $\rho: \Gamma_F \rightarrow \CG(\OO(X))$ attaches 
a homomorphism of $\Qp$-algebras 
$\zeta^C_\rho: Z(\Res_{F/\Qp}\mathfrak g)\rightarrow \OO(X)$, where $X\in \Rig_L$. By our previous 
construction, viewing $\CG$ as 
an $L$-group $\LG^T$, we have 
already constructed a homomorphism of $L$-algebras $\theta_{\rho}: S((\ghat^T)^*)^{\Ghat^T} \rightarrow E\otimes \OO(X)$. In \cite[Section 4.7]{DPS}, using \eqref{CG}, the authors construct a homomorphism $L$-algebras 
$\alpha: S(\ghat^*)^{\Ghat}\rightarrow S((\ghat^T)^*)^{\Ghat^T}$. This is where the shift by half sum of positive roots comes in. Then 
$\theta'_{\rho}: S(\ghat^*)^{\Ghat}\rightarrow E\otimes\OO(X)$ is defined as $\theta_{\rho}\circ \alpha$.
The homomorphism $\zeta^C_{\rho}$ is defined by \eqref{one} using $\theta'_{\rho}$ instead of $\theta_{\rho}$.

We will define \textit{$C$-admissible} pseudocharacters 
valued in $\CG$. Since $\Gm$ is commutative 
the map $\Rep^{\Gamma}_{\Gm}(A)\rightarrow 
\PC^{\Gamma}_{\Gm}(A)$ is bijective for any group $\Gamma$. The morphism $d: \CG\rightarrow \Gm$ induces a map $\PC^{\Gamma}_{\CG}(A) \rightarrow \PC^{\Gamma}_{\Gm}(A)$, $\Theta\mapsto d\circ \Theta$.
Let $A$ be a $\Zp$-algebra. We say that $\Theta\in \PC^{\Gamma_F}_{\CG{}}(A)$ is $C$-admissible, if $\Theta$ is admissible as 
the $\LG^T$-valued pseudocharacter of $\Gamma_F$ and $d\circ \Theta=\chi_{\cyc}\otimes_{\Zp} A $. It follows from the definition if $\rho\in \Rep^{\Gamma_F}_{\LG^T}(A)$ then $\rho$ is $C$-admissible if and only if 
$\Theta_{\rho}$ is $C$-admissible. 

Let $\omega: \Gamma_F \rightarrow
\Fp^{\times}$ be the reduction of $\chi_{\cyc}$ modulo $p$. Let $R_{\omega}$ be the $\Zp$-algebra 
representing the deformation functor $D_{\omega}: \mathfrak A_{\Zp} \rightarrow \Set$. Let $\varphi_{\cyc}: R_{\omega}\rightarrow \Zp$ be the 
homomorphism of $\Zp$-algebras corresponding 
to $\chi_{\cyc}\in D_{\omega}(\Zp)$. 

Let $k$ be a finite extension of $\Fp$ and let 
$\Thetabar\in \cPC^{\Gamma_F}_{\CG{}}(k)$ such that 
$\Thetabar$ is $C$-admissible. Let $\Def^{C}_{\Thetabar}: \mathfrak A_{W(k)} \rightarrow \Set$ be the subfunctor of $\Def_{\Thetabar}$ parameterising $C$-admissible deformations of $\Thetabar$. The map $\Theta\mapsto d\circ \Theta$ induces a homomorphism 
of local $\Zp$-algebras $R_{\omega}\rightarrow R^{\ps}_{\Thetabar}$. The functor $\Def^{C}_{\Thetabar}$ is represented by 
$R^{C}_{\Thetabar}= R^{\ps}_{\Thetabar}\otimes_{R_{\omega}, \varphi_{\cyc}} \Zp$.

Let $\tilde{X}^C_{\CG{}}: \Rig_{\Qp}^{\op} \rightarrow \Set$ be the functor, which sends
$Y\in \Rig_{\Qp}$ to the set of continuous $C$-admissible pseudocharacters $\Theta: \Gamma_F \rightarrow \CG(\OO(Y))$.

\begin{lem}\label{quasi_Stein_C} The functor $\tilde{X}^{C}_{\CG{}}$ is represented by a quasi-Stein rigid analytic space 
$$ X^{C}_{\CG{}}= \coprod_{z\in |\PC^{\Gamma_F,C}_{\CG{}}|} (\Spf R^{C}_{\Theta_z})^{\rig},$$
where
$|\PC^{\Gamma_F,C}_{\CG{}}|:=\{ z\in|\PC^{\Gamma_F,\adm}_{\LG^T}|: d\circ\Theta_z=\chi_{\cyc}\otimes_{\Zp} k_z\}$. 
\end{lem}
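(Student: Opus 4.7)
The plan is to realise $\tilde X^C_{\CG{}}$ as a fibre product of rigid analytic spaces whose representability is already at hand, in direct analogy with the argument used above to describe $\tilde X^{\adm}_{\LG}$.

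By definition, $\Theta \in \tilde X^C_{\CG{}}(Y)$ is a continuous admissible $\LG^T$-valued pseudocharacter of $\Gamma_F$ over $\OO(Y)$ satisfying $d \circ \Theta = \chi_{\cyc} \otimes_{\Zp} \OO(Y)$. Equivalently, $\Theta$ is an element of $\tilde X^{\adm}_{\LG^T}(Y)$ whose image under the morphism of functors $\tilde X^{\adm}_{\LG^T} \to \tilde X_{\Gm}$ induced by $d : \CG \to \Gm$ agrees with the pullback of $\chi_{\cyc} \in \tilde X_{\Gm}(\Sp \Qp)$, whence
$$ \tilde X^C_{\CG{}} \;=\; \tilde X^{\adm}_{\LG^T} \times_{\tilde X_{\Gm}} \underline{\Sp \Qp}. $$
The functor $\tilde X^{\adm}_{\LG^T}$ is representable by the same fibre product argument used to prove representability of $\tilde X^{\adm}_{\LG}$ in the preceding subsection (applied to $\LG^T = \CG$ viewed as an $L$-group of the connected reductive group $G^T$ over $F$), and $\tilde X_{\Gm}$ is representable by \cite[Theorem 6.21]{quast}. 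By \Cref{bsch} the $\Qp$-point $\chi_{\cyc}: \Sp \Qp \to X_{\Gm}$ corresponds to $\varphi_{\cyc}: R_\omega \to \Zp$ and factors through the summand $(\Spf R_\omega)^{\rig}$.

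Since fibre products of rigid analytic spaces exist and commute with disjoint unions in either factor, I obtain
$$ X^C_{\CG{}} \;=\; \coprod_{z \in |\PC^{\Gamma_F,\adm}_{\LG^T}|} \bigl((\Spf R^{\ps}_{\Theta_z})^{\rig} \times_{X_{\Gm}} \Sp \Qp\bigr). $$
For fixed $z$, the morphism $(\Spf R^{\ps}_{\Theta_z})^{\rig} \to X_{\Gm}$ factors through the summand indexed by the continuous character $d \circ \Theta_z : \Gamma_F \to k_z^\times$; thus the $z$-summand of the fibre product is empty unless $d \circ \Theta_z = \omega \otimes_{\Fp} k_z$, that is, unless $z \in |\PC^{\Gamma_F,C}_{\CG{}}|$, in which case the map factors through $(\Spf R_\omega)^{\rig}$.

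For such $z$ the identification $(\Spf R^{\ps}_{\Theta_z})^{\rig} \times_{(\Spf R_\omega)^{\rig}} \Sp \Qp \cong (\Spf R^C_{\Theta_z})^{\rig}$ follows by comparing universal properties via \Cref{bsch}: a $T$-point of either side amounts to a continuous $\OO$-algebra homomorphism $R^{\ps}_{\Theta_z} \to \OO(T)$ whose restriction to $R_\omega$ factors through $\varphi_{\cyc}$, i.e.\ a continuous homomorphism $R^C_{\Theta_z} \to \OO(T)$. Assembling the pieces yields the claimed disjoint-union description. Finally, each summand $(\Spf R^C_{\Theta_z})^{\rig}$ is quasi-Stein as Berthelot's rigid generic fibre of a complete local noetherian ring, and the disjoint union retains the quasi-Stein property in the convention of \cite{quast}. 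The main thing requiring care is the bookkeeping of which components survive the fibre product, which is completely controlled by the disjoint union decompositions of $X^{\adm}_{\LG^T}$ and $X_{\Gm}$ together with the defining relation $R^C_{\Theta_z} = R^{\ps}_{\Theta_z} \otimes_{R_\omega,\varphi_{\cyc}} \Zp$.
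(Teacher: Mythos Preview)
Your proof is correct and follows essentially the same approach as the paper: both realise $\tilde X^C_{\CG}$ as the fibre product $X^{\adm}_{\LG^T}\times_{X_{\Gm}}\Sp\Qp$, decompose into connected components, and identify each surviving summand with $(\Spf R^C_{\Theta_z})^{\rig}$ using $R^C_{\Theta_z}=R^{\ps}_{\Theta_z}\otimes_{R_\omega,\varphi_{\cyc}}\Zp$. You supply a bit more detail (why non-matching components vanish, the quasi-Stein property), but the argument is the same.
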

\begin{proof} It follows from the definition of $C$-admissible pseudocharacters that $ \tilde{X}^{C}_{\CG}$ is represented by 
$X^{\adm}_{\LG^T}\times_{X_{\Gm}} \Sp \Qp$, where the morphism $X^{\adm}_{\LG^T}\rightarrow X_{\Gm}$ is induced by $d$ and $\Sp \Qp\rightarrow X_{\Gm}$ by $\chi_{\cyc}\in X_{\Gm}(\Qp)$. It follows from \eqref{admLG} that 
$$ X^C_{\CG}\cong \coprod_{z\in |\PC^{\Gamma_F,C}_{\CG{}}|} (\Spf R^{\ps}_{\Theta_z})^{\rig} \times_{(\Spf R_{\omega})^{\rig}} (\Spf \Zp)^{\rig}.$$ 
Since $R^C_{\Theta_z}= R^{\ps}_{\Theta_z}\otimes_{R_{\omega}} \Zp$, the assertion follows. 
\end{proof}

Let $\rhobar: \Gamma_F \rightarrow \CG(k)$ be continuous and $C$-admissible. Let $\Thetabar$ be its $\CG$-pseudocharacter and let $X^{C}_{\Thetabar}:=\Spec R^C_{\Thetabar}$. Let $X^{\gen, C}_{\Thetabar}:= 
X^{\gen}_{\Thetabar}\times_{X^{\ps}_{\Thetabar}} X^{C}_{\Thetabar}$, let $A^C$ be its ring of global functions  and let $\rho^C: \Gamma_F \rightarrow \CG(A^C)$ be the universal representation over $X^{\gen, C}_{\Thetabar}$. Then $X^{\gen, C}_{\Thetabar} = X^{\gen}_{\Thetabar}\times_{X^{\ps}_{\omega}} \Spec \Zp$, where the map $\Spec \Zp\rightarrow X^{\ps}_{\omega}$ corresponds to $\chi_{\cyc}\in X^{\ps}_{\omega}(\Zp)$. Thus $\rho^C$ is $C$-admissible. Let $A^L$ be the ring of global sections of $X^{\gen}_{\Thetabar}$ and let $\rho^L: \Gamma_F \rightarrow \LG^T(A^L)$ be the universal representation.

\begin{lem}\label{normal_C} We have $(A^C[1/p])^{\Ghat^T}\cong R^C_{\Thetabar}[1/p]$. In particular, $R^C_{\Thetabar}[1/p]$ is normal. 
\end{lem}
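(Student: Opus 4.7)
The first claim is essentially a formal base change. From the fibre-product definition $X^{\gen, C}_{\Thetabar} = X^{\gen}_{\Thetabar} \times_{X^{\ps}_{\Thetabar}} X^C_{\Thetabar}$ one reads off $A^C \cong A^{\gen}_{\Thetabar} \otimes_{R^{\ps}_{\Thetabar}} R^C_{\Thetabar}$. The $\Ghat^T$-action sits entirely in the first tensor factor, since $\Ghat^T$ acts trivially on $R^{\ps}_{\Thetabar}$, and hence on $R^C_{\Thetabar} = R^{\ps}_{\Thetabar}\otimes_{R_\omega, \varphi_{\cyc}} \Zp$. Inverting $p$ and applying \Cref{tensor_inv2} with $A = R^{\ps}_{\Thetabar}[1/p]$, $M = A^{\gen}_{\Thetabar}[1/p]$ and $N = R^C_{\Thetabar}[1/p]$ yields
\[
(A^C[1/p])^{\Ghat^T} \cong (A^{\gen}_{\Thetabar}[1/p])^{\Ghat^T} \otimes_{R^{\ps}_{\Thetabar}[1/p]} R^C_{\Thetabar}[1/p].
\]
\Cref{adequate}, applied to the generalised reductive group $\LG^T$ (whose neutral component is $\Ghat^T$) and the underlying $\LG^T$-pseudocharacter, identifies the first factor on the right with $R^{\ps}_{\Thetabar}[1/p]$, collapsing the right-hand side to $R^C_{\Thetabar}[1/p]$.

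For the normality statement, $\Ghat^T$ is linearly reductive in characteristic zero and invariants under a reductive group preserve normality, so it is enough to show that $A^C[1/p]$ is itself normal. My plan is to obtain this by adapting \cite[Corollary 15.24]{defG}, which gives normality of $A^{\gen}_{\Thetabar}[1/p]$, to the $C$-admissible setting. Passing from $A^{\gen}_{\Thetabar}$ to $A^C$ amounts to base-changing along the closed immersion $\Spec \Zp \hookrightarrow \Spec R_{\omega}$ cut out by $\varphi_{\cyc}$, which on generic fibres is a regular closed immersion into the regular scheme $\Spec R_{\omega}[1/p]$. The codimension estimates of \Cref{nice_locus}, together with the analogous dimension and singular-locus bounds from \cite{defG}, restrict to the cyclotomic slice, so that Serre's criterion $R_1 + S_2$ for normality continues to hold for $A^C[1/p]$.

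The main obstacle will be this last step. The first isomorphism is a routine calculation once \Cref{adequate} and \Cref{tensor_inv2} are in place. Normality of $A^C[1/p]$, by contrast, requires checking that specialising the $\Gm$-component of the universal $\LG^T$-representation to $\chi_{\cyc}$ does not introduce new singularities in codimension one; concretely, one must verify that the Euler--Poincar\'e-type codimension bounds of \cite{defG} controlling the reducible and singular loci remain effective after cutting out the $C$-admissible subscheme, so that the arithmetic input from \Cref{normal} transfers verbatim to the $C$-group case.
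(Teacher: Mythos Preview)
Your argument for the isomorphism $(A^C[1/p])^{\Ghat^T}\cong R^C_{\Thetabar}[1/p]$ is correct and essentially the same as the paper's: both factor through \Cref{adequate} after writing $A^C$ as a tensor product. The paper uses the description $A^C\cong A^L\otimes_{R_\omega}\Zp$ and cites \Cref{tensor_inv1}; your choice of \Cref{tensor_inv2} with $N=R^C_{\Thetabar}[1/p]$ carrying the trivial $\Ghat^T$-action is at least as clean and applies on the nose.

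For the normality of $A^C[1/p]$ you are working too hard. The paper does not redo the argument: it simply cites \cite[Theorem 15.19]{defG}, with the pointer that the $C$-admissible case is explained in the proof of \cite[Theorem 16.5]{defG}. In other words, normality of the generic fibre of the moduli space of $C$-admissible representations is already established in \cite{defG}, and one just quotes it. Your sketched route---cut by the regular closed immersion $\Spec\Zp\hookrightarrow\Spec R_\omega$ corresponding to $\chi_{\cyc}$, then verify that the codimension bounds on the singular and reducible loci survive so that Serre's criterion still holds---is morally the content of those references, but as you yourself flag, you have not actually carried out the verification. There is no need to: the work has been done in \cite{defG}. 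Once $A^C[1/p]$ is normal, normality of its $\Ghat^T$-invariants $R^C_{\Thetabar}[1/p]$ follows exactly as you say.
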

\begin{proof} Since $A^C\cong A^L\otimes_{R_{\omega}}\Zp$, \Cref{tensor_inv1} and \Cref{adequate} imply that 
$$(A^C[1/p])^{\Ghat^T}\cong (A^L[1/p])^{\Ghat^T}\otimes_{R_{\omega}[1/p]} \Qp\cong 
R^{\ps}_{\Thetabar}[1/p]\otimes_{R_{\omega}[1/p]} \Qp
\cong R^C_{\Thetabar}[1/p].$$
The normality of $A^C[1/p]$ follows from 
\cite[Theorem 15.19]{defG},
as explained in the proof of \cite[Theorem 16.5]{defG}.
Since taking invariants preserves normality, we deduce that $R^{C}_{\Thetabar}[1/p]$ is normal. 
\end{proof}

Let $(A^C)^{\ana}$ be the ring of global functions of the analytification of $X^{\gen, C}_{\Thetabar}$, let $\rho^{C,\ana}= \rho^C\otimes_{A^C} (A^C)^{\ana}$ and let $(R^C_{\Thetabar})^{\ana}$ be the ring of global sections of $(\Spf R^C_{\Thetabar})^{\rig}$. 
Let  $(A^L)^{\ana}$ the ring of global sections of the analytification of $X^{\gen}_{\Thetabar}$, and let
$\rho^{L,\ana}=\rho^L\otimes_{A^L} (A^L)^{\ana}$.

\begin{thm}\label{main_C} There exists a unique  family of $\Qp$-algebra 
homomorphisms $$\zeta_{Y,\Theta}^C: Z(\Res_{F/\Qp} \mathfrak g)\rightarrow \OO(Y)$$ indexed by pairs $Y\in \Rig_{\Qp}$ and 
$\Theta\in X^{C}_{\CG}(Y)$ with the following properties:
\begin{enumerate} 
\item (functoriality) for every morphism $\varphi: Y'\rightarrow Y$ in $\Rig_{\Qp}$ we have $$\varphi^*(\zeta_{Y, \Theta})= \zeta_{Y', \varphi^* \Theta};$$
\item (specialisation) for every $Y\in \Rig_{\Qp}$ and every 
$y\in Y(\Qpbar)$ the specialisation 
$$\zeta^C_{y, \Theta}:  Z(\Res_{F/\Qp} \mathfrak g)
\overset{\zeta^C_{Y,\Theta}}{\longrightarrow} \OO(Y)\overset{y}{\longrightarrow} \Qpbar$$
is equal to $\zeta^C_{\rho}$, where $\rho: \Gamma_F \rightarrow 
\CG(\Qpbar)$ is any continuous representation with 
$\Theta_{\rho}= \Theta\otimes_{\OO(Y), y} \Qpbar$. 
\end{enumerate}
\end{thm}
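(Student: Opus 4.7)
The plan is to reduce \Cref{main_C} to \Cref{main_L} applied to the central extension $G^T$, using the isomorphism $\LG^T = \CG$ from \eqref{CG}. Since $G^T$ is connected reductive and splits over the same extension $E$ as $G$ (because $\Gm$ is already split), \Cref{main_L} applied to $G^T$ yields a unique family of $\Qp$-algebra homomorphisms $\zeta^{G^T}_{Y, \Theta}: Z(\Res_{F/\Qp}\mathfrak g^T) \to \OO(Y)$ for $Y \in \Rig_{\Qp}$ and $\Theta$ an admissible $\CG$-pseudocharacter (i.e.\ $\Theta \in \tilde X^{\adm}_{\LG^T}(Y)$), satisfying functoriality and specialisation. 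Since $C$-admissibility is strictly stronger than admissibility (it additionally requires $d \circ \Theta = \chi_{\cyc}$), the functor $\tilde X^C_{\CG}$ is a subfunctor of $\tilde X^{\adm}_{\LG^T}$, and $\zeta^{G^T}_{Y, \Theta}$ is available for every $\Theta \in \tilde X^C_{\CG}(Y)$.

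The bridge between the two constructions is the identity
\[
\zeta^C_\rho = \zeta^{G^T}_\rho \circ \tilde\alpha,
\]
valid for every continuous $C$-admissible $\rho: \Gamma_F \to \CG(\OO(X))$, where $\tilde\alpha: Z(\Res_{F/\Qp}\mathfrak g) \to Z(\Res_{F/\Qp}\mathfrak g^T)$ is the $\Qp$-algebra homomorphism obtained, embedding-by-embedding via $\sigma: F \hookrightarrow L$, by conjugating the map $\alpha: S(\ghat^*)^{\Ghat} \to S((\ghat^T)^*)^{\Ghat^T}$ of \cite[Section 4.7]{DPS} through the Harish-Chandra and Chevalley isomorphisms, and then passing to the tensor product over $\sigma$ via adjunction. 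Crucially, $\tilde\alpha$ depends only on $G$ and $G^T$, not on $\rho$. The identity is a direct unwinding of \eqref{one} together with the definition $\theta'_\rho = \theta_\rho \circ \alpha$. Accordingly, for $\Theta \in \tilde X^C_{\CG}(Y)$ we set
\[
\zeta^C_{Y, \Theta} := \zeta^{G^T}_{Y, \Theta} \circ \tilde\alpha.
\]

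Functoriality of the family $\{\zeta^C_{Y, \Theta}\}$ is inherited directly from $\{\zeta^{G^T}_{Y, \Theta}\}$. For specialisation, if $y \in Y(\Qpbar)$ and $\rho: \Gamma_F \to \CG(\Qpbar)$ is a continuous representation with $\Theta_\rho = \Theta \otimes_{\OO(Y), y} \Qpbar$, then specialisation for $\zeta^{G^T}$ gives $\zeta^{G^T}_{y, \Theta} = \zeta^{G^T}_\rho$, and composing with $\tilde\alpha$ produces $\zeta^C_{y, \Theta} = \zeta^C_\rho$. Uniqueness follows exactly as in \Cref{main_L}: by \Cref{normal_C} and \Cref{normal_rig}, each $(\Spf R^C_{\Theta_z})^{\rig}$ is reduced, so by \Cref{quasi_Stein_C} the ring $\OO(X^C_{\CG}) = \prod_z (R^C_{\Theta_z})^{\ana}$ is reduced. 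The specialisation property then pins down $\zeta^C_{X^C_{\CG}, \Theta^{\univ}}$ on the universal pair, and functoriality extends this to the rest of the family.

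The main technical point is to construct $\tilde\alpha$ as a bona fide $\Qp$-algebra homomorphism (rather than merely an $L$-algebra map after base change to $L$) and to verify the displayed identity. Both reduce to checking the appropriate Galois-equivariance of $\alpha$ with respect to the $\Gal(E/F)$-action, in analogy with the independence of the extension $\tau: E \hookrightarrow L$ established in \cite[Lemma 4.18]{DPS}. Once this bookkeeping is in place, the reduction above delivers \Cref{main_C} essentially for free from \Cref{main_L} and \Cref{normal_C}, without reproving any of the analytic content (Hebbarkeitssatz, smoothness of the absolutely irreducible locus, etc.).
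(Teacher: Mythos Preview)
Your approach is correct and genuinely different from the paper's. The paper does not invoke \Cref{main_L} for $G^T$ as a black box; instead it reruns the proof of \Cref{main_L} on the $C$-side. Concretely, the paper works with the universal $C$-admissible representation $\rho^{C,\ana}$ over $(X^{\gen,C}_{\Thetabar})^{\ana}$, observes that $\theta_{\rho^{C,\ana}}$ is the image of $\theta_{\rho^{L,\ana}}$ under the quotient map $E\otimes (A^L)^{\ana}\to E\otimes (A^C)^{\ana}$, and uses \Cref{main} together with the factorisation $R^{\ps}_{\Thetabar}\to R^C_{\Thetabar}\to A^C$ to conclude that $\theta_{\rho^{C,\ana}}$ lands in $E\otimes (R^C_{\Theta_z})^{\ana}$. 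Composing with the fixed map $\alpha$ then forces $\zeta^C_{\rho^{C,\ana}}$ into $(R^C_{\Theta_z})^{\ana}$, and the rest (product over $z$, uniqueness via reducedness from \Cref{normal_C}) is identical to \Cref{main_L}.

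Your route is more modular: you treat \Cref{main_L} for $G^T$ as an input and reduce everything to constructing a single $\Qp$-algebra homomorphism $\tilde\alpha$ between centres, independent of any representation. This is conceptually cleaner and makes the relation $\zeta^C=\zeta^{G^T}\circ\tilde\alpha$ transparent. The cost is the ``main technical point'' you flag: that the embedding-by-embedding construction of $\tilde\alpha$ via $\alpha$ and the Harish-Chandra and Chevalley isomorphisms really descends from an $L$-linear map to a $\Qp$-algebra map $Z(\Res_{F/\Qp}\mathfrak g)\to Z(\Res_{F/\Qp}\mathfrak g^T)$. This amounts to checking that $\tilde\alpha_{\sigma,\tau}$ is independent of the extension $\tau$ of $\sigma$ and that the family $\{\tilde\alpha_\sigma\}_\sigma$ is $\Gal(L/\Qp)$-equivariant. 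You are right that this is in the spirit of \cite[Lemma 4.18]{DPS}, but note that the argument there uses the $\Gal(E/F)$-equivariance of $\theta_\rho$; here there is no $\theta_\rho$ in sight, so one must instead argue directly that $\alpha$ is compatible with the Galois actions on both sides (ultimately because the half-sum shift defining $\alpha$ is pinned by the based root datum and hence Galois-stable). The paper's approach sidesteps this issue entirely by never separating $\alpha$ from $\theta_\rho$.
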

\begin{proof} The proof is analogous to the proof 
of \Cref{main_L}. It is enough to define $\zeta^C_{Y, \Theta}$, when $Y=X^C_{\CG}$ and $\Theta$ is the universal pseudocharacter over it. 
\Cref{quasi_Stein_C} implies that $\OO(X^{C}_{\CG})$ 
is a product of the rings $(R^C_{\Theta_z})^{\ana}$. For each 
$z\in |\PC^{\Gamma_F,C}_{\CG}|$ there 
is a finite extension $k$ of $k_z$ and
a continuous representation 
$\rhobar: \Gamma_F \rightarrow \CG(k)$ such that $\Theta_{\rhobar}= \Theta_z\otimes_{k_z} k$. Since 
$\Theta_z$ is $C$-admissible, so is the representation $\rhobar$. The homomorphism 
$\theta_{\rho^{C,\ana}}$ is equal to the composition 
of $\theta_{\rho^{L,\ana}}$ with the quotient map 
$E\otimes (A^L)^{\ana}\rightarrow E\otimes (A^C)^{\ana}$. 
Since $A^C= A^L\otimes_{R_{\omega}, \varphi_{\cyc}} \Zp$, the map 
$R^{\ps}_{\Theta_z}\rightarrow A^C$ factors through $R^{C}_{\Theta_z}\rightarrow A^C$.
Since $\theta_{\rho^{L,\ana}}$ takes values in $E\otimes (R^{\ps}_{\Theta_z})^{\ana}$ 
by \Cref{descent}, we deduce that 
$\theta_{\rho^{C,\ana}}$ takes values 
in $E\otimes (R^C_{\Theta_z})^{\ana}$. 
Thus $\theta'_{\rho^{C, \ana}}$ takes 
values in $E\otimes(R^C_{\Theta_z})^{\ana}$ and hence $\zeta^{C,z}:=\zeta^C_{\rho^{C, \ana}}$ takes values in $(R^C_{\Theta_z})^{\ana}$ and we may 
define $\zeta^{C, \univ}:= \prod_{z} \zeta^{C,z}$, where the product is taken over $z\in |\PC^{\Gamma_F,C}_{\CG}|$. \Cref{normal_C} implies 
that $(\Spf R^C_{\Theta_z})^{\rig}$ is normal and hence reduced. It follows from \Cref{quasi_Stein_C} that $X^{C}_{\CG}$ is reduced. 
The rest of the argument is the same as in \Cref{main_L}.
\end{proof}

\section{Application to Hecke eigenspaces}\label{sec_Hecke}

We use our construction to generalise the results of \cite[Section 9.9]{DPS} concerning infinitesimal characters of the subspace 
of locally analytic vectors in the Hecke eigenspaces in the setting of definite unitary groups. Our construction 
of the infinitesimal character enables us to handle 
Hecke eigenspaces corresponding to Galois representations,
which are residually reducible. Apart from this new ingredient the argument is the same as in \cite{DPS}. We refer the reader to \Cref{explain} for more details. 

 We put ourselves in the setting of the proof of  \cite[Theorem 3.3.3]{FS}. In particular,  $F$ is a totally real field, $E$ is a totally imaginary quadratic extension of $F$,  
 $S$ is a finite set of places of $F$ containing all the places above $p$ and $\infty$, $S_E$ is the set of places of $E$ above $S$,  $E_S$ is the maximal extension of $E$ unramified outside $S_E$,
   $G$ is a unitary group over $F$ which is an outer form of $\GL_n$ with respect to the quadratic extension $E/F$ such that $G$ is quasi-split at all finite places and anisotropic at all infinite places. We assume that all the places of $F$ above $p$ split in $E$. This implies that $G(F\otimes_{\QQ} \Qp)$ 
is isomorphic to a product of $\GL_n(F_v)$ for $v\mid p$. Let $K_p$ be 
the subgroup of $G(F\otimes_{\QQ} \Qp)$ mapped to $\prod_{v\mid p} \GL_n(\OO_{F_v})$ under this isomorphism. 
Let $U^p=\prod_{v\nmid p\infty} U_v$ be a compact  open subgroup of $G(\mathbb A^{\infty, p}_F)$. If $U_p$ is a compact open subgroup  of $G(F\otimes_{\QQ} \Qp)$ then the double coset 
  $$Y(U^p U_p):= G(F)\backslash G(\mathbb A_F)/ U^p U_p G(F \otimes_{\QQ} \RR)^{\circ}$$
is a finite set. The  completed cohomology with $\Zp$-coefficients and prime-to-$p$ level $U^p$ is defined as
$$ \Htilde(U^p):= \varprojlim_{m} \varinjlim_{U_p} H^0(Y(U^p U_p), \ZZ/p^m \ZZ),$$
where the inner limit is taken over all open compact subgroups $U_p$. We may identify  $\Htilde(U^p)$ (resp. $\Htilde(U^p)_{\Qp}$)  with the space of continuous $\Zp$-valued (resp. $\Qp$-valued) functions on the profinite set $$Y(U^p):=G(F)\backslash G(\mathbb A_F)/ U^p  G(F \otimes_{\QQ} \RR)^{\circ}\cong\varprojlim_{U_p} Y(U^p U_p).$$
 The action of $G(F\otimes_{\QQ} \Qp)$ on $Y(U^p)$ makes $\Htilde(U^p)_{\Qp}$ into an admissible unitary 
$\Qp$-Banach space representation of $G(F\otimes_{\QQ} \Qp)$ with unit ball equal to $\Htilde(U^p)$. The group $K_p$ acts on $Y(U^p)$ with finitely many orbits. We may replace $U^p$ by an open subgroup such that the action of $K_p$ on $Y(U^p)$ is free, \cite[Remark 9.3]{DPS}. Then the restriction of $\Htilde(U^p)_{\Qp}$
to $K_p$ is isomorphic to a finite direct sum of copies of the space of 
continuous $\Qp$-valued functions on $K_p$ with the action of $K_p$ by right translations.

We will now introduce a variant of the big Hecke algebra, which acts on $\Htilde(U^p)$ by continuous, $G(F\otimes_{\QQ} \Qp)$-invariant  endomorphisms. Let $\mathbb T^S$ be the double coset algebra $\ZZ[U^S\backslash G(\mathbb A^S_F)/ U^S]$. Let $\Spl^S_F$ be the set of places of $F$ outside $S$ that split in the extension $E/F$, let $\Spl^S_E$
be the set of places of $E$ above $\Spl^S_E$. For each 
$w\in \Spl^S_E$ above a place $v$ of $F$ of $E$ we fix an isomorphism $i_w: G(F_v)\cong \GL_n(E_w)$, which maps $U_v$ to $\GL_n(\OO_{E_w})$. Let $T^{(i)}_w\in \mathbb T^S$
be the Hecke operator corresponding to the double coset 
$\GL_n(\OO_{E_w}) \bigl(\begin{smallmatrix} \varpi_w I_i & 0 \\ 0 & I_{n-i}\end{smallmatrix}\bigr)  \GL_n(\OO_{E_w})$ via $i_w$, where $\varpi_w$ is a uniformiser of $E_w$.  Let $\mathbb T^S_{\Spl}$ be the subalgebra of $\mathbb T^S$ generated by $T_w^{(i)}$ for all $w\in \Spl^S_E$ and $1\le i \le n$.
The algebras $\mathbb T^S$ 
acts on $H^0(Y(U^p U_p), \ZZ/p^m \ZZ)$ and  we let $\mathbb T^S_{\mathrm{Spl}}(U^p U_p, \ZZ/p^m \ZZ)$ be the image of 
$\mathbb T^S_{\Spl}$ in $\End_{\ZZ}( H^0(Y(U^p U_p), \ZZ/p^m \ZZ))$. We define 
\begin{equation}\label{defnT}
\mathbb T^S_{\mathrm{Spl}}(U^p):= \varprojlim_{m, U_p} \mathbb T^S_{\mathrm{Spl}}(U^p U_p, \ZZ/p^m \ZZ).
\end{equation}
Since $\mathbb T^S_{\mathrm{Spl}}(U^p U_p, \ZZ/p^m \ZZ)$ are finite $\ZZ/p^m \ZZ$-modules, 
the algebra $\mathbb T^S_{\mathrm{Spl}}(U^p)$ is a profinite $\Zp$-algebra. It has 
only finitely many open maximal ideals by \cite[Lemma C.7]{FS}. If $\mm$ is an open maximal ideal of $\mathbb T^S_{\mathrm{Spl}}(U^p)$ we let $\Htilde(U^p)_{\mm}$ and $\mathbb T^S_{\mathrm{Spl}}(U^p)_{\mm}$ 
be the $\mm$-adic completions of $\Htilde(U^p)$ and $\mathbb T^S_{\mathrm{Spl}}(U^p)$, respectively.  We have 
\begin{equation}\label{Ch_rem}
\Htilde(U^p)\cong \prod_{\mm} \Htilde(U^p)_{\mm}, \quad \mathbb T^S_{\mathrm{Spl}}(U^p)\cong \prod_{\mm} \mathbb T^S_{\mathrm{Spl}}(U^p)_{\mm},
\end{equation}
where the (finite) product is taken over all open maximal ideals of $\mathbb T^S_{\mathrm{Spl}}(U^p)$.

\begin{prop} The algebra $\mathbb T^S_{\mathrm{Spl}}(U^p)$ is noetherian. Moreover, 
there is a continuous $n$-dimensional determinant law $D: \Zp[\Gal(E_S/E)]\rightarrow \mathbb T^S_{\mathrm{Spl}}(U^p)$ such that 
\begin{equation}\label{DFrob}
 D(1 +t \Frob_w)= \sum_{i=0}^n t^i N(w)^{i(i-1)/2} T_w^{(i)}, \quad \forall w\in \Spl^S_E,
\end{equation} 
where $N(w)$ is the absolute norm of the ideal $w$ and $\Frob_w\in \Gal(E_S/E)$ is 
a geometric Frobenius at $w$ and $t$ is an indeterminate.
\end{prop}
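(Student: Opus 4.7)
To establish noetherianness, the plan is to shrink $U^p$ so that $K_p$ acts freely on $Y(U^p)$; then the Schikhof dual $\Htilde(U^p)^d$ is a free $\Zp\br{K_p}$-module of finite rank $r = \#(K_p \backslash Y(U^p))$, so $\End_{\Zp\br{K_p}}(\Htilde(U^p)^d)\cong M_r(\Zp\br{K_p})$ is noetherian as a ring (being finitely generated as a module over the noetherian ring $\Zp\br{K_p}$). The algebra $\mathbb T^S_{\Spl}(U^p)$ acts faithfully and $\Zp\br{K_p}$-linearly on $\Htilde(U^p)^d$, so it embeds as a commutative closed $\Zp$-subalgebra of this matrix ring. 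A standard argument going back to Emerton's work on completed cohomology then shows that such a commutative closed subalgebra is finitely generated as a module over a suitable central noetherian subring built from $Z(\Zp\br{K_p})$, hence is itself noetherian.

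For the determinant law, by \eqref{Ch_rem} it will suffice to construct $D_{\mm}$ for each open maximal ideal $\mm$. At each finite level $U_p$, the algebra $\mathbb T^S_{\Spl}(U^p U_p, \Zp)_{\mm}$ is a finitely generated $\Zp$-module that becomes reduced after inverting $p$, and $\mathbb T^S_{\Spl}(U^p U_p, \Zp)_{\mm}[1/p]$ is a finite product of number fields indexed by Hecke eigensystems $\lambda$ coming from regular algebraic cuspidal automorphic representations of $G(\mathbb A_F)$. To each such $\lambda$ the classical construction of Galois representations for definite unitary groups (Labesse, Shin, Chenevier--Harris, Bella\"iche--Chenevier, combined with Mok's and Kaletha--Minguez--Shin--White's endoscopic classification) attaches a continuous semisimple representation $\rho_{\lambda}: \Gal(E_S/E) \to \GL_n(\Qpbar)$, unramified outside $S_E$, whose characteristic polynomial at $\Frob_w$ for $w \in \Spl^S_E$ realises the right-hand side of \eqref{DFrob} specialised at $\lambda$. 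Their $\Lambda^{i}$-polynomial laws will assemble into a continuous $n$-dimensional determinant law
$$ D_{\mm, U_p}^{\Qp}: \Zp[\Gal(E_S/E)] \longrightarrow \mathbb T^S_{\Spl}(U^p U_p, \Zp)_{\mm}[1/p]. $$

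The remaining step is to check that $D_{\mm, U_p}^{\Qp}$ has integral values, i.e.\ factors through $\mathbb T^S_{\Spl}(U^p U_p, \Zp)_{\mm}$. Formula \eqref{DFrob} already places the coefficients of $D_{\mm, U_p}^{\Qp}(1 + t\Frob_w)$ in this integral subring for every $w \in \Spl^S_E$; since $\Spl^S_E$ has Dirichlet density one among the primes of $E$ outside $S_E$, Chebotarev shows that $\{\Frob_w\}_{w \in \Spl^S_E}$ is dense in $\Gal(E_S/E)$. Combining this with the $p$-adic closedness of the integral Hecke algebra inside its characteristic-zero localisation and with the continuity of the $\Lambda^{i}$-operations, one deduces that $D_{\mm, U_p}^{\Qp}$ factors through the integral subring, and hence descends modulo $p^m$ to compatible determinant laws. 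Passing to the inverse limit over $(U_p, m)$ and gluing over $\mm$ will yield the required continuous $D$, which satisfies \eqref{DFrob} by construction. The hard part will be this final integrality descent; it hinges on the density of split Frobenii combined with the classical construction producing Galois representations whose Hecke eigenvalues carry exactly the cyclotomic normalisation encoded in the factors $N(w)^{i(i-1)/2}$.
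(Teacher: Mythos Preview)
The paper does not prove this proposition; it simply cites \cite[Theorem C.7]{FS}. Your determinant-law construction is essentially the standard one and is correct in outline: build Galois representations at finite level from the automorphic input, take their determinant laws, use density of split Frobenii plus continuity and closedness of the integral Hecke algebra to descend to $\Zp$-coefficients, then pass to the limit.

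Your noetherianness argument, however, has a genuine gap. The embedding
\[
\mathbb T^S_{\Spl}(U^p)\hookrightarrow \End_{\Zp\br{K_p}}(\Htilde(U^p)^d)\cong M_r(\Zp\br{K_p})
\]
does not by itself imply noetherianness: a closed commutative $\Zp$-subalgebra of a noetherian ring need not be noetherian. You then invoke ``a standard argument going back to Emerton'' that such a subalgebra is module-finite over something built from $Z(\Zp\br{K_p})$, but no such argument exists. The centre of the Iwasawa algebra of a non-abelian $p$-adic Lie group is far too small for $\mathbb T$ to be finite over it, and Emerton's work does not contain a result of this shape.

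The argument that actually works (and is what lies behind the cited reference) reverses your order of operations. First construct the determinant law $D$ as you outline. Then, for each open maximal ideal $\mm$, the reduction $\bar D = D\bmod\mm$ is a residual determinant law, and $D$ induces a continuous local homomorphism from its universal deformation ring $R^{\ps}_{\bar D}$ to $\mathbb T^S_{\Spl}(U^p)_{\mm}$. Formula \eqref{DFrob} shows that each $T^{(i)}_w$ lies in the image of this map (note $N(w)\in\Zp^\times$ since $w\nmid p$), and since these operators topologically generate $\mathbb T^S_{\Spl}(U^p)_{\mm}$, the map is surjective. Because $\Gal(E_S/E)$ satisfies Mazur's $p$-finiteness condition, $R^{\ps}_{\bar D}$ is a complete local noetherian ring, so its quotient $\mathbb T^S_{\Spl}(U^p)_{\mm}$ is noetherian; combining over the finitely many $\mm$ gives the claim.
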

\begin{proof} \cite[Theorem C.7]{FS}.
\end{proof} 

Let $\Theta$ be a $\GL_n$-pseudocharacter attached to $D$ via \cite{emerson2023comparison}. The $L$-group of $\GL_n$ is $\GL_n$ itself.
We would like to have a pseudocharacter valued in the $C$-group of $\GL_n$. 
We do this via a twisting construction described in \cite[Section 4.7]{DPS}. 
Let $\tilde{\delta}: \Gm \rightarrow \GL_n$ be the cocharacter such that  $\tilde{\delta}(t)$
is a diagonal matrix with the diagonal entries given by $(1, t^{-1}, \ldots, t^{1-n})$. Recall that the $C$-group ${}^C\GL_n\cong \GL_n \rtimes \Gm$. The map 
\begin{equation}\label{twisting}
\tw_{\tilde{\delta}}: {}^C\GL_n \rightarrow \GL_n \times \Gm, \quad (g,t)\mapsto (g \tilde{\delta}(t), t)
\end{equation}
is an isomorphism of group schemes, \cite[Example 2 (3)]{zhu}. We consider $\Theta \boxtimes \chi_{\cyc}$ as
a $\GL_n \times \Gm$-pseudocharacter of $\Gal(E_S/E)$ valued in $\mathbb T^S_{\mathrm{Spl}}(U^p)$ and define $$\Theta^C:= \tw_{\delta}^{-1}\circ (\Theta\boxtimes \chi_\cyc).$$
Then $\Theta^C$ is a $C$-admissible ${}^C\GL_n$-pseudocharacter valued in $\mathbb T^S_{\mathrm{Spl}}(U^p)$. If $w$ is a place of $E$ above $p$ we denote by $\Theta^C_w$ the restriction of $\Theta^C$ to the decomposition group at $w$. 

Finally, we come to the application of the construction carried out in the previous sections. Let $\mathbb T^S_{\mathrm{Spl}}(U^p)^{\ana}$ be the ring of global sections
of $(\Spf \mathbb T^S_{\mathrm{Spl}}(U^p))^{\rig}$. For each $w\mid p$ \Cref{main_C} gives us 
a $\Qp$-algebra homomorphism 
\begin{equation}\label{zetaCw}
\zeta^C_{w}: Z(\Res_{E_w/\Qp} (\gl_{n, E_w}))\rightarrow \mathbb T^S_{\mathrm{Spl}}(U^p)^{\ana}
\end{equation}
corresponding to $Y=(\Spf \mathbb T^S_{\mathrm{Spl}}(U^p))^{\rig}$ and $\Theta=\Theta^C_w$. Since, by assumption, all places 
$v$ of $F$ above $p$ split in $E$ we have an isomorphism
of groups $(\Res_{F/\QQ} G)_{\Qp} \cong \prod_{w\mid p} 
\Res_{E_w/\Qp} \GL_{n, E_w}$. This induces an isomorphism of $\Qp$-algebras
\begin{equation}\label{places}
Z(\Res_{F/\QQ} \mathfrak g)_{\Qp} \cong \bigotimes_{w\mid p} Z(\Res_{E_w/\Qp} (\gl_{n, E_w})),
\end{equation}
where $\mathfrak g$ is the Lie algebra of $G$. We define
$\zeta^C: Z(\Res_{F/\QQ} \mathfrak g)_{\Qp} \rightarrow 
\mathbb T^S_{\mathrm{Spl}}(U^p)^{\ana}$ as the tensor product of $\zeta^C_w$ via \eqref{places}.

Let $x: \mathbb T^S_{\mathrm{Spl}}(U^p)\rightarrow \Qpbar$ be a $\Zp$-algebra homomorphism, let $\mm_x$ be its kernel and let $\kappa(x)$ be the residue field. 
Since $\mathbb T^S_{\mathrm{Spl}}(U^p)$ is a product of 
complete local noetherian $\Zp$-algebras, $\kappa(x)$ is a finite extension of $\Qp$. 
Let $\Htilde(U^p)_{\Qp}[\mm_x]$ be the subspace of 
$\Htilde(U^p)_{\Qp}$ of elements annihilated by all the 
elements in $\mm_x$ - the so-called Hecke eigenspace. 
Since  $\Htilde(U^p)_{\Qp}[\mm_x]$ is closed in $\Htilde(U^p)_{\Qp}$ and is preserved by the action 
of $G(F\otimes_{\QQ} \Qp)$, it is an admissible unitary
$\kappa(x)$-Banach space representation of  $G(F\otimes_{\QQ} \Qp)$. Let $\Htilde(U^p)_{\Qp}[\mm_x]^{\la}$ be the subspace of locally analytic vectors for the action of the $p$-adic 
Lie group $G(F\otimes_{\QQ} \Qp)$. By differentiating 
the action one obtains an action of the Lie algebra and
an action of $Z(\Res_{F/\QQ} \mathfrak g)_{\Qp}$ on 
$\Htilde(U^p)_{\Qp}[\mm_x]^{\la}$.

\Cref{deJong_like} implies that 
$\mm_x \mathbb T^S_{\mathrm{Spl}}(U^p)^{\ana}$ is a maximal ideal of $\mathbb T^S_{\mathrm{Spl}}(U^p)^{\ana}$ with residue field equal to $\kappa(x)$. We let $\zeta^C_x: Z(\Res_{F/\QQ} \mathfrak g)_{\Qp}\rightarrow \kappa(x)$
denote the composition of $\zeta^C$ with the quotient map.

\begin{thm}\label{Hecke} The action of $Z(\Res_{F/\QQ} \mathfrak g)_{\Qp}$ on $\Htilde(U^p)_{\Qp}[\mm_x]^{\la}$
is given by $\zeta^C_x$.
\end{thm}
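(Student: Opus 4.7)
The plan is to follow the strategy of \cite[Section 9.9]{DPS}, where this theorem was proved under the hypothesis that the residual representation attached to $\mm_x$ is absolutely irreducible. The single substantial new ingredient needed is \Cref{main_C}, which constructs the infinitesimal character $\zeta^C$ directly from a pseudocharacter and so removes the need to produce a universal Galois representation over the big Hecke algebra.

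First, I would decompose using \eqref{Ch_rem}: since $x$ factors through a unique open maximal ideal $\mm$ of $\mathbb T^S_{\mathrm{Spl}}(U^p)$, one may replace $\mathbb T^S_{\mathrm{Spl}}(U^p)$ by the complete local noetherian $\Zp$-algebra $R := \mathbb T^S_{\mathrm{Spl}}(U^p)_{\mm}$, and $\Htilde(U^p)_{\Qp}$ by the admissible unitary Banach space representation $(\Htilde(U^p)_{\mm})_{\Qp}$. Let $M$ denote its subspace of locally analytic vectors and let $R^{\ana}$ be the ring of global sections of $(\Spf R)^{\rig}$. The action of $R$ on $\Htilde(U^p)_{\mm}$ extends by a standard functional-analytic continuity argument (as in \cite[Section 9]{DPS}) to a continuous action of $R^{\ana}$ on $M$ commuting with the actions of $G(F\otimes_{\QQ}\Qp)$ and $Z(\Res_{F/\QQ}\mathfrak g)_{\Qp}$. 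Thus the theorem will follow if, for every $z \in Z(\Res_{F/\QQ}\mathfrak g)_{\Qp}$, the element $z - \zeta^C(z) \in \End(M)$ vanishes, since specializing at $\mm_x$ then gives the desired scalar action on $\Htilde(U^p)_{\Qp}[\mm_x]^{\la}$.

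To prove this vanishing I would use density of classical algebraic automorphic points in $(\Spf R)^{\rig}$. At such a point $y \in (\Spf R)^{\rig}(\Qpbar)$, the eigenspace $\Htilde(U^p)_{\Qp}[\mm_y]^{\la}$ contains the locally algebraic vectors of a classical automorphic representation $\pi_y$; on these $Z(\Res_{F/\QQ}\mathfrak g)_{\Qp}$ acts through the Harish--Chandra infinitesimal character $\chi_y$ of the algebraic part of $(\pi_y)_\infty$. Local-global compatibility for $G$ together with the shift encoded in the $C$-group twist \eqref{twisting} (cf.\ \cite[Section 4.7]{DPS}) yields $\zeta^C_{\rho_y} = \chi_y$, where $\rho_y$ is the Galois representation attached to $\pi_y$ by Chenevier--Harris. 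The specialization property of \Cref{main_C} gives $\zeta^C_y = \zeta^C_{\rho_y}$, so $z - \zeta^C(z)$ annihilates every classical locally algebraic vector in $M$.

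The closed $R^{\ana}[G(F\otimes_{\QQ}\Qp)]$-stable subspace $\ker(z - \zeta^C(z)) \subseteq M$ therefore contains every classical locally algebraic vector. The main obstacle is the density of such vectors in $M$: this is a purely automorphic statement, insensitive to the residual situation, and is handled in \cite[Section 9.9]{DPS} via the eigenvariety machine together with Zariski-density of classical weights. Granting this, continuity forces $z = \zeta^C(z)$ on $M$, which completes the proof. In summary, each place in the argument of \cite[Section 9.9]{DPS} that invoked the universal Galois representation over $R$ is to be replaced by its pseudocharacter-theoretic counterpart coming from \Cref{main_C}, while the density and functional-analytic steps transfer essentially unchanged.
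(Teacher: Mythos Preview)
Your proposal is correct and follows essentially the same route as the paper: extend the Hecke action to $\mathbb T^S_{\mathrm{Spl}}(U^p)^{\ana}$ on locally analytic vectors, compare the two $Z(\Res_{F/\QQ}\mathfrak g)_{\Qp}$-actions, verify they agree at classical automorphic points using local--global compatibility together with the specialisation property of \Cref{main_C}, and conclude by density. The one imprecision is your appeal to ``the eigenvariety machine together with Zariski-density of classical weights'' for the density step; the paper (following \cite{DPS}) instead fixes a supercuspidal type $\tau_p$ of $K_p$ and uses \cite[Corollary 7.7]{DPS}, which says that the images of the evaluation maps $\Hom_{K_p}(\tau_p\otimes_L V,\Htilde(U^p)_L)\otimes_L V\to \Htilde(U^p)_L^{\la}$, as $V$ ranges over irreducible algebraic representations, span a dense subspace. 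This is a direct representation-theoretic density statement rather than an eigenvariety argument, but it plays exactly the role you assign to density, so the overall logic is the same.
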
 
\begin{proof} The proof is the same as the proof of \cite[Theorem  9.2]{DPS}, which is proved in a more general setting and specialised to the setting of definite unitary groups in \cite[Section 9.2]{DPS}, so we only give a sketch. 

The action of 
$\mathbb T^S_{\mathrm{Spl}}(U^p)$ on $\Htilde(U^p)_{\Qp}$ induces an action of $\mathbb T^S_{\mathrm{Spl}}(U^p)^{\ana}$ on the locally analytic
vectors $\Htilde(U^p)_{\Qp}^{\la}$. We then 
have two actions of $Z(\Res_{F/\QQ} \mathfrak g)_{\Qp}$
on $\Htilde(U^p)_{\Qp}^{\la}$: one via $\zeta^C$ and the 
other coming from the action of $G(F\otimes_{\QQ} \Qp)$.
It is enough to show that these actions coincide. 
This is verified by first showing that the two actions 
coincide on a subspace contained in locally algebraic 
vectors in $\Htilde(U^p)_{\Qp}^{\la}$, by relating 
this subspace to classical automorphic forms and using 
the local-global compatibility at $p$ of Galois representations associated to these automorphic forms. 

More precisely, let $L$ be a sufficiently large finite extension of $\Qp$, and let $\tau_p$ be a smooth irreducible representation of $K_p$ on an $L$-vector space, which is a supercuspidal type. Let $\Irr$ be the
set of isomorphism classes of irreducible algebraic 
representations of $(\Res_{F/\QQ} G)_L$. 
If $[V]\in \Irr$ then 
we may view 
$V$ as a representation of $K_p$ 
on an $L$-vector space via 
$K_p\subset (\Res_{F/\QQ} G)(\Qp)\subset (\Res_{F/\QQ} G)(L)$. Since $\Htilde(U^p)_{\Qp}$ is admissible, 
$\Hom_{K_p}(\tau_p\otimes_L V, \Htilde(U^p)_{L})$
is a finite dimensional $L$-vector space on which $\mathbb T^S_{\Spl}(U^p)$ acts. This action is semi-simple by 
\cite[Lemma 9.18]{DPS}.

Let $y: \mathbb T^S_{\Spl}(U^p)\rightarrow \Qpbar$ be a homomorphism of $\Zp$-algebras, such that 
$$\Hom_{K_p}(\tau_p\otimes_L V, \Htilde(U^p)_{L})[\mm_y]\neq 0.$$ 
We claim that $\zeta^C_y$ is equal to the 
infinitesimal character of the algebraic representation $V$. The claim implies that 
the two actions of $Z(\Res_{F/\QQ} \mathfrak g)_{\Qp}$ coincide on the image of the 
evaluation map 
\begin{equation}
\bigoplus_{[V]\in \Irr} 
\Hom_{K_p}(\tau_p\otimes_L V, \Htilde(U^p)_{L})\otimes_L V \rightarrow \Htilde(U^p)_{L}^{\la}.
\end{equation}
Since this subspace is dense in $\Htilde(U^p)_{\Qp}^{\la}$ by \cite[Corollary 7.7]{DPS}, the two actions coincide.  

To prove the claim we fix an isomorphism $\iota: \Qpbar\cong \mathbb C$. It follows from \cite[Lemma 9.18]{DPS} that there exists a cuspidal 
automorphic representation $\pi_y = \otimes'_v \pi_{y, v}$ of $G(\mathbb A_F)$ 
such that $ \mathbb T^S_{\Spl}(U^p)$ acts on 
$\pi_y^{U^S}$ via $\iota\circ y$. Let 
$\rho_y: \Gal(E_S/E)\rightarrow \GL_n(\Qpbar)$ 
be the Galois representation associated to $\pi_y$.
The local-global compatibility
between $\pi_{y}$ and $\rho_y$ is summarised in \cite[Theorem 7.2.1]{EGH}, 
see the references in its proof for proper attributions. In particular, it follows from 
\eqref{DFrob} and \cite[Theorem 7.2.1 (iii)]{EGH}
$$\det(1 + t \rho_y(\Frob_w))= D_y(1+ t \Frob_w), \quad \forall w\in \Spl_E^S.$$ 
Since the union of conjugacy classes of $\Frob_w$ for $w\in \Spl_E^S$ is dense in 
$\Gal(E_S/E)$ we deduce that $D_y$ is the 
determinant law associated to $\rho_y$. 
The representation $\rho_y$ is de Rham 
at all places $w\mid p$ and the Hodge--Tate 
weights can be expressed in terms of the 
highest weight of $V$ by \cite[Theorem 7.2.1 (ii)]{EGH}. Let $\rho^C_y: \Gal(E_S/E)\rightarrow {}^C\GL_n(\Qpbar)$ be
the representation $\tw_{\tilde{\delta}}^{-1}\circ(\rho_y\boxtimes \chi_{\cyc})$ obtained via the twisting construction \eqref{twisting}. 
Then $\Theta^C_y$ is ${}^C\GL_n$-pseudocharacter of $\rho^C_y$.
It follows from \cite[Proposition 5.5]{DPS} that $\zeta^C_{\rho^C_y}$ is equal to the 
infinitesimal character of $V$. It follows from the specialisation property in \Cref{main_C} that $\zeta^C_y=
\zeta^C_{\rho^C_y}$, which proves the claim.
\end{proof}

\begin{remar}\label{explain} Let us emphasise that the definition  
of $\zeta^C_{w}$ in \eqref{zetaCw} is the only new thing in this section. 
In \cite{DPS} an analog of 
\Cref{Hecke}  is 
proved for $\Htilde(U^p)_{\mm}$ instead 
of $\Htilde(U^p)$, under the assumption that 
the specialisation of $\Theta$ at $\mm$ is absolutely irreducible. 
This assumption implies the existence of Galois representation
$\rho: \Gal(E_S/E)\rightarrow \GL_n(\mathbb T^S_{\Spl}(U^p)_{\mm})$
such that its pseudocharacter is equal to $\Theta$. The authors 
in \cite{DPS} then use the twisting construction to 
obtain a representation $\rho^C: \Gal(E_S/E)\rightarrow {}^C\GL_n(\mathbb T^S_{\Spl}(U^p)_{\mm})$, and 
work with the infinitesimal character $\zeta^C_{\rho^C}$ instead of 
$\zeta^C$. Our construction of the infinitesimal character $\zeta^C$ enables us to carry 
out the same argument on the 
whole of $\Htilde(U^{p})_{\Qp}$, not just the part, where the residual Galois representation is irreducible.
\end{remar}

\bibliographystyle{plain}
\bibliography{Ref}
\end{document}